\documentclass[a4paper,10pt]{amsart}
\usepackage[utf8]{inputenc}
\usepackage{enumerate}
\usepackage{stmaryrd}
\usepackage{amsthm}
\usepackage{amsmath}
\usepackage{amsfonts}
\usepackage{amssymb}
\usepackage{mathtools}
\usepackage{mathrsfs}
\usepackage{setspace}
\usepackage{xcolor}
\usepackage{textgreek}
\usepackage{todonotes}
\usepackage{soul}
\usepackage[a4paper, left=3cm, right=3cm, top=3cm, bottom=3cm]{geometry}
\usepackage{thmtools} 
\usepackage{bm}

\usepackage{esint}

\usepackage[pdfdisplaydoctitle,colorlinks,breaklinks,urlcolor=blue,linkcolor=blue,citecolor=blue]{hyperref} 

\usepackage[nameinlink,capitalise]{cleveref}

\newtheorem{thm}{Theorem}[section]

\newtheorem*{thm*}{Theorem}
\newtheorem{definition}[thm]{Definition}
\newtheorem{cor}[thm]{Corollary}

\newtheorem{lem}[thm]{Lemma}

\newtheorem{prop}[thm]{Proposition}

\newtheorem{ass}[thm]{Assumption}

\newtheorem{example}[thm]{Example}

\newcommand{\one}{\mathbf{1}}

\newcommand{\eps}{\varepsilon}

\newcommand{\N}{\mathbb{N}}
\newcommand{\Z}{\mathbb{Z}}
\newcommand{\R}{\mathbb{R}}

\newcommand{\PP}{\mathbb{P}}
\newcommand{\norm}[1]{\left\|#1\right\|}
\newcommand{\T}{\mathbb{T}}

\newcommand{\EE}{\mathbb{E}}

\newcommand{\dd}{\mathrm{d}}

\newcommand{\mathd}{\,\mathrm{d}}
\renewcommand{\leq}{\leqslant}
\renewcommand{\geq}{\geqslant}

\newcommand{\seminorm}[1]{\left\llbracket {#1}\right\rrbracket}

\newcommand{\lllbracket}{\llbracket\mkern-2.5mu\lbrack}
\newcommand{\rrrbracket}{\rbrack\mkern-2.5mu\rrbracket}
\newcommand{\trinorm}[1]{\left\lllbracket {#1}\right\rrrbracket}

\theoremstyle{remark}
\newtheorem{rmk}[thm]{Remark}

\numberwithin{equation}{section}
\allowdisplaybreaks

\title[Anomalous properties of stochastic active scalars]{Anomalous properties of 2D active scalars \\ perturbed by rough transport noise}

\author[L. Galeati]{Lucio Galeati}
\address{Dipartimento di Ingegneria e Scienze dell’Informazione e Matematica, Università degli Studi
dell’Aquila, Italy} 
\email{lucio.galeati@univaq.it}

\author[E. Luongo]{Eliseo Luongo}
\address{Institut für Mathematik, Technische Universität Berlin, 10623 Berlin, Germany} 
\email{eliseo.luongo@tu-berlin.de  }
\author[U. Pappalettera]{Umberto Pappalettera}
\address{Departement Mathematik und Informatik, Universit\"at Basel, Spiegelgasse 1, 4051 Basel, Switzerland} 
\email{umberto.pappalettera@unibas.ch}

\keywords{}
\date\today

\begin{document}

\begin{abstract}
We study SPDEs associated with $2$D active scalars driven by incompressible transport noise of Kraichnan type, with regularity exponent $\alpha\in (0,1)$; our examples include the Euler, SQG and IPM systems.
We investigate whether a number of ``turbulent'' phenomenologies, which are well understood in the linear Kraichnan model, persist in this nonlinear setting, uniformly in vanishing viscosity approximations.
First, for suitable values of $\alpha$ and initial data in $L^p_x$, we establish \emph{anomalous regularization} estimates, measured in appropriate endpoint Besov-type spaces of regularity $\beta=\beta(\alpha,p)>0$. Remarkably, these results allow for some scaling supercritical regimes of the parameters $\alpha,p$; on the other hand, for (sub)critical parameters, we recover the same regularity exponent $\beta=1-\alpha$ as in the linear case.
Second, in the (sub)critical case, we further prove \emph{anomalous integrability}, namely solutions becoming instantaneously $L^\infty_x$-valued at positive times, uniformly in the viscosity; moreover, in this case we establish strong existence and pathwise uniqueness of solutions to the inviscid SPDE, which are recovered as the unique vanishing viscosity limit.
Finally, in the $2$D Euler case, for $\alpha\in (0,1/2)$, we establish \emph{anomalous dissipation} of enstrophy and sharpness of anomalous regularization.\\[1ex]

\textbf{Keywords:} 2D stochastic active scalars; Kraichnan noise; Euler equations; Anomalous regularization; Anomalous dissipation.\\[1ex]

\textbf{MSC (2020):} 60H15, 60H50, 76F25, 35R60

\end{abstract}

\maketitle


\section{Introduction}\label{sec:intro}
In this paper we investigate properties of nonlinear stochastic active scalar equations on $\R^2$, driven by a rough transport noise and written in the general form
\begin{equation}\label{eq:nonlinear_transport}
\begin{cases} 
\mathd \theta + u \cdot \nabla \theta \mathd t + \circ\, \mathrm{d} W \cdot \nabla \theta =
  f \mathd t, \\
  u := \mathscr{R} \theta,
  \\
  \theta |_{t = 0} = \theta_0 \in L^r_x \cap L^p_x, \quad 
  f \in L^1_T(L^r_x \cap L^p_x),
\end{cases}
\end{equation}
for suitable integrability exponents\footnote{Since we are working on the full space $\R^2$, operators like $\mathscr{R}_{BS}=-\nabla ^\perp (-\Delta)^{-1}$ are not continuous from $L^p_x$ to itself, which is why we need an additional exponent $r\leq p$ (see also \autoref{subsec:nonlocal_operators}). Its presence is technical and we will mostly focus on regularity regimes associated to the higher exponent $p$ (see e.g. \autoref{ass:exponent.beta} below); to simplify, the reader may think of $r=1$ at a first reading.} $1< r \leq 2 \leq p< +\infty$ and where the velocity field $u$ is reconstructed from the active scalar $\theta$ via some linear operator $\mathscr{R}$. Henceforth we will always assume that $\mathscr{R}$ is a singular integral operator of convolutional type, associated to some Fourier symbol $\widehat {\mathscr{R}}$ (i.e. $\widehat{ (\mathscr{R} \theta)}=\widehat {\mathscr{R}} \hat \theta$) which is smooth outside of the origin and such that $\mathscr{R}$ is divergence-free in the sense of distributions.
The main examples we have in mind are the following:
\begin{itemize}
    \item the Biot--Savart operator $\mathscr{R}=\mathscr{R}_{BS}:=-\nabla ^\perp (-\Delta)^{-1}$ (where $\nabla^\perp=(-\partial_2,\partial_1)$) associated to the 2D Euler equations in vorticity form;
    \item divergence-free Caldéron-Zygmund operators, denoted by $\mathscr{R}=\mathscr{R}_{CZ}$ in the following; relevant examples are the Surface Quasi-Geostrophic (SQG) equations for $\mathscr{R}=\nabla^\perp (-\Delta)^{-1/2}$, and the Incompressible Porous Media (IPM) equations for $\mathscr{R}=\nabla^\perp (-\Delta)^{-1} \partial_1$.
\end{itemize}

The SPDE \eqref{eq:nonlinear_transport} is considered on a given time interval $[0,T]$. The shorthand notation $L^m_T L^q_x := L^m([0,T];L^q(\R^2;\R^N))$, for $m,q \in [1,\infty]$ and $N$ depending on the context, specifies the time-space integrability of scalars, vectors, or matrices. A similar convention is used for $L^m_\omega$-integrability with respect to the randomness parameter $\omega \in \Omega$, where $(\Omega,\mathcal{F},\PP)$ denotes the probability space where $(\theta,W)$ are defined and $\EE$ denotes expectation with respect to $\PP$; we refer to \autoref{subsec:notation} for additional details concerning other functional spaces.

Throughout the paper we will mostly take $\mathd W$ to be a white-in-time, incompressible, Gaussian noise of Kraichnan type, with spatial regularity $\alpha \in (0,1)$; namely, its space covariance $C=C_\alpha$ satisfies condition \eqref{eq:isotropic.covariance} below.
However, in several of our results, we can allow slightly larger classes of $W$, as better clarified in \cref{rmk:anisotropy} and in the main body of the paper. 
We refer to \cref{sec:preliminaries} for more details on the covariance of $\mathd W$ and the notion of solutions to \eqref{eq:nonlinear_transport}. 

Let us shortly explain the interest in the study of \eqref{eq:nonlinear_transport}, referring to \cref{subsec:intro_literature} for a more extensive discussion.
The linear SPDE with this type of noise (namely with $\mathscr{R}\equiv f\equiv 0$) was first proposed by Kraichnan \cite{Kraichnan1968} as a synthetic model of passive scalar turbulence and has since then become rather popular in the physics community, cf. \cite{bernard1998slow,CFG2008}. 
This modelling perspective is also close to the celebrated Hasselmann's proposal \cite{Ha76} of representing the \emph{effective} influence of unresolved degrees of freedom on resolved variables through stochastic perturbations; see also \cite{Arnold2001} for a revisitation of this approach, \cite{crisan2023implementation} for a specific application to a class of idealized climate models, and \cite{DSF2024} for more a recent discussion in the context of stochastic fluid dynamics.
Stochastic model reduction (see for instance \cite{MTVE2001}) constitutes another mathematical realization of the same principle.

Moving our attention back to nonlinear SPDEs in the form of \eqref{eq:nonlinear_transport}, the introduction of general transport noise as a proxy for modelling the small, unresolved, turbulent scales of the fluid itself, is more recent and has been advocated by the mathematical community e.g. in \cite{BCF1991,Holm2015,Memin2014,FlaPap2022,DebPap2026}.
In the rough regime $\alpha\in (0,1)$, it was first noticed in \cite{CoMa23} that some features of the linear SPDE persist in nonlinear models (specifically for 2D Euler equations $\mathscr{R}=\mathscr{R}_{BS}$), drastically affecting the resulting solution theory for \eqref{eq:nonlinear_transport} compared to its deterministic counterpart (namely with $W\equiv 0$); generalizations have then been provided e.g. in \cite{jiao2025well,bagnara2025regularization,JiaLuo2026}.

Here, rather than examining well-posedness of \eqref{eq:nonlinear_transport}, we are interested in understanding whether solutions to \eqref{eq:nonlinear_transport} display several phenomenologies which are often expected in more realistic \emph{turbulent fluids}.
For this reason, rather than studying \eqref{eq:nonlinear_transport} directly, we regularize it by adding a small viscosity at the right-hand side and possibly approximating the external forcing $f$ and the initial condition $\theta_0$:
\begin{align}
\begin{cases} \label{eq:nonlinear_viscous} 
\mathd \theta^\nu + u^\nu \cdot \nabla \theta^\nu \mathd t + \circ\, \mathrm{d} W \cdot \nabla \theta^\nu =
  f^\nu \mathd t + \nu \Delta \theta^\nu \mathd t, \\
  u^\nu := \mathscr{R} \theta^\nu,\\
  \theta^\nu |_{t = 0} = \theta^\nu_0.
\end{cases}
\end{align}
We then consider vanishing viscosity limits of \eqref{eq:nonlinear_viscous} as the parameter $\nu \downarrow 0$ (cf. \cref{defn:vanishing_scheme}) and study properties of the solutions $\{ \theta^\nu\}_{\nu \in (0,1)}$ that hold uniformly in $\nu$, so to recover information on \emph{physically relevant} solutions to \eqref{eq:nonlinear_transport}.
We investigate the following problems:
\begin{itemize}
    \item 
    \emph{Anomalous regularization}: solutions gain positive regularity in a $L^2_x$-based Besov-type space, in mean square with respect to time and expectation;
    \item 
    \emph{Anomalous integrability}: solutions gain $L^\infty_x$ integrability at every positive time, with finite $p$-th probabilistic moment;
    \item 
    \emph{Anomalous dissipation}: the mean square $L^2_x$-norm of solutions is strictly dissipated, continuously in time.
\end{itemize}

Let us remark that none of the above properties is expected for smooth solutions of transport-like equations like \eqref{eq:nonlinear_transport}. Indeed, assuming $f \equiv 0$ for simplicity, the active scalar $\theta$ is formally transported by a measure-preserving, invertible map (the flow associated to $u$) and thus it cannot gain integrability over the initial condition $\theta_0$, nor lose $L^2_x$ mass; see also \autoref{prop:dissipation.implies.irregularity} for a conditional version of this statement.
For the same reason, any positive regularity gain resulting in $\theta_t\in L^q_x$ for some $q>p$ and time $t>0$ is formally incompatible with \eqref{eq:nonlinear_transport}.
Nonetheless, due to the spatial roughness of the noise, all these phenomena are actually possible. We refer to them as \emph{anomalous}, since we deal with properties that are true for $\theta^\nu$ for every fixed $\nu \in (0,1)$ and persist in the vanishing viscosity limit.

There are clear predecessors motivating the interest in understanding these anomalous phenomena for the nonlinear SPDEs \eqref{eq:nonlinear_transport}, as well as fluid dynamics models more generally.
Anomalous dissipation of energy plays a key role in Kolmogorov's theory, to the point of being regarded as the ``zero-th law of turbulence'' \cite{Frisch1995}.
The theories by Kolmogorov, Obukhov--Corrsin and Monin--Yaglom, encompassing both hydrodynamic and passive scalar turbulence, predict uniform-in-viscosity estimates for suitable structure functions at statistical equilibrium; these predictions suggest some persistence of regularity, as well as possibly some turbulent self-regularizing effect, see \cite{drivas2022self}.
In the mathematical community, anomalous regularization seems to have been clearly identified much more recently, in a research trend started with \cite{CoMa23}; it applies this idea at a dynamical level, showing that \emph{arbitrary} initial data in suitable Sobolev spaces gain better regularity over time (either at the inviscid level, or with uniform-in-viscosity bounds). There is a connection between this property and the construction of statistical equilibria satisfying the aforementioned predictions, see the discussions in \cite{GaGrMa24,Rowan25}.
In turn, anomalous regularization  can be used to infer anomalous integrability, as done in \cite{DrGaPa25}.
In the linear Kraichnan model, the aforementioned three anomalous properties are by now relatively well-understood, see \cite{Rowan2024,GaGrMa24,DrGaPa25,Rowan25} and the references therein. They are also satisfied by other simplified models of turbulence like the Burgers equations, see \cite{eyink2015spontaneous}; we refer to \autoref{subsec:intro_literature} for a more comprehensive bibliographic discussion.

\subsection{Anomalous regularization}
\label{ssec:intro.anomalous.reg}

Our first main result describes regimes where anomalous regularization holds.
In order to make the statements precise, following \cite{DrGaPa25}, for $\beta\in (0,1]$ we define the Besov-type space $\tilde{L}^2_{\omega,T} \tilde B^\beta_{2,\infty}$ as the collection of random functions $\theta\in L^2_{\omega,T,x}$ such that
\begin{align} \label{def:Besov-like.reg}
    \llbracket \theta \rrbracket_{\tilde{L}^2_{\omega,T} \tilde B^\beta_{2,\infty}}
    := \bigg( \sup_{z\neq 0} \frac{1}{|z|^{2\beta}} \int_0^T \EE[\| \delta_z \theta_t\|_{L^2_x}^2] \dd t \bigg)^{1/2} < \infty;
\end{align}
here we adopt the increment notation $\delta_z \theta_t (x) := \theta_t (x + z) - \theta_t (x)$.
The quantity in \eqref{def:Besov-like.reg} is closely related to the second-order structure function of $\theta$ and the parameter $\beta$ measures space regularity of solutions. 
For instance, if $\beta \in (0,1)$ and $\theta \in \tilde{L}^2_{\omega,T} \tilde B^\beta_{2,\infty}$, then $\theta \in L^2_{\omega,T} H^{\beta-\varepsilon}_x$ for every $\varepsilon \in (0,\beta)$.
We refer to \autoref{subsec:besov} for more details on such spaces.

In the linear case $\mathscr{R} \equiv0$, it was shown in \cite[Theorem 1.3]{DrGaPa25} that the unique solution $\theta$ associated to any initial condition $\theta_0\in L^2_x$ belongs to $\tilde{L}^2_{\omega,T} \tilde B^{1-\alpha}_{2,\infty}$, and that this regularity is optimal, e.g. in the sense that $\theta\notin\tilde{L}^2_{\omega,T} \tilde B^{\beta}_{2,\infty}$ for any $\beta>1-\alpha$.
As a consequence, we do not expect to obtain better regularity exponents in the nonlinear case.

Here, for nonlinear equations \eqref{eq:nonlinear_transport}, we identify both regimes in which solutions still gain regularity of order $1-\alpha$, and regimes in which we are only able to prove some partial regularity gain with $\beta\in (0,1-\alpha)$. 
As a consequence, anomalous regularization holds also in situations where uniqueness of solutions is not expected, see \cref{rmk:pathwise.uniquness} below.
More precisely, since we always work with viscous approximations $\theta^\nu$ solving \eqref{eq:nonlinear_viscous}, we obtain uniform-in-$\nu$ regularity estimates; vanishing viscosity limits then select particular solutions of the inviscid equation \eqref{eq:nonlinear_transport} which inherit the same anomalous regularity.

The exact amount of anomalous regularization $\beta\in (0,1-\alpha]$ we obtain depends on the choice of $\mathscr{R}$ and the values of $\alpha$, $p$. More precisely, we work under the following:
\begin{ass} \label{ass:exponent.beta}
    Suppose either of the following:
    \begin{itemize}
        \item
        $\mathscr{R}=\mathscr{R}_{BS}$, $r\in (1,2)$, $p\in [2,\infty)$ and\footnote{\label{foot:2}For 2D Euler, we could allow $p=\infty$ in \eqref{eq:definition.regularity}, using that $L^r_x\cap L^\infty_x\subset L^r_x\cap L^p_x$ for any $p\geq 3$.
        We only exclude $p=\infty$ for technical reasons: the vanishing viscosity scheme as defined in \autoref{defn:vanishing_scheme} may be inconsistent with the use of non-separable spaces like $L^\infty_x$. }
        \begin{align} \label{eq:definition.regularity} 
    \beta :=
    \begin{cases}
      1-\alpha, \quad &\mbox{if} \quad \ p \in [2,3),\ \alpha \leq 1-\frac{1}{p},
      \\
      1-\alpha, \quad &\mbox{if} \quad \ p \in [3,\infty),
      \\
      (1-\alpha)(p-2), \quad &\mbox{if} \quad \ p \in (2,3),\  1-\frac{1}{p}<\alpha<1;
    \end{cases}
\end{align}
\item 
$\mathscr{R}=\mathscr{R}_{CZ}$, $r\in (1,2]$, $p\in (2,\infty)$ and, adopting the convention $\frac{1}{0}=+\infty$,
\begin{align} \label{eq:definition.regularity_CZ}  
    \beta :=
    \begin{cases}
      1-\alpha, 
\quad &\mbox{if} \quad \ p \in (2,6),\ \alpha \leq \frac{1}{2}-\frac{1}{p},
\\
1-\alpha, 
\quad &\mbox{if} \quad \ p \in [6,\infty),\ \alpha \leq \frac{1}{2}-\frac{1}{2p-6},
\\
(\frac12-\alpha)(p-2) \quad &\mbox{if} \quad \ p \in [6,\infty),\ \frac{1}{2}-\frac{1}{2p-6}<\alpha<\frac{1}{2},
\\
(\frac12-\alpha)(p-2) \quad &\mbox{if} \quad \ p \in (2,6),\ \frac{1}{2}-\frac{1}{p}<\alpha<\frac{1}{2}.
    \end{cases}
\end{align}
    \end{itemize}
\end{ass}

The precise statement of our first main result is the following:

\begin{thm}[Anomalous regularization] \label{thm:regularization}
Let $\mathscr{R},r,p,\alpha,\beta$ be such that \cref{ass:exponent.beta} holds. Then for any $\theta_0\in L^r_x\cap L^p_x$, $f\in L^1_T(L^r_x\cap L^p_x)$ and any vanishing viscosity scheme $\{(\theta^\nu_0,f^\nu,\theta^\nu)\}_{\nu \in (0,1)}$ of \eqref{eq:nonlinear_transport} in the sense of \autoref{defn:vanishing_scheme}, the solutions $\{\theta^\nu\}_{\nu \in (0,1)}$ to \eqref{eq:nonlinear_viscous} are uniformly bounded in $L^{\infty}_{\omega,T}(L^r_x\cap L^p_x)\cap \Tilde{L}^2_{\omega,T}\Tilde{B}^{\beta}_{2,\infty}$, their laws are tight in suitable topologies, and any accumulation point as $\nu \downarrow 0$ is a weak solution of \eqref{eq:nonlinear_transport}, in the sense of \autoref{martingale_sol}, belonging to the space $L^{\infty}_{\omega,T}(L^r_x\cap L^p_x)\cap\Tilde{L}^2_{\omega,T}\Tilde{B}^{\beta}_{2,\infty}$ with almost surely weakly continuous paths in $L^2_x$. 
\end{thm}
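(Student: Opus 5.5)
The plan is to derive the structure-function estimate for each fixed $\nu$ from an Itô computation on the two-point quantity $\EE\|\delta_z\theta^\nu_t\|_{L^2_x}^2$, or equivalently on $\EE\int \theta^\nu_t(x)\theta^\nu_t(x+z)\,\dd x$, following the linear strategy of \cite{DrGaPa25}. The first step is the uniform a priori bound $\sup_\nu \sup_t\|\theta^\nu_t\|_{L^r_x\cap L^p_x}\lesssim \|\theta_0\|_{L^r_x\cap L^p_x}+\|f\|_{L^1_T(L^r_x\cap L^p_x)}$. It holds pathwise, because $u^\nu$ and the noise are divergence free and the Stratonovich transport preserves $L^q_x$ norms, while viscosity only dissipates them.

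Next, I apply Itô's formula to $\|\delta_z\theta^\nu_t\|_{L^2_x}^2$ and take expectations. The noise contributes, after the Itô–Stratonovich correction, the term $-\int_0^t\EE\int (\text{tr}(C(0)-C(z))\text{-type operator})$. For Kraichnan covariance with $C(0)-C(z)\sim |z|^{2\alpha}$, this yields the coercive quantity $|z|^{2\alpha}|\nabla_z|^2$ acting on the two-point correlation $\Gamma_t(z)=\EE\int\theta_t(x)\theta_t(x+z)\dd x$. That is exactly the mechanism of the linear case. Integrating in $z$ against a radial weight, as in \cite{DrGaPa25}, I then get
\[
\sup_{z}\frac{1}{|z|^{2(1-\alpha)}}\int_0^T \EE\|\delta_z\theta^\nu_t\|^2_{L^2_x}\dd t \lesssim \|\theta_0\|_{L^2_x}^2 + \text{(forcing)} + \text{(nonlinear term)}.
\]
Viscosity only adds a dissipative contribution with a good sign.

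The main obstacle is the nonlinear term $\int_0^T \EE\int \delta_z(u^\nu\cdot\nabla\theta^\nu)\,\delta_z\theta^\nu$, which after integration by parts becomes $\int \delta_z u^\nu\cdot\nabla(\dots)$ and is essentially a trilinear form $\int (\delta_z u)\,\delta_z\theta\,\nabla_x\theta$. I would rewrite it via the divergence-free structure in commutator form, $\int \delta_z u\cdot \nabla_z[\ldots]$. Then I would estimate $\|\delta_z u\|_{L^q}$ using $\mathscr{R}$: for $\mathscr{R}_{BS}$ it is Lipschitz-like, $\|\delta_z u\|_{L^p}\lesssim |z|\log$ or $|z|^{1-2/p}$ via Sobolev embedding of $\theta\in L^p_x$; for $\mathscr{R}_{CZ}$ only $\delta_z u$ has the same Besov regularity as $\theta$. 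The remaining factors are interpolated between the uniform $L^p_x$ bound and the Besov quantity being estimated. The outcome is a bound of the form $|z|^{\gamma}\,\llbracket\theta\rrbracket^{\kappa}$ with $\kappa<2$, which is absorbed by Young's inequality. In the (sub)critical regimes listed in \cref{ass:exponent.beta}, the power $\gamma\geq 2(1-\alpha)$, giving $\beta=1-\alpha$. Otherwise a bootstrap gives only the partial exponents $(1-\alpha)(p-2)$, respectively $(\tfrac12-\alpha)(p-2)$. I expect the bookkeeping of these exponents, and the choice of $q$ in Hölder's inequality that produces exactly the thresholds $1-\tfrac1p$, $\tfrac12-\tfrac1p$ and $\tfrac12-\tfrac1{2p-6}$, to be the delicate part. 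To make the absorption legitimate, I would run the argument with a cutoff $|z|\ge\delta$ (finite for fixed $\nu$) and then let $\delta\to0$.

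Finally, the uniform bounds in $L^\infty_{\omega,T}(L^r\cap L^p)\cap\tilde L^2_{\omega,T}\tilde B^\beta_{2,\infty}$ give compactness in $L^2_{T}L^2_{loc}$ via a Besov-type compact embedding. Time regularity in a negative Sobolev space comes from the equation, with the noise handled by BDG. Together these give tightness of the laws of $(\theta^\nu,W)$. Skorokhod's theorem then passes to a.s. convergence, and strong $L^2_{loc}$ convergence lets me identify the limit of the nonlinearity $\mathscr{R}\theta^\nu\,\theta^\nu$ in the weak formulation of \autoref{martingale_sol}, while the stochastic integrals converge by standard arguments. The bounds transfer to the limit by lower semicontinuity of the norms, and weak continuity in $L^2_x$ follows from the $L^\infty_t L^2_x$ bound together with continuity in a negative space.
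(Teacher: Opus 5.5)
Your skeleton matches the paper's. You use an energy balance for $\mathbb{E}\|\delta_z\theta^\nu\|_{L^2_x}^2$ tested against a radial weight; the paper uses $\langle\theta^\nu,\chi^\varepsilon*\theta^\nu\rangle$ with $\chi(z)=k_d(1-|z|^2)\mathbf 1_{\{|z|\le1\}}$. In it the noise gives a coercive term, and incompressibility turns the nonlinearity into $\varrho^{-1}\mathcal T(\varrho)$, where $\mathcal T(\varrho)=\int\!\!\int_{\{|z|\le 1\}} z\cdot\delta_{\varrho z}u^\nu\,|\delta_{\varrho z}\theta^\nu|^2\,\mathrm dz\,\mathrm dy$. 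Tightness and Skorokhod then follow.

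The gap is in controlling $\mathcal T$. One needs $\mathbb E\int_0^T\mathcal T(\varrho)\,\mathrm dt\le c_1\varrho^{-1+2\alpha+2\beta}(M_3+\eta\llbracket\theta^\nu\rrbracket^2)$ with $\eta$ below the coercivity gap. Your scheme (H\"older plus interpolation between the uniform $L^p_x$ bound and the seminorm, giving $\varrho^\gamma\llbracket\theta\rrbracket^\kappa$ with $\kappa<2$, then Young) fails in two parts of \autoref{ass:exponent.beta}.

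\emph{Critical endpoints.} These are $\alpha=1-\frac1p$ for $\mathscr R_{BS}$, $p\in[2,3)$, and $\alpha=\frac12-\frac1p$ for $\mathscr R_{CZ}$, $p\in(2,6)$, both included in the assumption. With $\beta=1-\alpha$ the required power is $\varrho^1$. A scaling count shows that a bound $\varrho^{\gamma}\|\theta\|_{L^p_x}^{3-\kappa}\llbracket\theta\rrbracket^{\kappa}$ has $\gamma=1$ at $\kappa=2$ but $\gamma<1$ for $\kappa<2$, since at criticality $\partial_\kappa\gamma=\frac2p-\alpha>0$. So you are stuck with $C\|\theta\|_{L^p_x}\varrho\llbracket\theta\rrbracket^2$, whose prefactor is not small; neither Young nor a cutoff $|z|\ge\delta$ closes this. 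The paper splits the data, by linearity with the drift frozen, into a part of size $\varepsilon$ in $L^r_x\cap L^p_x$ and a part bounded in $L^r_x\cap L^\infty_x$. Correspondingly $u^\nu=u^{\nu,1}+u^{\nu,2}$: $u^{\nu,1}$ contributes $\varepsilon\varrho\llbracket\theta\rrbracket^2$, and the more regular $u^{\nu,2}$ gains a factor $\varrho_0^{c}$, $c>0$, on scales $\varrho<\varrho_0$. The resulting $(\varepsilon+M_\varepsilon\varrho_0^c)\varrho\llbracket\theta\rrbracket^2$ is absorbed through the explicit gap $\frac{2\beta}{d+2\alpha+2\beta}$ of \autoref{lem:gron_increments}, choosing $\varepsilon$ first and then $\varrho_0$.

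\emph{Partial exponents for $p\in(2,3)$ in the supercritical range.} ``A bootstrap'' names no mechanism, and the natural ones fall short.
For $\mathscr R_{BS}$, optimizing H\"older exponents gives at best $\|\delta_{\varrho z}u^\nu\|_{L^{p/(p-2)}_x}\lesssim\varrho^{3-6/p}$ against $\|\delta_{\varrho z}\theta^\nu\|_{L^p_x}^2\lesssim1$. That yields $\beta=2-\frac3p-\alpha$, strictly smaller than $(1-\alpha)(p-2)$ for $\alpha>1-\frac1p$ and nonpositive for $\alpha\ge 2-\frac3p$.
For $\mathscr R_{CZ}$ there is no smoothing, so H\"older forces $\|\delta\theta^\nu\|_{L^{2p/(p-1)}_x}$ with $\frac{2p}{p-1}>p$. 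Only the seminorm controls this, which brings back the subcriticality condition.
Reusing regularity already gained does not help, since in this range it gives no integrability beyond $L^p_x$.

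The missing idea is \autoref{lem:data-splitting}: freeze $b^\nu=u^\nu$ and exploit linearity in the data.
\begin{itemize}
\item Truncate each datum $v$ as $v\mathbf 1_{\{|v|\le K\|v\|_{L^p_x}\}}$. The bounded part has $L^q_x$ norm at most $K^{(3-p)/2}\|v\|_{L^p_x}$ for $q=\frac{2p}{p-1}>p$.
\item Since $\|\delta_h u^\nu\|_{L^p_x}\lesssim|h|^{-\mathfrak h}$ ($\mathfrak h=-1$ for $\mathscr R_{BS}$, $0$ for $\mathscr R_{CZ}$), the trilinear term of the bounded part is $\lesssim\varrho^{-\mathfrak h}$ with no seminorm at all. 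Hence \autoref{lem:gron_increments} applies with $\eta=0$, giving regularity $\gamma=\frac{1-\mathfrak h}{2}-\alpha$ with constant $K^{3-p}$.
\item The tail has $L^2_x$ norm at most $K^{-(p-2)/2}\|v\|_{L^p_x}$ and is merely transported.
\item Choosing $K=\varrho^{-2\gamma}$ balances $\varrho^{2\gamma}K^{3-p}$ against $K^{2-p}$ and gives $\varrho^{2\gamma(p-2)}$, i.e.\ $\beta=\gamma(p-2)$.
\end{itemize}
In the supercritical regimes with $p\ge 3$, your H\"older--interpolation--Young scheme is adequate.
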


The first part of \autoref{thm:regularization}, concerning uniform regularity estimates for $\{\theta^\nu\}_{\nu \in (0,1)}$, will be established in \autoref{sec:computation}.
The proof is then completed in \autoref{subsec:weak_existence}, where details on the precise topology with respect to which tightness and convergence hold are given, cf. the space $\mathcal{E}$ defined in \eqref{polish_space_tightness}.
We refer to \autoref{subsec:intro_existence_uniqueness} below for further properties of solutions $\theta$ to \eqref{eq:nonlinear_transport} obtained by vanishing viscosity, especially conditions under which they are unique.

\begin{figure}[htbp] 
    \centering
    \includegraphics[width=0.49\textwidth]{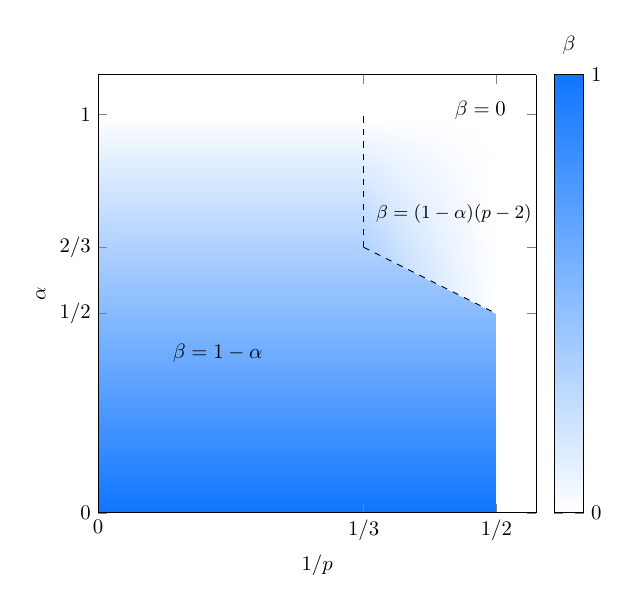}
    \includegraphics[width=0.49\textwidth]{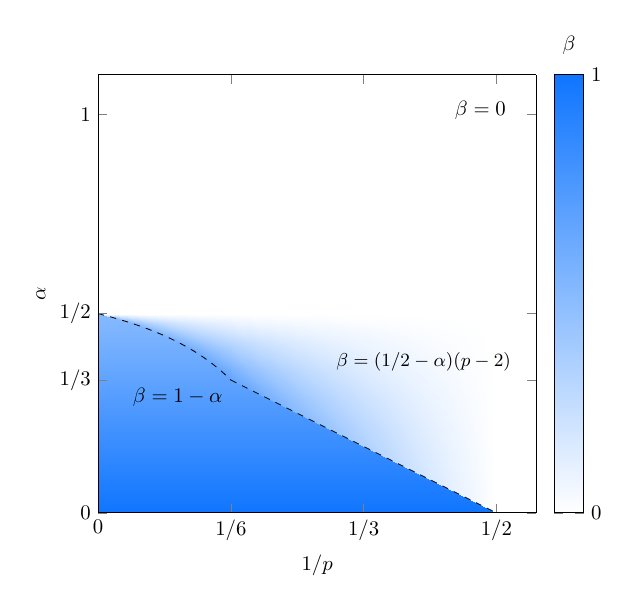}
    \caption{A visualization of the anomalous regularization exponent \eqref{eq:definition.regularity} for $\mathscr{R}=\mathscr{R}_{BS}$ (left) and $\mathscr{R}=\mathscr{R}_{CZ}$ (right).
    Dotted lines separate regions with different regularization regimes. }
    \label{fig.regularization}
\end{figure}
For the linear, incompressible Kraichnan model $\dd\theta + \circ \dd W \cdot \nabla \theta =0$ corresponding to $\mathscr{R}=0$ and no external forcing, the precise structure of the \emph{dissipation measure} governing the local energy balance has been studied in \cite[Section 5]{DrGaPa25}. There it is shown that, due to the specific structure of covariance of $W$, a representation of the dissipation measure \`{a} la Duchon--Robert \cite{DuRo00} produces a coercive term that carries information about the $\tilde{L}^2_{\omega,T} \tilde{B}^{1-\alpha}_{2,\infty}$ Besov-type regularity of solutions. 
More precisely, setting $\mathscr{R}\equiv f\equiv 0$, \cite[equation (1.11)]{DrGaPa25} in dimension $d=2$ yields
\begin{equation}\label{eq:dissipation_measure}
    \|\theta_0\|_{L^2_x}^2 - \EE[\| \theta_T\|_{L^2_x}^2] \propto \lim_{\eps \to 0} \int_0^T \int_{\mathbb{S}^1} \frac{\EE[\| \delta_{\varrho z}\theta_t\|_{L^2_x}^2]}{\varrho^{2(1-\alpha)}} \sigma(\dd z) \dd t
\end{equation}
where $\sigma(\dd z)$ denotes the surface measure on $\mathbb{S}^1$.

The proof of \cref{thm:regularization} passes through a general criterion (\autoref{lem:gron_increments}), leveraging analogous formulae for a stochastic process $\theta^\nu$ solving a nonlinear SPDE of the form \eqref{eq:nonlinear_viscous}.
Let us shortly discuss the main idea, restricting to $\beta=1-\alpha$ for simplicity.
By considering a well-chosen family of radial mollifiers $\{\chi^\eps\}_{\eps>0}$ (taken from \cite{DrGaPa25}, but also reminiscent of \cite{DuRo00,Novack2024}), we can use the structure of the SPDE to study energy-type balance governing the evolution of $\EE[\langle \theta_t,\chi^\eps\ast \theta_t\rangle]$ as $\eps\to 0^+$, see Equation \eqref{eq:energy_balance}.
Exploiting the divergence-free structure of $u^\nu$, one can see that $u^\nu\cdot\nabla \theta^\nu$ contributes to this balance with a trilinear term, reminiscent of the third-order longitudinal structure function (cf. \eqref{eq:prototype_longitudinal_function}):
\begin{equation*}
    \mathcal{T}_{u^\nu,\theta^\nu}(t,\eps) = \int_{\mathbb{R}^2} \int_{\{| z |  \leqslant 1\}} z \cdot \delta_{\eps z} u^\nu_t (y)  | \delta_{\eps z} \theta^\nu_t (y)
  |^2 \mathd y \mathd z;
\end{equation*}
instead the noise $W$ gives rise to a coercive-type term similar to the one appearing in the right-hand side of \eqref{eq:dissipation_measure}.
If $\mathcal{T}_{u^\nu,\theta^\nu}(t,\eps)$ is properly controlled by the Besov-type regularity of $\theta^\nu$ as $\eps\to 0^+$ (as encoded by condition \eqref{eq:key_bound_regularization} in \autoref{lem:gron_increments}), then the contribution from the noise dominates it, and so one can close an anomalous regularization estimate, resulting in uniform-in-$\nu$ bounds.
The case of general $\beta\in (0,1-\alpha]$ requires to track both contributions more precisely (introducing appropriate ``localized'' seminorms, cf. \eqref{eq:relevant_seminorms}, and a splitting argument, cf. \autoref{lem:data-splitting}), but follows the same spirit. 

We point out that the precise choice of the relation $u^\nu = \mathscr{R}\theta^\nu$ plays no crucial role in the argument, and in fact similar results can be obtained also for other constitutive laws (cf. \autoref{rem:gSQG_numerologies} below) and linear SPDEs with random drifts $b^\nu$ (cf. \autoref{thm:anomalous_linear}).

To better understand the parameters $(\alpha,\beta,p)$ appearing in \autoref{ass:exponent.beta}, let us introduce the following terminology, which is based on scaling arguments and \cite[Remark 1.7]{BaGa25}.

\begin{definition}
    Let $\alpha\in (0,1)$, $p\in [1,\infty]$, and let $\mathscr{R}$ be a singular integral operator associated to some $\mathfrak{h}$-homogeneous Fourier symbol $\widehat{\mathscr{R}}$, namely such that $\widehat{\mathscr{R}}(\lambda \xi) = \lambda^\mathfrak{h}\widehat{\mathscr{R}}(\xi)$ for every $\xi \in \R^2 \setminus \{0\}$ and $\lambda>0$. We say that that $(\alpha,p)$ belong to the \emph{scaling (sub)critical regime} if
    \begin{equation}\label{eq:subcritical_regime}
        \alpha \leq \frac{1-\mathfrak{h}}{2}-\frac{1}{p}.
    \end{equation}
    We say that $(\alpha,p)$ are \emph{critical} if equality holds in \eqref{eq:subcritical_regime}.
\end{definition}

For instance, in our setting one has $\mathfrak{h}(\mathscr{R}_{BS})=-1$ and $\mathfrak{h}(\mathscr{R}_{CZ})=0$, so that \eqref{eq:subcritical_regime} reads as
\begin{align*}
    \alpha \leq 1-\frac{1}{p},\quad \text{if } \mathscr{R}=\mathscr{R}_{BS}; 
    \qquad
    \alpha \leq \frac{1}{2}-\frac{1}{p},\quad \text{if } \mathscr{R}=\mathscr{R}_{CZ}.
\end{align*}
We see in particular that the (sub)critical range roughly corresponds to the first case in the respective definitions of $\beta$ given by \eqref{eq:definition.regularity}-\eqref{eq:definition.regularity_CZ}.
Let us also note that relation \eqref{eq:subcritical_regime} is non-empty for any value $\mathfrak{h}\in [-1,1)$.

\begin{rmk} \label{rmk:pathwise.uniquness}
As argued in \cite{BaGa25}, pathwise uniqueness for \eqref{eq:nonlinear_transport} is expected to hold if and only if condition \eqref{eq:subcritical_regime} holds.\footnote{There are by now several positive results in the (sub)critical range, see \cite{BaGa25} and the references therein for $\mathscr{R}=\mathscr{R}_{BS}$, as well as \cref{thm_pathwise_uniq_order_0} below for $\mathscr{R}=\mathscr{R}_{CZ}$; however, to the best of our knowledge, no explicit counterexample in the supercritical range is available yet.}
In light of this, \cref{thm:regularization} suggests that there is no direct connection between pathwise uniqueness for \eqref{eq:nonlinear_transport} and anomalous regularity of vanishing viscosity approximations \eqref{eq:nonlinear_viscous}.
\cref{thm:regularization} implies that vanishing viscosity solutions $\theta$ to \eqref{eq:nonlinear_transport} gain ``full'' Besov regularity of order $\beta=1-\alpha$ whenever $(\alpha,p)$ satisfy \eqref{eq:subcritical_regime}, but also for some scaling \emph{supercritical} parameters choices $(\alpha,p)$ (the second condition in \eqref{eq:definition.regularity} and \eqref{eq:definition.regularity_CZ}); on the other hand, there exist other supercritical regimes (the last condition in \eqref{eq:definition.regularity} and the last two conditions in \eqref{eq:definition.regularity_CZ}) where we only obtain some ``partial'' regularization with $\beta<1-\alpha$.
We do not know whether the latter is an artifact of the proof or is in fact sharp; we leave this question for future investigations.

This clear separation between regularization (and therefore existence) and uniqueness should not come as too surprising.
Already \cite[Theorems 2.11-2.12]{CoMa23} contained a similar distinction.
Another nonlinear setting displaying a similar distinction are the deterministic $2$D Euler equations: for any $\theta_0\in L^p_x$ with $p\in [1,\infty)$, solutions obtained by vanishing viscosity limits satisfy many nontrivial properties like renormalizability, Lagrangianity and compactness, see \cite{CiCrSp2021} and the references therein; yet these solutions are not expected to be unique (with explicit counterexamples in the presence of forcing, cf. \cite{vishik2018instability,vishik2018instabilityII,ABCDLGJK2024}) and the scaling critical case corresponds to $p=\infty$.
\end{rmk}

\subsection{Anomalous integrability}\label{subsec:intro_integrability}

Next, we focus on the problem of anomalous integrability. 
By \autoref{thm:regularization} and Sobolev embeddings, one can immediately obtain some additional integrability if $\beta>1-2/p$; bootstrapping this fact, we can obtain a much stronger result.

Let us briefly explain the heuristics behind this integrability gain, taking as prototypical example the Euler equations $\mathscr{R} = \mathscr{R}_{BS}$ with parameters $p \in [2,3)$ and $\alpha \leq 1-1/p$.
In this case, by \autoref{thm:regularization} we know that for every $\varepsilon>0$ and uniformly in $\nu \in (0,1)$, the viscous approximations $\{\theta^\nu\}_{\nu \in (0,1)}$ are uniformly bounded in
$$L^2_{\omega,T} H^{1-\alpha-\varepsilon}_x \hookrightarrow L^2_{\omega,T}  L^q_x\quad \text{for } q := \frac{2}{\alpha+\varepsilon},$$
and since $p<3$, $\alpha\leq 1-1/p$, we have $q>p$. Namely, starting from initial conditions $\theta_0^\nu \in L^p_x$ we end up with solutions $\theta^\nu_t(\omega) \in L^q_x$ for almost every $t \in [0,T]$ and $\omega \in \Omega$, with suitable bounds that are uniform in $\nu \in (0,1)$. At this point, we can iterate this procedure starting from $|\theta^\nu_t (\omega)|^{q/p} \in L^p_x$ and deduce $|\theta^\nu_{t'} (\omega')|^{q/p} \in L^q_x$ at a later time $t'$, or equivalently $\theta^\nu_{t'}(\omega') \in L^{q^2/p}_x$, and so on.

We make this heuristic quantitatively precise in \autoref{sec:integrability}, by keeping track of the time dependence of this integrability gain, as well as moments with respect to $\omega$.
In the actual argument, we use different integrability exponents and embeddings in the iteration in order to obtain sharper results, but the idea is the same explained above.
This strategy is reminiscent of the iteration performed in
\cite[Section 4.2]{DrGaPa25}.
However, the proof therein applied infinitely many interpolation bounds for the linear (possibly compressible) Kraichnan solution operator, which required careful control of the associated interpolation constants;
here, exploiting the incompressible transport structure instead
provides a more elementary argument, with direct control
of the constants.

Interestingly, this strategy does not work in the full generality of \autoref{ass:exponent.beta} and instead recovers (modulo additional technical conditions) the (sub)critical range \eqref{eq:subcritical_regime}:

\begin{ass}\label{ass:strong_regularization}
    $(\alpha,p,\mathscr{R})$ satisfy the (sub)critical condition \eqref{eq:subcritical_regime} and either:
    \begin{itemize}
        \item $\mathscr{R}=\mathscr{R}_{BS}$, $r\in (1,2)$ and $p \in [2,\infty)$;\footnote{\label{foot:3} Similarly to \cref{foot:2}, for 2D Euler, we could allow $p=\infty$ in \autoref{ass:strong_regularization} and we only exclude it for technical reasons. Indeed, since $\alpha<1$ by assumption, we can always find $\tilde{p}$ large enough such that $\alpha\leq 1-1/\tilde p$ and use that $L^r_x\cap L^\infty_x\subset L^r_x\cap L^{\tilde{p}}_x$.} or
        \item $\mathscr{R}=\mathscr{R}_{CZ}$, $r \in (1,2]$, $p \in (2,\infty)$.
    \end{itemize} 
\end{ass}

\begin{thm}[Anomalous integrability] \label{thm:integrability}
    Let $(\alpha,p,r,\mathscr{R})$ satisfy \autoref{ass:strong_regularization}.
    Let $\theta_0 \in L^r_x \cap L^p_x$, $f \in L^1_T(L^r_x \cap L^p_x)$ and set up a vanishing viscosity scheme for \eqref{eq:nonlinear_transport} that additionally satisfies
\begin{align} \label{eq:bound.forcing.anomalousintegrability}
\sup_{\nu \in (0,1)} \| \theta^\nu_0 \|_{L^p_x} =: M_\theta< \infty \quad \text{and} \quad
\sup_{\nu \in (0,1)} \| f^\nu \|_{L^1_T L^{p_\ast}_x} =: M_f< \infty
\quad
\mbox{ for some } p_\ast \in [p,\infty].
\end{align}
Then for every $t \in (0,1\wedge T]$ one has
\begin{equation}\label{eq:intro_anomalous_integrability}
  \sup_{\nu \in (0,1)}   \mathbb{E} \left[\sup_{s \in [ t ,T]}  \| {\theta}^{\nu}_s \|_{L^{p_\ast}_x}^p  \right]^{1/p}
   \lesssim 
   M_f+
   t ^{-\left( \frac{1}{1-\alpha} \right)\left(\frac{1}{p}-\frac{1}{p_\ast} \right)} M_\theta,
\end{equation}
where we adopt the convention $\frac{1}{\infty}=0$.
\end{thm}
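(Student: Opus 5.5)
We argue by an iteration of the anomalous regularization estimate of \autoref{thm:regularization}, applied to powers of the solution. The key point is that in the (sub)critical regime the regularity exponent is $\beta=1-\alpha$ for \emph{every} integrability exponent $q\geq p$. This holds because the condition $\alpha\leq \frac{1-\mathfrak{h}}{2}-\frac1q$ only improves as $q$ increases, and for $\mathscr{R}_{CZ}$ one checks directly that $\alpha\leq\frac12-\frac1q$ implies $\alpha\leq \frac12-\frac1{2q-6}$ when $q\geq 6$.

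\emph{Step 1 (uniform linear-type estimate for powers).} For $\theta^\nu$ smooth, incompressibility of both $u^\nu$ and $W$ gives a closed equation for $\rho:=|\theta^\nu|^{q/2}$ (suitably regularized, or $\mathrm{sign}$-power). Here $\rho$ is transported by $u^\nu$ and $\circ\, \mathd W$, and it has a nonpositive viscous contribution plus a forcing $\frac q2|\theta^\nu|^{q/2-2}\theta^\nu f^\nu$. I would treat $u^\nu$ as a given random divergence-free drift and invoke the linear statement \autoref{thm:anomalous_linear} (or directly \autoref{lem:gron_increments}, since the trilinear term only involves the $L^\infty_\omega$-bounded norms of $\theta^\nu$). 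This yields
\[
\int_s^t \EE\big[\llbracket |\theta^\nu_r|^{q/2}\rrbracket_{\tilde B^{1-\alpha}_{2,\infty}}^2\big]\mathd r \lesssim \EE\|\theta^\nu_s\|_{L^q_x}^q + (\text{forcing}).
\]
The $L^q_x$ norms are nonincreasing in time up to forcing, by the transport structure. By the embedding $\tilde B^{1-\alpha}_{2,\infty}\hookrightarrow L^{\gamma}_x$ with $\gamma<2/\alpha$ (interpolating with $L^2_x$), we gain $\|\theta^\nu_{t'}\|_{L^{q\lambda}_x}^{q}$ with $\lambda=\gamma/2>1$, at some time $t'\in[s,t]$. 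Monotonicity of $L^{q\lambda}$ norms then propagates the bound to all later times, giving the $\sup_{s\in[t,T]}$ in \eqref{eq:intro_anomalous_integrability}.

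\emph{Step 2 (iteration with time bookkeeping).} Set $q_0=p$ and $q_{k+1}=\lambda q_k$, and choose times $t_k=t(1-2^{-k})$. Since the regularity estimate is of Besov type with exponent $1-\alpha$, a rescaled (short-time) version on $[t_k,t_{k+1}]$ gives a factor $(t_{k+1}-t_k)^{-\frac{1}{1-\alpha}(\frac1{q_k}-\frac1{q_{k+1}})}$. I expect to obtain it by using the $\nu$-independent parabolic-type scaling of the noise coercivity $\varrho^{2(1-\alpha)}$, or alternatively by interpolating the Besov bound against time-averaging. Taking products over $k$, the exponents telescope to $\frac1{1-\alpha}(\frac1p-\frac1{p_*})$, and the geometric factors $2^{k(\cdot)}$ together with the constants $C^{1/q_k}$ yield a convergent product, since $\sum 1/q_k<\infty$. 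Moments in $\omega$ are handled by working pathwise-in-conditional-expectation: each step conditions on $\mathcal F_{t_k}$, and since $L^q$ norms are almost surely bounded, the $p$-th moment is recovered through the chain $\EE[\|\cdot\|_{L^{q_{k+1}}}^{p}]\le \EE[\EE[\|\cdot\|^{q_k}\,|\,\mathcal F_{t_k}]^{p/q_k}]$. The forcing is split off linearly via Duhamel/superposition (\autoref{lem:data-splitting}), contributing $M_f$ with no singular time weight because $f^\nu\in L^{p_*}$. For $p_*=\infty$ we pass to the limit $k\to\infty$.

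\emph{Main obstacle.} The hardest step is getting the constant in Step 1 uniform in $q$ (at most $C^{q}$ growth) and with the sharp time scaling. The trilinear term $\mathcal T$ for the power $\rho$ must be absorbed by the noise coercivity with constants independent of $q$. I would try to rely on the fact that $\rho$ solves a genuinely \emph{linear} transport equation with drift $u^\nu$, which is bounded in the scale-invariant norm dictated by \eqref{eq:subcritical_regime} through $\|\theta^\nu\|_{L^p_x}$ only. The nonlinearity therefore enters only via the fixed quantity $M_\theta$, and the $q$-dependence appears only through the forcing term and the chain rule.
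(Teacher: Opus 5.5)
Your architecture matches the paper's. You freeze $u^\nu=\mathscr{R}\theta^\nu$ as a given drift and regularize $\rho$, a regularization of $|\theta^\nu|^{q/2}$, conditionally on $\mathcal{F}_{T_0}$; the It\^o correction $-\nu F''(\theta^\nu)|\nabla\theta^\nu|^2\leq 0$ is harmless by its sign. You then embed, propagate by monotonicity of $L^q_x$-norms, and iterate on dyadic windows via conditional Jensen, after removing $f^\nu$ by superposition and \eqref{viscous_bound_variant}. Your Step 0 is beside the point: \autoref{ass:exponent.beta} at exponent $q$ plays no role, since $\rho$ is passive and $u^\nu$ is controlled only through $\theta^\nu\in L^r_x\cap L^p_x$.

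The sharp time factor, however, is only conjectured. A rescaling argument is not available: the covariance is only locally of Kraichnan form, and neither the viscosity nor the bounds on $u^\nu$ are scale-invariant. Your ``alternative'' is the actual mechanism and has to be the proof. By \eqref{eq:final.besov2} and \autoref{lem:besov.higher.time}, $\int_{T_0}^{T_0+t_\ast}\EE_{T_0}\|\rho_t\|^2_{\dot H^{\gamma\beta}_x}\mathd t\lesssim (t_\ast\|\theta^\nu_{T_0}\|^q_{L^q_x})^{1-\gamma}\,\seminorm{\rho}^{2\gamma}_{\beta,+\infty\mid\mathcal{F}_{T_0}}$. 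This holds because $\|\rho_t\|^2_{L^2_x}\leq\|\theta^\nu_{T_0}\|^q_{L^q_x}$ on the window, while $\seminorm{\rho}^2_{\beta,+\infty\mid\mathcal{F}_{T_0}}\lesssim\|\theta^\nu_{T_0}\|^q_{L^q_x}$ with a constant independent of $t_\ast$ and $q$. Averaging over the window and using monotonicity gives $t_\ast^{-\gamma}$ on the $q$-th power. This is exactly your parabolic factor for the gain $L^q_x\to L^{q\sigma/2}_x$ with $\frac1\sigma=\frac12-\frac{\gamma\beta}{2}$; the paper takes $\gamma=\frac12$.

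The genuine gap is at the critical endpoints, which \autoref{ass:strong_regularization} includes: equality in \eqref{eq:subcritical_regime}, e.g.\ $\mathscr{R}_{BS}$ with $p=2$, $\alpha=\frac12$. There \autoref{thm:anomalous_linear} does not apply. Boundedness of $u^\nu$ in the scale-invariant norm only yields $\EE_{T_0}\int_{T_0}^T\mathcal{T}_{u^\nu,\rho}(s,\varrho)\,\mathd s\leq C\|\theta^\nu\|_{L^\infty_{\omega,T}(L^r_x\cap L^p_x)}\,\varrho\,\seminorm{\rho}^2_{1-\alpha,\varrho_0\mid\mathcal{F}_{T_0}}$. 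That constant need not lie below the threshold $c_1\frac{1-\alpha}{2}$ of \autoref{cor:Theta.gronwall}, so the absorption fails.

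Nor can you fall back on an additive constant built from norms, as your remark that the trilinear term ``only involves bounded norms of $\theta^\nu$'' suggests. Any such bound needs integrability of $\rho$ beyond $L^2_x$, which is what is being proved.

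The missing ingredient is the decomposition \eqref{eq_viscous_splitting}. The part of $u^\nu$ generated by $\theta^{\nu,1}$ is $\eps$-small in the critical norm. The part generated by $\theta^{\nu,2}$ is bounded in a strictly subcritical norm, so it carries an extra factor $\varrho_0^{\kappa}$ with $\kappa>0$. Choosing $\eps$ first and then $\varrho_0$ makes $\eta$ small uniformly in $\nu$, $m$ and $q$. This is precisely the $q$-uniformity you flagged as the main obstacle.

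Note also that your tolerance of ``$C^q$ growth'' is too generous. A constant $C^q$ in the $q$-th-power estimate contributes $C^p$ at every step after taking $p/q_k$-th powers, and the product diverges. You need the constant bounded in $q$, or at least $\sum_k q_k^{-1}\log L_{q_k}<\infty$. The paper gets a $q$-independent constant because $K$, $\eta$ and $\varrho_0$ do not depend on $q$.
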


\begin{rmk}\label{rmk:integrability_intro}
    A prototypical application of \autoref{thm:integrability} is the following: in the absence of forcing $f^\nu\equiv 0$, taking $p_\ast=\infty$ and a vanishing viscosity scheme that additionally satisfies \eqref{bound_viscoud_data} below (cf. \autoref{rmk:bound_viscoud_data}), one obtains
    \begin{align*}
        \sup_{\nu \in (0,1)} \EE[ \| \theta_t^\nu\|_{L^\infty_x}^p] \lesssim (1\wedge t)^{-\frac{1}{1-\alpha}} \| \theta_0\|_{L^p_x}^p \quad\forall t\in (0,T].
    \end{align*}
    For $t\ll 1$, this is the same asymptotic integrability gain satisfied by parabolic PDEs of the form $\partial_t v = -(-\Delta)^{1-\alpha}v$. Note that even in the case of linear transport SPDEs, the estimate above improves on \cite[Theorem 1.3, equation (1.8)]{DrGaPa25}, as the exponent $\frac{1}{1-\alpha}$ (with $d=2$) here is exact.
\end{rmk}

\begin{figure}[htbp]
    \centering
    \includegraphics[width=0.49\textwidth]{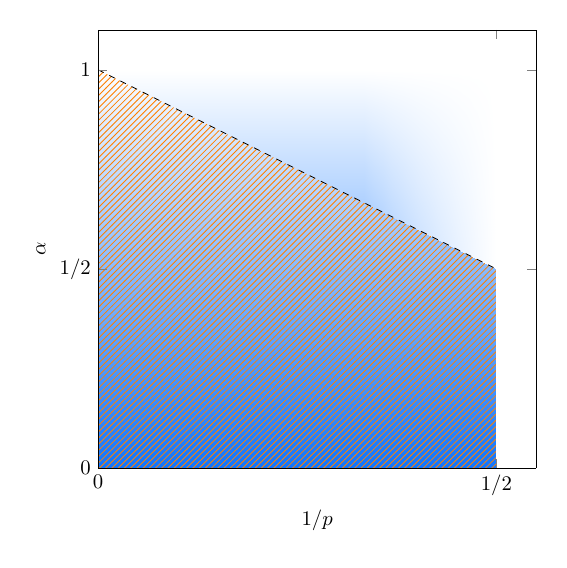}
    \includegraphics[width=0.49\textwidth]{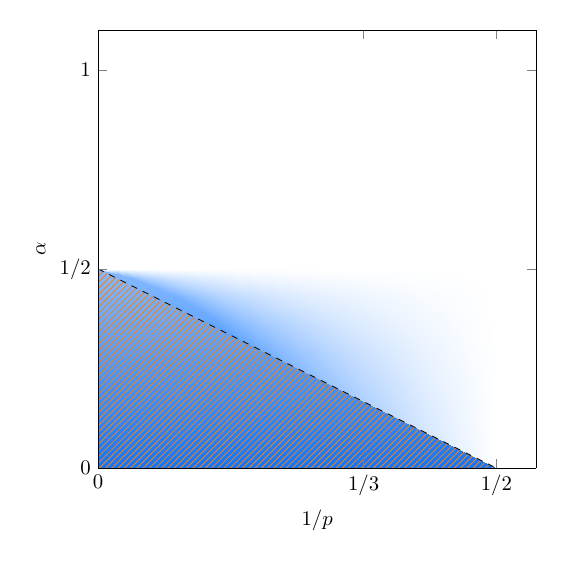}
    \caption{Hatched area: Region where \autoref{ass:strong_regularization} and anomalous integrability hold for $\mathscr{R}=\mathscr{R}_{BS}$ (left) and $\mathscr{R}=\mathscr{R}_{CZ}$ (right). The blue region, where \autoref{thm:regularization} applies, is as in \autoref{fig.regularization}. Notice that one can have anomalous regularization without anomalous integrability.}
\end{figure}

 \begin{rmk} 
It is worth comparing the setting of \cref{thm:integrability} with some of the supercritical regimes covered by \autoref{thm:regularization}, for instance the case
\begin{align*}
    \mathscr{R} =\mathscr{R}_{BS},\quad r<2,\quad p \in (2,3), \quad 1-\frac{1}{p} < \alpha < 1.
\end{align*}
Here, by Sobolev embeddings one has
\begin{equation*}
    \theta^\nu \in L^2_{\omega,T} H^{(1-\alpha)(p-2)-\eps}_x \hookrightarrow L^2_{\omega,T} L^q_x\quad\text{for}\quad q := \frac{2}{1-(1-\alpha)(p-2)+\varepsilon}<p.
\end{equation*}
That is, we do not gain any higher integrability than the one required on the initial condition $\theta_0$ in order to obtain anomalous regularization in the first place. The same dichotomy appears in the case $\mathscr{R}=\mathscr{R}_{CZ}$.
\end{rmk}

\begin{rmk} \label{rmk:anisotropy}
    \cref{thm:regularization} and \cref{thm:integrability} hold for a more general class of noises, see \cref{ass:noise} below, rather than just incompressible Kraichnan (namely $W$ with covariance $C=C_\alpha$ as in \eqref{eq:isotropic.covariance}).
    \cref{ass:noise} encodes some form of \emph{local isotropy} of the noise covariance around the origin, which is stable under more regular, possibly \emph{anisotropic} perturbations; see the discussion in \cref{ex:ass_noise,ex:ass_noise2}.
    In particular, our results readapt verbatim to the same SPDEs posed on the torus $\T^2$, conditionally on verifying \eqref{eq:local.expansion.noise}.
    
    Requiring exact isotropy of the noise was a major limitation of the approach from \cite{GaGrMa24,DrGaPa25} and earlier \cite{le2002integration}.
    Any kind of isotropy assumption, either exact or local, was recently overcome by Rowan in \cite{Rowan25}, with a different technique that covers a larger class of ``genuinely anisotropic'' noises.
    Concerning the application of Rowan's approach to nonlinear SPDEs on the torus, we refer to \cite{luotang2026}.
\end{rmk}

Similarly to the case of anomalous regularization, the mechanism underlying anomalous integrability is formulated more abstractly in
\autoref{subsec:integrability_strategy}.
It applies to a class of transport
SPDEs with random divergence-free drifts (see equations \eqref{eq:abstract_transport_SPDE}-\eqref{eq:general.viscous.SPDE}), i.e. without
requiring a constitutive relation between the drift $b$ and the advected quantity $\theta$, the latter being essentially treated as a ``passive'' scalar.
In particular, \autoref{thm:anomalous_linear} provides
sufficient conditions on such drifts to establish anomalous regularization and integrability for the advected scalar, in any dimension $d\geq 2$.

\subsection{Existence and uniqueness for inviscid SPDEs}\label{subsec:intro_existence_uniqueness}

Let us turn our attention back to the inviscid problem \eqref{eq:nonlinear_transport}.
Recall that the last part of \cref{thm:regularization} provides the existence of probabilistically weak solutions to \eqref{eq:nonlinear_transport} satisfying the anomalous regularity $\tilde{L}^2_{\omega,T} \tilde{B}^\beta_{2,\infty}$, obtained as limits in law (along subsequences) of vanishing viscosity schemes.

On the other hand, under the assumptions of \autoref{thm:integrability}, we also have the uniform-in-$\nu$ estimate \eqref{eq:intro_anomalous_integrability};
we can make the constants $M_f$, $M_\theta$ explicit by choosing ad hoc vanishing viscosity approximations, similarly to \autoref{rmk:bound_viscoud_data}.
Lower semicontinuity of $L^{p_\ast}_x$-norms and an application of the Portmanteau theorem then imply the following:

\begin{cor}\label{cor:existence}
Under the assumptions of \cref{thm:regularization}, there exists a probabilistically weak solution of \eqref{eq:nonlinear_transport} in the sense of \cref{martingale_sol}, obtained as  vanishing viscosity scheme and such that
\begin{align*}
\theta \in  L^{\infty}_{\omega,T}(L^r_x\cap L^p_x)\cap\Tilde{L}^2_{\omega,T}\Tilde{B}^{\beta}_{2,\infty}   
\end{align*}
with almost surely weakly continuous paths in $L^2_x$.
If additionally the assumptions of \cref{thm:integrability} hold, then the solution can be constructed so that it also satisfies
\begin{equation}\label{eq:corollary_anomalous_integrability}
   \mathbb{E} \left[\sup_{s \in [ t ,T]}  \| {\theta}_s \|_{L^{p_\ast}_x}^p  \right]^{1/p}
   \lesssim 
   \| f\|_{L^1_T L^{p_\ast}_x}+
   t ^{-\left( \frac{1}{1-\alpha} \right)\left(\frac{1}{p}-\frac{1}{p_\ast} \right)} \| {\theta}_{0} \|_{L^{p}_x},
   \quad
   \forall t \in (0,1\wedge T].
\end{equation}
\end{cor}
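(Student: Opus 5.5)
The first part of \cref{cor:existence} is a restatement of the last part of \cref{thm:regularization}. Fix any vanishing viscosity scheme in the sense of \autoref{defn:vanishing_scheme}. By \cref{thm:regularization} the laws of $\{\theta^\nu\}$ are tight in the Polish space $\mathcal{E}$ of \eqref{polish_space_tightness}. By Prokhorov and Skorokhod we extract a subsequence $\nu_n\downarrow 0$ and new random variables $(\tilde\theta^{n},\tilde W^{n})\to(\theta,W)$ almost surely in $\mathcal{E}$. The limit is a weak solution in the sense of \cref{martingale_sol}, with the stated regularity and with weakly continuous paths in $L^2_x$.

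For the second part, the only additional freedom is the choice of approximating data. I would choose them as in \autoref{rmk:bound_viscoud_data}, for instance $\theta^\nu_0=\rho^\nu\ast\theta_0$ and $f^\nu=\rho^\nu\ast f$ with a standard mollifier $\rho^\nu$. By Young's inequality, $\|\theta^\nu_0\|_{L^p_x}\le\|\theta_0\|_{L^p_x}$ and $\|f^\nu\|_{L^1_TL^{p_\ast}_x}\le\|f\|_{L^1_TL^{p_\ast}_x}$. This gives \eqref{eq:bound.forcing.anomalousintegrability} with $M_\theta=\|\theta_0\|_{L^p_x}$ and $M_f=\|f\|_{L^1_TL^{p_\ast}_x}$; if the latter is infinite, \eqref{eq:corollary_anomalous_integrability} is trivial. \cref{thm:integrability} then yields \eqref{eq:intro_anomalous_integrability} uniformly in $\nu$, with exactly the right-hand side of \eqref{eq:corollary_anomalous_integrability}. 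Since this scheme is still admissible, the first part applies to it.

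The main step is transferring the bound to the limit by lower semicontinuity. Fix $t\in(0,1\wedge T]$ and define $\Phi(\vartheta):=\sup_{s\in[t,T]}\|\vartheta_s\|_{L^{p_\ast}_x}^p$ on the path component of $\mathcal{E}$. I claim $\Phi$ is lower semicontinuous. The topology of $\mathcal{E}$ includes convergence in $C_T(L^2_x\text{-weak})$, or at least in $C_T H^{-s}_x$ on bounded sets. Writing $\|g\|_{L^{p_\ast}_x}=\sup_\varphi\langle g,\varphi\rangle$ over a countable dense family of test functions $\varphi\in C^\infty_c$ with $\|\varphi\|_{L^{p_\ast'}_x}\le1$ (this also covers $p_\ast=\infty$), $\Phi$ is a supremum of continuous functionals $\vartheta\mapsto|\langle\vartheta_s,\varphi\rangle|^p$ over $s\in[t,T]$ and $\varphi$, hence lower semicontinuous. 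By the Portmanteau theorem, or Fatou's lemma along the Skorokhod representation, $\EE[\Phi(\theta)]\le\liminf_n\EE[\Phi(\tilde\theta^n)]$. The law of $\tilde\theta^n$ equals that of $\theta^{\nu_n}$, so the uniform bound passes to the limit.

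The obstacle I expect is checking that the topology on $\mathcal{E}$ is indeed strong enough to make $\vartheta\mapsto\langle\vartheta_s,\varphi\rangle$ continuous, uniformly in $s$. I expect this to follow from weak continuity in $L^2_x$ together with the uniform $L^\infty_{\omega,T}L^2_x$ bounds; if needed, I would restrict $\Phi$ to the closed ball where these bounds hold, on which weak-in-$L^2_x$ and distributional convergence coincide.
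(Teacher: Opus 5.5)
Your proposal is correct and follows the paper's own proof: choose the approximating data as in \autoref{rmk:bound_viscoud_data}, apply \autoref{thm:integrability} uniformly in $\nu$, and pass to the limit using lower semicontinuity of $\sup_{s\in[t,T]}\|\cdot\|^p_{L^{p_\ast}_x}$ on $\mathcal{E}$ together with the Portmanteau theorem. Two small remarks. First, pure mollification does not put $\theta^\nu_0$ in $L^1_x$ when $r>1$, so you also need a cut-off $0\le\chi_R\le1$, which does not increase any $L^q_x$ norm. Second, the continuity you worried about is automatic: for each fixed $s$ and each $\varphi\in C^\infty_c$, the map $x\mapsto\langle x_s,\varphi\rangle$ is continuous on $C_T\mathcal{B}_{r,p}$, and a pointwise supremum of continuous maps is lower semicontinuous, so no uniformity in $s$ is needed.
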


\autoref{cor:existence} solely relies on a priori estimates and tightness arguments, which do not require any assumption of uniqueness of solution to the limit equation \eqref{eq:nonlinear_transport}.
On the other hand, \autoref{thm:integrability} applies under \autoref{ass:strong_regularization}, and so in particular in the (sub)critical regime \eqref{eq:subcritical_regime}; here, for $\mathscr{R}=\mathscr{R}_{BS}$, uniqueness holds by \cite{BaGa25} under the additional assumption $\theta_0\in \dot H^{-1}_x$. We complement that work by replacing the assumption $\theta_0\in \dot H^{-1}_x$ with $\theta_0\in L^r_x$, and  providing a similar well-posedness result in the case of zero-th order operators; as a result, we obtain the following comprehensive statement.

\begin{thm}\label{thm:intro_wellposedness}
    Let $(\alpha,p,r,\mathscr{R})$ satisfy \autoref{ass:strong_regularization}.
    Then for any $\theta_0\in L^r_x\cap L^p_x$ and deterministic forcing $f\in L^1_T (L^r_x\cap L^{p}_x)$, strong existence and pathwise uniqueness hold for solutions of \eqref{eq:nonlinear_transport}, in the class $\theta\in L^\infty_{\omega,T}(L^r_x\cap L^p_x)\cap \mathcal{U}_p$.
    For $\mathscr{R}=\mathscr{R}_{BS}$, and in strictly subcritical regimes for $\mathscr{R}=\mathscr{R}_{CZ}$, pathwise uniqueness holds in $L^\infty_{\omega,T}(L^r_x\cap L^p_x)$, too.
    Moreover, the unique solution is a Markov process, it is the limit in probability (in appropriate topologies) of the solutions $\{\theta^\nu\}_{\nu \in (0,1)} $ associated to any vanishing viscosity scheme $\{(\theta^\nu_0,f^\nu,\theta^\nu)\}_{\nu \in (0,1)}$ of \eqref{eq:nonlinear_transport} in the sense of \autoref{defn:vanishing_scheme}, and it satisfies the properties of \autoref{cor:existence}.
\end{thm}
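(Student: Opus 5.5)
The proof combines weak existence, pathwise uniqueness, and the Gyöngy--Krylov lemma (the infinite-dimensional Yamada--Watanabe principle).

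\emph{Weak existence.} \autoref{cor:existence} gives probabilistically weak solutions obtained as vanishing viscosity limits. Under \autoref{ass:strong_regularization} they satisfy the anomalous integrability bound \eqref{eq:corollary_anomalous_integrability}. They also lie in $\tilde L^2_{\omega,T}\tilde B^{1-\alpha}_{2,\infty}$, since the (sub)critical range falls in the first case of \autoref{ass:exponent.beta}. I expect these bounds to be what is needed to place the limits in the uniqueness class $\mathcal{U}_p$. For $\mathscr{R}_{BS}$ this is essentially an $L^\infty_x$ control at positive times. For $\mathscr{R}_{CZ}$ it is probably a Besov or $L^q$ control on $u=\mathscr{R}\theta$, which passes from $\theta$ to $u$ by Calderón--Zygmund boundedness.

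\emph{Pathwise uniqueness.} Take two solutions $\theta^1,\theta^2$ on the same stochastic basis, in the class $L^\infty_{\omega,T}(L^r_x\cap L^p_x)\cap\mathcal U_p$. Set $\eta=\theta^1-\theta^2$, which solves the linear transport SPDE
\begin{equation*}
\mathd\eta + u^1\cdot\nabla\eta\,\mathd t + \circ\,\mathd W\cdot\nabla\eta = -\mathscr{R}\eta\cdot\nabla\theta^2\,\mathd t .
\end{equation*}
\begin{itemize}
\item For $\mathscr{R}_{BS}$, I would follow \cite{BaGa25} but run the energy estimate at a negative Sobolev level $\dot H^{-1}$ (or $\dot H^{-s}$) for $\eta$, mollified as in the Duchon--Robert type argument of \autoref{lem:gron_increments}. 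The Kraichnan noise then contributes a coercive term of order $\|\eta\|_{\dot H^{-\alpha}}^2$, roughly gaining $1-\alpha$ derivatives. This term absorbs the nonlinear commutator $\langle \mathscr{R}\eta\cdot\nabla\theta^2,\eta\rangle$ via the $L^p$ bound on $\theta^2$, and criticality \eqref{eq:subcritical_regime} is exactly the numerology that makes the absorption work. The replacement of $\theta_0\in\dot H^{-1}_x$ by $\theta_0\in L^r_x$ enters here: $\eta_0=0$, and I would check that $\eta_t\in\dot H^{-1}$ for positive times, using $r<2$ and Hardy--Littlewood--Sobolev.
\item For $\mathscr{R}_{CZ}$, $\mathscr{R}$ has order $0$, so the same scheme is run at level $\dot H^{-1/2}$. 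In the critical case the absorption needs the extra smallness furnished by the class $\mathcal U_p$. In the strictly subcritical case the membership in $\mathcal U_p$ should be derivable from $L^\infty_{\omega,T}(L^r_x\cap L^p_x)$ alone, via a priori regularization of any solution in that class.
\end{itemize}

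\emph{Strong existence and convergence.} Apply the Gyöngy--Krylov lemma: for any two subsequences of viscosity parameters, the joint laws of $(\theta^{\nu_n},\theta^{\nu'_n},W)$ are tight by \autoref{thm:regularization}. Every joint limit is supported on pairs of solutions driven by the same $W$, both lying in the uniqueness class by the previous step, hence concentrated on the diagonal. This yields convergence in probability of the whole family $\theta^\nu$ and a strong solution.

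\emph{Markov property.} This follows from pathwise uniqueness together with measurable dependence on the initial datum, in the standard way (as in Da Prato--Zabczyk), since the solution is a measurable functional of $(\theta_0,W)$.

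The main obstacle is the uniqueness estimate: making the coercive noise term dominate the trilinear term at the critical scaling. I would first try to adapt the mollified balance of \autoref{lem:gron_increments} at negative regularity, with the constant tracked carefully enough that the bound closes via Gronwall.
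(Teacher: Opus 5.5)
Your overall architecture matches the paper's: vanishing-viscosity weak existence, pathwise uniqueness via a negative-Sobolev energy with a coercive noise flux, Gy\"ongy--Krylov, and Markov. The gap is in the uniqueness step. The theorem asserts uniqueness in classes with no Besov regularity, and your argument does not reach them.

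\emph{The $\mathscr{R}_{CZ}$ case.} At level $\dot H^{-1/2}_x$ the coercive term is $\|\rho\|_{H^{1/2-\alpha}_x}^2$. This is not a priori finite for a solution that is merely in $L^\infty_{\omega,T}(L^2_x\cap L^p_x)$. If you keep one factor of $\rho$ only in $L^2_x$, the numerology at this level closes only for $\alpha\le\frac12-\frac2p$. Your remedy, an ``a priori regularization of any solution in that class'', is not available. Anomalous regularization is proved only uniformly along viscous approximations, hence only for vanishing viscosity limits.

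The missing idea is to work at the $\alpha$-dependent level $H^{\alpha-1}_x$:
\begin{itemize}
\item The flux bound $F_{1-\alpha}(\xi)\le -K_1+K_2\langle\xi\rangle^{2(\alpha-1)}$ makes the coercive term exactly $K_1\|\rho\|_{L^2_x}^2$, which is finite for any solution in the class.
\item The cut-off fluxes $F^n_{1-\alpha}$ are uniformly bounded, so the frequency cut-off can be removed by dominated convergence.
\item The embedding $L^{2p/(p+2)}_x\hookrightarrow H^{-2/p}_x$ bounds the nonlinearity by $\|\rho\|_{H^{-\gamma}_x}\|\rho\|_{L^2_x}$, with $\gamma=1-2\alpha-\frac2p$.
\end{itemize}
For $\gamma>0$ this closes by interpolation and Young with no $\mathcal U_p$ at all. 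That is why strictly subcritical uniqueness holds in $L^\infty_{\omega,T}(L^r_x\cap L^p_x)$. For $\gamma=0$ one splits both $\theta^1$ and $\theta^2$ using $\mathcal U_p$.

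\emph{The $\mathscr{R}_{BS}$ case.} At the critical value $\alpha=1-\frac1p$, absorption genuinely needs the $\mathcal U_p$ smallness. So your argument, which places both solutions in $\mathcal U_p$, does not give the claimed uniqueness in $L^\infty_{\omega,T}(L^r_x\cap L^p_x)$. The missing idea is an asymmetry. After the cancellation $\langle u,(u^1\cdot\nabla)u\rangle=0$, only $\langle u,(u\cdot\nabla)u^2\rangle$ survives, so only $\theta^2$ needs to lie in $\mathcal U_p$. A Yamada--Watanabe-type gluing with the vanishing viscosity solution, which does lie in $\mathcal U_p$, then reduces any pair of solutions to this asymmetric case.

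Two further points.

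First, membership of the limits in $\mathcal U_p$ does not come from anomalous integrability or Besov bounds. $\mathcal U_p$ requires, for every $\eps>0$, a splitting $\theta=\theta^<+\theta^>$ with $\|\theta^<\|_{L^\infty_{\omega,T}L^p_x}\le\eps$ and $\theta^>\in L^\infty_{\omega,T}L^q_x$ for some $q>p$. Anomalous integrability only controls moments, and it degenerates as $t\to0$. The paper instead proceeds as follows:
\begin{itemize}
\item split the data into an $\eps$-small part in $L^r_x\cap L^p_x$ and a bounded part in $L^r_x\cap L^\infty_x$;
\item solve the linear viscous equation with the same drift $u^\nu$ for each part;
\item use the pathwise $L^q_x$ maximum principle;
\item pass to the limit by lower semicontinuity.
\end{itemize}

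Second, Hardy--Littlewood--Sobolev with $r<2$ cannot place $\rho_t$ in $\dot H^{-1}_x$. By scaling, $L^r_x\cap L^p_x\not\subset\dot H^{-1}_x$ in 2D for any $r>1$. What makes it work is $\rho_0=0$ together with the divergence structure of the equation. The paper proves a Gr\"onwall bound, uniform in $\delta$, for $\EE\langle G^\delta*\rho_t,\rho_t\rangle$, where $G^\delta$ is a truncated Green function. Gagliardo--Nirenberg is used to pair $\nabla G^\delta*\rho$ with $u^i\theta^i$, and monotone convergence in $\delta$ concludes.
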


The proof will be presented in \autoref{sec:existence_uniqueness}.
Some finer details in the statement of \autoref{thm:intro_wellposedness} are left vague on purpose to avoid technicalities.
The spaces $\mathcal{U}_p$, only needed for pathwise uniqueness when $\mathscr{R}=\mathscr{R}_{CZ}$ and  critical $(\alpha,p)$, are slightly technical and we postpone their definition to \autoref{sec:existence_uniqueness}; nevertheless, in most cases, we obtain pathwise uniqueness in the natural class $L^\infty_{\omega,T}(L^r_x\cap L^p_x)$.
Convergence in probability is again in the space $\mathcal{E}$ defined in \eqref{polish_space_tightness}.
Let us also mention that the statements holds for a slightly larger class of noises $W$ than just Kraichnan, cf. \autoref{ass:noise2}.

\begin{rmk}
\autoref{cor:existence} and \autoref{thm:intro_wellposedness} allow to construct global-in-time, Sobolev regular solutions of the inviscid stochastic problem \eqref{eq:nonlinear_transport} via vanishing viscosity approximations. 
Similar results are notoriously difficult to achieve in the deterministic case, since equations of this form are often ill-posed in Sobolev spaces of low regularity \cite{cordoba2021non, bianchini2025non, cordoba2025instantaneous} and high regularity norms may blow-up in finite time \cite{KiselevSQG,cordoba2024finite}. 
Let us also note that previous results on restored well-posedness by Kraichnan noise for nonlinear active scalar models, cf. \cite{CoMa23,jiao2025well,bagnara2025regularization,BaGa25}, were not able to treat the case of Calderón--Zygmund operators, which we cover here.
\end{rmk}

\begin{rmk}[Further extensions]\label{rem:gSQG_numerologies}
    In this paper we restricted ourselves to the paradigmatic examples $\mathscr{R}=\mathscr{R}_{BS}$ and $\mathscr{R}=\mathscr{R}_{CZ}$, but it is clear that the same approach readapts to larger families of nonlinear SPDEs, up to properly modifying the relevant numerologies. For instance, one can treat the generalized SQG equations, $\mathscr{R} =-\nabla^{\perp}(-\Delta)^{-\frac{1-\mathfrak{h}}{2}}$ with $\mathfrak{h}\in (-1,0)$ and compute the appropriate values of $\beta\in (0,1-\alpha]$ similarly to what is done in \autoref{sec:computation}.
    In particular, by \eqref{eq:formula_beta_intermediate}, we expect the existence of supercritical ranges of parameters where anomalous regularization holds with exponent
    \begin{align*}
        \beta=(p-2)\left(\frac{1-\mathfrak{h}}{2}-\alpha \right).
    \end{align*}
    Moreover in this case the (sub)critical regime is exactly given by \eqref{eq:subcritical_regime}, so that \autoref{thm:regularization} (with $\beta=1-\alpha$) and \autoref{thm:integrability} are still expected to hold under an analogue of \autoref{ass:strong_regularization}.
    It would be interesting to understand whether the same phenomenologies extend to the more singular case $\mathfrak{h}\in (0,1)$; we leave this problem for future investigations.
\end{rmk}

\subsection{Anomalous dissipation}

Let us finally discuss anomalous dissipation and consequently sharpness of the regularization entailed by \cref{thm:regularization}. In the linear case $\mathscr{R}=0$ and $f^\nu \equiv 0$, it was shown in \cite[Section 3]{DrGaPa25} that the regularity threshold $1-\alpha$ is optimal; the argument is by contradiction, combining the the fact that any non-zero solution diplays anomalous dissipation of energy with a commutator argument à la Constantin--E--Titi \cite{CET1994}.
Loosely speaking, the latter show that, if the solution $\theta$ belongs to $L^2_{\omega,T}H^{1-\alpha}_x$, then anomalous dissipation cannot take place.
It turns out that this implication remains true in the nonlinear case, at least in certain regimes of parameters, see \cref{prop:dissipation.implies.irregularity}. 

For the Euler equations, in a suitable range of parameters, we are able to prove that anomalous dissipation of mean enstrophy happens for every non-zero initial condition; in view of the discussion above, this implying optimality of the regularization from \cref{thm:regularization}.
\begin{thm}[Enstrophy dissipation for stochastic Euler] \label{thm_anomalous_dissipation}
Let $\mathscr{R}=\mathscr{R}_{BS}$ and assume that
\begin{equation}\label{eq:parameters_anomalous_dissipation}
    \alpha\in (0,1/2), \quad r\in (1,2), \quad p\in [2,+\infty], \quad \theta_0 \in L^r_x\cap L^p_x\cap\dot{H}^{-1}_x \setminus \{0\}, \quad f \equiv 0;
\end{equation}
consider the unique solution $\theta$ to \eqref{eq:nonlinear_transport} given by \autoref{thm:intro_wellposedness}.
Then, enstrophy dissipation happens continuously in time on average:
\begin{align}\label{eq:an_diss_euler}
    \lim_{s\rightarrow t}\mathbb{E}\left[\norm{\theta_t-\theta_s}_{L^2_x}^2\right]=0,
    \quad \text{and}\quad
    \mathbb{E}\left[\norm{\theta_t}^2_{L^2_x}\right]<\mathbb{E}\left[\norm{\theta_s}^2_{L^2_x}\right]\quad \text{for all } 0\leq s<t\leq T.
\end{align}
As a consequence, regularity $\theta\in \Tilde{L}^2_{\omega,T}\Tilde{B}^{1-\alpha}_{2,\infty}$ is optimal, in the sense that for every $q \in [1,\infty)$ and any nonempty subinterval $(t_0,t_1)\subset [0,T]$ it holds $\theta \notin L^2_{\omega,[t_0,t_1]} B^{1-\alpha}_{2,q}$.
\end{thm}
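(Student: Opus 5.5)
Our plan has three parts: continuity, strict dissipation, and then optimality of the regularity via \cref{prop:dissipation.implies.irregularity}. Throughout we work with the unique solution $\theta$ from \autoref{thm:intro_wellposedness}, realized as the limit in probability of a vanishing viscosity scheme $\theta^\nu$.

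\emph{Energy balance.} For each $\nu$, Itô's formula applied to $\|\theta^\nu_t\|_{L^2_x}^2$, with the Stratonovich correction cancelling the martingale quadratic variation up to the Itô--Stratonovich term, gives
\[
\EE\|\theta^\nu_t\|_{L^2_x}^2 + 2\nu\int_s^t \EE\|\nabla\theta^\nu_r\|_{L^2_x}^2\dd r = \EE\|\theta^\nu_s\|_{L^2_x}^2 .
\]
Hence $t\mapsto \EE\|\theta^\nu_t\|^2_{L^2_x}$ is nonincreasing. The same should then hold for the limit $\theta$, using lower semicontinuity together with the $\tilde B^{1-\alpha}_{2,\infty}$ bound to pass to the limit. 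A cleaner route is to derive, directly for $\theta$, the Duchon--Robert-type balance \eqref{eq:energy_balance} used in \autoref{lem:gron_increments}:
\[
\EE\|\theta_t\|^2-\EE\|\theta_s\|^2 = -c\lim_{\eps\to0}\int_s^t\!\!\int_{\mathbb S^1}\frac{\EE\|\delta_{\eps z}\theta_r\|^2}{\eps^{2(1-\alpha)}}\sigma(\dd z)\dd r + \lim_{\eps\to 0}\int_s^t \mathcal T_{u,\theta}(r,\eps)\dd r .
\]
The key point here is $\alpha<1/2$. Since $\theta_0\in\dot H^{-1}_x$, the velocity satisfies $u\in L^2_x$ with $\nabla u\in L^2_x\cap L^p_x$ (propagated via the noise being divergence-free, since the energy of $u$ is controlled), and $\theta\in\tilde B^{1-\alpha}_{2,\infty}$ with $1-\alpha>1/2$. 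The trilinear term is then bounded by $\eps\,\|\nabla u\|\,\|\delta_{\eps z}\theta\|^2_{L^4}$-type quantities, and the interpolation $\|\delta\theta\|_{L^4}\lesssim \|\delta\theta\|_{L^2}^{1/2}\|\delta\theta\|_{L^\infty}^{1/2}$ (with $L^\infty$ from \autoref{thm:integrability} when $p<\infty$, or directly) yields $\mathcal T=O(\eps^{1+2(1-\alpha)\cdot\frac12}\cdots)\to0$. So the nonlinear flux vanishes and the dissipation equals the noise term, exactly as in the linear case \eqref{eq:dissipation_measure}.

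\emph{Continuity and strictness.} Continuity of $\EE\|\theta_t-\theta_s\|^2$ follows from weak continuity of paths combined with continuity of $t\mapsto\EE\|\theta_t\|^2$, which comes from the balance above since the right-hand side is an integral in time of an $L^1$ function; by Vitali we then get strong $L^2_{\omega,x}$ continuity. For strictness, suppose $\EE\|\theta_t\|^2=\EE\|\theta_s\|^2$. Then the dissipation rate vanishes for a.e. $r\in(s,t)$, so $\lim_\eps\eps^{-2(1-\alpha)}\EE\|\delta_{\eps z}\theta_r\|^2=0$. This means $\theta_r$ is ``smoother than $1-\alpha$'' at small scales; together with the Gaussian/Kraichnan structure one wants to conclude $\theta_r\equiv 0$. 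This is the main obstacle. We would follow the linear argument of \cite[Section 3]{DrGaPa25}: show that vanishing of this limit forces $\EE\|\theta_r\|^2_{\dot B^{1-\alpha}}$-type quantities to vanish, and hence $\theta_r$ constant, hence $0$ in $L^2_x$. Conservation of $\EE\|\theta\|^2$ on $[s,r]$ would then force $\theta_s=0$, and backward uniqueness of the mean-square energy gives $\theta_0=0$, a contradiction. An alternative route is to use \cref{prop:dissipation.implies.irregularity} in reverse together with the positivity of $\hat C_\alpha$ at high frequencies, showing that any nonzero $\theta$ has nonzero high-frequency energy flux.

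\emph{Optimality.} If $\theta\in L^2_{\omega,[t_0,t_1]}B^{1-\alpha}_{2,q}$ with $q<\infty$, then $\eps^{-2(1-\alpha)}\int\EE\|\delta_{\eps z}\theta\|^2\to0$ by dominated convergence (the Besov $q<\infty$ tails vanish). By the balance above, $\EE\|\theta_t\|^2$ would then be constant on $(t_0,t_1)$, contradicting strict dissipation. This is precisely the content of \cref{prop:dissipation.implies.irregularity}.
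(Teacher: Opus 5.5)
There is a genuine gap in both claims of \eqref{eq:an_diss_euler}. Only your optimality step is sound; it is the paper's route through \autoref{prop:dissipation.implies.irregularity}.

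\emph{Continuity.} Your balance only writes $\EE\|\theta_s\|_{L^2_x}^2-\EE\|\theta_t\|_{L^2_x}^2$ as a limit, as $\eps\to 0$, of time integrals. The available $\tilde L^2_{\omega,T}\tilde B^{1-\alpha}_{2,\infty}$ bound makes the integrands bounded in $L^1_T$, but not uniformly integrable in time. So the limiting dissipation is a priori a measure in time, possibly with atoms. Weak continuity, lower semicontinuity and monotonicity give at best right-continuity of $t\mapsto\EE\|\theta_t\|_{L^2_x}^2$, not left-continuity. The claim that the right-hand side is ``an integral of an $L^1$ function'' is therefore unjustified. Given continuity of the mean enstrophy, your Vitali step would be fine.

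\emph{Strictness.} Vanishing of $\lim_{\eps\to0}\eps^{-2(1-\alpha)}\EE\|\delta_{\eps z}\theta_r\|_{L^2_x}^2$ does not force $\theta_r=0$: every nonzero $H^1_x$ function satisfies it. It only says $\theta$ is slightly better than $\tilde B^{1-\alpha}_{2,\infty}$. \autoref{prop:dissipation.implies.irregularity} gives ``regular $\Rightarrow$ conservative'', but the converse you need, that a nonzero solution cannot conserve mean enstrophy, is a dynamical statement. In the linear model, \cite{DrGaPa25} obtains it from the closed equation for two-point correlations, which has no analogue for the nonlinear equation. The ``backward uniqueness'' step and the high-frequency-flux alternative are likewise not substantiated.

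The missing idea is a Girsanov transform. Since $\theta_0\in\dot H^{-1}_x$ and $\theta\in L^2_{\omega,T}H^{1-\alpha-\delta}_x$, one gets $u=\mathscr{R}_{BS}\theta\in L^2_{\omega,T}H^{2-\alpha-\delta}_x$. For $\alpha<1/2$ and $\delta$ small, this space lies inside $L^2_{\omega,T}H^{1+\alpha}_x$, the Cameron--Martin space of the Kraichnan noise. The same holds for $\hat u=\mathscr{R}_{BS}\rho$, where $\rho$ is the linear Kraichnan solution with the same datum. Hence the laws of $\theta$ and $\rho$ on $C_T\mathcal{B}_2$ are equivalent (\autoref{prop:girsanov}), and both claims transfer from the linear model:
\begin{itemize}
\item $\|\theta_{s_n}-\theta_t\|_{L^2_x}\to0$ in probability because this holds for $\rho$. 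Dominated convergence, using $\|\theta_t\|_{L^2_x}\leq\|\theta_0\|_{L^2_x}$, upgrades it to mean square.
\item The event $\{\|\rho_t\|_{L^2_x}\le\|\rho_s\|_{L^2_x}\}$ has full probability by the Markov property. The event $\{\|\rho_t\|_{L^2_x}<\|\rho_s\|_{L^2_x}\}$ has positive probability by strict dissipation for $\rho$. Both properties pass to $\theta$ under equivalence of laws, which yields strict decay of $\EE\|\theta_t\|_{L^2_x}^2$.
\end{itemize}
This is where $\alpha<1/2$ and $\theta_0\in\dot H^{-1}_x$ are really used. They are not needed to kill the nonlinear flux, which vanishes for instance whenever $p\ge 3$, for any $\alpha$.
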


We point out that the conclusions of \autoref{thm_anomalous_dissipation}
apply even to smooth initial data $\theta_0$; this is in stark contrast with the unforced deterministic two-dimensional Euler equations, in which case spatial smoothness of solutions and enstrophy conservation hold.

\subsection{Related literature}\label{subsec:intro_literature}

\subsubsection{State of the art for the deterministic PDEs}
The literature on the deterministic $2$D Euler equations in vorticity form is extremely vast and we will not attempt to entirely cover it here; concerning well-posedness and ill-posedness results for these equations, we refer the interested reader to the overviews given in \cite{ABCDLGJK2024,CoMa23,GalLuo25,BCK2026}.
Let us instead discuss more in detail the SQG and IPM equations.

The rigorous mathematical analysis of the SQG system started with \cite{CoMaTa1994}, due to its relevance for the formation of strong fronts in atmospheric flows and its mathematical analogy with $3$D Euler; therein local well-posedness of regular solutions was shown. 
Global existence of weak $L^2_x$-solutions was obtained by Resnick \cite{Resnick1995} and refined by Marchand \cite{marchand2008existence} for $L^p_x$ with $p>4/3$. Recently the endpoint value $p=4/3$ has been achieved in \cite{DRLP2026} and refined to Lorentz spaces in \cite{constantin2026weak}. The most general result on weak existence for $L^p_x$-valued weak solutions to the generalized SQG system, for any exponent $\mathfrak{h}\in (-1,1)$ (as defined in \autoref{rem:gSQG_numerologies}) is currently \cite{de2026hamiltonian}.

In comparison, first results about local well-posedness and blow-up criteria for regular solutions of IPM were obtained in \cite{CGO2007}, see also \cite[Section 2]{KisYao2023}; for the derivation of this PDE via Darcy's law and a discussion relevant applications we refer e.g. to the monograph \cite{Bear1972}.

The fundamental difference between SQG and IPM, as pointed out in \cite{shvydkoy2011convex,IsVi2015}, is that in the first case $\mathscr{R}=\nabla^\perp (-\Delta)^{-1/2}$ has odd Fourier symbol, while $\mathscr{R}=\nabla^\perp (-\Delta)^{-1} \partial_1$ is even. This makes it much simpler to construct weak solutions to SQG via vanishing viscosity and compactness arguments, which have not been successfully implemented for IPM so far. Conversely, convex integration schemes work much more easily for IPM, with first global existence and non-uniqueness of $L^2_x\cap L^\infty_x$-valued solutions obtained in \cite{cordoba2011lack}. For the same reason, the first successful implementation of convex integration to SQG from \cite{buckmaster2019nonuniqueness} required to rephrase the system in its momentum form; this has been bypassed in the more recent refinements \cite{isett2021direct} and then \cite{brue2026flexibility}, where non-uniqueness of $L^p_x$-valued solutions with $p>4/3$ is shown.

Overall, the current state of the art for IPM and SQG is in many aspects similar. Small scale formations for large times have been shown for both systems respectively in \cite{KisYao2023,KiselevSQG}.
The techniques developed by C{\'o}rdoba and Mart{\'\i}nez-Zoroa have also been applied to both, yielding a variety of non-existence, strong ill-posedness and finite-time singularity results (possibly in the presence of very regular forcing), see \cite{cordoba2021non,cordoba2024finite,cordoba2025instantaneous,bianchini2025non,ABC2026}. 
Let us also mention the recent work \cite{dembski2025singularity} on finite-time singularity formation for IPM, on wedge domains arbitrarily close to the half place, for Lipschitz continuous and compactly supported initial data.

\subsubsection{Phenomenologies of turbulence and related models}

As already mentioned, the Kraichnan model was first introduced in \cite{Kraichnan1968} in the '60s.
It witnessed a renewed interest in the physics community in the late '90s and early '00s; it is one of the few available models which displays \emph{anomalous dissipation}, \emph{intermittency} and Lagrangian \emph{spontaneous stochasticity}. The work \cite{GawVerg00} further generalized it and observed a phase transition depending on the compressibility of the noise; instead intermittency of solutions was first discussed in \cite{bernard1998slow}. See also the monograph \cite{CFG2008}.
A more rigorous mathematical analysis was then conducted by E--Vanden-Eijnden \cite{EVan2000,EVan2001} and most prominently LeJan--Raimond \cite{le2002integration,LeJRai2004}.
This is also the first model where an explicit link between anomalous dissipation and Lagrangian spontaneous stochasticity was established; the same principle holds in much higher generality, see \cite{drivas2017lagrangian,Rowan2024}.

Recently there have been numerous relevant contributions for this model. Rowan \cite{Rowan2024} gave a proof of anomalous dissipation on $\T^d$ and established exponential dissipation of energy.
Building on the intuition first developed in \cite{CoMa23} for the $\dot H^{-1}_x$-norm, \cite{GaGrMa24} gave a first comprehensive result of \emph{anomalous regularization} of solutions, in the sense that initial data instantaneously gain higher Sobolev regularity under the evolution of the linear SPDE, up to $H^{1-\alpha-}_x$; the proof therein was based on the study of the Fourier spectrum by means of radial symmetries and Mellin transform.
The work \cite{crippa2025zero} extended this result to allow the presence of an additional Sobolev drift $b$, established the existence of \emph{dissipation measures} associated to the unique solutions and studied the associated vanishing noise limit; instead \cite{bagnara2024anomalous} analyzed similar phenomena in the case of vector advection (the so called Kasantzev--Kraichnan model).
Another proof of anomalous regularization was given in \cite{DrGaPa25}, based on the study of the PDE satisfied by the $2$-point self-correlation function.
This was the also the first work to establish the borderline regularity $\tilde L^2_{\omega,T} \tilde B^{1-\alpha}_{2,\infty}$, prove its sharpness and link it to the dissipation measure, as well as deriving anomalous integrability as a byproduct of anomalous regularization. However, the arguments from \cite{DrGaPa25} still crucially rely on isotropy of the noise and hold on $\R^d$.
The next step was accomplished by Rowan \cite{Rowan25}, analyzing the evolution of the energy spectrum using lattice Poincarè inequalities (building on \cite{luo2024elementary}) and extrapolation techniques. This approach allows for a large class of genuinely anisotropic noises and in some cases even provide infinite smoothing in some directions, cf. \cite[Corollary 1.7]{Rowan25}.

Another celebrated model displaying similar features to the ones studied here are the 1D Burgers equations $\partial_t u + u \partial_x u =0$, and more generally conservation laws \cite{Dafermos2000}. It is well-known (see e.g. \cite{DLOW2004}) that for $u_0\in L^\infty_x$, entropy solutions are unique, coincide with vanishing viscosity limits and instantaneous become $BV$-regular at positive times; on the other hand, classical solutions can cease to exist in finite time due to the formation of shocks, which result in loss of energy. The link between anomalous dissipation and spontaneous stochasticity for this system has been discussed in \cite{eyink2015spontaneous}. General dispersive estimates akin to our \autoref{thm:integrability} for scalar conservations laws were obtained in \cite{SeSi2019}.

Starting with \cite{DrElIyJe22}, many works have recently constructed examples of \emph{deterministic} vector fields inducing anomalous dissipation of energy on passive scalars, see e.g. \cite{DrElIyJe22, CoCrSo23,ArVi23+,BuSzWu23+,ElLi23+,JoSo24+,hess2025universal}.
Moreover, in the work \cite{hess2025turbulent}, it is presented a drift whose associated passive scalars display \emph{total} dissipation of energy, intermittency and partial anomalous regularization.

Let us finally mention \cite{bagnara2026regularity}. It was noticed therein that in $2$D, the autonomous velocity fields obtained as a.s. realization of $W_1(\omega)$ satisfy the weak Sard property, and therefore passive scalars advected by them must conserve energy, in stark contrast with the non-autonomous case.

\subsubsection{Transport noise and its regularizing effect}

The use of transport noise in fluid dynamics equations has been advocated by many authors, with a variety of arguments ranging from stochastic Lagrangian characteristics to variational principles; it is also of great relevance in applications like Stochastic Parametrization, Data Assimilation and Uncertainty Quantification. Without aiming to be comprehensive, we refer the reader to \cite{BCF1991,Holm2015,Memin2014,CHLN2022,DebMem2025}.
When the noise is sufficiently regular, rigorous derivations of such SPDEs in an ideal limit of multiscale systems can be found in \cite{FlaPap2022,DebPap2026}.

The idea that transport noise, modelling the influence of the small turbulent scales on larger ones, may stabilize the nonlinear dynamics and restore its well-posedness, has been notably advocated by Flandoli \cite{Flandoli2015}.
In the ever growing field of \emph{regularization by noise}, let us only mention the contributions closest to the setting of \eqref{eq:nonlinear_transport}.
The first work understanding some partial anomalous regularization features of rough Kraichnan noise and applying it the the 2D Euler equations was \cite{CoMa23}, with refinements in \cite{JiaLuo2025}; similar ideas have then been successfully applied to gSQG  \cite{bagnara2025regularization,jiao2025well,BaGa25}, linear transport with drift \cite{crippa2025zero} and 2D Boussinesq with thermal diffusivity \cite{JiaLuo2026}.

While completing this work, we were informed of the ongoing project \cite{luotang2026} by Luo and Tang. Therein the authors similarly study the problems anomalous regularization and anomalous dissipation of solutions to nonlinear 2D fluid dynamics SPDEs with rough transport noise; differently from our case, they work on the torus $\T^2$, adopting an approach closer to the one developed in the linear setting by Rowan \cite{Rowan25}.

\subsection{Structure of the paper}
We start by recalling in \autoref{sec:preliminaries} useful preliminaries concerning the noise $W$, the SPDEs \eqref{eq:nonlinear_transport}-\eqref{eq:nonlinear_viscous} and the function spaces we use.
Then in \autoref{sec:general.criterion} we develop a general criterion for anomalous regularization, cf. \autoref{lem:gron_increments}; we verify its applicability to \eqref{eq:nonlinear_transport} under \autoref{ass:exponent.beta} in \autoref{sec:computation}, cf. \autoref{prop_cases}.
\autoref{sec:integrability} is devoted to the proof of \autoref{thm:integrability}, as well as the more abstract \autoref{thm:anomalous_linear}, valid for transport SPDEs with random drifts in any dimension.
In the first part of \autoref{sec:existence_uniqueness}, we present the tightness arguments needed to complete the proofs of \autoref{thm:regularization} and \autoref{cor:existence}; in the second part we establish novel pathwise uniqueness results for \eqref{eq:nonlinear_transport} and altogether prove \autoref{thm:intro_wellposedness}.
\autoref{Appendix:approx_kernels} collects some key technical lemmas concerning the flux functions associated to the noise $W$ and (conditional variants of) the Besov-type spaces $\tilde{L}^2_{\omega,T}\tilde{B}^{\beta}_{2,\infty}$ used throughout the paper.

\subsection*{AI Disclosure}
The authors used GPT-6 Astra, Extra High solely for copyediting, typing and proofreading.
All the mathematical results in the paper were obtained by the authors without AI assistance.
The authors reviewed and edited all generated text and accept full responsibility for the final content.

\subsection*{Acknowledgements}
LG acknowledges support from the Istituto Nazionale di Alta Matematica (INdAM) through the project GNAMPA 2026 ``Fluidodinamica stocastica: irregolarità, trasporto e fenomeni di regolarizzazione''.
This article is published with funding from Università degli Studi dell’Aquila under the Call for Proposals for Fundamental research and Early-career research grants - Year 2026. (Italian version: L’articolo
è pubblicato con il contributo dell’Università degli Studi dell’Aquila nell’ambito dell’Avviso per la presentazione di Progetti di Ateneo per la Ricerca di base e Avvio alla Ricerca - anno 2026).
EL and UP have received funding from the European Research Council (ERC) under the European Union’s Horizon 2020 research and innovation programme (grant agreement No. 949981). 
EL has received funding from the Deutsche Forschungsgemeinschaft (DFG, German Research Foundation) under Germany´s Excellence Strategy – The Berlin Mathematics Research Center MATH+ (EXC-2046/2, project ID: 390685689).
UP has received funding from the Swiss National Science Foundation under the SNSF Ambizione grant No. 233216.
LG acknowledges the hospitality from Universit\"at Bielefeld, for a visit in November 2025 during which part of this project was developed, funded by the aforementioned ERC grant agreement No. 949981.

\section{Notation and Preliminaries}\label{sec:preliminaries}

\subsection{Notation}\label{subsec:notation}
We provide here for the reader's convenience the main conventions we will adopt throughout the paper.

Whenever performing estimates, given $a,b\in [0,+\infty)$, we write $a\lesssim b$ to mean that there exists $C>0$ such that $a\leq C b$; we write $a\lesssim_\lambda b$ if we want to stress the dependence of $C$ on the set of parameters $\lambda$. Notation $a\sim b$ stands for $a\lesssim b$ and $b\lesssim a$.

We always work on $\R^d$, $d\geq 2$; $v\cdot w$ denotes scalar product on $\R^d$.

We will adopt several shorthand notations for functions spaces. $L^p_x=L^p(\R^d;\R^N)$ denote Lebesgue spaces with norm $\|\cdot\|_{L^p_x}$, where $N$ may vary depending on the context. Similarly, for $s>0$, $W^{s,p}_x=W^{s,p}_x(\R^d;\R^N)$ denote fractional Sobolev spaces, as defined via Gagliardo--Niremberg seminorms whenever $s$ is not an integer. $H^s(\R^d;\R^N)=H^s_x$ denote inhomogeneous fractional Sobolev spaces as defined via Fourier trasform, for $s\in\R$; recall that $H^s_x=W^{s,2}_x$ for $s>0$.
For $\gamma>0$, $C^\gamma_x=C^\gamma(\R^d;\R^N)$ denote inhomogeneous H\"older spaces.
Similar conventions apply to their homogeneous counterparts $\dot W^{s,p}_x$, $\dot H^s_x$, and $\dot C^\gamma_x$.

We will often use $\llbracket\cdot \rrbracket$ to denote seminorms associated with some of these functions spaces, which may correspond to the norms of their homogeneous counterparts. For instance, for $s\in (0,1)$,
\begin{align*}
    \llbracket f\rrbracket_{W^{s,p}_x}  
    := 
    \| f\|_{\dot W^{s,p}_x}
    = \left( \int_{\R^d\times \R^d} \frac{|f(x)-f(y)|^p}{|x-y|^{d+sp}} \right)^{\frac{1}{p}}
, \quad
    \llbracket f\rrbracket_{C^s_x} := \| f\|_{\dot C^s_x}
    = \sup_{x\neq y} \frac{|f(x)-f(y)|}{|x-y|^s},
\end{align*}
and we set $\| f\|_{\dot W^{1,p}_x}:=\| \nabla f\|_{L^p_x},\ \| f\|_{\dot W^{0,p}_x}:=\|  f\|_{L^p_x}$.
For any $h\in\R^d$, we adopt the incremental notation $\delta_h f(x):=f(x+h)-f(x)$; with this convention, for $s\in [0,1]$ and $p\in (1,\infty)$, it is well-known that
\begin{equation}\label{eq:basic_increments}
    \| \delta_h f\|_{L^p_x}=\| f(\cdot+h)-f\|_{L^p_x} \lesssim_{d,s,p} \| f\|_{\dot W^{s,p}_x} |h|^s,
    \quad\forall h\in \R^d.
\end{equation}
We also use $\langle f,g\rangle$ to denote duality pairings between functions (or possibly distributions) of the space variable. This applies e.g. to $f,g\in L^2_x$, $f\in L^p_x$ and $g\in L^{p'}_x$, but also $f\in \mathcal{S}$ and $g\in \mathcal{S}'$ (denoting respectively Schwartz functions and tempered distributions), etc.
We will replace the subscript $x$ by ${\rm loc}$ whenever we want to consider local spaces, e.g. $L^p_{{\rm loc}}$. $\hat f$ denotes the Fourier transform of $f$, whenever $f\in \mathcal{S}'$, and the variable $\xi\in\R^d$ is often used for Fourier modes.

Given two Banach spaces $E_1$, $E_2$, we denote by $E_1\cap E_2$ their intersection, as a Banach space with norm $\| \cdot\|_{E_1\cap E_2}=\| \cdot\|_{E_1}+\| \cdot\|_{E_2}$.

We always work on an interval $[0,T]$.
Given a Banach space $E$ and $[t_0,t_1]\subset [0,T]$, we use $L^q_{[t_0,t_1]} E$ as a shorthand for the Lebesgue--Bochner space $L^q([t_0,t_1];E)$; when $[t_0,t_1]=[0,T]$, we simply write $L^q_T E$.
Similarly, given a reference probability space $(\Omega,\mathcal{F},\PP)$, $L^m_\omega E=L^m(\Omega,\mathcal{F},\PP;E)$ denotes the corresponding Lebesgue--Bochner space.

These notations can be chained, e.g. we can write $L^2_\omega L^q_T H^s_x$ as a shorthand for $$L^2(\Omega,\mathcal{F},\PP; L^q([0,T];H^s(\R^d;\R^N))),$$ and so on.
Whenever some exponents coincide, we may contract them, so that e.g. $L^2_{\omega,T} L^p_x=L^2_\omega L^2_T L^p_x$ and $L^2_{\omega} L^q_{T,x}=L^2_\omega L^q_T L^q_x$.

Given a Polish space $F$ and $\gamma\in (0,1)$, we write $C^\gamma_T F$ for the space of $\gamma$-H\"older continuous, $F$-valued functions $C^\gamma([0,T];F)$.

\subsection{Structure of the noise}\label{subsec:structure_noise}

We work on a given filtered probability space $(\Omega,\mathcal{F},(\mathcal{F}_t)_{t\geq 0},\mathbb{P})$ satisfying the usual conditions. 
We set ourselves on $\R^d$, $d\geq 2$. 
The driving noise $W$ in the SPDEs we consider (e.g. \eqref{eq:nonlinear_transport}) is assumed to be a Brownian-in-time, colored-in-space homogeneous Gaussian process defined on $(\Omega,\mathcal{F},(\mathcal{F}_t)_{t\geq 0},\mathbb{P})$; hence its covariance matrix $C:\R^d\to\R^{d\times d}$ satisfies 
\begin{align}
    \mathbb{E}[W_t(x) \otimes W_s(y)] =: (t \wedge s) C(x-y),
    \quad
    t,s \in [0,T], \quad
    x,y \in \R^d.
\end{align}
Throughout the paper we will assume that
\begin{align} \label{eq:properties.C}
    \nabla \cdot C = 0
    \quad \text{and}
   \quad \hat{C} \in L^1_\xi \cap L^\infty_\xi,
\end{align}
where $\hat C$ denotes the Fourier transform of $C$. This implies in particular that $C$ is continuous and
\begin{equation}\label{eq:cancellation_fourier_covariance}
    \nabla\cdot W \equiv 0,\quad 
    \xi\cdot \hat C(\xi) \xi=0 \quad \text{for Lebesgue a.e. }\xi\in\R^d.
\end{equation}
Note that $C$ and $\hat C$ are both even, $\R^{d\times d}$-valued functions.
By \eqref{eq:properties.C} and \cite[Section 2.1]{GalLuo25}, one can represent $W$ by the series expansion
\begin{equation}\label{eq:noise_series_expansion}
    W_t(x) = \sum_{k\in \N} \sigma_k(x) W^k_t
\end{equation}
where $\{\sigma_k\}_k$ is a family of smooth, divergence-free vector fields such that
\begin{equation}\label{eq:covariance_series_expansion}
    C(x-y)=\sum_{k\in\N} \sigma_k(x)\otimes \sigma_k(y),
\end{equation}
and convergence in \eqref{eq:covariance_series_expansion} in uniform on compact sets. Representations \eqref{eq:noise_series_expansion}-\eqref{eq:covariance_series_expansion} are often useful in practical computations and we will use them systematically throughout the paper, see e.g. \autoref{sec:general.criterion}.

The majority of our results hold under the following key condition on $W$.

\begin{ass}\label{ass:noise}
    The noise covariance $C$ satisfies \eqref{eq:properties.C}.    
    Letting $Q(z):=C(0)-C(z)$, there exist $\alpha\in (0,1)$ and some constants $c_1$, $c_2$, $r_0>0$ such that
    \begin{align} \label{eq:local.expansion.noise}
        \left|Q(z)- c_1 | z |^{2 \alpha} \left[ P^{\|}_z + \left( 1 + \frac{2 \alpha}{d -
  1} \right) P^{\perp}_z \right]\right| \leq c_2 | z |^2, \quad \text{ for } | z | \leq r_0.
    \end{align}
\end{ass}
In the line above, we denoted by $P^{\|}_z := \hat{z} \otimes \hat{z}$ and $P^{\perp}_z := Id-\hat{z} \otimes \hat{z}$ respectively the longitudinal and perpendicular projections over the vector $\hat{z} := z/|z|$ for $z \neq 0$, and $Q(0)=0$.

\begin{example}[Kraichnan noise]\label{ex:kraichnan}
    Fix $\alpha\in (0,1)$ and let $C=C_\alpha$ be defined via its Fourier transform by
\begin{align}\label{eq:isotropic.covariance}
    \hat C_\alpha(\xi) = \frac{1}{\langle\xi\rangle^{d+2\alpha}} P^\perp_\xi,
    \quad
    \mbox{for some } \alpha \in (0,1).
\end{align}
Above we denoted by $\langle\xi\rangle$ the Japanese bracket, i.e. $\langle\xi\rangle=(1+|\xi|^2)^{1/2}$.
Up to multiplicative constants, this is the noise first introduced in \cite{Kraichnan1968}; \autoref{ass:noise} is satisfied by \cite[Appendix A]{DrGaPa25}.
\end{example}

\begin{example}\label{ex:ass_noise}
    Let $C$ satisfy \autoref{ass:noise} and consider $\tilde C= C+R$, where $R$ is another covariance satisfying \eqref{eq:properties.C} and such that $R\in W^{2,\infty}_x$.
    By Taylor expanding $R$ around the origin, using its evenness, it is easy to see that $\tilde C$ still satisfies \autoref{ass:noise}, for the same $\alpha$, $r_0$ and $\tilde c_1=c_2$, $\tilde c_2=c_2 + \| D^2 R\|_{L^\infty_x}$. 
    This applies in particular whenever the noise $W$ is of the form $W=W^1+W^2$, where $W^1$ has covariance $C$ and $W^2$ is Lipschitz regular.\footnote{See more generally \cite[Proposition 2.6]{GalLuo25} for similar regularity counting arguments between a given noise and its covariance.}
\end{example}

\begin{example}\label{ex:ass_noise2}
    Let $C_\alpha$ be as in \eqref{eq:isotropic.covariance}, $R>0$ fixed and consider the covariance $C$ given by
    \begin{align*}
        \hat C(\xi):= \hat C_\alpha(\xi) \mathbf{1}_{\{|\xi|>R\}};
    \end{align*}
    then \autoref{ass:noise} holds. Indeed, $R=C-C_\alpha$ has compactly supported Fourier transform and is therefore smooth by Bernstein's lemma, so we are in the setting of \autoref{ex:ass_noise}. The same applies to any other choice of perturbation of $C_\alpha$ which only modifies its Fourier transform on a bounded set.
\end{example}

\subsection{Nonlocal operators $\mathscr{R}$}\label{subsec:nonlocal_operators}
Recall that for us $\mathscr{R}$ is always a singular integral operator of convolutional type, associated to some Fourier symbol $\widehat {\mathscr{R}}$.
We collect here useful properties of such operators.
First of all, due to its convolutional structure, $\mathscr{R}$ commutes with translations; in particular, whenever $\mathscr{R} f$ is well-defined, it holds
\begin{equation}\label{eq:R_commute_translation}
    \mathscr{R}(\delta_z f)= \delta_z \mathscr{R} f,\quad \forall z\in\R^2.
\end{equation}

In the case $\mathscr{R}=\mathscr{R}_{CZ}$, by Caldéron--Zygmund theory,  for any $q\in (1,\infty)$ we have
\begin{equation}\label{eq:properties_CZ}
    \| \mathscr{R}_{CZ} f\|_{L^q_x}\lesssim_q \| f\|_{L^q_x},\quad \forall f\in L^q_x.
\end{equation}
This applies for instance to $\mathscr{R}=\nabla^\perp (-\Delta)^{-1} \partial_1$ (for IPM) and $\mathscr{R}=\nabla^\perp (-\Delta)^{-1/2}$ (for SQG).

The Biot--Savart operator $\mathscr{R}_{BS}$ is instead $(-1)$-homogeneous.
In this case, by standard properties of Riesz transforms and Riesz potentials of order $1$ (see \cite[Chapters 3-4]{stein1970singular}), for every choice of parameters ${q_1}\in (1,2)$ and $q_2\in (2,\infty)$ satisfying $\frac{1}{q_2}=\frac{1}{q_1}-\frac{1}{2}$. one has
\begin{equation}\label{eq:properties_BS_integrability}
    \| \mathscr{R}_{BS} f\|_{L^{q_2}_x}\lesssim_{q_1} \| f\|_{L^{q_1}_x},\quad \forall f\in L^{q_1}_x.
\end{equation}
Moreover, $\nabla\mathscr{R}_{BS}=-\nabla \nabla^{\perp}(-\Delta)^{-1}$ is a Caldéron--Zygmund operator, therefore
\begin{equation}\label{eq:properties_BS}
    \|\mathscr{R}_{BS} f\|_{\dot W^{1,q}_x}=\| \nabla \mathscr{R}_{BS}  f\|_{L^q_x}\lesssim_q \| f\|_{L^q_x},\quad \forall f\in L^1_x\cap L^\infty, \quad q\in (1,\infty).
\end{equation}
Estimate \eqref{eq:properties_BS} then extends to any $f$ such that $\mathscr{R}_{BS} f$ is well-defined; as a consequence, for any couple $(q_1,q_2)$ as in \eqref{eq:properties_BS_integrability}, $\mathscr{R}_{BS}$ is a bounded operator from $L^{q_1}_x\cap L^{q_2}_x$ to $W^{1,q_2}_x$.

Note in particular that $\mathscr{R}_{BS}$ is not a well-defined operator on $L^2_x$, which is why when dealing with 2D Euler we always consider functions in $L^r_x\cap L^p_x$ with $r\in (1,2)$.
However, $\mathscr{R}_{BS}$ does enjoy nice mapping properties between homogeneous Sobolev spaces:
\begin{equation}\label{eq:BS_Sobolev_spaces}
    \| \mathscr{R}_{BS} f\|_{\dot H^{s+1}_x} \lesssim \| f\|_{\dot H^s_x},\quad \forall s\in\R.
\end{equation}
As a consequence, for any $s\geq 1$, $\mathscr{R}_{BS}f\in H^s_x$ whenever $f\in \dot H^{-1}_x\cap H^{s-1}_x$.

\subsection{Solution concepts and vanishing viscosity approximations}\label{subsec:solutions}

As mentioned in the introduction, several results we obtain are valid even without any constitutive relation between the advected scalar $\theta$ and the drift $u$, as long as they satisfy suitable regularity and/or integrability assumptions.
To make this clear, let us consider more generally on $\R^d$ the stochastic transport equation
\begin{equation}\label{eq:abstract_transport_SPDE}
    \mathd \theta + b \cdot \nabla \theta \mathd t + \circ\, \mathrm{d} W \cdot \nabla \theta = f \mathd t, \quad  \theta |_{t = 0} = \theta_0.  
\end{equation}
For simplicity we always assume the forcing $f$ to be deterministic, but we allow $b$ in \eqref{eq:abstract_transport_SPDE} to be random.
As standard, in order to study solutions to \eqref{eq:abstract_transport_SPDE}, we pass to the (formally) equivalent It\^o form, which reads
\begin{equation*}
    \mathd \theta + b \cdot \nabla \theta \mathd t + \mathrm{d} W \cdot \nabla \theta = f \mathd t+\frac{1}{2} C(0):D^2 \theta \,\dd t, \quad  \theta |_{t = 0} = \theta_0,
\end{equation*}
where $C$ is the covariance of $W$.

We employ the following relatively standard notion of weak solution to \eqref{eq:abstract_transport_SPDE}, which is weak both in the analytical and in the probabilistic sense.

\begin{definition}\label{martingale_sol_linear}
    Let $C$ be a covariance satisfying \eqref{eq:properties.C}.
    We say that a tuple $(\Omega,\mathcal{F},(\mathcal{F}_t)_{t\geq 0},\mathbb{P}, \theta,b,W)$ is a solution of \eqref{eq:abstract_transport_SPDE} with deterministic initial condition $\theta_0\in L^2_x$ and deterministic forcing $f\in L^1_TL^2_x$ if:
    \begin{enumerate}
        \item $(\Omega,\mathcal{F},(\mathcal{F}_t)_{t\geq 0},\mathbb{P})$ is a filtered probability space satisfying the standard assumptions and $W$ is an $\mathcal{F}_t$-Wiener process with covariance $C$;
        \item $\theta:\Omega\times [0,T]\rightarrow L^2_x$  is a $\mathcal{F}_t$-progressively measurable process whose trajectories are $\mathbb{P}$-a.s. weakly continuous in the sense of distributions and such that $\theta\in L^2_T L^2_x$ $\mathbb{P}$-a.s.;
        \item $b:\Omega\times [0,T]\times \R^d\to \R^d$ is $\mathcal{F}_t$-progressively measurable, $\PP$-a.s. $\nabla\cdot b=0$ in the sense of distributions, and $\mathbb{P}$-a.s. $ b, b \theta\in L^1_{loc}([0,T]\times \R^d)$;
        \item for every $\phi\in C^{\infty}_c(\R^d)$, the following holds $\mathbb{P}$-a.s. for every $t\in [0,T]$
        \begin{align}\label{weak_formulation}
            \langle \theta_t,\phi\rangle-\langle \theta_0,\phi\rangle&=\int_0^t \left( \langle b_s\theta_s,\nabla\phi\rangle+\frac{1}{2}\langle \theta_s,C(0):D^2\phi\rangle +\langle f_s,\phi\rangle\right) \mathd{ s}+\int_0^t \langle \theta_s \nabla \phi
            ,  \mathd{ W}_s\rangle.
        \end{align}
    \end{enumerate}
\end{definition}

Under the assumptions in \autoref{martingale_sol_linear}, the integrals in \eqref{weak_formulation} are well defined, continuous semimartingales, see e.g. \cite[Lemma 2.2]{DrGaPa25}.
The nonlinear SPDE \eqref{eq:nonlinear_transport} can be regarded as a special case of \eqref{eq:abstract_transport_SPDE} with an additional constraint:

\begin{definition}\label{martingale_sol}
    Let $C$, $\theta_0$ and $f$ be given as in \autoref{martingale_sol_linear} and $d=2$. We say that a tuple $(\Omega,\mathcal{F},(\mathcal{F}_t)_{t\geq 0},\mathbb{P}, \theta,W)$ is a probabilistically weak solution to \eqref{eq:nonlinear_transport}
    if it satisfies \autoref{martingale_sol_linear} with $b=u=\mathscr{R}\theta$, where the latter is assumed to be $\PP$-a.s. a well-defined element of $L^1_{\rm loc}([0,T]\times\R^d)$.
    We say that $\theta$ is a probabilistically strong solution if the above holds with $(\mathcal{F}_t)_{t\geq 0}$ being the standard augmentation of the filtration generated by $W$.
\end{definition}

As discussed in the introduction, many of the properties of \eqref{eq:abstract_transport_SPDE}-\eqref{eq:nonlinear_transport} will be established through vanishing viscosity approximations.
Rather than studying directly \eqref{eq:abstract_transport_SPDE} directly, let us consider suitable viscous approximations $\{\theta^\nu\}_{\nu \in (0,1)}$ solving
\begin{align}\label{eq:general.viscous.SPDE}
    \mathd \theta^\nu + b^\nu \cdot \nabla \theta^\nu \mathd t + \mathrm{d} W \cdot \nabla \theta^\nu =
  \frac12 C(0) : D^2 \theta^\nu \mathd t + \nu \Delta\theta^\nu \mathd t + f^\nu \mathd t, \quad \theta^\nu\vert_{t=0}=\theta^\nu_0.
\end{align}

In what follows, we will always assume that, for any fixed $\nu \in (0,1)$,
\begin{equation}\label{eq:viscous_id_basic}
    \theta^\nu_0\in L^1_x\cap L^\infty_x, \quad f^\nu\in L^1_T (L^1_x\cap L^\infty_x).
\end{equation}
We will further assume that, once a filtered probability space $(\Omega,\mathcal{F},(\mathcal{F}_t)_{t\geq 0},\mathbb{P})$ is given, supporting an $\mathcal{F}_t$-Brownian motion $W$ with covariance $C$ and a $\mathcal{F}_t$-progressive divergence-free drift $b^\nu$, then existence and pathwise uniqueness of $\mathcal{F}_t$-progressive solutions $\theta^\nu$ to \eqref{eq:general.viscous.SPDE} holds, in the class
\begin{align*}
    \theta^\nu\in L^\infty_{\omega,T} (L^1_x\cap L^\infty_x)\cap L^\infty_\omega L^2_T \dot H^1_x.
\end{align*}
This is true for instance whenever $b^\nu\in L^\infty_{\omega,T} L^p_x$ for some $p\in [2,\infty]$; indeed, existence can be established by weak compactness as in \cite[Section 5]{bagnara2025regularization}, while uniqueness follows from the stochastic Lions--Magenes lemma (\cite[Theorem 2.13]{RozLot2018}) since $b^\nu \theta^\nu\in L^2_{\omega,T,x}$.
Thanks to the same result, $\theta^\nu$ satisfies $\PP$-a.s. the energy balance
\begin{align}\label{viscous_energy_eq}
    \norm{\theta^\nu_t}_{L^2_x}^2+2\nu\int_0^t\norm{\nabla \theta^{\nu}_s}_{L^2_x}^2 \mathd s&=\|\theta_0^{\nu}\|_{L^2_x}^2
    +
    2\int_0^t \langle f^\nu_s, \theta^\nu_s \rangle \mathd s,\quad \forall t\in [0,T],
\end{align}
and moreover $\theta^\nu$ has $\PP$-a.s. paths in $C_T L^2_x$.
Furthermore, mutatis mutandis in \cite[Lemma 4.7 and Proposition 5.3]{bagnara2025regularization}, one has the $\PP$-a.s. pathwise estimates
\begin{align}\label{viscous_bound_1}
   \sup_{t \in [0,T]} \norm{\theta^\nu_t}_{L^q_x}&\leq \norm{\theta^\nu_0}_{L^q_x}+\int_0^T \norm{f^\nu_t}_{L^q_x} \mathd t,
   \quad \forall q\in [1,+\infty],
   \\
    \label{viscous_bound_2}
    \nu\int_0^T \norm{\nabla\theta^\nu_t}_{L^2_x}^2\mathd s& \leq \frac{1}{2}\left(\norm{\theta^\nu_0}_{L^2_x}+\int_0^T \norm{f^\nu_t}_{L^2_x} \mathd t\right)^2;
\end{align}
One can improve \eqref{viscous_bound_1} to the following, sharper one: $\PP$-a.s.
\begin{align}\label{viscous_bound_variant}
   \sup_{t \in [s,T]} \norm{\theta^\nu_t}_{L^q_x}&\leq \norm{\theta^\nu_s}_{L^q_x}+\int_s^T \norm{f^\nu_t}_{L^q_x} \mathd t, \quad \forall s\in[0,T],\quad\forall q\in (1,+\infty).
\end{align}
Indeed, running the same argument on the time interval $[s,T]$, together with uniqueness, yields the $\PP$-a.s. bound \eqref{viscous_bound_1} with $0$ replaced by any fixed $s$; one can then note that, for $q\in (1,\infty)$, $\theta^\nu$ has $\PP$-a.s. paths in $C_T L^q_x$, by interpolating between $C_T L^2_x$ and $L^\infty_T (L^1_x\cap L^\infty_x)$. One can finally choose a countable dense family $\{s_n\}_{n\in \N}\subset [0,T]$ where the bound \eqref{viscous_bound_1} holds $\PP$-a.s. with $0$ replaced by any $s_n$, and extend it to any $s\in [0,T]$ by continuity, obtaining \eqref{viscous_bound_variant}.

All the aforementioned estimates still apply in the case when $d=2$ and $b^\nu=\mathscr{R}\theta^\nu$, namely when $\theta^\nu$ solves \eqref{eq:nonlinear_viscous}.
In this case, mutatis mutandis in \cite[Lemma 4.7]{bagnara2025regularization}, for any $\nu \in (0,1)$ and $(\theta^\nu_0,f^\nu)$ satisfying \eqref{eq:viscous_id_basic}, there exists a probabilistically strong solution $\theta^\nu$ to \eqref{eq:nonlinear_viscous}.
In this case, since the driving noise has independent stationary increments, the unique solution $\theta^\nu$ has the Markov property.
In order to obtain uniform-in-$\nu$ estimates, we will need some control on the quantities appearing on the right-hand sides of \eqref{viscous_bound_1}-\eqref{viscous_bound_2}-\eqref{viscous_bound_variant}.
The viscous approximations we consider are specified in the following definition.
\begin{definition}\label{defn:vanishing_scheme}
    Let $\theta_0$, $f$, $\mathscr R$, $p,r \in [1,\infty)$ be given. We say that a family $\{(\theta^\nu_0,f^\nu,\theta^\nu)\}_{\nu \in (0,1)}$ is a \emph{vanishing viscosity scheme} for the SPDE \eqref{eq:nonlinear_transport} on $[0,T]$ if
    \begin{align*}
        &\{(\theta^\nu_0,f^\nu)\}_{\nu \in (0,1)}\subset L^1_x\cap L^\infty_x \times L^1_T(L^1_x\cap L^\infty_x),\\ 
        & \sup_{\nu \in (0,1)}
        \| \theta^\nu \|_{L^\infty_{\omega,T} \left(L^r_x\cap L^p_x\right)} + \sup_{\nu \in (0,1)} \norm{f^\nu}_{L^1_T\left(L^r_x\cap L^p_x\right)}<+\infty,\\
        &(\theta_0^\nu,f^\nu)\to (\theta_0, f) \text{ in } L^r_x\cap L^p_x \times L^1_T(L^r_x\cap L^p_x) \text{ as }\nu\to 0^+,
    \end{align*}
    and for every $\nu \in (0,1)$ the process $\theta^\nu$ solves \eqref{eq:nonlinear_viscous} in a sense analogous to \autoref{martingale_sol}.
\end{definition}

\begin{rmk} \label{rmk:bound_viscoud_data} 
The class of vanishing viscosity schemes in \autoref{defn:vanishing_scheme} is non-empty, as can be readily checked using cut-offs and convolutions.
Indeed, the approximations can be constructed so that additionally
\begin{equation} \label{bound_viscoud_data}
    \sup_{\nu \in (0,1)} \norm{\theta_0^\nu}_{L^r_x\cap L^p_x} \leq  \norm{\theta_0}_{L^r_x\cap L^p_x}, \quad
    \sup_{\nu \in (0,1)} \norm{f^\nu}_{L^1_T\left(L^r_x\cap L^p_x\right)} \leq  \norm{f}_{L^1_T\left(L^r_x\cap L^p_x\right)}.
\end{equation}
\end{rmk}

In view of \eqref{viscous_bound_1}-\eqref{viscous_bound_2}, noting that under \autoref{ass:exponent.beta} we have $L^r_x\cap L^p_x \subset L^2_x $ by interpolation, any vanishing viscosity scheme in the sense of \autoref{defn:vanishing_scheme} satisfies
\begin{equation}\label{viscous_uniform_bound}\begin{split}
    \sup_{\nu \in (0,1)}
    \left( \| \theta^\nu \|_{L^\infty_{\omega,T} \left(L^r_x\cap L^p_x\right)}^2 + \nu\|\theta^\nu\|^2_{L^\infty_\omega L^2_T \dot{H}^1_x}
    \right) 
    \lesssim 
    \sup_{\nu \in (0,1)} \left(
     \| \theta^\nu \|_{L^\infty_{\omega,T} \left(L^r_x\cap L^p_x\right)}^2 + \norm{f^\nu}_{L^1_T\left(L^r_x\cap L^p_x\right)}^2\right)< \infty.
\end{split}\end{equation}

We finally recall one further property of the above viscous approximations, which will turn out convenient to cover critical regimes.
By \cite[Lemma A.3]{bagnara2025regularization}, for each fixed $\varepsilon>0$ we can decompose $\theta_0^{\nu},f^{\nu}$ as
\begin{align}
\theta_0^{\nu}=:\theta_0^{\nu,1}+\theta_0^{\nu,2},
\quad
f^{\nu}=:f^{\nu,1}+f^{\nu,2},
\end{align}
where $\theta_0^{\nu,1}, \theta_0^{\nu,2}\in L^{1}_x\cap L^{\infty}_x$ and $f^{\nu,1}, f^{\nu,2}\in L^1_T(L^{1}_x\cap L^{\infty}_x)$, with bounds
\begin{align}\label{eq:split_in_cond}
    \sup_{\nu\in (0,1)}\left(\|\theta_0^{\nu,1}\|_{L^r_x\cap L^p_x}+\|f^{\nu,1}\|_{L^1_T (L^r_x\cap L^p_x)}\right)&\leq \varepsilon,
    \\
    \label{eq:split_in_cond.bis}
    \sup_{\nu\in (0,1)}\left(\|\theta_0^{\nu,2}\|_{L^r_x\cap L^{\infty}_x}+\|f^{\nu,2}\|_{L^1_T (L^r_x\cap L^{\infty}_x)}\right)&\leq M_{\eps}<\infty.
\end{align}
In view of this splitting and the pathwise uniqueness for the viscous problem \eqref{eq:general.viscous.SPDE} (with $b^\nu=u^\nu$), the solutions $\theta^{\nu}$ to \eqref{eq:nonlinear_viscous} can be written as $\theta^{\nu}=:\theta^{\nu,1}+\theta^{\nu,2} $ for $\{\theta^{\nu,i}\}_{i=1,2}$ solving
\begin{align*}
\begin{cases} 
\mathd \theta^{\nu,i} + u^\nu \cdot \nabla \theta^{\nu,i} \mathd t + \circ\, \mathrm{d} W \cdot \nabla \theta^{\nu,i} =
  f^{\nu,i} \mathd t + \nu \Delta \theta^{\nu,i} \mathd t, \\
  \theta^{\nu,i} |_{t = 0} = \theta^{\nu,i}_0,
\end{cases}
\end{align*}
on the whole time interval $[0,T]$, so that again by \eqref{viscous_bound_1} we have $\theta^{\nu,i}\in L^{\infty}_{\omega,T}(L^1_x\cap L^{\infty}_x)$ and
\begin{align}\label{eq_viscous_splitting}
        \sup_{\nu\in (0,1)}\|\theta^{\nu,1}\|_{L^{\infty}_{\omega,T}\left(L^r_x\cap L^p_x\right)}\leq \varepsilon,\quad \sup_{\nu\in (0,1)}\|\theta^{\nu,2}\|_{L^{\infty}_{\omega,T}(L^r_x\cap L^{\infty}_x)}\leq M_\eps <\infty.
\end{align}

\subsection{Besov-type spaces}\label{subsec:besov}

Recall the definition of the spaces $\tilde{L}^2_{\omega,T} \tilde B^\beta_{2,\infty}$ as given in \eqref{def:Besov-like.reg}; these are Banach spaces whenever endowed with the norm $\| f\|_{\tilde{L}^2_{\omega,T} \tilde B^\beta_{2,\infty}}:=\| f\|_{L^2_{\omega,T,x}} + \llbracket f\rrbracket_{\tilde{L}^2_{\omega,T} \tilde B^\beta_{2,\infty}}$.

Estimating directly increments $\delta_z f$ can be rather challenging; it turns out convenient to consider spherical averages of increments instead. For any $\varrho>0$, set
\begin{equation}\label{eq:relevant_seminorms}\begin{split}
    g_\beta(\varrho,f):=\mathbb{E} \left[ \int_{[0, T] \times \mathbb{R}^d}
   \int_{\mathbb{S}^{d-1}} \frac{| \delta_{\varrho z} f_s (y) |^2}{\varrho^{2\beta}} \sigma (\dd z) \mathd y \mathd s \right], \quad 
   \seminorm{f}_{\beta,\varrho_0}^2
   := \sup_{\varrho \in (0,\varrho_0)} g_\beta(\varrho,f).
\end{split}\end{equation}
This reduction essentially comes without cost: by \cite[Lemma 2.18]{DrGaPa25}, it holds
\begin{align*}
    \seminorm{f}_{\beta,+\infty}\sim\llbracket f\rrbracket_{\tilde L^2_{\omega,T} \tilde B^\beta_{2,\infty}}.
\end{align*}
Moreover, by triangular inequality one can readily check that
\begin{align*}
    \seminorm{f}_{\beta,+\infty}\leq \seminorm{f}_{\beta,\varrho_0} + \frac{1}{\varrho_0^\beta} \| f\|_{L^2_{\omega,T,x}}.
\end{align*}
As a consequence, in order to obtain regularity estimates, we only need to consider sufficiently small increments, namely study $\seminorm{f}_{\beta,\varrho_0}$ for $\varrho_0$ sufficiently small. This localization turns out very convenient, as $\seminorm{f}_{\beta,\varrho_0}$ provides rather precise control over quantities like $\| \delta_z f\|_{L^2_{\omega,T} H^s_x}$ whenever $|z|\leq \varrho_0$ and $s\in (0,\beta)$, which in turn gives us access to Sobolev embeddings. We refer to \autoref{app:besov} and in particular \autoref{lem:final_besov} for the precise (technical) statements, which will be crucial in \cref{sec:general.criterion,sec:computation}

To prove sharpness of anomalous dissipation in \autoref{thm_anomalous_dissipation}, we need a similar deterministic class of spaces, which will be relevant in \autoref{subsec:no_anomalous}.
For $\gamma\in (0,1)$ and $p\in [1,+\infty]$, we denote by $E^{\gamma,p}_{[s,t]}$ the closure of Schwartz functions in $\tilde L^2_{[s,t]}\tilde B^\gamma_{p,\infty}$, the latter being defined analogously to \cite[Section 2.6.3]{bahouri2011fourier}. In particular, arguing as in \cite[Lemma 2.18]{DrGaPa25}, one has
\begin{align*}
    \seminorm{f}_{\tilde L^2_{[s,t]} \tilde B^\gamma_{p,\infty}}
    \sim 
    \sup_{z\neq 0} \frac{1}{|z|^\gamma}\| \delta_z f\|_{L^2_{[s,t]}L^p_x}, 
    \quad
    \| f\|_{\tilde L^2_T \tilde B^\gamma_{p,\infty}}
    = 
    \| f\|_{L^2_{[s,t]}L^p_x} + \seminorm{f}_{\tilde L^2_{[s,t]} \tilde B^\gamma_{p,\infty}}
\end{align*}
and
\begin{equation}\label{eq:characterization_sharp_endpoint}
    f\in E^{\gamma,p}_{[s,t]}
    \quad\Leftrightarrow \quad 
    f\in \tilde L^2_{[s,t]} \tilde B^\gamma_{p,\infty} 
    \quad \text{and} \quad 
    \lim_{\eps\to 0} \sup_{|z|\leq \eps} \frac{1}{|z|^\gamma}\| \delta_z f\|_{L^2([s,t]; L^p_x)}=0.
\end{equation}
Notice in particular that, for any $q\in [2,\infty)$ we have the embedding
\begin{align*}
    L^2_{[s,t]} B^\gamma_{p,q} \hookrightarrow E^{\gamma,p}_{[s,t]}.
\end{align*}

It is clear from the definitions that $L^2_\omega E^{\gamma,p}_{T} \subset \tilde{L}^2_{\omega,T} \tilde B^\beta_{2,\infty}$. Due to position of quantifiers however, these spaces are often not directly comparable; the use of $E^{\gamma,p}_{T}$ will provide finer pathwise estimates in \autoref{prop:dissipation.implies.irregularity}.

\subsection{Weighted spaces}\label{subsec:weighted_spaces}

Since $\R^2$ is unbounded, we will employ weighted Sobolev spaces in order to perform tightness arguments in the proof of \autoref{thm:regularization}, see \autoref{subsec:weak_existence}.
Let $w(x) := (1 + |x|^2)^{-2}$; for any $s\in \R$, let $\tilde{H}^s_x$ be the closure of compactly supported smooth functions with respect to the weighted norm 
\begin{align*}
    \|\phi\|_{\tilde{H}^s_x}=\|\phi\, w\|_{H^s_x}.
\end{align*}
defined by the weight $w(x) := (1 + |x|^2)^{-2}$.
By \cite[Lemma A.4]{bagnara2025regularization}, for any $s\in\R$ and $\delta>0$ one has
\begin{equation}\label{eq:weighted_spaces_embeddings}
    H^{s+\delta}_x\hookrightarrow \tilde H^s_x\hookrightarrow H^{s-\delta}_{ {\rm loc}}\quad \text{with compact embeddings}.
\end{equation}
Using these facts and classical Aubin-Lions-Simon compactness arguments (cf. \cite{Simon1987}), one can obtain compactness criteria for $\tilde H^{s}_x$-valued paths.
The proof of the next statement is a basic variation on the ones of \cite[Lemma 2.2]{crippa2025zero} and \cite[Lemma 3.5]{bagnara2025no}, and it is therefore omitted.

\begin{lem}\label{lem:compactness_paths_weights}
The following hold:
\begin{itemize}
    \item[i)] For any $s_1,\gamma,\delta_1>0$, we have the compact embedding
    \begin{equation*}
        L^\infty_T L^2_x\cap C^\gamma_T H^{-s_1}_x \hookrightarrow C_T \tilde H^{-\delta_1}_x.
    \end{equation*}
    \item[ii)] If $K$ is compact in $C_T \tilde H^{-\delta_1}_x$, then for any $s_2>0$ and any $R>0$, the set
    \begin{align*}
        \tilde K_R:= K\cap \left\{ f\in L^2_T H^{s_2}_x: \ \| f\|_{L^2_T H^{s_2}_x} \leq R \right\}
    \end{align*}
    is compact in $C_T \tilde H^{-\delta_1}_x\cap L^2_T \tilde H^{s_2-\delta_2}_x$.
\end{itemize}
\end{lem}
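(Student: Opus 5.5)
Both parts follow the standard Aubin--Lions--Simon pattern, using the compact embeddings \eqref{eq:weighted_spaces_embeddings}. The plan is to prove part i) directly via a diagonal/Ascoli argument, and then to derive part ii) from i) together with an interpolation inequality.

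\emph{Part i).} Let $B$ be a bounded set in $L^\infty_T L^2_x\cap C^\gamma_T H^{-s_1}_x$. I would verify the Arzel\`a--Ascoli criteria in $C_T \tilde H^{-\delta_1}_x$.

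\emph{Pointwise relative compactness.} For each fixed $t$, the set $\{f_t: f\in B\}$ is bounded in $L^2_x$. Since $L^2_x=H^0_x\hookrightarrow \tilde H^{-\delta_1}_x$ compactly by \eqref{eq:weighted_spaces_embeddings} (with $s=-\delta_1$, $\delta=\delta_1$), this set is relatively compact. Here I use that $C^\gamma_T H^{-s_1}_x$ paths are continuous, so the $L^\infty_T$ bound holds at every $t$ by weak lower semicontinuity.

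\emph{Equicontinuity.} I would interpolate between the two norms. The weighted norm is $\|\phi w\|_{H^{-\delta_1}}$, and multiplication by the smooth, bounded weight $w$ with bounded derivatives is continuous on every $H^s_x$. Hence
\[
\|f_t-f_s\|_{\tilde H^{-\delta_1}_x}\lesssim \|f_t-f_s\|_{H^{-\delta_1}_x}.
\]
If $\delta_1<s_1$, interpolation between $L^2_x$ and $H^{-s_1}_x$ gives
\[
\|f_t-f_s\|_{H^{-\delta_1}_x}\le \|f_t-f_s\|_{L^2_x}^{1-\delta_1/s_1}\|f_t-f_s\|_{H^{-s_1}_x}^{\delta_1/s_1}\lesssim |t-s|^{\gamma\delta_1/s_1}.
\]
This yields a uniform H\"older modulus. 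If instead $\delta_1\ge s_1$, the bound is immediate from the $C^\gamma_T H^{-s_1}_x$ seminorm.

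Ascoli's theorem then gives relative compactness. I do not expect a closedness issue, since compactness of the embedding only requires that bounded sets are relatively compact.

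\emph{Part ii).} Take a sequence $(f_n)\subset\tilde K_R$. By compactness of $K$, extract a subsequence converging in $C_T\tilde H^{-\delta_1}_x$ to some $f\in K$. The $L^2_T H^{s_2}_x$ bound persists in the limit by weak compactness, so $\|f\|_{L^2_TH^{s_2}_x}\le R$.

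It then suffices to prove convergence in $L^2_T\tilde H^{s_2-\delta_2}_x$. Set $g_n=f_n-f$ and $\tilde g_n=g_n w$. The sequence $\tilde g_n$ is bounded in $L^2_T H^{s_2}_x$ and tends to $0$ in $C_T H^{-\delta_1}_x$. Interpolating in space and then applying H\"older in time gives
\[
\|\tilde g_n\|_{L^2_TH^{s_2-\delta_2}}\le \|\tilde g_n\|_{L^2_TH^{s_2}}^{\lambda}\,\|\tilde g_n\|_{L^2_TH^{-\delta_1}}^{1-\lambda}\to0,
\]
where $s_2-\delta_2=\lambda s_2-(1-\lambda)\delta_1$ with $\lambda\in(0,1)$.

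Finally, the set $\tilde K_R$ is closed: the norm bound passes to the limit by weak lower semicontinuity, and $K$ is closed.

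The main obstacle I expect is the bookkeeping around the weight. This includes checking that multiplication by $w$ is bounded on negative-order spaces, which follows by duality from the positive-order case. It also includes confirming that the $\tilde H$ norms are exactly the ones used in the interpolation above.
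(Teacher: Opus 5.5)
Your proof is correct and is essentially the Aubin--Lions--Simon-type argument the paper has in mind; the paper omits the proof and refers to \cite[Lemma 2.2]{crippa2025zero} and \cite[Lemma 3.5]{bagnara2025no}. For i) you use Arzel\`a--Ascoli, with pointwise compactness from $L^2_x\hookrightarrow\tilde H^{-\delta_1}_x$ and equicontinuity by interpolation. For ii) you use a Fourier-side interpolation that only needs multiplication by $w$ to be bounded on $H^s_x$. Identifying the weak $L^2_TH^{s_2}_x$ limit with $f$ is immediate, since $w>0$ and $1/w=(1+|x|^2)^2$ is a polynomial. The only case you leave out is $\delta_2\ge s_2+\delta_1$, where no admissible $\lambda\in(0,1)$ exists; it is trivial, because then $\|\cdot\|_{\tilde H^{s_2-\delta_2}_x}\le\|\cdot\|_{\tilde H^{-\delta_1}_x}$.
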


\section{A general criterion for anomalous regularization} 
\label{sec:general.criterion}

Consider momentarily a solution $\theta$ to the linear Kraichnan model, namely with $\mathscr{R}\equiv 0$ and $f\equiv 0$ in \eqref{eq:nonlinear_transport}.
In this case, as shown in \cite[Section 5]{DrGaPa25}, due to the specific structure of covariance of $W$, a representation of the dissipation measure $\mathcal{D}[\theta]$ \`{a} la Duchon-Robert produces a coercive term in the energy balance; in turn, this term controls the $\tilde{L}^2_{\omega,T} \tilde{B}^{1-\alpha}_{2,\infty}$ Besov-type regularity of $\theta$. More precisely, \cite[equation (1.11)]{DrGaPa25} yields
\begin{align*}
    \|\theta_0\|_{L^2_x}^2 - \EE[\| \theta_T\|_{L^2_x}^2] \sim \lim_{\eps \to 0} \int_0^T \int_{\mathbb{S}^{d-1}} \frac{\EE[\| \delta_{\varrho z}\theta_t\|_{L^2_x}^2]}{\varrho^{2(1-\alpha)}} \sigma(\dd z) \dd t.
\end{align*}
This suggests the use of Duchon--Robert-type representation formulae as regularization estimates beyond the linear case.
Here we explore this idea more in depth, proving it to be also very effective in more complicated, nonlinear models.

The main result of this section is \autoref{lem:gron_increments}; it can be seen as a general criterion, allowing to obtain anomalous regularization estimates in a perturbative fashion, whenever the commutator $\mathcal{T}_{b,\theta}$ associated to the transport term $b\cdot\nabla \theta$ is of lower order compared to the expected regularization as measured by $\seminorm{\theta}_{\beta,\varrho_0}$, which is localized at sufficiently small scales.

We focus here on general SPDEs of the form \eqref{eq:abstract_transport_SPDE} on $\R^d$ with random drift $b$; in the upcoming \autoref{sec:computation} we will then specialize our results to active scalar SPDEs in $\R^2$ of the form \eqref{eq:nonlinear_transport}.
As before, rather than studying directly \eqref{eq:abstract_transport_SPDE}, let us consider its viscous approximations $\{\theta^\nu\}_{\nu \in (0,1)}$ solving \eqref{eq:general.viscous.SPDE}.
We are interested in uniform-in-$\nu$ bounds on suitable Besov-type norms on $\theta^\nu$. 
We point out that we do not necessarily aim at closing the an estimate best possible regularity exponent $1-\alpha$; indeed, there are scenarios (cf. some of the supercritical cases in \cref{ass:exponent.beta}) in which one can only close the estimate for \emph{some} $\beta<1-\alpha$.

We first need the following deterministic lemma; recall the notation $\delta_z g (x) := g (x + z) - g (x)$.
\begin{lem} \label{lem:trilinear}
Let $\chi:\R^d \to \R$ be a symmetric, compactly supported  probability density with $\nabla \chi \in L^\infty_x$, and denote $\chi^\varepsilon(z) := \varepsilon^{-d} \chi(\varepsilon^{-1} z)$.
  Let $\theta \in L^p_x$ for some $p \in [2,\infty]$ and define $\theta^{\varepsilon} :=  \chi^{\varepsilon} \ast \theta$. 
  Let $b$ be a divergence-free velocity field such that $b \in L^q_x$ with $1/q + 2/p=1$.
  Then for any $\eps>0$ it holds that
  \begin{equation} \label{eq:commutator}
    \langle b \cdot \nabla \theta^{\varepsilon}, \theta \rangle = -
    \frac{1}{4}  \int_{\mathbb{R}^d \times \mathbb{R}^d} \frac{1}{\varepsilon}
    \nabla \chi (z) \cdot \delta_{\varepsilon z} b (x)  |  \delta
_{\varepsilon z} \theta (x) |^2 \mathd x \mathd z. 
  \end{equation}
\end{lem}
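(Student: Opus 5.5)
The plan is a direct Duchon--Robert-type computation. It uses only three structural facts: $\chi$ is symmetric, so $\nabla\chi^\eps$ is odd; $\int\nabla\chi^\eps=0$; and $\nabla\cdot b=0$. All integrals below are absolutely convergent by H\"older's inequality with $\frac1q+\frac2p=1$, since $\nabla\chi^\eps\in L^\infty_x$ has compact support and $\theta\in L^p_x$. This justifies every use of Fubini and every change of variables.

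\emph{Step 1 (increment form of the gradient).} By symmetry of $\chi$ we have $\theta^\eps(x)=\int\chi^\eps(z)\theta(x+z)\,\dd z$. Moving the derivative onto $\chi^\eps$ and using $\int\nabla\chi^\eps=0$ gives
\begin{equation*}
\nabla\theta^\eps(x)=-\int\nabla\chi^\eps(z)\,\delta_z\theta(x)\,\dd z .
\end{equation*}
Hence
\begin{equation*}
\langle b\cdot\nabla\theta^\eps,\theta\rangle=-\iint\nabla\chi^\eps(z)\cdot b(x)\,\delta_z\theta(x)\,\theta(x)\,\dd x\,\dd z .
\end{equation*}

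\emph{Step 2 (completing the square).} Set $I:=\iint\nabla\chi^\eps(z)\cdot b(x)\,|\delta_z\theta(x)|^2\,\dd x\,\dd z$ and expand $|\delta_z\theta(x)|^2=\theta(x+z)^2-2\theta(x+z)\theta(x)+\theta(x)^2$.
\begin{itemize}
\item The $\theta(x)^2$ term vanishes because $\int\nabla\chi^\eps=0$.
\item For the $\theta(x+z)^2$ term, note that $\int\nabla\chi^\eps(z)\,\theta^2(x+z)\,\dd z=-\nabla_x(\chi^\eps\ast\theta^2)(x)$. Its pairing with $b$ vanishes since $\nabla\cdot b=0$.
\item In the cross term we may replace $\theta(x+z)$ by $\delta_z\theta(x)$, again because $\int\nabla\chi^\eps=0$.
\end{itemize}
This yields $I=-2\iint\nabla\chi^\eps\cdot b\,\delta_z\theta\,\theta$, that is, $\langle b\cdot\nabla\theta^\eps,\theta\rangle=\tfrac12 I$.

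\emph{Step 3 (symmetrization and rescaling).} Change variables $x'=x+z$, $z'=-z$. Then $\delta_z\theta(x)=-\delta_{z'}\theta(x')$, $b(x)=b(x'+z')$ and $\nabla\chi^\eps(z)=-\nabla\chi^\eps(z')$. Therefore
\begin{equation*}
I=-\iint\nabla\chi^\eps(z)\cdot b(x+z)\,|\delta_z\theta(x)|^2\,\dd x\,\dd z .
\end{equation*}
Averaging the two expressions for $I$ gives $2I=-\iint\nabla\chi^\eps(z)\cdot\delta_z b(x)\,|\delta_z\theta(x)|^2$, so $\langle b\cdot\nabla\theta^\eps,\theta\rangle=-\tfrac14\iint\nabla\chi^\eps(z)\cdot\delta_z b\,|\delta_z\theta|^2$. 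Finally substitute $z=\eps w$. Since $\nabla\chi^\eps(\eps w)=\eps^{-d-1}\nabla\chi(w)$ and $\dd z=\eps^d\dd w$, this produces exactly \eqref{eq:commutator}.

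\emph{Main obstacle.} The only delicate point is justifying $\int b\cdot\nabla g\,\dd x=0$ for $g=\chi^\eps\ast\theta^2$ at the endpoint exponents.
\begin{itemize}
\item For $p\in[2,\infty)$ we have $\nabla g=\nabla\chi^\eps\ast\theta^2\in L^{p/2}_x$ and $b\in L^q_x$ with $q=(p/2)'$. Here I would approximate $g$ by $g\phi_R$, with $\phi_R$ a smooth cutoff, and then mollify. The cutoff error is $\int b\cdot\nabla\phi_R\, g$, which tends to $0$ by H\"older since $g\in L^{p/2}_x$ and $\|\nabla\phi_R\|_{L^\infty_x}\lesssim R^{-1}$. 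For $p=2$ one instead uses $g\in L^1_x\cap L^\infty_x$ and $b\in L^\infty_x$.
\item For $p=\infty$ (so $q=1$) I would instead mollify $b$, which preserves $\nabla\cdot b=0$, and pass to the limit using $\nabla g\in L^\infty_x$ and $L^1_x$-convergence of the mollified $b$.
\end{itemize}
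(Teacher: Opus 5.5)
Your proof is correct and is essentially the paper's argument. Both expand $|\delta_{\varepsilon z}\theta|^2$, kill the two diagonal terms using $\int\nabla\chi^\varepsilon=0$ and $\nabla\cdot b=0$, and use the oddness of $\nabla\chi^\varepsilon$ to produce $\delta_{\varepsilon z}b$. The only difference is the order: the paper symmetrizes first, via the DiPerna--Lions commutator $\tfrac12\iint[b(x)-b(y)]\cdot\nabla\chi^\varepsilon(x-y)\,\theta(x)\theta(y)\mathd x\mathd y$, and expands afterwards, whereas you expand with the unsymmetrized $b(x)$ and symmetrize at the end via $(x,z)\mapsto(x+z,-z)$.

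Your ``main obstacle'' is lighter than you make it and needs no case distinction in $p$. By Fubini,
\[
\int b\cdot\nabla(\chi^\varepsilon\ast\theta^2)\mathd x=\int\theta^2(y)\Big(\int b(x)\cdot\nabla_x[\chi^\varepsilon(x-y)]\mathd x\Big)\mathd y,
\]
and the inner integral vanishes by testing $\nabla\cdot b=0$ against the compactly supported Lipschitz function $\chi^\varepsilon(\cdot-y)$. This is exactly how the paper uses incompressibility, and it works uniformly for $p\in[2,\infty]$.
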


\begin{proof}
First, notice that by Young's convolution inequality we have $\nabla \theta^\varepsilon \in L^p_x$ and so the quantity $\langle b \cdot \nabla \theta^{\varepsilon}, \theta \rangle$ is well-defined.
Since $b$ is divergence-free and $\chi$ is symmetric, by integration by parts we have
  \begin{align*}
    2 \langle b \cdot \nabla \theta^{\varepsilon}, \theta \rangle 
    &= 
    \langle b \cdot \nabla \theta^{\varepsilon}, \theta \rangle - \langle (b \cdot \nabla \theta)^{\varepsilon}, \theta \rangle,
  \end{align*}
where $(b \cdot \nabla \theta)^{\varepsilon} := (b \cdot \nabla \theta)\ast \chi^{\varepsilon}$, and the product $\langle (b \cdot \nabla \theta)^{\varepsilon}, \theta \rangle$ is well-defined since $(b \cdot \nabla \theta)^\varepsilon = (\nabla \cdot (b\theta))^\varepsilon = \nabla \cdot (b \theta)^\varepsilon \in L^{p'}_x$, with $p'$ being the H\"older conjugate of $p$. The above is the classical DiPerna--Lions commutator \cite{diperna1989ordinary}:
\begin{align*}
   2 \langle b \cdot \nabla \theta^{\varepsilon}, \theta \rangle
    &= \int_{\mathbb{R}^d \times \mathbb{R}^d} [b (x) - b (y)] \cdot \nabla
    \chi^{\varepsilon} (x - y) \theta (x) \theta (y) \mathd x \mathd y.
\end{align*}
  
  Next, let us manipulate the right-hand side of \eqref{eq:commutator}. 
  By the assumptions on $p$ and $q$, the double integral is well-defined for every $\varepsilon>0$ and we can rigorously expand the square inside.  
  Since $\nabla \chi^{\varepsilon}$ is an odd function, by change of variables we have
  \begin{align*}
   & \int_{\mathbb{R}^d \times \mathbb{R}^d} \nabla \chi^{\varepsilon} (x - y)
    \cdot (b (x) - b (y)) | \theta (y) |^2 \mathd x \mathd y =
    \int_{\mathbb{R}^d \times \mathbb{R}^d} \nabla \chi^{\varepsilon} (x - y)
    \cdot (b (x) - b (y)) | \theta (x) |^2 \mathd x \mathd y.
    \end{align*} 
    Moreover, since $b$ is divergence-free and $\chi$ is compactly supported:
    \begin{align*}
    &\int_{\mathbb{R}^d \times \mathbb{R}^d} \nabla \chi^{\varepsilon} (x - y)
    \cdot (b (x) - b (y)) | \theta (x) |^2 \mathd x \mathd y
    \\& = \int_{\mathbb{R}^d} b (x) | \theta (x) |^2 \cdot \left(
    \int_{\mathbb{R}^d} \nabla \chi^{\varepsilon} (x - y) \mathd y \right)
    \mathd x
     - \int_{\mathbb{R}^d} | \theta (x) |^2 \mathd x \left(
    \int_{\mathbb{R}^d} \nabla \chi^{\varepsilon} (x - y) \cdot b (y) \mathd y
    \right) \mathd x = 0.
  \end{align*}
  Therefore, we conclude that 
\begin{align*}
    \langle b \cdot \nabla \theta^{\varepsilon}, \theta \rangle
    &=
    \frac12 \int_{\mathbb{R}^d \times \mathbb{R}^d} [b (x) - b (y)] \cdot
    \nabla \chi^{\varepsilon} (x - y) \theta (x) \theta (y) \mathd x \mathd
    y
    \\
    &=-\frac14 \int_{\mathbb{R}^d \times \mathbb{R}^d} \frac{1}{\varepsilon}
    \nabla \chi (z) \cdot \delta_{\varepsilon z} b (x)  | \delta
_{\varepsilon z} \theta (x) |^2 \mathd x \mathd z
. \qedhere
\end{align*}
\end{proof}

We can now move to the the main result of this section.
For $\nu \in (0,1)$, suppose we are given a family of solutions $\{\theta^\nu\}_{\nu \in (0,1)}$ to the SPDEs \eqref{eq:general.viscous.SPDE}; here $b^\nu$ and $\theta^\nu$ can be both random and we do not impose any specific relation between them.

We are interested in obtaining uniform-in-$\nu$ bounds for the expected averaged increments of the solutions:
\begin{align} \label{eq:average.increments}
   \sup_{\nu \in (0,1)} \sup_{\varrho \in (0,1)} \mathbb{E} \left[ \int_{[0, T] \times \mathbb{R}^d}
   \int_{\mathbb{S}^{d-1}} \frac{| \delta_{\varrho z} \theta^{\nu}_t (y) |^2}{\varrho^{2\beta}} \sigma (\dd z) \mathd y \mathd t \right]
   < \infty,
\end{align}
where $\beta \in (0,1-\alpha]$ is a parameter determining the Besov-type space regularity of the scalar fields $\{\theta^\nu\}_{\nu \in (0,1)}$, as explained around \eqref{def:Besov-like.reg} and in \autoref{subsec:besov}.
In order to study \eqref{eq:average.increments}, similarly to the quantities defined in \eqref{eq:relevant_seminorms}, let us set
\begin{align}
g^\nu_\beta (\varrho) &:= g_\beta(\varrho,\theta^\nu):=\mathbb{E} \left[ \int_{[0, T] \times \mathbb{R}^d}
   \int_{\mathbb{S}^{d-1}} \frac{| \delta_{\varrho z} \theta^{\nu}_s (y) |^2}{\varrho^{2\beta}} \sigma (\dd z) \mathd y \mathd s \right], \nonumber
   \\
   \seminorm{\theta^\nu}_{\beta,\varrho_0}^2 &:= \sup_{\varrho \in (0,\varrho_0)} g^\nu_\beta (\varrho)
   := \sup_{\varrho \in (0,\varrho_0)} g_\beta(\varrho,\theta^\nu),  \nonumber
   \\
 \mathcal{T}_{b^\nu,\theta^\nu}(t,\varrho) &:= \int_{\mathbb{R}^d} \int_{\{| z |  \leqslant 1\}} z \cdot \delta_{\varrho z} b^\nu_t (y)  | \delta_{\varrho z} \theta^\nu_t (y)
  |^2 \mathd y \mathd z. \label{eq:prototype_longitudinal_function}
\end{align}

\begin{lem}\label{lem:gron_increments}
Let $W$ satisfy \autoref{ass:noise} with constants $c_1,c_2,r_0>0$.
Fix $T>0$ and consider a family of progressive divergence-free velocity fields $\{b^\nu\}_{\nu \in (0,1)} \subset L^\infty_{\omega,T}L^s_x$ for some $s \in [2,\infty]$.
Assume that the families $\{\theta^\nu_0\}_{\nu \in (0,1)} \subset L^1_x \cap L^\infty_x$ and $\{f^\nu\}_{\nu \in (0,1)} \subset L^1_T(L^1_x \cap L^\infty_x)$ are such that
\begin{align} \label{eq:assumptions.theta_0.f}
    \sup_{\nu \in (0,1)}
     \|\theta_0^\nu \|_{L^2_x}^2 =: M_1 < \infty,
     \quad
     \sup_{\nu \in (0,1)}  \|f^\nu\|^2_{L^1_TL^2_x}
     =: M_{2} < \infty;
\end{align} 
let $\{\theta^\nu\}_{\nu\in (0,1)}$ solve \eqref{eq:general.viscous.SPDE} with initial condition $\theta_0^\nu$ and satisfy \eqref{viscous_energy_eq}-\eqref{viscous_bound_1}-\eqref{viscous_bound_2}.\\
Fix $\beta \in (0,1-\alpha]$ and further suppose that there exist constants
\begin{align*}
    \eta \in \left[ 0, \frac{2\beta}{d+2\alpha+2\beta}\right), \quad M_3\in (0,+\infty), \quad \varrho_0 \in (0,r_0\wedge 1)
\end{align*}
such that  
\begin{align} \label{eq:key_bound_regularization}
\mathbb{E} \left[ \int_0^T \mathcal{T}_{b^\nu,\theta^\nu}(s,\varrho) \mathd s \right]
  \leq
  c_1 \varrho^{-1+2\alpha+2\beta} \left(
  M_3  + \eta \seminorm{\theta^\nu}_{\beta,\varrho_0}^2 \right),\quad \forall\nu \in (0,1), \, \forall \varrho \in (0,\varrho_0).
\end{align}
Then there exists a finite constant $K=K(c_1,c_2,d)$ such that, for $M_0 := K(1+T)M_1+KM_2+M_3$, it holds:
\begin{equation}\label{eq:gron_main_estimate}
 \sup_{\nu \in (0,1)} \seminorm{\theta^\nu}_{\beta,\varrho_0}^2  \leq M_0\left( \frac{2\beta}{d+2\alpha+2\beta} - \eta \right)^{-1}  < \infty.
\end{equation}
Moreover, the solutions $\{\theta^\nu\}_{\nu \in (0,1)}$ are uniformly bounded in $\tilde{L}^2_{\omega,T} \tilde{B}^{\beta}_{2,\infty}$ with estimate
\begin{equation}\label{eq:gron_main_estimate2}
    \sup_{\nu \in (0,1)} \seminorm{\theta^\nu}_{\tilde{L}^2_{\omega,T} \tilde{B}^{\beta}_{2,\infty}}^2  
 \lesssim M_0\left[ \left( \frac{2\beta}{d+2\alpha+2\beta} - \eta \right)^{-1} + \varrho_0^{-2\beta}\right].
\end{equation}
\end{lem}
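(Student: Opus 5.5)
We follow the Duchon--Robert strategy of \cite{DrGaPa25}, adapted to the drift. Fix $\nu$ and a radial mollifier $\chi$ as in \autoref{lem:trilinear}, chosen with $\chi(z)=\chi(|z|)$ supported in the unit ball and $-\nabla\chi(z)=\phi(|z|) z$ with $\phi\geq 0$. The natural choice $\chi\propto (1-|z|^2)_+$ (suitably regularized) gives $-\nabla\chi(z)\propto z\mathbf{1}_{|z|\le1}$, so the trilinear term of \eqref{eq:commutator} becomes exactly $\mathcal{T}_{b^\nu,\theta^\nu}(t,\varrho)$ up to a factor $\varrho^{-1}$.

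\textbf{Step 1: energy balance.} Apply Itô's formula (justified by the regularity of $\theta^\nu$ and the stochastic Lions--Magenes lemma) to $\langle\theta^\nu_t,\chi^\varrho\ast\theta^\nu_t\rangle$ and take expectations, so that the martingale term vanishes. Using the energy balance \eqref{viscous_energy_eq}, subtract to obtain an equation for
\[
\EE\|\theta^\nu_t\|_{L^2_x}^2-\EE\langle\theta^\nu_t,\chi^\varrho\ast\theta^\nu_t\rangle=\tfrac12\int\chi^\varrho(z)\,\EE\|\delta_z\theta^\nu_t\|_{L^2_x}^2\,\dd z.
\]
The contributions are as follows.
\begin{itemize}
\item The viscous term contributes with a favorable sign.
\item The forcing is bounded by $\sqrt{M_1M_2}+M_2$ via \eqref{viscous_bound_1}.
\item The drift gives, by \autoref{lem:trilinear}, a multiple of $\varrho^{-1}\EE\int\mathcal T\,\dd s$.
\item The noise (Itô correction together with the quadratic variation) yields
\[
\int \EE\big\langle \theta^\nu_s(\cdot+\varrho z), Q(\varrho z): D^2\chi(z)\dots\big\rangle,
\]
that is, after integrating by parts twice, a term $\int Q(\varrho z):D^2\chi(z)\,\EE\langle\theta^\nu,\theta^\nu(\cdot+\varrho z)\rangle\,\dd z$.
\end{itemize}

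\textbf{Step 2: extract coercivity from \autoref{ass:noise}.} Replace $Q(\varrho z)$ by $c_1\varrho^{2\alpha}|z|^{2\alpha}[P^\|+(1+\tfrac{2\alpha}{d-1})P^\perp]$, at the cost of an error $c_2\varrho^2$ times $\|\theta^\nu\|_{L^2}^2$, which is bounded via $M_1,M_2$ (here $\varrho\le\varrho_0<r_0$). Since $\int D^2\chi=0$, we may replace $\langle\theta,\theta(\cdot+\varrho z)\rangle$ by $-\tfrac12\|\delta_{\varrho z}\theta\|^2$. Passing to polar coordinates, the exact structure (incompressibility being encoded by the factor $1+2\alpha/(d-1)$) is what makes the angular integral of the isotropic part against $D^2\chi$ produce a positive radial weight, as in \cite[Section 5]{DrGaPa25}. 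The result is
\[
-c\,c_1\varrho^{2\alpha}\int_0^1 w(\lambda)\lambda^{2\alpha}\,\lambda^{2\beta}\varrho^{2\beta}\,g^\nu_\beta(\lambda\varrho)\,\dd\lambda,
\]
with an explicit weight $w$. This sign computation is the main obstacle; I would redo the computation of \cite{DrGaPa25} with the specific $\chi$, checking that the weight is positive and computing its normalisation, which produces the constant $\tfrac{2\beta}{d+2\alpha+2\beta}$ through $\int_0^1\lambda^{d-1+2\alpha+2\beta}\,\dd\lambda$-type integrals.

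\textbf{Step 3: close.} Integrate in time over $[0,T]$. Dropping the positive left-hand side at time $T$, bounding the initial term by $M_1$, and dividing by $c_1\varrho^{2\alpha+2\beta}$, we obtain
\[
\int_0^1\lambda^{d-1+2\alpha+2\beta}\,g^\nu_\beta(\lambda\varrho)\,\dd\lambda\lesssim \varrho^{-2\alpha-2\beta}(\ldots)+M_3+\eta\,\seminorm{\theta^\nu}^2_{\beta,\varrho_0}.
\]
Here the hypothesis \eqref{eq:key_bound_regularization} controls the drift, with the $\varrho^{-1}$ cancelling the $\varrho^{-1+2\alpha+2\beta}$. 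Since $\varrho^{-2\alpha-2\beta}\le\varrho^{-2}$ is bad, I would instead keep the left side in the form $\varrho^{-2\alpha-2\beta}\int\chi^\varrho\|\delta\theta\|^2$ and use $\beta\le1-\alpha$ together with the $c_2\varrho^2$ error to get uniform constants $K(1+T)M_1+KM_2$. The remaining task is to turn this averaged-in-$\lambda$ bound into a bound on $\sup_\varrho g^\nu_\beta(\varrho)$. Since $g^\nu_\beta(\lambda\varrho)\lambda^{2\beta}$ relates increments at different scales, I would use the identity
\[
g(\varrho)=\text{(left-hand side at scale }\varrho)-\int\ldots
\]
obtained by differentiating in $\varrho$ (a Gronwall-in-scale argument, hence the lemma's name): writing $G(\varrho)=\varrho^{-(d+2\alpha+2\beta)}\int_0^\varrho s^{d-1+2\alpha+2\beta}g(s)\,\dd s$ yields an ODE inequality. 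Taking $\sup_{\varrho<\varrho_0}$ and absorbing the $\eta$ term gives \eqref{eq:gron_main_estimate}. Finally, \eqref{eq:gron_main_estimate2} follows from the stated comparison $\seminorm{f}_{\beta,\infty}\le\seminorm{f}_{\beta,\varrho_0}+\varrho_0^{-\beta}\|f\|_{L^2}$, the bound $\|\theta^\nu\|_{L^2_{\omega,T,x}}^2\lesssim T(M_1+M_2)$, and \cite[Lemma 2.18]{DrGaPa25}.
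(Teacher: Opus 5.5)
The overall architecture is the right one: a mollified energy balance, \autoref{lem:trilinear} for the drift, the local expansion of $Q$, and \cite[Lemma 2.18]{DrGaPa25} at the end. However, Step 2 asserts something that cannot be true, and Step 3 then closes through a mechanism that does not exist.

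\textbf{The sign structure of the noise term.} The replacement $\langle\theta,\theta(\cdot+\varrho z)\rangle\to-\frac12\|\delta_{\varrho z}\theta\|_{L^2_x}^2$ needs $\int Q(\varrho z):D^2\chi(z)\,\dd z=0$, i.e.\ $\nabla\cdot Q=0$; $\int D^2\chi=0$ alone is not enough. That same identity says your radial weight has zero total mass, so it cannot be positive.

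Concretely, take $\chi=k_d(1-|z|^2)\mathbf{1}_{\{|z|\leq1\}}$ and do not smooth it. It already satisfies \autoref{lem:trilinear}, and smoothing would destroy both the exact identification with $\mathcal{T}_{b^\nu,\theta^\nu}$ and the structure below. Its Hessian is $D^2\chi=2k_d(-I\,\mathbf{1}_{B_1}+\hat z\otimes\hat z\,\sigma)$, which has a singular part on $\mathbb{S}^{d-1}$. The isotropic part of $Q$ gives $Q:I=c_1(d+2\alpha)\varrho^{2\alpha}|z|^{2\alpha}$, and $Q:\hat z\otimes\hat z=c_1\varrho^{2\alpha}$ on the sphere. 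Hence, up to an $O(c_2T\|\theta^\nu\|_{L^\infty_TL^2_x}^2)$ remainder, the noise term equals
\[
-\frac{c_1k_d}{\varrho^{2-2\alpha}}\,\EE\int_0^T\Big[\int_{\mathbb{S}^{d-1}}\|\delta_{\varrho z}\theta^\nu_s\|_{L^2_x}^2\,\sigma(\dd z)-(d+2\alpha)\int_{\{|z|\leq1\}}|z|^{2\alpha}\|\delta_{\varrho z}\theta^\nu_s\|_{L^2_x}^2\,\dd z\Big]\dd s .
\]
This is a coercive surface term at \emph{exactly} scale $\varrho$, minus an anti-coercive average over smaller scales.

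Two further consequences follow.
\begin{itemize}
\item You dropped the factor $\varrho^{-2}$ coming from $D^2\chi^\varrho$, which is why your scaling in Step 3 does not add up. With it, all bounded terms carry $\varrho^{2-2\alpha-2\beta}\leq1$, and this is where $\beta\leq1-\alpha$ is used.
\item In your small-scale-energy formulation the noise is a \emph{source}. So the terminal term and the viscous term must be bounded above via \eqref{viscous_bound_1}--\eqref{viscous_bound_2}, and it is the initial term that may be dropped. This is the opposite of what you wrote.
\end{itemize}

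\textbf{How to close with the signed structure.} Divide by $c_1k_d\varrho^{2\alpha+2\beta-2}$ and use \eqref{eq:key_bound_regularization}; with the correct sign, only an upper bound on $\EE\int_0^T\mathcal{T}_{b^\nu,\theta^\nu}(s,\varrho)\,\dd s$ is needed. For $\varrho<\varrho_0$ this gives
\[
g^\nu_\beta(\varrho)\leq(d+2\alpha)\,\varrho^{-d-2\alpha-2\beta}\int_0^\varrho g^\nu_\beta(r)\,r^{d-1+2\alpha+2\beta}\,\dd r+\eta\,\seminorm{\theta^\nu}_{\beta,\varrho_0}^2+M_0 .
\]
The first term is at most $\frac{d+2\alpha}{d+2\alpha+2\beta}\seminorm{\theta^\nu}_{\beta,\varrho_0}^2$. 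Take the supremum in $\varrho$ and absorb. This is legitimate because, for fixed $\nu$, $g^\nu_\beta(\varrho)\lesssim\varrho^{2-2\beta}\EE\|\nabla\theta^\nu\|_{L^2_{T,x}}^2<\infty$. The result is \eqref{eq:gron_main_estimate}. The constant is $1-\frac{d+2\alpha}{d+2\alpha+2\beta}=\frac{2\beta}{d+2\alpha+2\beta}$, not the normalisation of a positive weight.

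\textbf{Your Gronwall-in-scale idea.} It does work, but only on this inequality.
\begin{itemize}
\item With your $G$, one has $g^\nu_\beta=\varrho G'+(d+2\alpha+2\beta)G$.
\item So the inequality reads $\varrho G'+2\beta G\leq R:=\eta\seminorm{\theta^\nu}_{\beta,\varrho_0}^2+M_0$.
\item Then $(\varrho^{2\beta}G)'\leq\varrho^{2\beta-1}R$ gives $G\leq R/(2\beta)$, using $\varrho^{2\beta}G(\varrho)\to0$ as $\varrho\to0$.
\item Finally $g^\nu_\beta\leq(d+2\alpha)G+R$ returns the same constant.
\end{itemize}
A bound on the average $G$ alone, as in your Step 3, gives no pointwise control of $g^\nu_\beta$ by differentiation, since you have no bound on $G'$. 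In any case, such a bound is not what the energy balance provides.
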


\begin{proof}
The first part of the proof follows some of the computations of \cite[Section 5]{DrGaPa25}. Let $\chi$ be as in \autoref{lem:trilinear} and mollify $\theta^\nu, f^\nu$ by defining
\begin{align*}
    \theta^{\nu,\varepsilon} := \theta^\nu \ast \chi^\varepsilon,
    \quad
    f^{\nu,\varepsilon} := f^\nu \ast \chi^\varepsilon,
\end{align*}
which solve the SPDE
\begin{align*} 
 \mathd \theta^{\nu, \varepsilon} + (b^{\nu} \cdot \nabla
   \theta^{\nu})^\varepsilon \mathd t + \sum_{k \in \N} (\sigma_k \cdot \nabla
   \theta^\nu)^\varepsilon \mathd W^k  =  \frac{1}{2} C (0) : D^2 \theta^{\nu,
   \varepsilon} \mathd t + \nu \Delta \theta^{\nu, \varepsilon}  \mathd t
   +
f^{\nu,\varepsilon} \mathd t.   
\end{align*}

Testing the equation above against $\theta^{\nu}$ and symmetrizing we find
\begin{align*} 
\mathd \left(\theta^{\nu} \theta^{\nu, \varepsilon}\right) 
     &= 
     \mathd M^{\nu,\varepsilon}_t 
     +
     \frac{1}{2} C (0) : D^2 (\theta^{\nu} \theta^{\nu, \varepsilon}) \mathd t 
     - 
     C (0) \nabla \theta^{\nu} \cdot \nabla \theta^{\nu, \varepsilon} \mathd t
     + 
     \sum_{k \in \N} (\sigma_k \cdot \nabla \theta^{\nu}) (\sigma_k \cdot \nabla
     \theta^{\nu})^{\varepsilon}\mathd t 
    \notag \\
     & \quad+ 
     \nu \Delta (\theta^{\nu} \theta^{\nu, \varepsilon})\mathd t 
     - 
     2 \nu  \nabla \theta^{\nu} \cdot \nabla \theta^{\nu, \varepsilon} \mathd t
     - 
     (b^{\nu} \cdot \nabla \theta^{\nu})^{\varepsilon} \theta^{\nu} \mathd t 
     -
     ( b^{\nu} \cdot \nabla \theta^{\nu}) \theta^{\nu, \varepsilon}
     \mathd t
     \notag \\
     & \quad+ 
     f^{\nu,\varepsilon} \theta^\nu \mathd t + f^\nu \theta^{\nu,\varepsilon} \mathd t, 
\end{align*}
where $M^{\nu,\varepsilon}$ is a suitable martingale term. Next, we manipulate the right-hand side of the previous equation in the following way: First, we can rewrite
\begin{align*}
C (0)  \nabla \theta^{\nu} \cdot \nabla \theta^{\nu, \varepsilon} \mathd t
&=
\nabla  \cdot \left( \theta^\nu C (0) \nabla \theta^{\nu, \varepsilon} \right) \mathd t
-
\theta^\nu C (0) :D^2 \theta^{\nu, \varepsilon} \mathd t 
\\
&=
\nabla  \cdot \left( \theta^\nu C (0) \nabla \theta^{\nu, \varepsilon} \right) \mathd t
-
\theta^\nu [(C (0) :D^2 \chi^\varepsilon) \ast\theta^{\nu}] \mathd t;
\end{align*}
Similarly, using the fact that the coefficients $\sigma_k$ are divergence-free:
\begin{align*}
\sum_{k \in \N} (\sigma_k \cdot \nabla \theta^{\nu}) (\sigma_k \cdot \nabla
\theta^{\nu})^{\varepsilon}\mathd t 
&=
\sum_{k \in \N} \nabla \cdot (\sigma_k  \theta^{\nu}) \nabla \cdot(\sigma_k      \theta^{\nu})^{\varepsilon}\mathd t 
\\
&=
\nabla \cdot \left( \sum_{k \in \N}  \sigma_k  \theta^{\nu} \nabla \cdot(\sigma_k      \theta^{\nu})^{\varepsilon}\mathd t  \right)
-
\sum_{k \in \N}  \sigma_k  \theta^{\nu} \cdot \nabla(\nabla \cdot(\sigma_k      \theta^{\nu})^{\varepsilon})\mathd t
\\
&=
\nabla \cdot \left( \sum_{k \in \N}  \sigma_k  \theta^{\nu} \nabla \cdot(\sigma_k      \theta^{\nu})^{\varepsilon}\mathd t  \right)
-
\theta^\nu [(C:D^2\chi^\varepsilon) \ast \theta^\nu] \mathd t;
\end{align*}
Finally, by \autoref{lem:trilinear}, for almost every time $t \in [0,T]$ we have $\mathbb{P}$-almost surely
\begin{align*}
- 
&\int_{\mathbb{R}^d}(b^{\nu} \cdot \nabla \theta^{\nu})^{\varepsilon} \theta^{\nu} \mathd x 
-
\int_{\mathbb{R}^d}( b^{\nu} \cdot \nabla \theta^{\nu}) \theta^{\nu, \varepsilon} \mathd x 
=
2 \langle b^{\nu} \cdot \nabla  \theta^{\nu, \varepsilon}, \theta^{\nu} \rangle 
\\
&=
-\frac{1}{2} \int_{\mathbb{R}^d \times \mathbb{R}^d} \frac{1}{\varepsilon} \nabla \chi(z) \cdot \delta_{\varepsilon z} b^\nu(x) | \delta_{\varepsilon z} \theta^\nu(x)|^2 \mathd x \mathd z.
\end{align*}

Integrating $\mathd \left(\theta^{\nu} \theta^{\nu, \varepsilon}\right)$ over $[0,T] \times \mathbb{R}^d$, taking expectations, and recalling that $Q(z) = C(0)-C(z)$, we get
\begin{equation} \label{eq:energy_balance}\begin{split}
\mathbb{E} [ \langle \theta^{\nu}_T , \theta^{\nu, \varepsilon}_T \rangle]
&+
  2 \nu \mathbb{E} \left[ \int_0^T \langle \nabla \theta^{\nu}_s, \nabla
  \theta^{\nu, \varepsilon}_s \rangle \mathd s\right] 
  -
\mathbb{E} [ \langle \theta^{\nu}_0 , \theta^{\nu, \varepsilon}_0 \rangle]
\\
&=
\mathbb{E} \left[ \int_0^T
\int_{\mathbb{R}^d \times \mathbb{R}^d}
Q (x - y) : D^2\chi^{\varepsilon} (x - y)  
  \theta^{\nu}_s (x) \theta^{\nu}_s (y) \mathd y \mathd x \mathd s \right]
  \\
  & \quad
  -\frac{1}{2}\mathbb{E} \left[
  \int_0^T  \int_{\mathbb{R}^d \times \mathbb{R}^d} \frac{1}{\varepsilon} \nabla \chi(z) \cdot \delta_{\varepsilon z} b^\nu_s(x) | \delta_{\varepsilon z} \theta^\nu_s(x)|^2 \mathd x \mathd z \mathd s \right] 
  \\
  &\quad+ 
  \mathbb{E} \left[ \int_0^T (\langle f^{\nu,\varepsilon}_s ,\theta^\nu_s \rangle + \langle f^\nu_s , \theta^{\nu,\varepsilon}_s \rangle) \mathd s \right]
  \\ 
  & =: I^{\nu, \varepsilon}_1 + I^{\nu, \varepsilon}_2 + I^{\nu, \varepsilon}_3.
\end{split}\end{equation}

As in \cite{DrGaPa25}, the term $I^{\nu,\varepsilon}_1$ can be further manipulated arguing by symmetries, integration by parts, using the fact that $Q$ is divergence-free, and rescaling variables. We get
\begin{align*}
  I^{\nu,\varepsilon}_1 
  &= - \frac{1}{2} \mathbb{E} \left[ \int_0^T \int_{\mathbb{R}^d \times
  \mathbb{R}^d} Q (x - y) : D^2\chi^{\varepsilon} (x - y) | \theta^{\nu}_s (x)
  - \theta^{\nu}_s (y) |^2 \mathd y \mathd x \mathd s \right] 
  \\
  &= 
- \frac{1}{2} \mathbb{E} \left[ \int_0^T \int_{\mathbb{R}^d \times \mathbb{R}^d} 
   \frac{Q (\varepsilon z)}{\varepsilon^2} : D^2 \chi ( z) \, | \delta_{\varepsilon z} \theta^{\nu}_s (x) |^2 \mathd z \mathd x \mathd s \right].
\end{align*}

Following \cite[Corollary 5.5]{DrGaPa25}, we pick the special choice  
\begin{align*} 
\chi (z) &:=   k_d (1 - | z |^2) \mathbf{1}_{\{| z | \leqslant 1\}}, \quad k_d:=\frac{d(d+2)}{2\omega_{d-1}},
\end{align*}
where $\omega_{d-1}$ denotes the surface area of $\mathbb{S}^{d-1}$; it fulfills the assumptions of \autoref{lem:trilinear} and satisfies $D^2\chi \in \mathrm{BV}$.
Using the expansion \eqref{eq:local.expansion.noise} of $Q$ around the origin, for any $\eps\leq r_0$ one finds
\begin{align*}
I^{\nu, \varepsilon}_1 = 
&- 
c_1 k_d \mathbb{E}
   \left[  \int_0^T \int_{\mathbb{R}^d} \int_{\mathbb{S}^{d-1}} 
   \frac{| \delta_{\varepsilon z} \theta^{\nu}_s (y) |^2}{\varepsilon^{2-2 \alpha }} \sigma (\dd z) \mathd y \mathd s \right]
   \\
&+
c_1 k_d(d+2\alpha) \mathbb{E}
   \left[ 
   \int_0^T \int_{\mathbb{R}^d} \int_{\{| z | \leqslant 1\}} | z |^{2 \alpha}  \frac{ | \delta_{\varepsilon z} \theta^{\nu}_s (y) |^2 }{\varepsilon^{2-2 \alpha}} \mathd z \mathd y \mathd s
   \right]
   + Rem_T^{\nu,\varepsilon},    
\end{align*}
where the remainder satisfies $|Rem_T^{\nu,\varepsilon}| \lesssim c_2T\|\theta^\nu\|_{L^\infty_tL^2_x}^2$ uniformly in $T>0$, $\nu \in (0,1)$, and $\varepsilon\in (0,r_0]$.  
Furthermore, our choice of $\chi$ gives
\[ I^{\nu, \varepsilon}_2 =  k_d \mathbb{E} \left[ \int_0^T \int_{\mathbb{R}^d}
   \int_{\{| z | \leq 1\}} \frac{z \cdot \delta_{\varepsilon z} b^{\nu}_s
   (y)}{\varepsilon}  | \delta_{\varepsilon z} \theta^{\nu}_s (y) |^2 \mathd y
   \mathd z \mathd s \right] 
   =
   \frac{k_d}{\varepsilon} \mathbb{E} \left[ \int_0^T \mathcal{T}_{b^\nu,\theta^\nu}(s,\varepsilon) \mathd s \right]. \]

Plugging these expressions for $I^{\nu, \varepsilon}_1$ and $I^{\nu, \varepsilon}_2$ in \eqref{eq:energy_balance} and rearranging, we obtain:
\begin{align*} 
& \mathbb{E}
   \left[  \int_0^T \int_{\mathbb{R}^d} \int_{\mathbb{S}^{d-1}} 
   \frac{| \delta_{\varepsilon z} \theta^{\nu}_s (y) |^2}{\varepsilon^{2-2 \alpha }} \sigma (\dd z) \mathd y \mathd s \right]
   \\
&\quad-
(d+2\alpha) \mathbb{E}
   \left[ 
   \int_0^T \int_{\mathbb{R}^d} \int_{\{| z | \leqslant 1\}} | z |^{2 \alpha}  \frac{ | \delta_{\varepsilon z} \theta^{\nu}_s (y) |^2 }{\varepsilon^{2-2 \alpha}} \mathd z \mathd y \mathd s
   \right]
\\   
&=\frac{1}{c_1 \varepsilon} \mathbb{E} \left[ \int_0^T \mathcal{T}_{b^\nu,\theta^\nu}(s,\varepsilon) \mathd s \right]
  \\
  &\quad
 + \frac{1}{c_1 k_d}\left( -\mathbb{E} [ \langle \theta^{\nu}_T , \theta^{\nu, \varepsilon}_T \rangle]
-
  2 \nu \mathbb{E} \left[ \int_0^T \langle \nabla \theta^{\nu}_s, \nabla
  \theta^{\nu, \varepsilon}_s \rangle \mathd s\right] 
  +
\mathbb{E} [ \langle \theta^{\nu}_0 , \theta^{\nu, \varepsilon}_0 \rangle]
  + 
  Rem^{\nu,\varepsilon}_T + I^{\nu,\varepsilon}_3\right).
\end{align*}
We can estimate the terms in the last line above using \eqref{viscous_energy_eq}-\eqref{viscous_bound_1}-\eqref{viscous_bound_2}, $|Rem_T^{\nu,\varepsilon}| \lesssim c_2 T\|\theta^\nu\|_{L^\infty_T L^2_x}^2$, and the basic bounds
\begin{align} \label{eq:inequality.I3}
    |\langle \theta^{\nu}_T , \theta^{\nu, \varepsilon}_T\rangle|  \lesssim \| \theta^\nu_T\|_{L^2_x}^2,
    \quad
   |\langle \nabla \theta^{\nu}_s, \nabla
  \theta^{\nu, \varepsilon}_s \rangle|
  \lesssim
  \| \theta^\nu_s \|^2_{\dot{H}^1_x},
  \quad
  |I^{\nu,\varepsilon}_3| \lesssim \| \theta_0^\nu\|_{L^2_x}^2 + \|f^\nu\|^2_{L^1_{T}L^2_x};
  \end{align}
taking $\eps=\varrho <r_0$ and rewriting the integral over $|z|\leq 1$ over spherical means, overall one obtains the following: There exist a constant $K=K(d,c_1,c_2)$ such that uniformly in $T>0$, $\nu \in (0,1)$, and $\varrho\in (0, \varrho_0)$ it holds:
\begin{equation}\label{eq:inequality_g}\begin{split}
    g^\nu_\beta (\varrho)& \leq (d + 2\alpha) \varrho^{- d-2\alpha-2\beta} \int_0^\varrho g^\nu_\beta (r) r^{d-1+2\alpha+2\beta} \mathd r
  + \frac{\varrho^{1-2\alpha-2\beta}}{c_1} \mathbb{E} \left[ \int_0^T \mathcal{T}_{b^{\nu},\theta^{\nu}}(s,\varrho)\mathd s \right] 
  \\
  &\quad+ \varrho^{2-2\alpha-2\beta} K \left[ (1+T) \|\theta_0^\nu \|_{L^2_x}^2 +  \|f^\nu\|^2_{L^1_{T}L^2_x} \right].
\end{split}\end{equation}

By definition of $\seminorm{\theta^\nu}_{\beta,\varrho_0}$, for $\varrho < \varrho_0$ we have
    \begin{equation}\label{eq:inequality_g2}
       (d + 2\alpha) \varrho^{- d-2\alpha-2\beta} \int_0^\varrho g^\nu_\beta (r) r^{d-1+2\alpha+2\beta} \mathd r \leq
        \frac{d+2\alpha}{d+2\alpha+2\beta} \seminorm{\theta^\nu}_{\beta,\varrho_0}^2;
    \end{equation}
invoking assumptions \eqref{eq:assumptions.theta_0.f} and \eqref{eq:key_bound_regularization}, from \eqref{eq:inequality_g} we obtain that
\begin{align*}
    g^\nu_\beta(\varrho) \leq \left( \frac{d+2\alpha}{d+2\alpha+2\beta} + \eta \right) \seminorm{\theta^\nu}_{\beta,\varrho_0}^2 + M_0 \quad \forall \varrho<\varrho_0 < 1.
\end{align*}
The bound \eqref{eq:gron_main_estimate} on $\seminorm{\theta^\nu}_{\beta,\varrho_0}^2$ then follows from the assumption $\eta < \frac{2\beta}{d+2\alpha+2\beta}$, after taking the supremum over $\varrho \in (0,\varrho_0)$, $\nu \in (0,1)$ and rearranging terms in \eqref{eq:inequality_g2}.
Finally, by \cite[Lemma 2.18]{DrGaPa25}, estimate \eqref{eq:gron_main_estimate} and the definition of $\seminorm{\cdot}_{\beta,\varrho_0}$ we have
\begin{align*}
 \sup_{\nu \in (0,1)} \seminorm{\theta^\nu}_{\tilde{L}^2_{\omega,T} \tilde{B}^{\beta}_{2,\infty}}^2  
 \lesssim
 \sup_{\nu \in (0,1)} \seminorm{\theta^\nu}_{\beta,+\infty}^2
 \lesssim
 \sup_{\nu \in (0,1)} \seminorm{\theta^\nu}_{\beta,\varrho_0}^2 + \varrho_0^{-2\beta}\sup_{\nu \in (0,1)} \|\theta^\nu\|_{L^2_{\omega,T,x}}^2
\end{align*}
from which \eqref{eq:gron_main_estimate2} follows.
\end{proof}

We conclude this section with a nontrivial application of the lemma above.

\begin{lem}
\label{lem:data-splitting}
Let $d\geq 2$, $p\in (2,3)$, $-1\leq \mathfrak{h}\leq 0$, $0<\alpha<\frac{1-\mathfrak{h}}{2}$ and set
\begin{equation*}
 \gamma:=\frac{1-\mathfrak{h}}{2}-\alpha,\qquad
 \beta:=\gamma(p-2).
\end{equation*}
Let $W$ satisfy \autoref{ass:noise} with constants $c_1,c_2,r_0>0$.
Fix $T>0$ and consider a family of progressive, divergence-free drifts $\{b^\nu\}_{\nu\in(0,1)}\subset L^\infty_{\omega,T}L^s_x$ for some $s \in [2,\infty]$,  satisfying
\begin{equation}
\sup_{\nu\in(0,1)}
\|b^\nu\|_{L^\infty_{\omega,T}\dot W^{-\mathfrak{h},p}_x}<\infty.
\label{eq:drift-sobolev}
\end{equation}
Assume that the families $\{\theta^\nu_0\}_{\nu \in (0,1)} \subset L^1_x \cap L^\infty_x$ and $\{f^\nu\}_{\nu \in (0,1)} \subset L^1_T(L^1_x \cap L^\infty_x)$ are such that \begin{equation}
 M_p:=\sup_{\nu\in(0,1)}
 \left(\|\theta_0^\nu\|_{L^2_x\cap L^p_x}
       +\|f^\nu\|_{L^1_T(L^2_x\cap L^p_x)}\right)<\infty,
 \label{eq:data-bounds}
\end{equation}
and let $\{\theta^\nu\}_{\nu\in (0,1)}$ solve \eqref{eq:general.viscous.SPDE} with initial condition $\theta_0^\nu$ and satisfy \eqref{viscous_energy_eq}-\eqref{viscous_bound_1}-\eqref{viscous_bound_2}. 
Then
\begin{equation}
 \sup_{\nu \in (0,1)} \seminorm{\theta^\nu}_{\tilde{L}^2_{\omega,T} \tilde{B}^{\beta}_{2,\infty}}^2  
 \lesssim
  M_p^2.
 \label{eq:conclusion}
\end{equation}

\end{lem}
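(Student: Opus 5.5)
We plan to apply \autoref{lem:gron_increments}, verifying \eqref{eq:key_bound_regularization} with $\eta$ as small as we like. The difficulty is that $\theta^\nu$ is only bounded in $L^p_x$ with $p<3$. The Hölder bound on the trilinear term $\mathcal{T}_{b^\nu,\theta^\nu}$ therefore cannot be closed directly with $\beta=1-\alpha$. The name of the lemma points to the remedy: split the data as in \eqref{eq:split_in_cond}-\eqref{eq:split_in_cond.bis}, but now at a scale-dependent level. For each $\varrho\in(0,\varrho_0)$ and a level $\lambda=\lambda(\varrho)$ to be chosen, we write $\theta_0^\nu=\theta_0^{\nu,\le}+\theta_0^{\nu,>}$ with $\theta_0^{\nu,\le}:=\theta_0^\nu\mathbf{1}_{\{|\theta_0^\nu|\le\lambda\}}$, and split $f^\nu$ in the same way. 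We then solve the linear equation \eqref{eq:general.viscous.SPDE} with the \emph{same} drift $b^\nu$ for each piece, so that $\theta^\nu=\theta^{\nu,\le}+\theta^{\nu,>}$ by pathwise uniqueness. By \eqref{viscous_bound_1} and Chebyshev-type estimates we get
\begin{align*}
\|\theta^{\nu,\le}\|_{L^\infty_{\omega,T}L^\infty_x}\lesssim\lambda,\qquad
\|\theta^{\nu,>}\|_{L^\infty_{\omega,T}L^2_x}^2\lesssim\lambda^{2-p}M_p^p .
\end{align*}

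Next we bound $\mathcal{T}_{b^\nu,\theta^\nu}(s,\varrho)\le 2\,\mathcal{T}_{b^\nu,\theta^{\nu,\le}}+2\,\mathcal{T}_{b^\nu,\theta^{\nu,>}}$. The increment of the drift is controlled by \eqref{eq:drift-sobolev} and \eqref{eq:basic_increments}: $\|\delta_{\varrho z}b^\nu\|_{L^p_x}\lesssim\varrho^{-\mathfrak{h}}$ when $\mathfrak{h}\in(-1,0]$, with the endpoint $\mathfrak{h}=-1$ handled via $\dot W^{1,p}$. For the bounded part we use Hölder with exponents $p,p',\infty$ together with interpolation:
\begin{align*}
\int|\delta_{\varrho z} b^\nu||\delta_{\varrho z}\theta^{\nu,\le}|^2
\lesssim \varrho^{-\mathfrak{h}}\,\|\delta_{\varrho z}\theta^{\nu,\le}\|_{L^{2p'}_x}^2
\lesssim \varrho^{-\mathfrak{h}}\,\lambda^{2-2/p'}\,\|\delta_{\varrho z}\theta^{\nu,\le}\|_{L^2_x}^{2/p'} .
\end{align*}
The factor $\|\delta\theta^{\nu,\le}\|_{L^2}$ should be compared with $\varrho^\beta\seminorm{\theta^\nu}_{\beta,\varrho_0}$ plus the increment of $\theta^{\nu,>}$, which is at most $2\|\theta^{\nu,>}\|_{L^2_x}$. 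For the large part, $\theta^{\nu,>}$ is small in $L^2$ but we have no $L^\infty$ bound on it. Here we plan to use the $L^p$ bound directly: Hölder with exponents $(p,p,\frac{p}{p-2})$ and interpolation of $\|\theta^{\nu,>}\|_{L^{2p/(p-1)}}$ between $L^2$ and $L^p$ give
\begin{align*}
\mathcal{T}_{b^\nu,\theta^{\nu,>}}\lesssim\varrho^{-\mathfrak{h}}\,\lambda^{-\kappa}
\end{align*}
for an explicit $\kappa=\kappa(p)>0$.

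We then optimize $\lambda=\varrho^{-a}$ so that both contributions are at most $c_1\varrho^{-1+2\alpha+2\beta}(M_3+\eta\seminorm{\theta^\nu}^2_{\beta,\varrho_0})$. Young's inequality turns the power $\seminorm{\cdot}^{2/p'}$ into $\eta\seminorm{\cdot}^2+C_\eta$, with $\eta$ arbitrarily small. The numerology $\beta=\gamma(p-2)$ should come out of balancing exponents: matching $-\mathfrak{h}$ plus the $\lambda$-powers against $-1+2\alpha+2\beta=2\beta-2\gamma-\mathfrak{h}$ forces the relation between $a$, $\gamma$ and $p$. We expect this exponent bookkeeping to be \textbf{the main obstacle}. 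In particular we must check that $p<3$ is what makes the $L^2$-tail decay $\lambda^{2-p}$ win against the loss coming from the bounded part, and that all powers of $\varrho$ are nonnegative for $\varrho<1$.

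Once \eqref{eq:key_bound_regularization} holds with $M_3\lesssim_\eta M_p^2$ (after homogeneity rescaling in $M_p$) and small $\eta$, \autoref{lem:gron_increments} yields \eqref{eq:gron_main_estimate2}. Since $M_0\lesssim M_p^2$, this gives \eqref{eq:conclusion}. The quadratic scaling in $M_p$ can be ensured by first normalizing $M_p=1$, since the drift bound is independent of $\theta$ in this linear setting.
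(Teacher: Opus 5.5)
There is a genuine gap, and it is exactly where the difficulty of the lemma lies: the claimed tail bound $\mathcal{T}_{b^\nu,\theta^{\nu,>}}\lesssim\varrho^{-\mathfrak h}\lambda^{-\kappa}$. For $p\in(2,3)$ both exponents you invoke exceed $p$, since $\frac{2p}{p-1}>p$ and $\frac{p}{p-2}>p$. So $L^{2p/(p-1)}_x$ is not an interpolation space between $L^2_x$ and $L^p_x$, and $\theta^{\nu,>}$, which you only control in $L^2_x\cap L^p_x$, cannot be estimated there. No other H\"older pairing helps either. With $\delta_{\varrho z}b^\nu\in L^p_x$ you need $\|\delta\theta\|_{L^{r_1}_x}\|\delta\theta\|_{L^{r_2}_x}$ with $\frac1{r_1}+\frac1{r_2}=1-\frac1p$, and this is compatible with $r_1,r_2\le p$ only when $p\ge 3$; that is precisely why $p<3$ is the delicate range. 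A consistency check confirms the problem. If your tail bound held, you could take $\lambda\equiv 1$; the bounded part is then trivially $O(\varrho^{-\mathfrak h})$ as well. You would obtain \eqref{eq:key_bound_regularization} with $\beta=\gamma$ and $\eta=0$ without any splitting, so the exponent $\gamma(p-2)$ could never come out of your bookkeeping. Verifying \eqref{eq:key_bound_regularization} for the full $\theta^\nu$ is therefore not viable with only $L^2_x\cap L^p_x$ control on $\theta^\nu$.

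The missing idea is to split at the level of the conclusion, not inside the trilinear term. Apply \autoref{lem:gron_increments}, with $\eta=0$ and regularity $\gamma$ (not $\beta$), only to the solution $\theta^{\nu,\le}_K$ of the linear equation with the same drift $b^\nu$ and truncated data. Its data lie in $L^2_x\cap L^q_x$ with $q=\frac{2p}{p-1}$. H\"older with exponents $(p,q,q)$ and \eqref{viscous_bound_1} then give
\[
\mathbb{E}\int_0^T|\mathcal{T}_{b^\nu,\theta^{\nu,\le}_K}(t,\varrho)|\,\mathrm{d}t\lesssim T\varrho^{-\mathfrak h}\big(\|\theta^{\nu,\le}_{0,K}\|_{L^q_x}+\|f^{\nu,\le}_K\|_{L^1_TL^q_x}\big)^2,
\]
and $-1+2\alpha+2\gamma=-\mathfrak h$, so this closes. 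The tail never enters any trilinear estimate; you only use $\|\delta_{\varrho z}\theta^{\nu,>}_K\|_{L^2_x}\le 2\|\theta^{\nu,>}_K\|_{L^2_x}$. Truncating $v$ at the level $K\|v\|_{L^p_x}$ gives $\|v^{\le}_K\|_{L^q_x}\le K^{(3-p)/2}\|v\|_{L^p_x}$ and $\|v^{>}_K\|_{L^2_x}\le K^{-(p-2)/2}\|v\|_{L^p_x}$. Hence, for each fixed $\varrho<\varrho_0$,
\[
\int_0^T\int_{\mathbb{S}^{d-1}}\mathbb{E}\big[\|\delta_{\varrho z}\theta^\nu_t\|_{L^2_x}^2\big]\,\sigma(\mathrm{d}z)\,\mathrm{d}t\lesssim\big(\varrho^{2\gamma}(1+K^{3-p})+K^{2-p}\big)M_p^2 .
\]
The scale-dependent choice $K=\varrho^{-2\gamma}$ balances the last two terms at $\varrho^{2\gamma(p-2)}=\varrho^{2\beta}$. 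For $p\ge3$ one would have $q\le p$ and no truncation would be needed.

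There is also a secondary issue: the truncation level must be proportional to $\|f^\nu_t\|_{L^p_x}$, not a fixed $\lambda$. With a fixed level, $\|f^{\nu,>}_t\|_{L^2_x}\le\lambda^{(2-p)/2}\|f^\nu_t\|_{L^p_x}^{p/2}$, which is not integrable in time when only $f^\nu\in L^1_TL^p_x$ is known. So your claimed $L^\infty_{\omega,T}L^2_x$ bound on $\theta^{\nu,>}$ would also fail for the forced part.
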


\begin{proof}
For every fixed $\nu \in (0,1)$, take $b^\nu$ as in the statement and consider an auxiliary scalar $\Theta^\nu$ solving \eqref{eq:general.viscous.SPDE} with initial condition $\Theta_0^\nu\in L^1_x \cap L^\infty_x$ and forcing $\Phi^\nu\in L^1_T(L^1_x \cap L^\infty_x)$. Denote
\begin{align*}
     \widehat{M}_l^\nu:=\|\Theta_0^\nu\|_{L^l_x}+\|\Phi^\nu\|_{L^1_TL^l_x},
     \qquad 
     \forall l \in [1,+\infty].
\end{align*}
We first obtain a regularization estimate for $\Theta^\nu$ in terms of $\widehat{M}_2^\nu$ and $\widehat{M}_q^\nu$, where $q:=\frac{2p}{p-1}>p$.

Relation \eqref{viscous_bound_1} implies
$\|\Theta^\nu\|_{L^\infty_{\omega,T}L^l_x}\leq \widehat{M}_l^\nu$.
For $-1\leq \mathfrak{h}\leq 0$, \eqref{eq:basic_increments} and \eqref{eq:drift-sobolev} imply
\begin{equation}
 \sup_{\nu\in(0,1)}
 \|\delta_h b^\nu\|_{L^\infty_{\omega,T}L^p_x}
 \lesssim |h|^{-\mathfrak{h}},
 \qquad 
 \forall h\ne0.
 \label{eq:drift-increment}
\end{equation}
Hence, by H\"older's inequality and \eqref{eq:drift-increment}, we have
\begin{align*}
 \mathbb{E}\left[\int_0^T|\mathcal T_{b^\nu,\Theta^\nu}(t,\varrho)|\,\dd t\right]
 &\leq \mathbb{E}\left[\int_0^T\int_{|z|\leq1}|z|
       \|\delta_{\varrho z}b_t^\nu\|_{L^p_x}
       \|\delta_{\varrho z}\Theta_t^\nu\|_{L^q_x}^2\,\dd z\,\dd t\right]
       \notag\\
 &\lesssim T\varrho^{-\mathfrak{h}}(\widehat{M}_q^\nu)^2.
\end{align*}
The choice of $\gamma$ gives
\begin{align*}
    -1+2\alpha+2\gamma=-\mathfrak{h},
 \qquad 0<\gamma\leq1-\alpha.
\end{align*}
Thus, \autoref{lem:gron_increments} applies to the auxiliary scalar with regularity
$\gamma$ and $\eta=0$, leading to 
\begin{equation}
 \seminorm{\Theta^\nu}_{\tilde{L}^2_{\omega,T} \tilde{B}^{\gamma}_{2,\infty}}^2 
 \lesssim (\widehat{M}_2^\nu)^2+(\widehat{M}_q^\nu)^2.
 \label{eq:auxiliary-regularity}
\end{equation}

We now split $\theta_0^\nu$ and $f^{\nu}$. For $K\geq1$ and $v\in L^2_x\cap L^p_x$,
define
\begin{equation}
 v_K^{\leq}
 :=v\one_{\{|v|\leq K\|v\|_{L^p_x}\}},
 \qquad
 v_K^{>}:=v-v_K^{\leq},
 \label{eq:normalized-truncation}
\end{equation}
with both parts zero when $v=0$. Direct integration gives
\begin{align}
 \|v_K^{>}\|_{L^2_x}
 &\leq K^{-(p-2)/2}\|v\|_{L^p_x},
       \label{eq:tail-bound}\\
 \|v_K^{\leq}\|_{L^q_x}
 &\leq K^{1-p/q}\|v\|_{L^p_x},
 \qquad
 \|v_K^{\leq}\|_{L^2_x}\leq\|v\|_{L^2_x}.
       \label{eq:bounded-part-bound}
\end{align}
Apply this splitting to $\theta_0^\nu$ and separately to $f_t^\nu$ for
almost every $t$. 

Let $\theta_K^{\nu,\leq}$ and $\theta_K^{\nu,>}$ solve
the corresponding linear equations, both with drift $b^\nu$ and noise
$W$. Their data retain the approximating $L^1_x\cap L^\infty_x$
regularity and we are in the setting of previous computations. Also, by linearity and uniqueness of solutions, it holds
\begin{equation}
 \theta^\nu=\theta_K^{\nu,\leq}
             +\theta_K^{\nu,>}.
 \label{eq:solution-splitting}
\end{equation}
The bounds \eqref{viscous_bound_1} and \eqref{eq:tail-bound} imply
\begin{equation}
 \sup_{\nu\in (0,1)}\|\theta_K^{\nu,>}\|_{L^\infty_{\omega,T}L^2_x}
 \leq K^{-(p-2)/2} M_p.
 \label{eq:transported-tail}
\end{equation}
On the other hand, applying \eqref{eq:auxiliary-regularity} and \eqref{eq:bounded-part-bound} gives
\begin{equation}
\sup_{\nu\in (0,1)}\seminorm{\theta_K^{\nu,\leq}}_{\tilde{L}^2_{\omega,T} \tilde{B}^{\gamma}_{2,\infty}}
 \lesssim  M_p^2+K^{2(1-p/q)} M_p^2.
 \label{eq:transported-low}
\end{equation}

For $0<\varrho<\varrho_0$, equations \eqref{eq:solution-splitting},\eqref{eq:transported-tail},\eqref{eq:transported-low} now yield
\begin{align*}
\int_0^T \int_{\mathbb{S}^{d-1}} \EE[\| \delta_{\varrho z}\theta_t\|_{L^2_x}^2] \sigma(\dd z) \dd t
 &\lesssim 
   \left(\varrho^{2\gamma} 
   +\varrho^{2\gamma}K^{2(1-p/q)} 
   +K^{2-p}\right) M_p^2.
\end{align*}
Choosing
\begin{equation*}
 K:=\varrho^{-2\gamma/[p(1-2/q)]}
   =\varrho^{-2\gamma }\geq1,
\end{equation*}
then both terms involving $K$ have exactly the same power:
\begin{align*}
    \varrho^{2\gamma}K^{2(1-p/q)}=K^{2-p}
 =\varrho^{2\gamma (p-2)}=\varrho^{2\beta}.
\end{align*}
Also $0<\beta<\gamma$, so $\varrho^{2\gamma}\leq\varrho^{2\beta}$ for
$\varrho<1$. Hence
\begin{align*}
    \int_0^T \int_{\mathbb{S}^{d-1}} \EE[\| \delta_{\varrho z}\theta_t\|_{L^2_x}^2] \sigma(\dd z) \dd t
 &\lesssim  M_p^2\varrho^{2\beta}.
\end{align*}
Taking the supremum over $\varrho$ and $\nu$ proves
\eqref{eq:conclusion}. 
\end{proof}

\section{Anomalous regularization for 2D active scalar equations}
\label{sec:computation}

We prove here the first part of the statement of \autoref{thm:regularization}, concerning uniform estimates for $\{\theta^\nu\}_{\nu \in (0,1)}$ in $\tilde L^2_{\omega,T} \tilde B^\beta_{2,\infty}$.
To do so, the abstract criterion from \autoref{sec:general.criterion} and its consequences, cf. \autoref{lem:data-splitting}, with the choice $b^\nu=\mathscr{R} \theta^\nu$ play a major role.
\begin{prop}\label{prop_cases}
    Let $\alpha,\ p,\ r,\  \mathscr{R}$ satisfy the assumption of \autoref{thm:regularization}, consider a
vanishing viscosity scheme in the sense of \autoref{defn:vanishing_scheme} and set
$b^\nu=u^\nu=\mathscr{R}\theta^\nu$. \begin{enumerate}
\item
If $p\geq3$ or \autoref{ass:strong_regularization} holds, then the assumptions of \autoref{lem:gron_increments} are satisfied
with the exponent $\beta$ from \autoref{ass:exponent.beta}.
\item
If $2<p<3$ and \autoref{ass:exponent.beta} holds,
the hypotheses of \autoref{lem:data-splitting} are satisfied with
\begin{equation}\label{eq:formula_beta_intermediate}\begin{split}
    \mathfrak{h}=\begin{cases}
 -1,&\mathscr{R}=\mathscr{R}_{BS},\\
 0,&\mathscr{R}=\mathscr{R}_{CZ},
 \end{cases}
 \qquad
 \gamma=\frac{1-\mathfrak{h}}{2}-\alpha,
 \qquad \beta=(p-2)\gamma.
\end{split}\end{equation}
\end{enumerate}
\end{prop}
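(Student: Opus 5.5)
The plan is to treat the two items separately. Item (2) is a direct check of hypotheses. Item (1) reduces to verifying the key trilinear bound \eqref{eq:key_bound_regularization}.

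\emph{Item (2).} We take $b^\nu=u^\nu=\mathscr{R}\theta^\nu$ and check the hypotheses of \autoref{lem:data-splitting}.
\begin{itemize}
\item Since $r\leq 2\leq p$, interpolation gives $L^r_x\cap L^p_x\subset L^2_x$. Hence \eqref{eq:data-bounds} follows from the definition of a vanishing viscosity scheme, and \eqref{viscous_energy_eq}--\eqref{viscous_bound_2} hold by \autoref{subsec:solutions}.
\item For the integrability $b^\nu\in L^\infty_{\omega,T}L^s_x$ with $s\geq 2$: in the Biot--Savart case, \eqref{eq:properties_BS_integrability} gives $u^\nu\in L^{q_2}_x$ with $\frac1{q_2}=\frac1r-\frac12$, so $q_2>2$. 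In the CZ case, \eqref{eq:properties_CZ} gives $u^\nu\in L^p_x$.
\item The Sobolev bound \eqref{eq:drift-sobolev} follows from \eqref{eq:properties_BS} for $\mathfrak{h}=-1$, since $\|u^\nu\|_{\dot W^{1,p}_x}\lesssim\|\theta^\nu\|_{L^p_x}$. It follows from \eqref{eq:properties_CZ} for $\mathfrak{h}=0$. In both cases the bound is uniform by \eqref{viscous_uniform_bound}.
\item Finally, $0<\alpha<\frac{1-\mathfrak h}{2}$ is part of \autoref{ass:exponent.beta} in these regimes.
\end{itemize}

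\emph{Item (1), scheme.} We must bound $\EE\int_0^T\mathcal{T}_{u^\nu,\theta^\nu}(s,\varrho)\,\dd s$. Hölder's inequality with exponents $(a,m,m)$, $\frac1a+\frac2m=1$, gives
\[
|\mathcal T_{u^\nu,\theta^\nu}(s,\varrho)|\leq \int_{|z|\leq 1}\|\delta_{\varrho z}u^\nu_s\|_{L^a_x}\|\delta_{\varrho z}\theta^\nu_s\|_{L^m_x}^2\,\dd z.
\]
\begin{itemize}
\item \emph{Drift factor.} For $\mathscr{R}_{BS}$ we use \eqref{eq:basic_increments} and \eqref{eq:properties_BS}, giving $\|\delta_{\varrho z}u^\nu\|_{L^p_x}\lesssim\varrho\|\theta^\nu\|_{L^p_x}$. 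For $\mathscr{R}_{CZ}$ we use only the commutation \eqref{eq:R_commute_translation} and $L^a$-boundedness, giving $\|\delta_{\varrho z}u^\nu\|_{L^a_x}\lesssim\|\delta_{\varrho z}\theta^\nu\|_{L^a_x}$. So in the CZ case the drift increment must itself be interpolated.
\item \emph{Scalar factor.} We interpolate $\|\delta_{\varrho z}\theta^\nu\|_{L^m_x}$ between the $L^2_x$ increment, which is controlled by $g^\nu_\beta(\varrho)\leq\seminorm{\theta^\nu}_{\beta,\varrho_0}^2$ after taking expectation and time integration (Jensen/Hölder in $(\omega,t)$), and the uniform $L^p_x$ bound from \eqref{viscous_uniform_bound}.
\end{itemize}
In the Biot--Savart case with $p\geq3$ this is clean. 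We take $a=p$ and $m=\frac{2p}{p-1}\in[2,p]$, and obtain
\[
\EE\int_0^T\mathcal T\,\dd s\lesssim \varrho\,\varrho^{2\beta\lambda}\bigl(\seminorm{\theta^\nu}_{\beta,\varrho_0}^{2}\bigr)^{\lambda}
\]
for some $\lambda\in(0,1]$. Since $-1+2\alpha+2\beta=1$ when $\beta=1-\alpha$, Young's inequality gives the bound with any $\eta>0$ and $M_3=C_\eta$.

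\emph{Subcritical and critical regimes.} When $p<3$, or in the CZ case, pure $L^2$--$L^p$ interpolation no longer reaches the needed power of $\varrho$. We would instead use \autoref{lem:final_besov}: $\seminorm{\theta^\nu}_{\beta,\varrho_0}$ controls $\delta_{z}\theta^\nu$ in $L^2_{\omega,T}H^{s}_x$ for $s<\beta$, and hence, by Sobolev embedding, in $L^2_{\omega,T}L^{2/(1-s)}_x$. We then balance exponents so that the resulting power of $\varrho$ is at least $-1+2\alpha+2\beta$. Matching these powers is exactly the numerology of the (sub)critical condition \eqref{eq:subcritical_regime}, together with the extra CZ constraint $\alpha\leq\frac12-\frac{1}{2p-6}$ for $p\geq6$.
\begin{itemize}
\item In strictly subcritical cases there is a spare power $\varrho^{\kappa}$ with $\kappa>0$. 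This yields a small $\eta$ after shrinking $\varrho_0$.
\item In the critical case the powers match exactly, and smallness must come from the data instead. Here we use the splitting \eqref{eq_viscous_splitting}, $\theta^\nu=\theta^{\nu,1}+\theta^{\nu,2}$, with $\|\theta^{\nu,1}\|_{L^r_x\cap L^p_x}\leq\eps$ and $\theta^{\nu,2}$ bounded in $L^\infty_x$.
\item The trilinear form $\mathcal T$ is expanded accordingly, with the drift split as $u^\nu=\mathscr{R}\theta^{\nu,1}+\mathscr{R}\theta^{\nu,2}$ and the scalar split likewise. Every term containing a $\theta^{\nu,1}$ factor carries a factor $\eps$ into $\eta$. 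Terms built only from $\theta^{\nu,2}$ and $\theta^\nu$ use the $L^\infty_x$ (hence $L^{\tilde p}_x$ for any large $\tilde p$) bound, which restores a spare power of $\varrho$ and lands in $M_3$.
\end{itemize}

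\emph{Main obstacle.} I expect the hard step to be this critical bookkeeping: checking that each mixed term in the expansion is either $\eps$-small in front of $\seminorm{\theta^\nu}_{\beta,\varrho_0}^2$ or strictly subcritical, and that $\eta<\frac{2\beta}{2+2\alpha+2\beta}$ can be achieved uniformly in $\nu$. The CZ case is the more delicate one, since there the drift increment brings no gain in $\varrho$.
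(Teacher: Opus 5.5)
Item (2) is fine (it is the paper's Cases 3 and 5). So is your Biot--Savart argument for $p\geq3$, although there the crude bound $\|\delta_{\varrho z}\theta^\nu\|_{L^3_x}\leq 2\|\theta^\nu\|_{L^3_x}$ already gives $\EE\int_0^T|\mathcal T_{u^\nu,\theta^\nu}(s,\varrho)|\,\dd s\lesssim\varrho$, i.e.\ \eqref{eq:key_bound_regularization} with $\eta=0$.

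The genuine gap is Item (1) for $\mathscr R=\mathscr R_{CZ}$, $p\geq 3$, $\alpha>\frac12-\frac1p$. This regime belongs to Item (1), whose hypothesis is ``$p\geq3$ \emph{or} (sub)critical''. There $\beta$ equals either $1-\alpha$ (for $p\geq 6$ and $\frac12-\frac1p<\alpha\leq\frac12-\frac1{2p-6}$, a supercritical range) or $(\frac12-\alpha)(p-2)<1-\alpha$. You discard $L^2$--$L^p$ interpolation ``in the CZ case'' and rely on Sobolev embeddings plus \autoref{lem:final_besov}. But a CZ drift increment yields no power of $\varrho$. With $u^\nu\in L^p_x$ and $\|\delta_{\varrho z}\theta^\nu\|_{L^{2p/(p-1)}_x}\lesssim\|\delta_{\varrho z}\theta^\nu\|_{H^{1/p}_x}$, you get at best $\varrho^{2\beta-2/p}\seminorm{\theta^\nu}_{\beta,\varrho_0}^2$. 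This is $O(\varrho^{-1+2\alpha+2\beta})$ as $\varrho\to0$ only if $\alpha\leq\frac12-\frac1p$. Splitting does not help, because the $\eps$-small part of the drift also lies only in $L^r_x\cap L^p_x$. So your claim that balancing exponents yields ``the (sub)critical condition together with $\alpha\leq\frac12-\frac1{2p-6}$'' is not right. For $p\geq 6$ that threshold is \emph{weaker} than subcriticality and cannot come out of the Sobolev route. The partial exponent $\beta<1-\alpha$ is never treated in your proposal.

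The missing step is exactly the interpolation you dismissed (the paper's Case 6):
\begin{enumerate}
\item Split only the drift, $u^\nu=u^{\nu,1}+u^{\nu,2}$, with $\|u^{\nu,1}\|_{L^p_x}\lesssim\eps$ and $\|u^{\nu,2}\|_{L^{2p}_x}\lesssim M_\eps$.
\item Apply H\"older as $\|\delta u^{\nu,1}\|_{L^p_x}\|\delta\theta^\nu\|_{L^{2p/(p-1)}_x}^2$ and $\|\delta u^{\nu,2}\|_{L^{2p}_x}\|\delta\theta^\nu\|_{L^{4p/(2p-1)}_x}^2$.
\item Interpolate the scalar between the uniform $L^p_x$ bound and $L^2_x$, e.g.\ $\|\delta\theta^\nu\|_{L^{2p/(p-1)}_x}^2\leq\|\delta\theta^\nu\|_{L^p_x}^{2(1-s_1)}\|\delta\theta^\nu\|_{L^2_x}^{2s_1}$ with $s_1=\frac{p-3}{p-2}$. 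This is where $p\geq3$ is used.
\item Jensen gives the factor $\varrho^{2s_1\beta}\seminorm{\theta^\nu}_{\beta,\varrho_0}^{2s_1}$. The condition $-1+2\alpha+2\beta\leq 2s_1\beta$ is equivalent to $\beta\leq(\frac12-\alpha)(p-2)$. This is the origin of both $\beta=(\frac12-\alpha)(p-2)\wedge(1-\alpha)$ and the threshold $\frac12-\frac1{2p-6}$.
\item For $\beta=(\frac12-\alpha)(p-2)$ the powers coincide. So $\eta$ must be made small via $\eps$ and via the spare factor $\varrho_0^{\beta/(p-2)}$ of the $u^{\nu,2}$ term, choosing $\varrho_0$ after $\eps$.
\item Since the seminorm enters with power $2s_1<2$, you need $X^{s_1}\leq 1+X$ for $X=\seminorm{\theta^\nu}_{\beta,\varrho_0}^2$, so $M_3\neq0$.
\end{enumerate}

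Two smaller points on your critical bookkeeping. First, do not split the scalar: you have no Besov control on $\theta^{\nu,1}$ and $\theta^{\nu,2}$ separately. Second, the $u^{\nu,2}$ terms still multiply $\seminorm{\theta^\nu}_{\beta,\varrho_0}^2$, so their spare power is what makes their contribution to $\eta$ small; they cannot be placed in $M_3$.

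Also watch the Biot--Savart critical point $\alpha=1-\frac1p$, $p<3$. Your choice $a=p$, $m=\frac{2p}{p-1}$ needs $H^{1/p}_x=H^{\beta}_x$ control of $\delta\theta^\nu$, which is the endpoint $\gamma=1$ excluded in \autoref{lem:final_besov}. The paper avoids this by using H\"older in $(3,3,3)$, with $\dot W^{1,p}_x\hookrightarrow\dot W^{5/3-2/p,3}_x$ on the drift and only $H^{1/3}_x$ on the scalar.
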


We split verification in the different cases introduced in \autoref{ass:exponent.beta}, see the subsections below.
Before starting, let us recall that whenever needed we can invoke the decomposition $\theta^\nu=\theta^{\nu,1}+\theta^{\nu,2}$ satisfying the estimates \eqref{eq:split_in_cond}-\eqref{eq:split_in_cond.bis}. This allows to treat ``critical'' exponents $p$ by making the contribution of $\theta^{\nu,1}$ small, at the price of allowing a possibly very large, subcritical component $\theta^{\nu,2}$.

\subsection{Case 1: $\mathscr{R}=\mathscr{R}_{BS},\ p \in [2,3),\ \alpha \leq 1-1/p$}\label{sss:regularity_case1}
Let $\eps>0$ to be fixed later. In light of the decomposition $\theta^\nu=\theta^{\nu,1}+\theta^{\nu,2}$, one similarly has $u=u^{\nu,1}+u^{\nu,2}$ with $u^{\nu,i}=\mathscr{R}_{BS} \theta^{\nu,i}$; by estimates \eqref{eq:split_in_cond}-\eqref{eq:split_in_cond.bis} and \eqref{eq:properties_BS}, we deduce the uniform-in-$\nu$ bounds 
\begin{align}\label{estimate_splitting_Euler}
        \sup_{\nu\in (0,1)}\|u^{\nu,1}\|_{L^{\infty}_{\omega,T} (\dot{W}^{1,r}_x \cap \dot{W}^{1,p}_x)}\lesssim \varepsilon,\quad \sup_{\nu\in (0,1)}\|u^{\nu,2}\|_{L^{\infty}_{\omega,T} \left(\dot{W}^{1,r}_x\cap \dot{W}^{1,2p}_x\right)}\lesssim M_\eps, 
\end{align}

Let us fix
\begin{equation*}
    s:=\frac53-\frac2p \in \left[\frac23,1\right), \quad
    \delta := \frac{1}{3(1-\alpha)} \in \left(\frac13,1\right);
\end{equation*}
then by Sobolev embeddings
\begin{align*}
\norm{u^{\nu,1}}_{L^\infty_{\omega,T}\dot{W}^{s, 3}_x} &\lesssim
\norm{u^{\nu,1}}_{L^\infty_{\omega,T}\dot{W}^{1,p}_x}\lesssim \varepsilon, 
\qquad
\norm{u^{\nu,2}}_{L^\infty_{\omega,T}\dot{W}^{1, 3}_x}\lesssim 
 M_\eps,
\\    
\norm{\delta_{r z}\theta^\nu}_{L^2_{\omega,T}L^3_x} 
&\lesssim
\norm{\delta_{r z} \theta^\nu}_{L^2_{\omega,T}H^{1/3}_x}
=
\norm{\delta_{r z}\theta^\nu}_{L^2_{\omega,T}H^{\delta(1-\alpha)}_x}.
\end{align*}

By \autoref{lem:final_besov} from \autoref{app:besov} (with $T_0=0$), for any $\rho\in (0,1)$ it holds that
\begin{align*}
    \mathbb{E} \left[ \int_0^T \int_{\{| z |\leqslant 1\}}  
   \norm{\delta_{\varrho z} \theta^\nu_t}_{H^{\delta(1-\alpha)}_x}^2  \mathd z \mathd t \right]
   \lesssim \rho^{2(1-\delta)(1-\alpha)} \seminorm{\theta^\nu}_{1-\alpha,\varrho_0}^2
   = \rho^{4/3-2\alpha} \seminorm{\theta^\nu}_{1-\alpha,\varrho_0}^2.
\end{align*}
Combining this estimate with H\"{o}lder's inequality and \eqref{eq:basic_increments}, we find
\begin{align*}
    \mathbb{E} \left[ \int_0^T |\mathcal{T}_{u^\nu,\theta^\nu}(t,\varrho)| \mathd t \right] 
    & \lesssim \mathbb{E} \left[ \int_0^T \int_{\{| z |\leqslant 1\}}  \|  \delta_{\rho z} (u^{\nu,1}_t +u^{\nu,2}_t) \|_{L^3_x} \| \delta_{\varrho z} \theta^{\nu}_t \|_{L^3_x}^2 \mathd z \mathd t \right]\\
    &\lesssim
   \varrho^s \mathbb{E} \left[ \int_0^T \int_{\{| z |\leqslant 1\}}  \|  u^{\nu,1}_t \|_{\dot{W}^{s, 3}_x} \| \delta_{\varrho z} \theta^{\nu}_t \|_{L^3_x}^2 \mathd z \mathd t \right]
   \\ 
   &\quad+ \varrho \mathbb{E} \left[ \int_0^T \int_{\{| z | \leqslant 1\}}  \|  u^{\nu,2}_t \|_{\dot{W}^{1, 3}_x} \| \delta_{\varrho z} \theta^{\nu}_t \|_{L^3_x}^2 \mathd z \mathd t \right]
   \\ 
   & \lesssim
   (\varepsilon \varrho^s +M_\eps \varrho) \mathbb{E} \left[ \int_0^T \int_{\{| z |\leqslant 1\}}  
   \norm{\delta_{\varrho z} \theta^\nu_t}_{H^{\delta(1-\alpha)}_x}^2  \mathd z \mathd t \right]
   \\ 
   &\lesssim 
   (\varepsilon \varrho^s+M_\eps \varrho) \varrho^{4/3-2\alpha} \seminorm{\theta^\nu}_{1-\alpha,\varrho_0}^2\\ 
   & \lesssim 
   (\varepsilon+M_\eps \varrho_0^{4/3-2\alpha})\varrho\seminorm{\theta^\nu}_{1-\alpha,\varrho_0}^2.
\end{align*}
In the last passage we used that $4/3-2\alpha+s\geq 1$ by construction.
The last inequality implies the validity of \eqref{eq:key_bound_regularization} for $\beta=1-\alpha$ and $M_3=0$, up to choosing first $\eps>0$ and consequently $\rho_0$ small enough so that
\begin{align*}
    \eps + M_\eps \rho_0^{4/3-2\alpha} \leq c_1 \eta < c_1\, \frac{1-\alpha}{2},
\end{align*}
which is allowed since $4/3-2\alpha>0$ by assumption.
\subsection{Case 2: $\mathscr{R}=\mathscr{R}_{BS},\ p\in [3,\infty],\ \alpha\in (0,1)$}\label{sss:regularity_case3} 
Since $L^r_x \cap L^p_x\subset L^3_x$, by estimates \eqref{viscous_uniform_bound} and \eqref{eq:properties_BS} we have
\begin{align*}
    \sup_{\nu\in (0,1)}\norm{u^\nu}_{L^\infty_{\omega,T}\dot{W}^{1, 3}_x}\lesssim \sup_{\nu\in (0,1)}\left(\norm{\theta_0^{\nu}}_{L^r_x\cap L^p_x}+\| f^{\nu} \|_{L^1_T \left(L^r_x\cap L^p_x\right)}\right) < \infty.
\end{align*}
As before, by H\"{o}lder's inequality and \eqref{eq:basic_increments}, we have 
\begin{align*}
    \mathbb{E} \left[ \int_0^T |\mathcal{T}_{u^\nu,\theta^\nu}(t,\varrho)| \mathd t \right] 
    & \lesssim
   \varrho\, \mathbb{E} \left[ \int_0^T \| u^{\nu}_t \|_{\dot{W}^{1, 3}_x} \| \theta^{\nu}_t \|_{L^3_x}^2
   \mathd t \right] \lesssim \varrho,
\end{align*}
which verifies \eqref{eq:key_bound_regularization} for $\beta=1-\alpha$ and $\eta=0$ .
\subsection{Case 3: $\mathscr{R}=\mathscr{R}_{BS},\ p\in (2,3)$ and $1-1/p<\alpha<1$}\label{sss:regularity_case2}
By equations \eqref{eq:basic_increments},\eqref{eq:properties_BS},\eqref{viscous_bound_1} it holds
\begin{align*}
    \|\delta_h u^\nu\|_{L^\infty_{\omega,T}L^p_x}
 \le |h|\|\nabla u^\nu\|_{L^\infty_{\omega,T}L^p_x}
 \lesssim |h| M_p.
\end{align*}
The $L^r_x\cap L^p_x$ bounds also give a uniform $L^\infty_x$ bound
for $u^\nu$, and imply \eqref{eq:data-bounds} by interpolation.
Therefore the assumptions of \autoref{lem:data-splitting} are satisfied with $\mathfrak{h}=-1$ and
$\gamma=1-\alpha$, leading to the claim.
\subsection{Case 4: $\mathscr{R}=\mathscr{R}_{CZ}$, $p\in (2,6],\ \alpha\leq \frac{1}{2}-\frac{1}{p}$ }\label{sss:regularity_case4}

Let $\eps>0$ to be chosen later. Similarly to Case 1, by the decomposition $\theta^\nu=\theta^{\nu,1}+\theta^{\nu,2}$, estimates \eqref{eq:split_in_cond}-\eqref{eq:split_in_cond.bis} and the properties of Caldéron-Zygmund kernels (cf. \eqref{eq:properties_CZ}),  \eqref{eq:properties_BS}, we have $u^\nu=u^{\nu,1}+u^{\nu,2}$ with  
\begin{align}\label{estimate_splitting_CZ}
        \sup_{\nu\in (0,1)}\|u^{\nu,1}\|_{L^{\infty}_{\omega,T}  L^p_x}\lesssim \varepsilon,\quad \sup_{\nu\in (0,1)}\|u^{\nu,2}\|_{L^{\infty}_{\omega,T}  L^{2p}_x}\lesssim M_\eps
\end{align}
where we additionally used the inclusion $L^{2p}_x\subset L^r_x\cap L^\infty_x$.
Let
\begin{align*}
    s_1:=\frac{1}{p}\in \left( 0, \frac{1}{2}-\alpha\right],\quad s_2:=\frac{1}{2p}\in \left(0,\frac12-\alpha\right).
\end{align*}
By H\"older's inequality, Sobolev embeddings, estimate \eqref{estimate_splitting_CZ} and \autoref{lem:final_besov} we have 
 \begin{align*}
    \mathbb{E} \left[ \int_0^T |\mathcal{T}_{u^\nu,\theta^\nu}(t,\varrho)| \mathd t\right] 
    & \lesssim
 \mathbb{E} \left[ \int_0^T \int_{\{| z | \leqslant 1\}} \| \delta_{\varrho z} u^{\nu,1}_t \|_{L^{p}_x}\| \delta_{\varrho z} \theta^{\nu}_t \|_{H^{s_1}_x}^{2}
   \mathd z \mathd t \right]\\ & \quad + \mathbb{E} \left[ \int_0^T \int_{\{| z | \leqslant 1\}} \| \delta_{\varrho z} u^{\nu,2}_t \|_{L^{2p}_x}\| \delta_{\varrho z} \theta^{\nu}_t \|_{H^{s_2}_x}^{2}
   \mathd z \mathd t \right]
   \\ 
& \lesssim  \mathbb{E}\left[\int_0^T \int_{\{| z |\leqslant 1\}} 
\left(\varepsilon\|\delta_{\varrho z} \theta^{\nu}_t \|_{H^{s_1}_x}^{2}+M_\eps \| \delta_{\varrho z} \theta^{\nu}_t \|_{H^{s_2}_x}^{2}\right)
   \mathd z \mathd t\right]
   \\
   & \lesssim \left(\varepsilon \varrho^{2\left(1-\alpha-\frac{1}{p}\right)}+ M_\eps \varrho^{2\left(1-\alpha-\frac{1}{2p}\right)}\right) \seminorm{\theta^\nu}_{1-\alpha,\varrho_0}^2 
   \\
   &\lesssim \left(\varepsilon+M_\eps \varrho_0^{\frac{1}{p}}\right)\varrho \seminorm{\theta^\nu}_{1-\alpha,\varrho_0}^2.
\end{align*}   
The last inequality implies \eqref{eq:key_bound_regularization} for $\beta=1-\alpha$ and $M_3=0$, up to choosing (similarly to Case 1) $\varepsilon>0$ and $\varrho_0>0$ sufficiently small so that the condition on $\eta$ is satisfied.
\subsection{Case 5: $\mathscr{R}=\mathscr{R}_{CZ}$, $p\in (2,3),\ \frac{1}{2}-\frac{1}{p}<\alpha<\frac{1}{2}$} By the estimate \eqref{eq:properties_CZ}, it holds
\begin{align*}
    \|\delta_h u^\nu\|_{L^\infty_{\omega,T}L^p_x}
 \leq 2\|u^\nu\|_{L^\infty_{\omega,T}L^p_x}
 \lesssim M_p.
\end{align*}
Therefore the assumptions of \autoref{lem:data-splitting} are satisfied with $\mathfrak{h}=0$ and
$\gamma=1/2-\alpha$ leading to the claim.
\subsection{Case 6: $\mathscr{R}=\mathscr{R}_{CZ}$, $p\in [3,6],\ \frac{1}{2}-\frac{1}{p}<\alpha<\frac{1}{2}$ or $p>6,\ \alpha \in (0,1/2)$ }\label{sss:regularity_case5}
Let us fix $\varepsilon>0$ and decompose $\theta^\nu=\theta^{\nu,1}+\theta^{\nu,2}$ so that \eqref{estimate_splitting_CZ} applies. Let $s_1,s_2\in [0,1)$ be defined by
\begin{align*}
    \frac{p-1}{2p}=\frac{s_1}{2}+\frac{1-s_1}{p},\quad \frac{2p-1}{4p}=\frac{s_2}{2}+\frac{1-s_2}{p},
\end{align*}
and set $\beta:=(1/2-\alpha)(p-2)\wedge (1-\alpha)$; note that by construction
\begin{equation}\label{eq:case5_parameters}
        s_1 =\frac{p-3}{p-2}<s_2=\frac{2p-5}{2p-4}, \quad 2(s_2-s_1)=\frac{1}{p-2}, \quad -1+2\alpha+2\beta \leq2\beta s_1.
\end{equation}
By H\"older's inequality, interpolation, estimates \eqref{viscous_uniform_bound} and \eqref{estimate_splitting_CZ}, and \autoref{lem:final_besov}, we have
 \begin{align*}
    \mathbb{E} \left[ \int_0^T |\mathcal{T}_{u^\nu,\theta^\nu}(t,\varrho)| \mathd t \right] 
    & \lesssim
 \mathbb{E} \left[ \int_0^T \int_{\{| z | \leqslant 1\}} \| \delta_{\varrho z} u^{\nu,1}_t \|_{L^{p}_x}\| \delta_{\varrho z} \theta^{\nu}_t \|_{L^{p}_x}^{2(1-s_1)}\| \delta_{\varrho z} \theta^{\nu}_t \|_{L^{2}_x}^{2s_1}
   \mathd z \mathd t \right]
   \\ 
   & \quad + \mathbb{E} \left[ \int_0^T \int_{\{| z |\leqslant 1\}} \| \delta_{\varrho z} u^{\nu,2}_t \|_{L^{2p}_x}\| \delta_{\varrho z} \theta^{\nu}_t \|_{L^{p}_x}^{2(1-s_2)}\| \delta_{\varrho z} \theta^{\nu}_t \|_{L^{2}_x}^{2s_2}
   \mathd z \mathd t \right]
   \\ 
   & \lesssim  \mathbb{E}\left[\int_0^T \int_{\{| z |\leqslant 1\}} \left(\varepsilon \| \delta_{\varrho z} \theta^{\nu}_t \|_{L^{2}_x}^{2s_1}+M_\eps\| \delta_{\varrho z} \theta^{\nu}_t \|_{L^{2}_x}^{2s_2}\right)
   \mathd z \mathd t\right]\\
   & \lesssim \left(\varepsilon \varrho^{2s_1 \beta}+M_\eps \varrho^{2 s_2 \beta}\right)\left(\seminorm{\theta^\nu}_{\beta,\varrho_0}^{2s_1} + \seminorm{\theta^\nu}_{\beta,\varrho_0}^{2 s_2}\right)\\
   & \lesssim \left(\varepsilon \varrho^{2s_1 \beta}+M_\eps \varrho_0^{2 (s_2 -s_1)\beta}\right) \rho^{2\beta s_1} \left(1+\seminorm{\theta^\nu}_{\beta,\varrho_0}^2\right)\\
   &\lesssim \left(\varepsilon+M_\eps \varrho_0^{\frac{\beta}{p-2}}\right)\varrho^{-1+2\alpha+2\beta}\left(1+\seminorm{\theta^\nu}_{\beta,\varrho_0}^2\right)
\end{align*}  
where in the last passage we used \eqref{eq:case5_parameters} and $\varrho<1$.
The above estimate shows \eqref{eq:key_bound_regularization}, for suitable $M_3$ and $\eta$, by choosing $\varepsilon>0$ and $\varrho_0$ sufficiently small.
By its definition, one readily checks that $\beta=1-\alpha$ if $p> 6$ and  $\alpha\leq \frac{1}{2}\left(1-\frac{1}{p-3}\right)$, which matches the definition \eqref{eq:definition.regularity_CZ}.

\section{Anomalous integrability} \label{sec:integrability}
This section is devoted to investigating anomalous integrability of solutions to \eqref{eq:nonlinear_transport}, meant as a space integrability gain of the solution $\theta$, as opposed to a simple propagation of space integrability from the initial condition $\theta_0$, which is formally expected in incompressible transport type equations like \eqref{eq:nonlinear_transport} and \eqref{eq:abstract_transport_SPDE}.
\cref{lem:regularization-conditional-bound} constitutes the elementary step described in \autoref{subsec:intro_integrability} for the space integrability gain, which is then completed by the iteration argument in \autoref{prop:abstract_anomalous_integrability}.
Similarly to what was done in \cref{sec:general.criterion,sec:computation}, we first provide in \cref{subsec:integrability_strategy} abstract results that apply in any dimension $d\geq 2$ and for viscous approximations of general SPDEs of the form \eqref{eq:abstract_transport_SPDE}, with no constitutive relation between $b^\nu$ and $\theta^\nu$; then in \cref{subsec:integrability_active_scalars} we specialize to the case of 2D active scalars \eqref{eq:nonlinear_transport} with drift $u^\nu=\mathscr{R} \theta^\nu$.

\subsection{General strategy}\label{subsec:integrability_strategy}

We start with the following observation, which will simplify the arguments by allowing us not to track the forcing term $f$.
Suppose $\theta_0 \in L^r_x \cap L^p_x$, $f \in L^1_T(L^r_x \cap L^{p_\ast}_x)$ with $p_\ast \geq p$ and set up a vanishing viscosity scheme for \eqref{eq:abstract_transport_SPDE} that additionally satisfies the 
uniform bound \eqref{eq:bound.forcing.anomalousintegrability}.
Then we can decompose the solutions $\{\theta^\nu\}_{\nu \in (0,1)}$ as $\theta^\nu = \theta^{\nu,1}+\theta^{\nu,2}$, where $\theta^{\nu,1}$ solves \eqref{eq:general.viscous.SPDE} with initial condition $\theta^{\nu,1}_0=\theta^\nu_0$ and forcing $f^{\nu,1}=0$, while $\theta^{\nu,2}$ solves the same SPDE with initial condition $\theta^{\nu,2}_0=0$ and forcing $f^{\nu,2}=f^\nu$. For the latter, by \eqref{viscous_bound_variant} we immediately have the $\PP$-a.s. bound
\begin{align*}
    \| \theta^{\nu,2} \|_{L^\infty_T L^{p_\ast}_x} \lesssim \| f^\nu \|_{L^1_T L^{p_\ast}_x} \leq M_f < \infty;
\end{align*}
therefore without loss of generality we only need to consider anomalous integrability in the case of zero external forcings $f^\nu \equiv 0$, namely $\theta^\nu$ solving
\begin{align*}
    \mathd \theta^\nu + b^\nu \cdot \nabla \theta^\nu \mathd t + \mathrm{d} W \cdot \nabla \theta^\nu =
  \frac12 C(0) : D^2 \theta^\nu \mathd t + \nu \Delta\theta^\nu \mathd t.
\end{align*}
Moreover, by propagation of $L^p_x$ moments (cf. again \eqref{viscous_bound_variant}) and interpolation, for SPDEs on $\R^d$ it is sufficient to prove that
\begin{equation}\label{eq:anomalous_integrability_general}
    \sup_{\nu \in (0,1)}   \mathbb{E} \left[\sup_{s \in [ t ,T]}  \| {\theta}^{\nu}_s \|_{L^\infty_x}^p  \right]
   \lesssim 
   t^{- \frac{d}{2(1-\alpha)}} M_\theta^p\quad\forall t\in (0,1 \wedge T].
\end{equation}
This is consistent with the estimate discussed in \autoref{rmk:integrability_intro} for $d=2$; \autoref{prop:abstract_anomalous_integrability} extends this a bit further, allowing more general parameters $\beta\in (0,1-\alpha]$.

We are now ready to present our main strategy of proof.
We already know that, if the results from \autoref{sec:general.criterion} apply, then solutions gain some Sobolev regularity $H^s_x$ at Lebesgue a.e. positive time $T_0>0$, which by embeddings imply integrability $L^q_x$ with $1/q=1/2-s/d$. In turn, this integrability is then preserved for $t\geq T_0$ by the incompressibility of the drift $b^\nu$ and the noise $W$. We want to understand whether, restarting the SPDE at time $T_0$ and using Markovianity of the system, we can iterate this argument to gain even further integrability.

In order to do so, we fix an auxiliary parameter $q \in (2,\infty)$ and investigate whether we can apply the anomalous regularization result of \autoref{sec:general.criterion} to the stochastic process
\begin{align*}
    \Theta^\nu :=|\theta^\nu|^{q/2}.
\end{align*}
At a technical level, to rigorously apply It\^o Formula, we regularize the function $z \mapsto |z|^{q/2}$ by taking a sequence $\{F_m\}_{m \in \N} \subset C^2(\R)$ of nonnegative, convex functions such that $F_m(z) \uparrow |z|^{q/2}$ monotonically as $m \to \infty$; 
for technical reasons that will be clear in a moment, we also assume $F''_m$ bounded for every fixed $m$.\footnote{
One can construct $F_m$ in the following way. Let $\psi_m \in C^\infty(\R)$ be a smooth cutoff function taking values in $[0,1]$ and such that $\psi_m(z) = 0$ for $|z|<1/m$ or $|z|>2m$ and $\psi_m(z) = 1$ for $2/m < |z| < m$. Define $F_m$ by imposing $F''_m(z) := \psi_m(z) F''(z)$ and $F_m(0)=F'_m(0)=0$.
}

Let $T_0 \in [0,T)$ denote the (deterministic) time that will serve the purpose of base point from which to restart the system, and so the iteration.
Following the same computation as in \cite[Proposition 2.20]{DrGaPa25}, one can verify that $\Theta^{\nu,m} := F_m(\theta^\nu)$ solves the SPDE
\begin{align} \label{eq:Theta_nu,m}
   \mathd \Theta^{\nu,m} 
    &+ 
    b^{\nu} \cdot \nabla \Theta^{\nu,m} \mathd t 
    +
 \mathd W \cdot \nabla
   \Theta^{\nu,m} 
   =
   \frac{1}{2} C (0) : D^2 \Theta^{\nu,m}\mathd t + \nu \Delta \Theta^{\nu,m} \mathd t + \Phi^{\nu,m} \mathd t,  
\end{align}
on the time interval $[0,T]$, where we defined
\begin{align*}
    \Phi^{\nu,m} := - 
    \nu F''_m( \theta^\nu) |\nabla \theta^\nu|^2.  
\end{align*}

We regard it as an SPDE on $[T_0,T]$ with initial condition $\Theta^{\nu,m}_{T_0}$; notice that \eqref{eq:Theta_nu,m} has the same structure of the general viscous SPDE \eqref{eq:general.viscous.SPDE} of \autoref{sec:general.criterion},
with the only differences that the initial condition $\Theta^{\nu,m}_{T_0}$ and the external forcing $\Phi^{\nu,m}$ are random.
Nonetheless, as shown in \autoref{cor:Theta.gronwall} below, a conditional analogue of \autoref{lem:gron_increments} holds true, providing anomalous regularization for $\Theta^{\nu,m}$.
Hereafter we adopt the short-hand notation $\mathbb{E}_{T_0}:= \mathbb{E}[\,\cdot \mid \mathcal{F}_{T_0}]$ and define
\begin{align*}
    G^{\nu,m}_{\beta,T_0} (\varrho) &:= G_{\beta,T_0}(\varrho,\Theta^{\nu,m}) := \mathbb{E}_{T_0} \left[ \int_{[T_0, T] \times \mathbb{R}^d}
   \int_{\mathbb{S}^{d-1}} \frac{| \delta_{\varrho z} \Theta^{\nu,m}_s (y) |^2}{\varrho^{2\beta}} \sigma (\dd z) \mathd y \mathd s \right],
   \\
   \seminorm{\Theta^{\nu,m}}_{\beta,\varrho_0 \mid \mathcal{F}_{T_0}}^2 &:=
   \sup_{\varrho \in (0,\varrho_0)}G^{\nu,m}_{\beta,T_0}(\varrho):=\sup_{\varrho \in (0,\varrho_0)} G_{\beta,T_0}(\varrho,\Theta^{\nu,m}).
\end{align*}

We refer to \autoref{app:besov} for more details on such conditional seminorms.

\begin{lem} \label{cor:Theta.gronwall}
Let $W$ satisfy \autoref{ass:noise} with constants $c_1,c_2,r_0>0$.
Fix $T>0$, $T_0 \in [0,T)$, and a family of progressive divergence-free drifts $\{b^\nu\}_{\nu \in (0,1)} \subset L^\infty_{\omega,[T_0,T]}L^s_x$ for some $s\in [2,\infty]$.
Assume that $\{\theta^\nu_{T_0}\}_{\nu \in (0,1)} \subset L^1_x \cap L^\infty_x$ $\PP$-a.s. and let $\Theta^{\nu,m}$ solve \eqref{eq:Theta_nu,m} on the time interval $[T_0,T]$.

Fix $\beta \in (0,1-\alpha]$ and suppose that there exist parameters
\begin{align*}
    \eta < \frac{2\beta}{d+2\alpha+2\beta},\quad \varrho_0 \in (0,r_0\wedge 1)
\end{align*}
such that 
\begin{align*} \label{eq:key_bound_regularization.Theta}
\mathbb{E}_{T_0} \left[ \int_{T_0}^T \mathcal{T}_{b^\nu,\Theta^{\nu,m}}(s,\varrho) \mathd s  \right]
  \leq
  c_1 \eta \varrho^{-1+2\alpha+2\beta} \seminorm{\Theta^{\nu,m} }_{\beta,\varrho_0 \mid \mathcal{F}_{T_0}}^2,
  \quad \forall\nu \in (0,1),\ \varrho \in (0,\varrho_0), \ m \in \N.
\end{align*}
Then $\PP$-a.s.
\begin{align*}
 \seminorm{\Theta^{\nu,m}}_{\beta,\varrho_0 \mid \mathcal{F}_{T_0}}^2  \leq K(1+T) \left( \frac{2\beta}{d+2\alpha+2\beta} - \eta \right)^{-1} \| \theta^\nu_{T_0}\|_{L^q_x}^q,
\end{align*}
where $K$ is the same constant as in \autoref{lem:gron_increments}.
\end{lem}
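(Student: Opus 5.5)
We will repeat the proof of \autoref{lem:gron_increments} with $\theta^\nu$ replaced by $\Theta^{\nu,m}$, the time interval $[0,T]$ by $[T_0,T]$, and unconditional expectations by $\mathbb{E}_{T_0}$. Mollify \eqref{eq:Theta_nu,m} with the special kernel $\chi^\varepsilon$, test against $\Theta^{\nu,m}$, and integrate over $[T_0,T]\times\R^d$. Taking the conditional expectation, the stochastic integral starting at $T_0$ vanishes, because it is a true martingale on $[T_0,T]$. This requires $F''_m$ bounded, so that $\Theta^{\nu,m}\in L^\infty_{\omega,T}(L^1_x\cap L^\infty_x)\cap L^2_{\omega,T}\dot H^1_x$, with the bounds inherited from those of $\theta^\nu$. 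We thus obtain the conditional analogue of \eqref{eq:energy_balance}. The noise term $I_1$ and the transport term $I_2$ are handled exactly as before; \autoref{lem:trilinear} is purely deterministic and is applied pathwise. Using \eqref{eq:local.expansion.noise}, we arrive at the conditional version of \eqref{eq:inequality_g} for $G^{\nu,m}_{\beta,T_0}(\varrho)$ with $M_3=0$. We then close with the averaging bound \eqref{eq:inequality_g2} and the assumed bound on $\mathbb{E}_{T_0}[\int_{T_0}^T\mathcal{T}]$, rearranging thanks to $\eta<\frac{2\beta}{d+2\alpha+2\beta}$. The seminorm is finite for fixed $\nu,m$, since it is bounded by $\varrho_0^{-2\beta}$ times a finite $L^2$ norm, so the rearrangement is legitimate.

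The only genuinely new ingredient is the boundary and forcing terms. These are $\mathbb{E}_{T_0}[\langle\Theta^{\nu,m}_T,\Theta^{\nu,m,\varepsilon}_T\rangle]$, the viscous term $2\nu\,\mathbb{E}_{T_0}\int\langle\nabla\Theta,\nabla\Theta^{\varepsilon}\rangle$, the initial term $\langle\Theta^{\nu,m}_{T_0},\Theta^{\nu,m,\varepsilon}_{T_0}\rangle$, the remainder $Rem$, and $I_3$, which now involves the random forcing $\Phi^{\nu,m}=-\nu F_m''(\theta^\nu)|\nabla\theta^\nu|^2$. The key observation is a sign: $\Phi^{\nu,m}\le 0$ by convexity, while $\Theta^{\nu,m}\ge0$ and $\chi^\varepsilon\ge0$. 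Hence $I_3=\mathbb{E}_{T_0}\int(\langle\Phi^{\varepsilon},\Theta\rangle+\langle\Phi,\Theta^\varepsilon\rangle)\le 0$, and it can be dropped, since it enters the right-hand side of the inequality for the increments with the favourable sign. I would double-check this sign when moving terms across; it is exactly what makes the final constant free of any forcing term.

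The remaining terms are bounded by the energy of $\Theta$ alone. We have $|\langle\Theta_T,\Theta^{\varepsilon}_T\rangle|\le\|\Theta^{\nu,m}_T\|_{L^2_x}^2$. The viscous term is bounded by $2\nu\int\|\nabla\Theta^{\nu,m}\|_{L^2_x}^2$, and from the $L^2$ energy balance of \eqref{eq:Theta_nu,m} (that is, \eqref{viscous_energy_eq}) we get $\|\Theta_t\|_{L^2_x}^2+2\nu\int_{T_0}^t\|\nabla\Theta\|^2\le\|\Theta_{T_0}\|^2+2\int\langle\Phi,\Theta\rangle\le\|\Theta^{\nu,m}_{T_0}\|_{L^2_x}^2$, again by the sign of $\Phi$. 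The remainder satisfies $|Rem|\lesssim c_2(T-T_0)\sup_t\|\Theta_t\|^2_{L^2_x}$. Finally, $\|\Theta^{\nu,m}_{T_0}\|_{L^2_x}^2=\|F_m(\theta^\nu_{T_0})\|_{L^2_x}^2\le\||\theta^\nu_{T_0}|^{q/2}\|_{L^2_x}^2=\|\theta^\nu_{T_0}\|_{L^q_x}^q$ by monotone convergence $F_m\uparrow|z|^{q/2}$, and this quantity is $\mathcal{F}_{T_0}$-measurable. Collecting terms gives $G\le(\frac{d+2\alpha}{d+2\alpha+2\beta}+\eta)\seminorm{\Theta}^2+K(1+T)\|\theta^\nu_{T_0}\|_{L^q_x}^q$, with the same $K$ as before, and the claim follows.

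I expect the main obstacle to be the rigorous justification of the conditional computation. The It\^o product formula for $\Theta\Theta^\varepsilon$ on $[T_0,T]$ requires enough integrability of $\Phi^{\nu,m}$ (in $L^1_{\omega,T,x}$, since $F''_m$ is bounded and $\nu\nabla\theta^\nu\in L^2$) and of the drift term to apply \autoref{lem:trilinear} with $b\in L^s_x$, $\Theta\in L^1\cap L^\infty$. One must also check the vanishing of the conditional martingale increments, which follows from square integrability of the stochastic integrand, itself coming from $\Theta\in L^\infty_{\omega,T}L^2_x$ and $\hat C\in L^1$.
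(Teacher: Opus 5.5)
Your proposal follows the paper's proof essentially line by line. You rerun the computation of \autoref{lem:gron_increments} under $\mathbb{E}_{T_0}$ on $[T_0,T]$, drop $I_3\le 0$ using $\Phi^{\nu,m}\le 0$, $\Theta^{\nu,m}\ge 0$ and $\chi^\varepsilon\ge 0$, and control the boundary, viscous and remainder terms through the energy inequality
\[
\sup_{t\ge T_0}\|\Theta^{\nu,m}_t\|_{L^2_x}^2+2\nu\int_{T_0}^T\|\nabla\Theta^{\nu,m}_s\|_{L^2_x}^2\,\mathd s\le\|\Theta^{\nu,m}_{T_0}\|_{L^2_x}^2\le\|\theta^\nu_{T_0}\|_{L^q_x}^q .
\]

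One side remark needs correcting: your reason why the seminorm is finite before you rearrange. The paper leaves this point implicit. The trivial bound $\|\delta_{\varrho z}\Theta^{\nu,m}_s\|_{L^2_x}\le 2\|\Theta^{\nu,m}_s\|_{L^2_x}$ gives $G^{\nu,m}_{\beta,T_0}(\varrho)\lesssim\varrho^{-2\beta}\,\mathbb{E}_{T_0}\big[\|\Theta^{\nu,m}\|_{L^2_{[T_0,T],x}}^2\big]$. This blows up as $\varrho\to 0$. Hence the supremum over $\varrho\in(0,\varrho_0)$ is not controlled by $\varrho_0^{-2\beta}$ times an $L^2$ norm; that bound only handles $\varrho\ge\varrho_0$.

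Instead, use the viscous $\dot H^1$ control at fixed $\nu$. Combining $\|\delta_h\Theta^{\nu,m}_s\|_{L^2_x}\le|h|\,\|\nabla\Theta^{\nu,m}_s\|_{L^2_x}$ with the energy inequality above gives, $\PP$-a.s.,
\[
\seminorm{\Theta^{\nu,m}}_{\beta,\varrho_0\mid\mathcal{F}_{T_0}}^2\lesssim \frac{\varrho_0^{2-2\beta}}{\nu}\,\|\theta^\nu_{T_0}\|_{L^q_x}^q<\infty,
\]
using $\beta\le 1-\alpha<1$. This bound is not uniform in $\nu$, but finiteness is only needed for each fixed $\nu$. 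The rearrangement then yields the $\nu$-independent estimate of the statement.
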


\begin{proof}
The proof follows by performing the same computations as in the proof of \autoref{lem:gron_increments} leading to \eqref{eq:inequality_g}, with $(\Theta^{\nu,m},\Phi^{\nu,m})$ in place of $(\theta^{\nu},f^{\nu})$, using the conditional expectation $\mathbb{E}_{T_0}=\mathbb{E}[\,\cdot \mid \mathcal{F}_{T_0}]$ instead of the expectation $\mathbb{E}$.
Here we only sketch the main differences. 

First of all, the regularity assumption \eqref{eq:assumptions.theta_0.f} on the forcing $f^\nu$ is not necessarily satisfied by $\Phi^{\nu,m}$ in the present setting, and was used to obtain the bound \eqref{eq:inequality.I3} on the term $I^{\nu,\varepsilon}_3$. 
To circumvent this, we notice that since $F''_m$ is bounded,  by \eqref{viscous_bound_2} it holds $\Phi^{\nu,m} \in L^\infty_\omega L^1_T L^1_x$ for every $\nu \in (0,1)$ and $m \in \N$.
Moreover, since $F_m$ is convex, $\Phi^{\nu,m} \leq 0$ for almost every $(\omega,t,x)$.
Therefore, $\PP$-a.s. we have 
\begin{align}
    I^{\nu,\varepsilon}_3
    &=
    \mathbb{E}_{T_0} \left[ \int_{T_0}^T (\langle \Phi^{\nu,m,\varepsilon}_s ,\Theta^{\nu,m}_s \rangle + \langle \Phi^{\nu,m}_s , \Theta^{\nu,m,\varepsilon}_s \rangle) \mathd s  \right]
    \leq
    0,
\end{align}
where we used the facts that $\Theta^{\nu,m} \geq 0$ by construction and that convolution with $\chi^\varepsilon$ preserves positivity.
Similarly, we have the $\PP$-a.s. inequality $\int_{T_0}^T \langle\Theta^{\nu,m}_s, \Phi^{\nu,m}_s \rangle \mathd s \leq 0$, from which we deduce the following energy estimate for $\Theta^{\nu,m}$:
\begin{align*} 
    \sup_{t \geq T_0} \left( \| \Theta^{\nu,m}_t \|^2_{L^2_x}
    +
    2\nu \int_{T_0}^t \| \nabla \Theta^{\nu,m}_s \|^2_{L^2_x} \mathd s \right)
    \leq
    \| \Theta^{\nu,m}_{T_0} \|^2_{L^2_x}
    \leq
    \| \theta^\nu_{T_0}\|_{L^q_x}^q,
    \quad
    \PP\mbox{-almost surely}.
\end{align*}
Using the above inequalities and mutatis mutandis in \eqref{eq:inequality.I3}, the analogue of \eqref{eq:inequality_g} becomes:
\begin{align*}
    G^{\nu,m}_{\beta,T_0} (\varrho)& \leq (d + 2\alpha) \varrho^{- d-2\alpha-2\beta} \int_0^\varrho G^{\nu,m}_{\beta,T_0} (\rho) \rho^{d-1+2\alpha+2\beta} \mathd \rho
  \\
  &\quad+ 
  \frac{\varrho^{1-2\alpha-2\beta}}{c_1}
  \mathbb{E}_{T_0} \left[ \int_{T_0}^T \mathcal{T}_{b^\nu,\Theta^{\nu,m}}(s,\varrho) \mathd s  \right] 
  \\
  &\quad+ 
  \varrho^{2-2\alpha-2\beta} K (1+T) \| \theta^\nu_{T_0}\|_{L^q_x}^q.
\end{align*}
From here the claim follows exactly as in the proof of \autoref{lem:gron_increments}.
\end{proof}

Given the anomalous regularization for $\Theta^{\nu,m}$, we can obtain anomalous integrability of $\theta^\nu$ with the iterative procedure described above. The iteration step is described by the following lemma.

\begin{lem} \label{lem:regularization-conditional-bound}
Suppose the assumptions of \autoref{cor:Theta.gronwall} are satisfied with parameters $\beta,\eta,\varrho_0$ independent of $q\in [2,\infty)$, and define
\begin{equation}\label{eq:defn_sigma_beta}
    \sigma := \frac{2d}{d-\beta}.
\end{equation}
Then for any $t_\ast \in (0,T-T_0)$, $t_\ast \leq 1$ it holds
\begin{align} \label{eq:regularization-conditional-bound}
\mathbb{E}_{T_0} \left[  \sup_{t \in [T_0+t_\ast,T]}   \| {\theta}^{\nu}_t \|^q_{L^{\sigma q/2}_x} \right]
&\lesssim
t_\ast^{-1/2} M_\ast \| \theta^\nu_{T_0} \|^{q}_{L^q_x},
\quad
\PP\mbox{-almost surely},
\end{align}
where the implicit constant is independent of $\nu,q,t_\ast$, and
\begin{align*}
 M_\ast :=  \varrho_0^{-\beta} + K(1+T) \left( \frac{2\beta}{d+2\alpha+2\beta} - \eta \right)^{-1}. 
\end{align*}
\end{lem}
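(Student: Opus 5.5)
The plan is to combine the conditional anomalous regularization of \autoref{cor:Theta.gronwall} with a Sobolev embedding for $\Theta^{\nu,m}=F_m(\theta^\nu)$. We then use the monotone propagation of $L^\ell_x$ norms to pass from a time-averaged bound to a bound on the supremum over $[T_0+t_\ast,T]$.

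\textbf{Step 1 (conditional Besov bound).} By \autoref{cor:Theta.gronwall} and the localization inequality $\seminorm{f}_{\beta,+\infty}\le\seminorm{f}_{\beta,\varrho_0}+\varrho_0^{-\beta}\|f\|_{L^2}$ (in its conditional form, \autoref{app:besov}) we get, $\PP$-a.s.,
\[
\mathbb{E}_{T_0}\Big[\int_{T_0}^T \|\Theta^{\nu,m}_s\|^2_{L^2_x}\,\dd s\Big]+\sup_{z\neq0}|z|^{-2\beta}\,\mathbb{E}_{T_0}\Big[\int_{T_0}^T\|\delta_z\Theta^{\nu,m}_s\|_{L^2_x}^2\,\dd s\Big]\lesssim M_\ast^2\,\|\theta^\nu_{T_0}\|^q_{L^q_x}.
\]
Here the $L^2$ part is controlled by the energy estimate $\|\Theta^{\nu,m}_t\|_{L^2_x}^2\le\|\theta^\nu_{T_0}\|_{L^q_x}^q$ from the proof of the corollary. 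The factor $(1+T)$ is absorbed since $T$ is fixed. I will keep track of whether $M_\ast$ or $M_\ast^2$ appears; the statement is linear in $M_\ast$, which suggests taking a square root at the end, or that the constants are arranged so that $M_\ast\ge1$.

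\textbf{Step 2 (embedding and limit in $m$).} The borderline space $\tilde B^\beta_{2,\infty}$ does not embed into $L^{\sigma}_x$ with exactly $\sigma=2d/(d-\beta)$ via $H^{\beta-\eps}$. I will therefore use the endpoint embedding $B^\beta_{2,\infty}\hookrightarrow L^{\sigma,\infty}_x$ (weak-$L^\sigma$), or better the pathwise-in-$(\omega,t)$ version provided by \autoref{lem:final_besov}, to get
\[
\mathbb{E}_{T_0}\int_{T_0}^T\|\Theta^{\nu,m}_s\|^2_{L^\sigma_x}\dd s\lesssim M_\ast\|\theta^\nu_{T_0}\|^q_{L^q_x}.
\]
Should the exact endpoint fail, my first attempt is the fallback of proving the bound for $\sigma$ replaced by any smaller exponent. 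The constant is independent of $q$ because the hypotheses on $\beta,\eta,\varrho_0$ are uniform in $q$. Letting $m\to\infty$ by monotone convergence (using $F_m\uparrow|z|^{q/2}$ and lower semicontinuity), and noting $\|\,|\theta|^{q/2}\|_{L^\sigma}^2=\|\theta\|^q_{L^{\sigma q/2}}$, we obtain $\mathbb{E}_{T_0}\int_{T_0}^T\|\theta^\nu_s\|^q_{L^{\sigma q/2}_x}\dd s\lesssim M_\ast\|\theta^\nu_{T_0}\|^q_{L^q_x}$.

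\textbf{Step 3 (from time average to supremum).} By \eqref{viscous_bound_variant} (with $f^\nu\equiv0$, $\ell=\sigma q/2$), $t\mapsto\|\theta^\nu_t\|_{L^\ell_x}$ is a.s. nonincreasing. Hence $\sup_{t\ge T_0+t_\ast}\|\theta^\nu_t\|^q_{L^\ell}\le\inf_{s\in[T_0,T_0+t_\ast]}\|\theta^\nu_s\|^q_{L^\ell}\le t_\ast^{-1}\int_{T_0}^{T_0+t_\ast}\|\theta^\nu_s\|^q_{L^\ell}\dd s$. This alone gives $t_\ast^{-1}$, not $t_\ast^{-1/2}$, so the main obstacle is the sharper power. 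To obtain it, I will rerun Steps 1--2 on the shorter interval $[T_0,T_0+t_\ast]$ rather than $[T_0,T]$. The Besov constant then reads $K(1+t_\ast)(\cdots)^{-1}+\varrho_0^{-\beta}$, with the $L^2$-part contributing $t_\ast\|\theta_{T_0}\|^q_{L^q}$. The key point is that the increment seminorm is controlled by the energy dissipated on that interval, of size $O(1)\cdot\|\theta^\nu_{T_0}\|^q_{L^q}$ rather than $O(t_\ast)$. A Cauchy--Schwarz/interpolation between the trivial bound $\int_{T_0}^{T_0+t_\ast}\|\Theta\|_{L^2}^2\le t_\ast\|\theta_{T_0}\|_{L^q}^q$ and the regularity bound then yields an effective factor $t_\ast^{1/2}$ in the time integral. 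Dividing by $t_\ast$ gives $t_\ast^{-1/2}$, which is sufficient since $t_\ast\le1$. This balancing, together with checking that all constants stay independent of $q$ and $\nu$ (conditional expectations handled via \autoref{app:besov}), is where I expect the real work to lie.
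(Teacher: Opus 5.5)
Your skeleton matches the paper's. You take the conditional regularity of $\Theta^{\nu,m}$ from \autoref{cor:Theta.gronwall}, embed into $L^\sigma_x$, send $m\to\infty$ by monotone convergence, and use the a.s.\ monotonicity of $t\mapsto\|\theta^\nu_t\|_{L^{\sigma q/2}_x}$ to pass from a time average over $[T_0,T_0+t_\ast]$ to the supremum over $[T_0+t_\ast,T]$. You also correctly locate where $t_\ast^{1/2}$ must come from: a Cauchy--Schwarz trade between the $O(t_\ast)$ bound $\int_{T_0}^{T_0+t_\ast}\|\Theta^{\nu,m}_s\|_{L^2_x}^2\,\dd s\leq t_\ast\|\theta^\nu_{T_0}\|_{L^q_x}^q$ and the $O(1)$ Besov bound.

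However, the plan rests on a miscount of the Sobolev exponent. $\sigma=\frac{2d}{d-\beta}$ is the exponent of $\dot H^{\beta/2}_x\hookrightarrow L^\sigma_x$, not of $H^\beta_x$, which reaches $L^{2d/(d-2\beta)}_x$. So the endpoint difficulty you raise in Step 2 does not exist. (Incidentally, a weak-type embedding applied pathwise would need the supremum over $\varrho$ inside the expectation and time integral, which $\seminorm{\cdot}_{\beta,\varrho_0\mid\mathcal{F}_{T_0}}$ does not provide.) More seriously, within your own bookkeeping Step 3 cannot close. If reaching $L^\sigma_x$ costs all of $\beta$, any Cauchy--Schwarz trade that gains $t_\ast^{1/2}$ leaves you at a strictly smaller exponent. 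For instance, interpolating your Step 2 $L^\sigma_x$ bound with $L^2_x$ lands in $L^{\sigma'}_x$ with $\frac1{\sigma'}=\frac12\big(\frac12+\frac1\sigma\big)$. The fallback of a smaller exponent does not prove the lemma as stated.

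The fix is to interpolate in smoothness, which is exactly what the paper does. Apply \autoref{lem:besov.higher.time} (built on \eqref{eq:final.besov2}) with $\gamma=1/2$ on $[T_0,T_0+t_\ast]$. Using only $\|\Theta^{\nu,m}_t\|_{L^2_x}^2\leq\|\theta^\nu_{T_0}\|^q_{L^q_x}$ and $t_\ast\leq 1$, this gives
\[
\int_{T_0}^{T_0+t_\ast}\EE_{T_0}\big[\|\Theta^{\nu,m}_t\|_{H^{\beta/2}_x}^2\big]\,\dd t\lesssim t_\ast^{1/2}\,\|\theta^\nu_{T_0}\|_{L^q_x}^{q/2}\Big(\varrho_0^{-\beta}\|\theta^\nu_{T_0}\|_{L^q_x}^{q/2}+\seminorm{\Theta^{\nu,m}}_{\beta,\varrho_0\mid\mathcal{F}_{T_0}}\Big).
\]
The embedding $H^{\beta/2}_x\hookrightarrow L^\sigma_x$ then delivers the $L^\sigma_x$ integrability and the factor $t_\ast^{1/2}$ at the same time. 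There is no need to rerun \autoref{cor:Theta.gronwall} on the shorter interval: the seminorm of $\Theta^{\nu,m}\mathbf{1}_{[T_0,T_0+t_\ast]}$ is bounded by the one on $[T_0,T]$. \autoref{cor:Theta.gronwall} bounds that seminorm by $\big(K(1+T)(\frac{2\beta}{d+2\alpha+2\beta}-\eta)^{-1}\big)^{1/2}\|\theta^\nu_{T_0}\|_{L^q_x}^{q/2}$. Then $\sqrt{A}\leq 1+A$ together with $\varrho_0<1$ gives the linear constant $M_\ast$, which settles your $M_\ast$ versus $M_\ast^2$ question. From there, the rest of your Step 3 goes through unchanged: monotone convergence in $m$, division by $t_\ast$, and monotonicity of the $L^{\sigma q/2}_x$ norm.
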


\begin{proof}
Under our assumptions, we have a control on $\seminorm{\Theta^{\nu,m} }_{\beta,\varrho_0 \mid \mathcal{F}_{T_0}}^2$ as in the statement of \autoref{cor:Theta.gronwall}.
By the assumption $f^\nu \equiv 0$, we have the $\PP$-a.s. inequality
\begin{equation}\label{eq:regularization-conditional-bound-proof1}
    \| \Theta^{\nu,m} \|_{L^\infty_{[T_0,T_0+t_\ast]} L^2_x} \leq \| \theta^\nu \|^{q/2}_{L^\infty_{[T_0,T_0+t_\ast]} L^q_x} \leq \| \theta^\nu_{T_0} \|^{q/2}_{L^q_x}.
\end{equation}
Applying in sequence Sobolev embeddings, \cref{lem:besov.higher.time},  \autoref{cor:Theta.gronwall}, estimate \eqref{eq:regularization-conditional-bound-proof1}, the assumption $t_\ast \leq1$, and Young's inequality, we obtain
\begin{align*}
 \int_{T_0}^{T_0+t_\ast} \EE_{T_0}\left[\| \Theta^{\nu,m}_t\|_{L^\sigma_x}^2 \right] \mathd t 
 &\lesssim
 \int_{T_0}^{T_0+t_\ast} \EE_{T_0} \left[\| \Theta^{\nu,m}_t\|_{H^{\beta/2}_x}^2  \right] \mathd t
 \\
&\lesssim
t_\ast^{1/2} \| \theta^\nu_{T_0} \|^{q/2}_{L^q_x} \left(  \frac{\| \theta^\nu_{T_0} \|^{q/2}_{L^q_x}}{\varrho_0^{\beta}} + \seminorm{\Theta^{\nu,m} }_{\beta,\varrho_0 \mid \mathcal{F}_{T_0}} \right)
\\
&\lesssim
t_\ast^{1/2} \| \theta^\nu_{T_0} \|^{q}_{L^q_x} \left(  \varrho_0^{-\beta} + K(1+T) \left( \frac{2\beta}{d+2\alpha+2\beta} - \eta \right)^{-1}  \right)
\\
& = t_\ast^{1/2} M_\ast \| \theta^\nu_{T_0} \|^{q}_{L^q_x},
\end{align*} 
where the implicit constant is independent of the parameters $\nu,m,q,t_\ast$. 

Since $\Theta^{\nu,m} \uparrow \Theta^{\nu}$ for $m \to \infty$, by conditional monotone convergence \cite[Theorem 10.1.7]{Dudley}  we have for every fixed $\nu \in (0,1)$:
\begin{align*} 
\int_{T_0}^{T_0+t_\ast} \EE_{T_0} \left[\| \theta^{\nu}_t\|_{L^{\sigma q/2}_x}^q \right] \mathd t 
=
\int_{T_0}^{T_0+t_\ast} \EE_{T_0} \left[\| \Theta^{\nu}_t\|_{L^\sigma_x}^2  \right] \mathd t   
    \lesssim
    t_\ast^{1/2} M_\ast \| \theta^\nu_{T_0} \|^{q}_{L^q_x}
    \quad
    \PP\mbox{-a.s.}
\end{align*}
As a consequence, using that conditional expection commutes with time integral and the map $t \mapsto \| \theta^\nu_t \|_{L^{\sigma q/2}_x}$ is $\PP$-a.s. non-increasing (recall \eqref{viscous_bound_variant} with $f^\nu \equiv 0$), we deduce the following: for every fixed $t_\ast \in (0,T-T_0)$ and $\nu \in (0,1)$ it holds:
\begin{align} \label{eq:integrability.theta_tau}
 \mathbb{E}_{T_0}\left[ \|\theta^\nu_{T_0+t_\ast}\|_{L^{\sigma q/2}_x}^q  \right] \lesssim t_\ast^{-1/2} M_\ast \| \theta^\nu_{T_0} \|^{q}_{L^q_x}  < \infty,
  \quad \PP\mbox{-a.s.}
\end{align}
Using the monotonicity of conditional expectation and again the fact that $t \mapsto \| \theta^\nu_t \|_{L^{\sigma q/2}_x}$ is $\PP$-a.s. non-increasing, we obtain
\begin{align*}
    \mathbb{E}_{T_0} \left[  \sup_{t \in [T_0+t_\ast,T]}  \| {\theta}^{\nu}_t \|^q_{L^{\sigma q/2}_x} \right]
    \leq \mathbb{E}_{T_0} \left[  \| {\theta}^{\nu}_{T_0+t_\ast} \|^q_{L^{\sigma q/2}_x} \right]
\end{align*}
which combined with \eqref{eq:integrability.theta_tau} yields the desired \eqref{eq:regularization-conditional-bound}.
\end{proof}

We are now finally ready to complete the iteration argument.
\begin{prop}\label{prop:abstract_anomalous_integrability}
    Under the assumptions of \autoref{lem:regularization-conditional-bound}, for any $p\in [2,\infty)$ and $t\in (0,1\wedge T]$ it holds
    \begin{align*}
        \mathbb{E} \left[\sup_{s \in [ t ,T]}  \| {\theta}^{\nu}_s \|_{L^{\infty}_x}^p  \right]
        \lesssim t ^{- \frac{d}{2\beta}} \| {\theta}^{\nu}_{0} \|_{L^p_x}^p\quad\forall \nu \in (0,1),
    \end{align*}
where the hidden constant in the above estimate depends on $M^\nu_\ast$ and the hidden constant in \eqref{eq:regularization-conditional-bound}, but does not depend on $p$, $\nu$, $\theta^\nu_0$, $T$.
\end{prop}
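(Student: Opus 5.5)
The plan is to iterate \autoref{lem:regularization-conditional-bound} along a sequence of exponents and times, using the tower property of conditional expectation and the Markov structure, which is encoded in the fact that the lemma holds for any deterministic base time $T_0$. We assume $f^\nu\equiv 0$, as reduced at the beginning of \autoref{subsec:integrability_strategy}. Set $\lambda:=\sigma/2=\frac{d}{d-\beta}>1$, the integrability gain factor, and define exponents $q_0:=p$, $q_{n+1}:=\lambda q_n$, so $q_n=p\lambda^n\to\infty$. Fix $t\in(0,1\wedge T]$ and choose time steps $t_n:=c\,t\,\kappa^{n}$ with $\kappa\in(0,1)$ and $c=c(\kappa)$ such that $\sum_{n\ge0}t_n= t$. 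Put $T_n:=\sum_{k<n}t_k$.

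Applying \autoref{lem:regularization-conditional-bound} with $q=q_n$ and $T_0=T_n$ gives $\PP$-a.s.
\[
\EE_{T_n}\bigl[\|\theta^\nu_{T_{n+1}}\|_{L^{q_{n+1}}_x}^{q_n}\bigr]\le C M_\ast t_n^{-1/2}\|\theta^\nu_{T_n}\|_{L^{q_n}_x}^{q_n}.
\]
To chain these bounds we need a common moment $p$. We raise to the power $p/q_n\le1$ and use conditional Jensen for the concave map $x\mapsto x^{p/q_n}$:
\[
\EE_{T_n}\bigl[\|\theta^\nu_{T_{n+1}}\|_{L^{q_{n+1}}_x}^{p}\bigr]\le (C M_\ast t_n^{-1/2})^{p/q_n}\|\theta^\nu_{T_n}\|_{L^{q_n}_x}^{p}.
\]
Taking expectations and iterating yields
\[
\EE\bigl[\|\theta^\nu_{T_N}\|_{L^{q_N}_x}^p\bigr]\le \prod_{n<N}(CM_\ast t_n^{-1/2})^{p\lambda^{-n}/p}\,\|\theta^\nu_0\|_{L^p_x}^p .
\]
The point is that the exponents $p/q_n=\lambda^{-n}$ form a summable geometric series, independent of $p$. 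Writing $S:=\sum_n\lambda^{-n}=\frac{\lambda}{\lambda-1}=\frac{d}{\beta}$, the product is bounded by $(CM_\ast)^{S}\,t^{-S/2}\,\prod_n(c\kappa^n)^{-\lambda^{-n}/2}$, and the last factor converges since $\sum n\lambda^{-n}<\infty$. This gives exactly $t^{-d/(2\beta)}$, with constants depending only on $M_\ast$, $d$, $\beta$ and the constant in \eqref{eq:regularization-conditional-bound}, and not on $p$, $\nu$ or $T$. We use $t_n\le 1$ here, which holds because $t\le1$.

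To pass to $L^\infty_x$ and the supremum over $s\in[t,T]$, we use that $s\mapsto\|\theta^\nu_s\|_{L^q_x}$ is $\PP$-a.s. non-increasing by \eqref{viscous_bound_variant}, so $\sup_{s\ge t}\|\theta^\nu_s\|_{L^{q_N}_x}\le\|\theta^\nu_{T_N}\|_{L^{q_N}_x}$ since $T_N\le t$. The bound is uniform in $N$, and $\|\theta^\nu_s\|_{L^{q_N}_x}\to\|\theta^\nu_s\|_{L^\infty_x}$ because $\theta^\nu_s\in L^1_x\cap L^\infty_x$. Fatou's lemma applied to $\sup_s\|\theta^\nu_s\|_{L^{q_N}_x}^p\ge\|\theta^\nu_{s_0}\|^p_{L^{q_N}}$, combined with monotonicity in $s$ to reduce the supremum to $s=t$, then concludes.

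The main obstacle is the bookkeeping of moments. The lemma controls the $q_n$-th conditional moment, while the iteration needs the fixed $p$-th moment. The Jensen step is what makes the constants compound as a geometric product with exponents $\lambda^{-n}$, rather than blowing up, and one must check carefully that the choice of $t_n$ keeps $\prod t_n^{-\lambda^{-n}/2}$ comparable to $t^{-d/(2\beta)}$. A minor point is that the lemma requires $t_n<T-T_n$, which holds when $t<T$; the case $t=T$ is handled trivially by monotonicity.
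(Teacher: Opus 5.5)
Your proposal is correct and follows essentially the same route as the paper's proof: iterate \autoref{lem:regularization-conditional-bound} with $q_n=p(\sigma/2)^n$ along geometrically shrinking time steps, then use conditional Jensen and the tower property so that the exponents $p/q_n$ sum to $d/\beta$, and finally pass to $L^\infty_x$ using monotonicity in time and a limit in $q_n$. The differences are cosmetic: the paper uses dyadic steps $2^{-n}t$ and monotone convergence where you use general geometric steps and Fatou, and your caveat about $t=T$ is unnecessary, since $T_{n+1}<t\leq T$ holds for every $n$.
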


\begin{proof}
    We can iteratively apply \autoref{lem:regularization-conditional-bound} starting from parameters $q=p$ and $T_0=0$. More precisely, for fixed $t \in (0,1 \wedge T]$, let us set
    \begin{align*}
        q_n := p \left(\frac{\sigma}{2}\right)^n, \quad \tau_n := \tau_{n-1} + 2^{-n} t,\quad \forall n\geq 1
    \end{align*}
    for $\sigma$ defined by \eqref{eq:defn_sigma_beta}; note that $\sigma>2$ and so $q_n\geq p$.
    By estimate \eqref{eq:regularization-conditional-bound} applied with $q = q_{n}$, $T_0=\tau_n$ and $t_\ast = 2^{-n} t$, there exists a constant $L=L(K,T,d,\beta,c_1,c_2,\varrho_0,\eta)$ 
    such that
    \begin{align*}
        \mathbb{E}_{\tau_n} \left[\sup_{s \in [ \tau_{n+1},T]}  \| {\theta}^{\nu}_s \|_{L^{q_{n+1}}_x}^{q_n}   \right]
        \leq L\, 2^{\frac{n}{2}}\, t^{-\frac{1}{2}} \| \theta^\nu_{\tau_n}\|_{L^{q_n}_x}^{q_n}.
    \end{align*}
    Combined with conditional Jensen's inequality and the tower property, this yields
    \begin{align*}
        \mathbb{E} \left[\sup_{s \in [ \tau_{n+1},T]}  \| {\theta}^{\nu}_s \|_{L^{q_{n+1}}_x}^p   \right]
        \leq 
        L^{\frac{p}{q_n}}
        2^{\frac{np}{2 q_n}} 
        t^{-\frac{p}{2 q_n}}   \mathbb{E} \left[  \| {\theta}^{\nu}_{\tau_n} \|_{L^{q_{n}}_x}^p  \right].
    \end{align*}
    and iterating
    \begin{align*}
        \mathbb{E} \left[\sup_{s \in [ \tau_{n+1},T]}  \| {\theta}^{\nu}_s \|_{L^{q_{n+1}}_x}^p \right]  
        \leq 
        L^{\sum_{k=0}^n \frac{p}{q_k}}\,  2^{\sum_{k=0}^n \frac{kp}{2 q_k}}\, t^{-\frac12\sum_{k=0}^n \frac{p}{q_k}}  \| {\theta}^{\nu}_{0} \|_{L^p_x}^p.
    \end{align*}
Since for every $n$ we have
\begin{align*}
    \tau_{n+1} \leq t,
    \quad
    \sum_{k=0}^n \frac{p}{q_k}
    =
    \sum_{k=0}^n \frac{1}{(\sigma/2)^k}
    \leq
    \frac{\sigma}{\sigma-2}
    = \frac{d}{\beta},
    \quad
    \sum_{k=0}^n \frac{kp}{q_k}
    =
    \sum_{k=0}^n \frac{k}{(\sigma/2)^k}
    \leq
    \frac{2\sigma}{(\sigma-2)^2}
    =\frac{d}{\beta}\left(\frac{d}{\beta}-1\right),
\end{align*}
from the line above, monotone convergence and \cite[Lemma A.2]{GalLuo25} we deduce that
\begin{align*}
 \mathbb{E} \left[\sup_{s \in [ t ,T]}  \| {\theta}^{\nu}_s \|_{L^{\infty}_x}^p  \right]
   \leq L^{\frac{d}{\beta}}\, 2^{\frac{d}{\beta}\left(\frac{d}{\beta}-1\right)} t ^{- \frac{d}{2\beta} } \| {\theta}^{\nu}_{0} \|_{L^p_x}^p
\end{align*}
which yields the desired conclusion.
\end{proof}

We conclude this section presenting sufficient conditions on the drifts $b^\nu$ under which the criteria from \autoref{sec:general.criterion} and this one apply, yielding both anomalous regularization and anomalous integrability for linear transport SPDEs on $\R^d$ with random drifts. For simplicity, below we restrict to the subcritical case, namely enforce strict inequalities on $\alpha$; the critical case would require $\{b^\nu\}_{\nu \in (0,1)}$ to satisfy appropriate small-large decompositions in the style of \eqref{estimate_splitting_Euler}-\eqref{estimate_splitting_CZ}. Below, for $\gamma\in (0,1]$, $W^{\gamma,\infty}_x$ denotes the homogeneous Sobolev space, with associated seminorm $\llbracket \cdot \rrbracket_{\dot W^{\gamma,\infty}}=\llbracket\cdot\rrbracket_{C^\gamma_x}$.

\begin{thm}\label{thm:anomalous_linear}
    Let $W$ satisfy \autoref{ass:noise}, $\{\theta_0^\nu\}_{\nu \in (0,1)}\subset L^1_x\cap L^\infty_x$ and $\{f^\nu\}_{\nu \in (0,1)}\subset L^1_T (L^1_x\cap L^\infty_x)$; let $\{b^\nu\}_{\nu \in (0,1)}$ be a family of progressive divergence-free drifts, $\{b^\nu\}_{\nu \in (0,1)} \subset L^\infty_{\omega,[T_0,T]}L^s_x$ for some $s\in [2,\infty]$ and let $\{\theta^\nu\}_{\nu \in (0,1)}$ be the unique solutions to \eqref{eq:abstract_transport_SPDE}. Further assume that:
    \begin{itemize}
        \item[a)] either $\alpha\in [1/2,1)$ and there exists $\gamma\in (0,1]$ such that
        \begin{equation}\label{eq:anomalous_linear_cond1}
            \alpha<\frac{1+\gamma}{2}, \qquad
            \sup_{\nu \in (0,1)} \|b^\nu\|_{L^\infty_{\omega, T} \dot W^{\gamma,\infty}_x} <+\infty; \qquad \text{or}
        \end{equation}
        \item[b)] $\alpha\in (0,1/2)$ and there exists $p\in (d,\infty]$ such that
        \begin{equation}\label{eq:anomalous_linear_cond2}
            \alpha<\frac{1}{2}-\frac{d}{2p}, \qquad
            \sup_{\nu \in (0,1)} \|b^\nu\|_{L^\infty_{\omega, T} L^p_x} <+\infty.
        \end{equation}
    \end{itemize}
    Then conditions \eqref{eq:key_bound_regularization} and \eqref{eq:key_bound_regularization.Theta} are satisfied for $\beta=1-\alpha$.
    In particular, for every $\nu \in (0,1)$, the solution $\theta^\nu$ to \eqref{eq:general.viscous.SPDE} satisfies
    \begin{align*}
        & \llbracket \theta^\nu \rrbracket_{\tilde{L}^2_{\omega,T} \tilde B^\beta_{2,\infty}} \lesssim \| \theta^\nu_0\|_{L^2_x}^2 + \| f^\nu\|_{L^1_T L^2_x}^2,\\
        & \mathbb{E} \left[\sup_{s \in [ t ,T]}  \| {\theta}^{\nu}_s \|_{L^{\infty}_x}^2  \right]
        \lesssim (1\wedge t) ^{- \frac{d}{2(1-\alpha)}} (\| {\theta}^{\nu}_{0} \|_{L^p_x}^2 + \| f^\nu\|_{L^1_T L^\infty_x}^2)\quad\forall t\in (0,T]
    \end{align*}
    where the hidden constant does not depend on $\nu \in (0,1)$.
\end{thm}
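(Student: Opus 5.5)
The plan is to verify the key trilinear bounds \eqref{eq:key_bound_regularization} and \eqref{eq:key_bound_regularization.Theta} with $\beta=1-\alpha$ and $M_3=0$. We aim for an estimate of the form
\begin{equation*}
\mathbb{E}\Big[\int_0^T \mathcal{T}_{b^\nu,\vartheta}(s,\varrho)\,\dd s\Big]\lesssim \varrho^{\kappa}\,\varrho^{1-2\alpha+2\beta}\seminorm{\vartheta}_{\beta,\varrho_0}^2
\end{equation*}
for some $\kappa>0$, which equals $\varrho^{1+\kappa}\seminorm{\vartheta}^2_{\beta,\varrho_0}$ when $\beta=1-\alpha$. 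Here $\vartheta\in\{\theta^\nu,\Theta^{\nu,m}\}$, and in the $\Theta$-case expectations are replaced by $\mathbb{E}_{T_0}$. Choosing $\varrho_0$ small then makes $\varrho_0^\kappa\lesssim c_1\eta$ with $\eta<\frac{2\beta}{d+2\alpha+2\beta}$ fixed, and the constants obtained this way do not depend on $q$. The conclusions then follow by quoting results already in the paper. The Besov bound comes from \autoref{lem:gron_increments}, with $M_1,M_2$ controlled by the data. The $L^\infty_x$ bound comes from \autoref{prop:abstract_anomalous_integrability} with $d/(2\beta)=d/(2(1-\alpha))$, applied after removing the forcing through the splitting at the start of \autoref{subsec:integrability_strategy}. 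To pass from the $p$-th moment to the second moment one uses Jensen's inequality and propagation of $L^p_x$ norms. The estimates only use the structure of $\mathcal{T}$ and never the equation, so they apply verbatim to $\Theta^{\nu,m}$ (which is transported by the same $b^\nu$) under conditional expectations.

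In case a) we use Hölder's inequality in the form $|\mathcal{T}_{b,\vartheta}(s,\varrho)|\leq \int_{|z|\le1}\|\delta_{\varrho z}b_s\|_{L^\infty_x}\|\delta_{\varrho z}\vartheta_s\|_{L^2_x}^2\,\dd z$, together with $\|\delta_{\varrho z} b\|_{L^\infty_x}\le \varrho^\gamma\llbracket b\rrbracket_{C^\gamma_x}$. This gives $\mathbb{E}\int_0^T|\mathcal{T}|\lesssim \varrho^{\gamma}\varrho^{2(1-\alpha)}\seminorm{\vartheta}^2_{1-\alpha,\varrho_0}$, after bounding the integral over the ball by spherical means (as in the rewriting used in the proof of \autoref{lem:gron_increments}). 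Comparing with $\varrho^{-1+2\alpha+2\beta}=\varrho^{1}$, the gain is $\kappa=\gamma+1-2\alpha>0$, which is exactly the condition $\alpha<\frac{1+\gamma}{2}$. Since in case a) we have $\alpha\ge 1/2$, the condition is genuinely needed. The restriction to $\alpha\geq 1/2$ is only to separate the cases.

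In case b) we mimic Case 4 of \autoref{sec:computation} without any splitting, since the regime is strictly subcritical. We bound $\|\delta_{\varrho z}b\|_{L^p_x}\le 2\|b\|_{L^p_x}$ and put $\delta_{\varrho z}\vartheta$ in $L^{2p'}_x$ with $\frac1{p}+\frac{2}{2p'}=1$. By Sobolev embedding, $L^{2p'}_x\supset H^{s}_x$ with $s=\frac{d}{2p}$. Then \autoref{lem:final_besov} (conditional version for $\Theta$) gives
\begin{equation*}
\mathbb{E}\int\!\!\int_{|z|\le1}\|\delta_{\varrho z}\vartheta\|_{H^s_x}^2\lesssim \varrho^{2(1-\alpha-s)}\seminorm{\vartheta}^2_{1-\alpha,\varrho_0},
\end{equation*}
provided $s<1-\alpha$. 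The resulting gain is $\kappa=1-2\alpha-\frac dp>0$, which is precisely \eqref{eq:anomalous_linear_cond2}; we need $p>d$ so that $s<1/2$.

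The main obstacle is checking that \autoref{lem:final_besov} and its conditional analogue are applicable uniformly in $m$, $q$ and $T_0$. In particular, the $H^s$ interpolation should cost only $\seminorm{\cdot}_{\beta,\varrho_0}$ plus an $L^2$ term, and that $L^2$ term must be absorbed consistently with the form $c_1\eta\varrho^{-1+2\alpha+2\beta}\seminorm{\Theta}^2$ required by \autoref{cor:Theta.gronwall}, which has no $M_3$. If an $L^2$ remainder appears, we would try to handle it by adding it to the $K(1+T)\|\theta_{T_0}\|_{L^q_x}^q$ term, using $\|\Theta^{\nu,m}\|_{L^2_x}^2\le\|\theta^\nu_{T_0}\|^q_{L^q_x}$.
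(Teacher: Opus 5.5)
Your proposal is correct and matches the paper's proof essentially line by line. In case a) you use H\"older's inequality with $\|\delta_{\varrho z} b^\nu\|_{L^\infty_x}\leq \varrho^\gamma \llbracket b^\nu\rrbracket_{C^\gamma_x}$, which gives the gain $\gamma+1-2\alpha>0$. In case b) you put the increments in $L^{2p/(p-1)}_x\supset H^{d/(2p)}_x$ and invoke \autoref{lem:final_besov}, which gives the gain $1-2\alpha-d/p>0$; the $\Theta^{\nu,m}$ case is handled the same way with $\EE_{T_0}$.

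Two small remarks. First, the power in your first display should be $\varrho^{-1+2\alpha+2\beta}$, not $\varrho^{1-2\alpha+2\beta}$. Second, the $L^2$-remainder you worry about never appears. Indeed, \eqref{eq:final.besov1} bounds the inhomogeneous $H^{\beta\gamma}_x$-norm of increments purely by $|h|^{2(1-\gamma)\beta}\seminorm{f}_{\beta,\varrho_0\mid\mathcal{F}_{T_0}}^2$, so no $M_3$-type term is needed in \autoref{cor:Theta.gronwall}.
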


\begin{proof}
    We only verify \eqref{eq:key_bound_regularization}, since \eqref{eq:key_bound_regularization.Theta} is identical up to replacing $\theta^\nu$ with $\Theta^{\nu,m}$.

    First assume that $\alpha\in [1/2,1)$ and \eqref{eq:anomalous_linear_cond1} holds. Fix any $\varrho_0<1\wedge r_0$, then for $\varrho \leq \varrho_0$ we have
    \begin{align*}
        \left|\mathbb{E} \left[ \int_0^T \mathcal{T}_{b^\nu,\theta^\nu}(s,\varrho) \mathd s \right]\right|
        & \leq 2 \varrho^\gamma \| b^\nu\|_{L^\infty_{\omega,T} \dot W^{\gamma,\infty}_x} \int_{\{|z|\leq 1\}} \| \delta_{\varrho z} \theta^\nu\|_{L^2_{\omega,T,x}}^2 \mathd z \\
        & \lesssim \varrho^{\gamma+2(1-\alpha)} \| b^\nu\|_{L^\infty_{\omega,T} \dot W^{\gamma,\infty}_x} \seminorm{\theta^\nu}_{1-\alpha,\varrho_0}^2.
    \end{align*}
    By assumption $\gamma+2(1-\alpha)>1$, so we can find $\varrho_0$ small enough such that \eqref{eq:key_bound_regularization} is satisfied.

    Consider now the case $\alpha\in (0,1/2)$ and assume that \eqref{eq:anomalous_linear_cond2} holds. Define parameters $q\in [2,\infty]$, $s\in [0,1)$, $\theta\in [0,1)$ by
    \begin{align*}
        \frac{1}{q}=\frac{1}{2}-\frac{1}{2p}, \quad s:= \frac{d}{2p},\quad \theta:=\frac{s}{1-\alpha}.
    \end{align*}
    Then by H\"older's inequality, Sobolev embeddings and \autoref{lem:final_besov} we obtain
    \begin{align*}
        \left|\mathbb{E} \left[ \int_0^T \mathcal{T}_{b^\nu,\theta^\nu}(s,\varrho) \mathd s \right]\right|
        & \leq 2 \| b^\nu\|_{L^\infty_{\omega,T}L^p_x} \mathbb{E}\left[ \int_{\{|z|\leq 1\}} \int_0^T \| \delta_{\varrho z} \theta^\nu\|_{L^q_x}^2 \mathd t \mathd z \right]\\
        & \lesssim \| b^\nu\|_{L^\infty_{\omega,T}L^p_x} \mathbb{E}\left[ \int_{\{|z|\leq 1\}} \int_0^T \| \delta_{\varrho z} \theta^\nu\|_{H^s_x}^2 \mathd t \mathd z \right]\\
        & \lesssim \| b^\nu\|_{L^\infty_{\omega,T}L^p_x} |\varrho|^{2(1-\theta) (1-\alpha)} \llbracket \theta^\nu\rrbracket_{1-\alpha,\varrho_0}^2
    \end{align*}
    for all $\varrho\leq \varrho_0$. By construction we have
    \begin{align*}
        2(1-\theta)(1-\alpha)>1
        \quad \Leftrightarrow \quad
        2(1-\alpha-s)>1
        \quad \Leftrightarrow \quad
        \alpha <\frac{1}{2}-s=\frac{1}{2}-\frac{d}{2p};
    \end{align*}
    therefore by assumption \eqref{eq:key_bound_regularization.Theta}, up to choosing $\varrho_0$ sufficiently small, \eqref{eq:key_bound_regularization} is satisfied.
\end{proof}

\subsection{Application to 2D active scalars}\label{subsec:integrability_active_scalars}
Before entering the case-by-case estimates, we fix the common setup. In view of the previous reduction, we work with zero external forcing $f^\nu\equiv 0$, and set $d=2$, $\beta:=1-\alpha$.
From now on we assume that $\theta^\nu$ solves \eqref{eq:nonlinear_viscous}, namely the drift is given by $u^\nu=\mathscr{R}\theta^\nu$.
Let $q\geq2$, fix an arbitrary base time $T_0\in[0,T)$ and let $\Theta^{\nu,m}=F_m(\theta^\nu)$ be the approximation of $|\theta^\nu|^{q/2}$ introduced in \autoref{subsec:integrability_strategy}.
In each of the regimes below, we only need to verify assumption \eqref{eq:key_bound_regularization.Theta} from \autoref{cor:Theta.gronwall} for this choice of parameters, namely
\begin{equation}\label{eq:anomalous_integrability_goal}
    \mathbb{E}_{T_0}\left[
\int_{T_0}^T
\mathcal{T}_{u^\nu,\Theta^{\nu,m}}(s,\varrho)\,\mathd s
\right]
\leq
c_1 \eta\,\varrho\,
\seminorm{\Theta^{\nu,m}}_{1-\alpha,\varrho_0\mid\mathcal{F}_{T_0}}^2
\quad
\forall \varrho\in(0,\varrho_0),
\end{equation}
with $\eta>0$ sufficiently small and with $\eta,\varrho_0$ independent of $\nu,m,q$. Let us postpone the verification of \eqref{eq:anomalous_integrability_goal} in the ranges of parameters $\alpha,p$ described by \autoref{thm:integrability} to the subsections below; assuming \eqref{eq:anomalous_integrability_goal}, we can complete the
\begin{proof}[Proof of \autoref{thm:integrability}]
    By the reduction presented at the beginning of \autoref{subsec:integrability_strategy}, we can reduce ourselves to the case $f\equiv 0$.    
    Since \eqref{eq:anomalous_integrability_goal} holds, \autoref{lem:regularization-conditional-bound} applies and so does \autoref{prop:abstract_anomalous_integrability}; taking $T_0=0$ and using \eqref{eq:bound.forcing.anomalousintegrability} one then obtains estimate \eqref{eq:anomalous_integrability_general}. Finally, interpolating with the bound \eqref{viscous_bound_variant} yields the proof of estimate \eqref{eq:intro_anomalous_integrability}.
\end{proof}

We proceed with verifying \eqref{eq:anomalous_integrability_goal}; up to relabelling $\eta$, henceforth without loss of generality we may set $c_1=1$.

\subsubsection{Case 1: $\mathscr{R}=\mathscr{R}_{BS},\ p=2,\ \alpha \leq 1/2$} \label{ssec:integrability1}

Compared to Case a) from \autoref{thm:anomalous_linear}, this regime is borderline, since $u^\nu\in L^\infty_{\omega,T} H^1_x$, but $H^1_x$ does not embed in $L^\infty_x$ and we want to allow $\alpha=1/2$.
Similarly to \autoref{sss:regularity_case1}, set
\begin{align*}
    s:=\frac{2}{3},\quad \delta:=\frac{1}{3(1-\alpha)};
\end{align*}
as therein, for any $\eps>0$, we can employ the splitting $u^\nu=u^{\nu,1}+u^{\nu,2}$ satisfying
\begin{align*}
    \sup_{\nu \in (0,1)} \| u^{\nu,1}\|_{L^\infty_{\omega,T} \dot W^{2/3,3}_x} \lesssim \eps,\quad 
    \sup_{\nu \in (0,1)} \| u^{\nu,2}\|_{L^\infty_{\omega,T} \dot W^{1,3}_x} \lesssim M_\eps.
\end{align*}
Going through similar computations as in \autoref{sss:regularity_case1} and applying \autoref{lem:final_besov}, one thus finds
\begin{align*}
    \mathbb{E}_{T_0} \left[ \int_{T_0}^T \mathcal{T}_{u^\nu,\Theta^{\nu,m}}(t,\varrho) \mathd t \right]
    & \lesssim
    (\varepsilon \varrho^{2/3} +M_\eps \varrho) \mathbb{E}_{T_0} \left[ \int_{T_0}^T \int_{\{| z |\leqslant 1\}} \norm{\delta_{\varrho z} \Theta^{\nu,m}_t}_{H^{\delta(1-\alpha)}_x}^2  \mathd z \mathd t \right]
   \\ 
   &\lesssim (\varepsilon \varrho^{2/3}+M_\eps \varrho) \varrho^{4/3-2\alpha} \seminorm{\Theta^{\nu,m}}_{1-\alpha,\varrho_0\mid \mathcal{F}_{T_0}}^2\\ 
   & \lesssim 
   (\varepsilon+M_\eps \varrho_0^{4/3-2\alpha})\varrho\seminorm {\Theta^{\nu,m}}_{1-\alpha,\varrho_0\mid \mathcal{F}_{T_0}}^2
\end{align*}
where we used that $2-2\alpha\geq 1$ since $\alpha\leq 1/2$.
Hence, by first fixing $\eps>0$ and then choosing $\varrho_0>0$ small enough, condition \eqref{eq:anomalous_integrability_goal} is satisfied with $\eta := C(\varepsilon+M_\eps \varrho_0^{4/3-2\alpha})$, which we can make arbitrarily small.

\subsubsection{Case 2: $\mathscr{R}=\mathscr{R}_{BS},\ p \in (2,\infty),\ \alpha \leq 1-\frac{1}{p}$} \label{ssec:integrability2}

This is effectively a subcase of Case~a) from \autoref{thm:anomalous_linear}, in light of the Sobolev embedding $W^{1,p}_x \hookrightarrow C^\gamma_x$ for $\gamma:=1-2/p$; the only difference is that we can cover the critical equality $\alpha = 1-1/p$ thanks to splitting arguments.
Similarly to \eqref{estimate_splitting_Euler}, we can employ a splitting $u^\nu=u^{\nu,1}+u^{\nu,2}$ with
\begin{align*}
    \sup_{\nu \in (0,1)} \| u^{\nu,1}\|_{L^\infty_{\omega,T} C^\gamma_x} 
    &\lesssim 
    \sup_{\nu \in (0,1)} \| u^{\nu,1}\|_{L^\infty_{\omega,T} W^{1,p}_x} 
    \lesssim 
    \eps,
    \\
    \sup_{\nu \in (0,1)} \| u^{\nu,1}\|_{L^\infty_{\omega,T} C^{\gamma+1/p}_x}
    &\lesssim 
    \sup_{\nu \in (0,1)} \| u^{\nu,1}\|_{L^\infty_{\omega,T} W^{1,2p}_x} 
    \lesssim 
    M_\eps.
\end{align*}
Therefore by the definition of $\seminorm {\Theta^{\nu,m}}_{1-\alpha,\varrho_0\mid \mathcal{F}_{T_0}}$ one finds
\begin{align*}
    \mathbb{E}_{T_0} \left[ \int_{T_0}^T \mathcal{T}_{u^\nu,\Theta^{\nu,m}}(t,\varrho) \mathd t  \right] 
    &\lesssim
    (\eps \rho^\gamma + M_\eps \rho^{\gamma+1/p}) \mathbb{E}_{T_0} \left[ \int_{T_0}^T \int_{\{| z | \leqslant 1\}} \| \delta_{\varrho z} \Theta^{\nu,m}_t \|_{L^2_x}^2 \mathd z \mathd t  \right]
   \\ 
    &\lesssim  (\eps + M_\eps \varrho_0^{1/p}) \rho^{\gamma + 2(1-\alpha)} \seminorm{\Theta^{\nu,m} }_{1-\alpha,\varrho_0 \mid \mathcal{F}_{T_0}}^2.
\end{align*}
By assumption $\gamma+2(1-\alpha)\geq 1$, therefore up to choosing $\eps>0$ and $\rho_0>0$ appropriately the reabsorb constants, condition condition \eqref{eq:anomalous_integrability_goal} is satisfied.

\subsubsection{Case 3: $\mathscr{R}=\mathscr{R}_{CZ}$, $p\in (2,\infty),\ \alpha\leq \frac{1}{2}-\frac{1}{p}$ }\label{ssec:integrability3}

This case is analogous to Case b) from \cref{thm:anomalous_linear}, up to using the splitting \eqref{estimate_splitting_CZ} to handle the critical case $\alpha=1/2-1/p$, similarly to Case 2 above. We leave the details to the reader.

\section{Existence and uniqueness results for inviscid SPDEs}\label{sec:existence_uniqueness}
This section is divided into two parts. In \autoref{subsec:weak_existence}, we prove the second part of \autoref{thm:regularization}, establishing weak existence of solutions to \eqref{eq:nonlinear_transport}.
More precisely, we show tightness of the family $\{\theta^\nu\}_{\nu \in (0,1)}$ and characterize any limit point as $\nu\to 0^+$ as a weak solution to \eqref{eq:nonlinear_transport}, in the sense of \autoref{martingale_sol}. In \autoref{subsec:pathwise_uniqueness} we show pathwise uniqueness for \eqref{eq:nonlinear_transport} in the setting of \autoref{ass:strong_regularization}, thereby completing the proof of \autoref{thm:intro_wellposedness}. To simplify the notation, from now on we denote by $\Bar{M}$ the square root of the right-hand side appearing in \eqref{viscous_uniform_bound}, including the implicit constant.

\subsection{Weak existence}\label{subsec:weak_existence}
We present here the proof of the second part of \autoref{thm:regularization}.
Namely, we show tightness of the family $\{\theta^\nu\}_{\nu \in (0,1)}$ and characterize any limit point as $\nu\to 0^+$ as a weak solution to \eqref{eq:nonlinear_transport}, in the sense of \autoref{martingale_sol}.

Throughout this section, we will always consider the following setting.
We fix parameters $(r,p,\alpha,\beta)$ satisfying \autoref{ass:exponent.beta}; recall in particular that $\beta$ is defined as in \eqref{eq:definition.regularity}-\eqref{eq:definition.regularity_CZ}.
The initial condition and external forcing satisfy $\theta_0\in L^r_x\cap L^p_x$ and $f\in L^1_T (L^r_x\cap L^p_x)$, respectively.
We consider a given vanishing viscosity scheme $\{(\theta^\nu_0,f^\nu,\theta^\nu)\}_{\nu\in (0,1)}$ in the sense of \autoref{defn:vanishing_scheme}. 
The noise $W$ is assumed to satisfy \autoref{ass:noise} for some given constants $c_1,c_2,r_0>0$, which covers the Kraichnan model as a special subcase.

As a consequence of \autoref{prop_cases} and \autoref{lem:gron_increments}, together with \autoref{lem:besov.higher.time}, we obtain the following result.
\begin{cor}\label{cor:space_compactness}
    Under the same assumptions of \autoref{thm:regularization}, we have for every $\delta\in (0,\beta)$
    \begin{align*}
        \sup_{\nu\in(0,1)}\llbracket {\theta}^{\nu}\rrbracket_{\Tilde{L}^2_{\omega,T}\Tilde{B}^{\beta}_{2,\infty}}
        \lesssim 1,
        \quad
        \sup_{\nu\in(0,1)}\mathbb{E}\left[\|{\theta}^{\nu}\|^2_{L^2_TH^{\beta-\delta}_x}\right]&\lesssim 1.
    \end{align*}
\end{cor}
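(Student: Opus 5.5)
The first bound is a direct combination of earlier results, so I will organize the argument by the cases of \autoref{ass:exponent.beta} as sorted in \autoref{prop_cases}.

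In the cases covered by part (1) of \autoref{prop_cases} ($p\geq 3$, or \autoref{ass:strong_regularization}), the hypotheses of \autoref{lem:gron_increments} hold with the exponent $\beta$ of \autoref{ass:exponent.beta}. I will take $b^\nu=u^\nu=\mathscr{R}\theta^\nu$. The integrability $b^\nu\in L^\infty_{\omega,T}L^s_x$ with $s\geq 2$ follows from \eqref{eq:properties_BS_integrability}, respectively \eqref{eq:properties_CZ}, applied to $\theta^\nu\in L^\infty_{\omega,T}(L^1_x\cap L^\infty_x)$. The constants $M_1,M_2$ in \eqref{eq:assumptions.theta_0.f} are bounded uniformly in $\nu$ by \eqref{viscous_uniform_bound}, since $L^r_x\cap L^p_x\subset L^2_x$. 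Then \eqref{eq:gron_main_estimate2} gives the uniform bound on $\llbracket\theta^\nu\rrbracket_{\tilde L^2_{\omega,T}\tilde B^\beta_{2,\infty}}$.

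In the cases covered by part (2) ($2<p<3$), \autoref{lem:data-splitting} applies with $\mathfrak h,\gamma$ as in \eqref{eq:formula_beta_intermediate}. Its conclusion \eqref{eq:conclusion} is exactly the same seminorm bound, with $M_p\lesssim \bar M$.

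For the second estimate I will combine the seminorm bound with the uniform $L^2$ bound $\sup_\nu\|\theta^\nu\|_{L^2_{\omega,T,x}}\lesssim \bar M$ from \eqref{viscous_uniform_bound}, which gives $\sup_\nu\|\theta^\nu\|_{\tilde L^2_{\omega,T}\tilde B^\beta_{2,\infty}}\lesssim 1$. It then suffices to show that $\tilde L^2_{\omega,T}\tilde B^\beta_{2,\infty}$ embeds continuously into $L^2_\omega L^2_TH^{\beta-\delta}_x$. I will invoke \autoref{lem:besov.higher.time} from the appendix (with $T_0=0$ and unconditional expectation), which I expect to state precisely such an embedding. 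If it is phrased differently, the direct argument uses the Gagliardo form of the $H^{\beta-\delta}_x$ seminorm. Exchanging $\EE$, $\int_0^T$ and the $z$-integral by Fubini gives
\[
\EE\int_0^T\llbracket\theta^\nu_t\rrbracket_{H^{\beta-\delta}_x}^2\dd t=\int_{\R^2}\frac{\EE\int_0^T\|\delta_z\theta^\nu_t\|_{L^2_x}^2\dd t}{|z|^{2+2(\beta-\delta)}}\dd z .
\]
I then split the $z$-integral at $|z|=1$. For $|z|\leq 1$, the integrand is bounded by $\llbracket\theta^\nu\rrbracket^2|z|^{2\beta-2-2\beta+2\delta}=\llbracket\theta^\nu\rrbracket^2|z|^{-2+2\delta}$, which is integrable in 2D because $\delta>0$. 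For $|z|>1$, I use $\|\delta_z\theta\|_{L^2_x}\leq 2\|\theta\|_{L^2_x}$, and $|z|^{-2-2(\beta-\delta)}$ is integrable at infinity because $\beta-\delta>0$. Adding the $L^2$ part of the norm completes the bound.

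The only step needing care is the interchange of the supremum over $z$ with the time and expectation integrals. The $\tilde B$ space puts $\sup_z$ outside $\EE\int_0^T$, but the Gagliardo integral only requires a bound on $\EE\int_0^T\|\delta_z\theta\|^2$ for each fixed $z$, which is exactly what that norm provides, so Fubini suffices. Beyond this there is no genuine obstacle; the substantive work is already contained in \autoref{prop_cases}.
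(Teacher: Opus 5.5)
Your proposal is correct and follows the paper's route. The uniform $\tilde L^2_{\omega,T}\tilde B^\beta_{2,\infty}$ bound comes from \autoref{prop_cases} feeding into \autoref{lem:gron_increments}, or into \autoref{lem:data-splitting} for $2<p<3$ in the supercritical regime, and the $L^2_\omega L^2_T H^{\beta-\delta}_x$ bound comes from \autoref{lem:besov.higher.time} with $T_0=0$. Your fallback Gagliardo--Tonelli computation is also valid: since $0<\beta-\delta<1$ and the seminorm controls $\EE\int_0^T\|\delta_z\theta^\nu_t\|_{L^2_x}^2\,\dd t$ for each fixed $z$, it gives a more elementary substitute for the Littlewood--Paley interpolation behind that appendix lemma.
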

Moreover, the time increments of $\theta^{\nu}$ can also be controlled uniformly in $\nu$, as stated in the following lemma.
\begin{lem}\label{time_compactness}
For every $\sigma>2$, $\gamma\in (0,1/2)$ and $n\in \mathbb{N}$, it holds that
\begin{align*}
    \sup_{\nu\in (0,1)}\mathbb{E}\left[\left\|\theta^\nu-\int_0^\cdot f^\nu_s \dd s\right\|_{C^{\gamma}_TH^{-\sigma}_x}^n\right]&\lesssim 1.
\end{align*}
\end{lem}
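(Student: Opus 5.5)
We will work with the It\^o weak formulation and the process $\tilde\theta^\nu_t:=\theta^\nu_t-\int_0^t f^\nu_s\,\dd s$. For a fixed test function, \eqref{weak_formulation} (with $b=u^\nu$ and the extra viscous term) gives
\begin{align*}
\tilde\theta^\nu_t-\tilde\theta^\nu_s
&=\int_s^t\Big(-\nabla\cdot(u^\nu_r\theta^\nu_r)+\tfrac12 C(0):D^2\theta^\nu_r+\nu\Delta\theta^\nu_r\Big)\dd r
-\int_s^t \nabla\cdot(\theta^\nu_r\,\dd W_r)
\end{align*}
in the sense of distributions. The plan is to bound the $n$-th moments of $\|\tilde\theta^\nu_t-\tilde\theta^\nu_s\|_{H^{-\sigma}_x}$ by $C|t-s|^{n/2}$ uniformly in $\nu$. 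We then invoke the Kolmogorov continuity theorem (Garsia--Rodemich--Rumsey), with $n$ large, to get $\gamma$-H\"older norms for every $\gamma<1/2$. Since $\tilde\theta^\nu_0=\theta^\nu_0$ is bounded in $L^2_x$, this also controls the sup norm. Moments of lower order $n$ follow from higher ones by Jensen.

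\textbf{Drift terms.} The key point is to use $\sigma>2$ so that $H^{\sigma}_x\hookrightarrow W^{1,\infty}_x\cap W^{2,2}_x$, i.e. $\|\phi\|_{W^{1,\infty}}+\|D^2\phi\|_{L^2}\lesssim\|\phi\|_{H^\sigma}$ in $d=2$. By duality we need
\begin{align*}
|\langle u^\nu\theta^\nu,\nabla\phi\rangle|\le\|u^\nu\theta^\nu\|_{L^1_x}\|\nabla\phi\|_{L^\infty_x}.
\end{align*}
The product $\|u^\nu\theta^\nu\|_{L^1_x}$ is bounded uniformly, $\PP$-a.s. and in time, as follows. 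In the CZ case use $\|u^\nu\|_{L^2}\lesssim\|\theta^\nu\|_{L^2}$. In the BS case use $\|u^\nu\|_{L^{q_2}}\lesssim\|\theta^\nu\|_{L^{r}}$ with $1/q_2=1/r-1/2$, together with $\theta^\nu\in L^{q_2'}_x\supset L^r_x\cap L^p_x$ by interpolation, since $q_2'\in(1,2)$ and $r<q_2'$. Both bounds follow from \eqref{viscous_bound_1} and \autoref{defn:vanishing_scheme}, which give $\PP$-a.s. $\sup_t\|\theta^\nu_t\|_{L^r\cap L^p}\le \bar M$. The terms $\frac12C(0):D^2\theta^\nu$ and $\nu\Delta\theta^\nu$ are handled by moving both derivatives onto $\phi$, giving a bound $\lesssim(1+\nu)\bar M\|\phi\|_{H^2}$. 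Hence the whole drift contributes a $\PP$-a.s. bound $C|t-s|$, uniform in $\nu\in(0,1)$.

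\textbf{Martingale term.} Write $M^\nu_t=\sum_k\int_0^t\nabla\cdot(\sigma_k\theta^\nu_r)\,\dd W^k_r$ as an $H^{-\sigma}_x$-valued (Hilbert space) martingale. By the Burkholder--Davis--Gundy inequality in Hilbert spaces,
\begin{align*}
\EE\|M^\nu_t-M^\nu_s\|_{H^{-\sigma}}^n\lesssim\EE\Big(\int_s^t\sum_k\|\nabla\cdot(\sigma_k\theta^\nu_r)\|_{H^{-\sigma}}^2\dd r\Big)^{n/2}.
\end{align*}
Since $\|\nabla\cdot g\|_{H^{-\sigma}}\le\|g\|_{H^{1-\sigma}}\le\|g\|_{L^2}$, it remains to show
\begin{align*}
\sum_k\|\sigma_k\theta\|_{L^2_x}^2=\int\operatorname{tr}\big(\textstyle\sum_k\sigma_k(x)\otimes\sigma_k(x)\big)|\theta(x)|^2\dd x=\operatorname{tr}C(0)\,\|\theta\|_{L^2_x}^2,
\end{align*}
using \eqref{eq:covariance_series_expansion}. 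The Hilbert--Schmidt norm is therefore bounded by $\operatorname{tr}C(0)\,\bar M^2$ a.s., and the martingale increment has $n$-th moment $\lesssim|t-s|^{n/2}$.

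\textbf{Main obstacle.} The main (mild) obstacle is the rigorous justification of the Hilbert-valued It\^o representation and of the interchange of the series in $k$. Since $\theta^\nu\in L^2_TH^1_x$ for fixed $\nu$, the equation holds in $H^{-1}_x$ and these manipulations are legitimate. Combining the drift and martingale bounds with Kolmogorov's criterion concludes the proof.
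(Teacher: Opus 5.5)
Your overall structure is essentially the paper's: you use the It\^o weak formulation, bound the drift pathwise by $C|t-s|$, apply Hilbert-space BDG with $\sum_k\|\sigma_k\theta\|_{L^2_x}^2=\operatorname{tr}C(0)\,\|\theta\|_{L^2_x}^2$, and conclude by Kolmogorov/GRR. The paper delegates the linear terms to \cite{crippa2025zero} and only adds a bound on the nonlinearity. However, your bound on the nonlinearity fails in the Biot--Savart case.

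You claim $\theta^\nu\in L^{q_2'}_x$, where $q_2'$ is the conjugate of $q_2=2r/(2-r)$, on the grounds that $r<q_2'$. But $1/q_2'=3/2-1/r$, so $q_2'\geq r$ if and only if $r\leq 4/3$. For instance, $r=3/2$ gives $q_2'=6/5<r$, and $L^r_x\cap L^p_x\not\subset L^{6/5}_x$ on $\R^2$. The Biot--Savart case allows any $r\in(1,2)$, and this is not just an unlucky choice of exponents. For $r>4/3$ there is no bound on $\|\mathscr{R}_{BS}\theta\,\theta\|_{L^1_x}$ in terms of $\|\theta\|_{L^r_x\cap L^p_x}$ at all. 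Take radial $\theta=\langle x\rangle^{-a}$ with $a\in(2/r,3/2)$. Then $\theta\in L^r_x\cap L^p_x$ and $|\mathscr{R}_{BS}\theta(x)|\sim|x|^{1-a}$ at infinity, so $|u\theta|\sim|x|^{1-2a}\notin L^1_x$. The vanishing viscosity scheme only controls $L^r_x\cap L^p_x$ norms uniformly in $\nu$; the $L^1_x$ norms of $\theta^\nu_0$ may blow up as $\nu\to 0$. So your uniform-in-$\nu$ drift bound fails as stated.

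The fix is to drop the $L^1_x$ requirement and also use the $L^2$-integrability of $\nabla\phi$. By H\"older and the $L^r_x\to L^{2r/(2-r)}_x$ bound for $\mathscr{R}_{BS}$,
\[
\|u^\nu_t\theta^\nu_t\|_{L^r_x}\leq\|u^\nu_t\|_{L^{2r/(2-r)}_x}\|\theta^\nu_t\|_{L^2_x}\lesssim\bar M^2 .
\]
For $\sigma>2$ one has $\nabla\phi\in H^{\sigma-1}_x\hookrightarrow L^2_x\cap L^\infty_x\subset L^{r'}_x$, since $r'>2$. Hence $|\langle u^\nu_t\theta^\nu_t,\nabla\phi\rangle|\lesssim\bar M^2\|\phi\|_{H^\sigma_x}$. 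This is exactly the paper's estimate $\|\nabla\cdot(u^\nu_t\theta^\nu_t)\|_{H^{-\sigma}_x}\lesssim\|u^\nu_t\theta^\nu_t\|_{L^r_x}$. Your Calder\'on--Zygmund $L^1_x$ bound is fine as written. With this replacement in the Biot--Savart case, the rest of your argument goes through.
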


\begin{proof}
To control the time increments of $\theta^\nu - \int_0^\cdot f^\nu_s \dd s$ we can use similar computations as those in \cite[Section A.2]{crippa2025zero} used to obtain \cite[equation (A.8)]{crippa2025zero}, with the right-hand side therein replaced by $\Bar{M}^{2n}$.
The main difference with respect to \cite{crippa2025zero} is the presence of the nonlinearity, that we bound as follows. 

   When $\mathscr{R}=\mathscr{R}_{CZ}$, since $u^\nu$ is divergence-free, by \eqref{eq:properties_CZ} and Sobolev embeddings we have the $\PP$-a.s. estimate
   \begin{align*}
        \| \nabla\cdot( u^\nu_t \theta^\nu_t)\|_{H^{-\sigma}_x}
        \lesssim \| u^\nu_t\theta^\nu_t\|_{L^1_x}
        \lesssim \| \theta^\nu_t\|_{L^2_x}^2 \lesssim \bar M^2,\quad\forall t\in [0,T];
    \end{align*}
    similarly, when $\mathscr{R}=\mathscr{R}_{BS}$, by H\"older's inequality and \eqref{eq:properties_BS_integrability} we find
    \begin{align*}
        \| \nabla\cdot( u^\nu_t \theta^\nu_t)\|_{H^{-\sigma}_x}
        \lesssim \| u^\nu_t\theta^\nu_t\|_{L^r_x}
        \lesssim \| u^\nu_t\|_{L^{\frac{2r}{2-r}}_x} \|\theta^\nu_t\|_{L^2_x}
        \lesssim \| \theta^\nu_t\|_{L^r_x\cap L^2_x}^2
        \lesssim \bar M^2,\quad\forall t\in [0,T].
    \end{align*}
    
    In both cases it follows that
    \begin{align*}
        \sup_{\nu\in (0,1)} \left\| \left\| \int_0^\cdot u^\nu_s\cdot \nabla \theta^\nu_s \mathd s \right\|_{W^{1,\infty}_T H^{-\sigma}_x} \right\|_{L^\infty_\omega}
        \lesssim \Bar{M}^2.
    \end{align*}
    The other terms can be bounded as in \cite[Section A.2]{crippa2025zero}; we omit the straightforward details.
\end{proof}

Fix $\delta\in (0,\beta/2)$ small; let $\mathcal{B}_{r,p}$ denote the closed ball of radius $2\Bar{M}$ in $L^{r}_x\cap L^{p}_x$, endowed with the weak topology. Let us define the Polish space
\begin{align}\label{polish_space_tightness}
    \mathcal{E}:=
    C_T \mathcal{B}_{r,p} \cap L^2_T \tilde{H}^{\beta-2\delta}_x,
\end{align}
where we recall that the weighted Sobolev spaces $\tilde H^s_x$ were introduced in \autoref{subsec:weighted_spaces}.

Let us also consider the class of processes $\mathcal{U}_p$ from \cite[Definition 1.2]{BaGa25}.
Given $p\in (1,+\infty)$, we say that a process $\varphi$ with paths in $L^\infty_{\omega,T}L^p_x$ belongs to $\mathcal{U}_p$ if for every $\eps>0$ there is $q\in (p,+\infty)$ such that $\varphi$ can be written as
\begin{equation}\label{eq:defn_Up}
    \varphi=\varphi^{<}+\varphi^{>},\quad
    \mbox{with}\quad\|\varphi^{<}\|_{L^\infty_{\omega,T}L^p_x}\leq \eps,\quad \|\varphi^{>}\|_{L^\infty_{\omega,T}L^q_x}<+\infty.
\end{equation}

With these notations in mind, the second part of \autoref{thm:regularization} is a consequence of the following proposition.

\begin{prop}\label{prop:convergence_law}
The laws of $\{\theta^\nu\}_{\nu \in (0,1)}$ are tight on $\mathcal{E}$. Moreover, if $\mu$ is any accumulation point of these laws as $\nu\downarrow 0$, then there exists a weak solution 
\begin{align*}
\theta \in L^{\infty}_{\omega,T}(L^r_x\cap L^p_x)
\cap
\Tilde{L}^2_{\omega,T}\Tilde{B}^{\beta}_{2,\infty} \cap \mathcal{U}_p
\end{align*}
of \eqref{eq:nonlinear_transport} in the sense of \autoref{martingale_sol}, with law $\mu$ as $\mathcal{E}$-valued random variable and almost surely with weakly continuous paths in $L^2_x$.
\end{prop}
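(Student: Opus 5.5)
The plan follows the classical stochastic compactness method: prove tightness on $\mathcal{E}$, pass to a Skorokhod representation, identify the limit as a weak solution, and then transfer the uniform bounds to the limit by lower semicontinuity.

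\emph{Tightness.} We split $\theta^\nu=(\theta^\nu-\int_0^\cdot f^\nu_s\dd s)+\int_0^\cdot f^\nu_s \dd s$. Since $f^\nu\to f$ in $L^1_T(L^r_x\cap L^p_x)$, the second term ranges in a compact subset of $C_T(L^r_x\cap L^p_x)$; this also provides equi-integrability in time. For the first term we combine three bounds: the pathwise bound \eqref{viscous_uniform_bound} (paths in $L^\infty_T(L^r_x\cap L^p_x)$ with norm at most $\bar M$), \autoref{time_compactness} ($C^\gamma_T H^{-\sigma}_x$ moments), and \autoref{cor:space_compactness} ($L^2_{\omega,T} H^{\beta-\delta}_x$ bounds). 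By Chebyshev, each bound holds on a set of probability at least $1-\kappa$ for a suitable radius $R$. On such sets, \autoref{lem:compactness_paths_weights} gives compactness in $C_T\tilde H^{-\delta_1}_x\cap L^2_T\tilde H^{\beta-2\delta}_x$. To obtain compactness in $C_T\mathcal{B}_{r,p}$, we use that on the ball the weak topology is metrizable and coincides with convergence against a countable dense set of test functions. Uniform convergence in $C_T \tilde H^{-\delta_1}_x$, together with the uniform bound $2\bar M$, then implies uniform convergence in $C_T\mathcal{B}_{r,p}$. Note that $\mathcal{E}$ is Polish, being a closed subset of a product of Polish spaces.

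\emph{Identification of the limit.} Consider the joint laws of $(\theta^\nu,W)$ on $\mathcal{E}\times C_T\mathcal{W}$, where $\mathcal{W}$ is a suitable Hilbert space containing $W$ (e.g. $\tilde H^{-s}_x$-valued, using the series \eqref{eq:noise_series_expansion}). Along a subsequence, Skorokhod--Jakubowski yields a new probability space carrying $(\tilde\theta^\nu,\tilde W^\nu)\to(\tilde\theta,\tilde W)$ a.s. We pass to the limit in \eqref{weak_formulation} for fixed $\phi\in C^\infty_c$, treating the terms as follows.
\begin{itemize}
\item The linear terms, including the Itô correction $C(0):D^2\phi$, converge by the weak convergence in $C_T\mathcal{B}_{r,p}$.
\item The viscous term is $O(\nu)$.
\item The stochastic integral is handled by the standard lemma on convergence of stochastic integrals (Debussche--Glatt-Holtz--Temam type), or by identifying quadratic variations as martingales.
\end{itemize}
The main obstacle is the nonlinear term $\int_0^t\langle \mathscr{R}\tilde\theta^\nu_s\,\tilde\theta^\nu_s,\nabla\phi\rangle \dd s$. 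Here strong convergence is available only in $L^2_T\tilde H^{\beta-2\delta}_x$, hence in $L^2_T L^2_{\rm loc}$, since $\beta-2\delta>0$ and by \eqref{eq:weighted_spaces_embeddings}. The velocity $\mathscr{R}\tilde\theta^\nu$ is nonlocal, so we decompose $\mathscr{R}\tilde\theta^\nu-\mathscr{R}\tilde\theta$ into a local part and a far-field tail.
\begin{itemize}
\item For $\mathscr{R}_{CZ}$: since $\tilde\theta^\nu\rightharpoonup\tilde\theta$ weakly in $L^2_x$ for each $t$ and $\phi\tilde\theta^\nu\to\phi\tilde\theta$ strongly in $L^2_{T,x}$, the pairing $\langle \mathscr{R}\tilde\theta^\nu,\tilde\theta^\nu\nabla\phi\rangle=-\langle\tilde\theta^\nu,\mathscr{R}^*(\tilde\theta^\nu\nabla\phi)\rangle$ converges as a weak-strong product. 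Uniform $L^\infty_T L^2_x$ bounds and dominated convergence in time then conclude.
\item For $\mathscr{R}_{BS}$: we use the uniform $L^r_x\cap L^p_x$ bounds to get a uniform bound on $u^\nu$ in $L^{2r/(2-r)}_x$, and local compactness of $u^\nu$ from \eqref{eq:properties_BS}. A Rellich argument then gives $u^\nu\to u$ strongly in $L^2_{T}L^2_{\rm loc}$, so the product converges.
\end{itemize}
Once \eqref{weak_formulation} is established for a countable dense family of $\phi$, we construct the filtration generated by $(\tilde\theta,\tilde W)$. The fact that $\tilde W$ is a Wiener process with covariance $C$ with respect to this filtration is inherited from the approximations.

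\emph{Properties of the limit.} Each bound transfers to the limit by lower semicontinuity plus Fatou/Portmanteau, since it is uniform in $\nu$.
\begin{itemize}
\item The bounds in $L^\infty_{\omega,T}(L^r_x\cap L^p_x)$ are lower semicontinuous under weak convergence.
\item For $\tilde L^2_{\omega,T}\tilde B^\beta_{2,\infty}$, for each fixed $z$ the map $\theta\mapsto \int_0^T\|\delta_z\theta_t\|_{L^2_x}^2\dd t$ is lower semicontinuous on $\mathcal{E}$. Fatou then gives the bound, and we take the supremum over $z$ afterwards.
\item Weak continuity in $L^2_x$ follows from $C_T\mathcal{B}_{r,p}$ and interpolation, since $L^r_x\cap L^p_x\subset L^2_x$.
\item For $\mathcal{U}_p$: for fixed $\eps$, apply the splitting \eqref{eq:split_in_cond}--\eqref{eq_viscous_splitting} to $\theta^{\nu}$ and include $(\theta^{\nu,1},\theta^{\nu,2})$ jointly in the tightness argument; both are bounded in $C_T$ of weak balls. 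The limits then give $\theta=\theta^{<}+\theta^{>}$ with $\|\theta^<\|_{L^\infty_{\omega,T}L^p_x}\le\eps$ and $\theta^{>}\in L^\infty_{\omega,T}(L^r_x\cap L^\infty_x)\subset L^\infty_{\omega,T} L^q_x$ for any $q>p$, again by lower semicontinuity.
\end{itemize}
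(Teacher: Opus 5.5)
Your outline matches the paper's proof in all essential respects: tightness from \eqref{viscous_uniform_bound}, \autoref{time_compactness}, \autoref{lem:compactness_paths_weights} and \autoref{cor:space_compactness}, then Skorokhod, a weak--strong passage to the limit in the nonlinearity, and lower semicontinuity for the bounds. A few points need adjusting.
\begin{itemize}
\item The $L^2_{\omega,T}H^{\beta-\delta}_x$ bound of \autoref{cor:space_compactness} holds for $\theta^\nu$, not for $\theta^\nu-\int_0^\cdot f^\nu_s\dd s$, because the forcing integral has no positive regularity. As in the paper, first produce a compact $K\subset C_T\mathcal{B}_{r,p}$ for the full $\theta^\nu$. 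Then intersect $K$ with the $L^2_TH^{\beta-\delta}_x$-ball of $\theta^\nu$ itself, via \autoref{lem:compactness_paths_weights}-ii).
\item For $\mathscr{R}_{BS}$, your Rellich upgrade to strong local convergence of $u^\nu$ is correct. It uses spatial compactness from \eqref{eq:properties_BS}, identification of the limit pointwise in time from the $C_T\mathcal{B}_{r,p}$ convergence, and dominated convergence in $t$. It is, however, unnecessary: the paper only uses $u^\nu\rightharpoonup u$ from \eqref{eq:properties_BS_integrability} against the strong $L^2_TL^2_{\rm loc}$ convergence of $\theta^\nu$. That is exactly your weak--strong argument for $\mathscr{R}_{CZ}$.
\end{itemize}

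The one genuine deviation is the $\mathcal{U}_p$ step. The paper builds the splitting directly on the auxiliary space, which is possible because $\tilde\theta^{\nu_n}$ is the unique strong solution there. It then passes to weak-$*$ limits of the components in $L^\infty_{\omega,T}L^q_x$ on that fixed space, using only lower semicontinuity of norms. No tightness of the components is needed, and every $\eps$ is handled on the same space as $\tilde\theta$.

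Your route pushes $(\theta^{\nu,1},\theta^{\nu,2})$ through Skorokhod. It can be made to work, but two things must be fixed.
\begin{itemize}
\item Pathwise boundedness does not give tightness in $C_T$ of weak balls. That would require a time-regularity estimate for the components, including equicontinuity of $\int_0^\cdot f^{\nu,i}_s\dd s$. The mere $L^1_T$ bounds in \eqref{eq:split_in_cond}--\eqref{eq:split_in_cond.bis} do not provide this. The simplest fix is to place the components in a bounded ball of $L^2_{T,x}$ with its weak topology. That ball is compact and metrizable, so boundedness alone gives tightness, and $L^\infty_TL^q_x$-norms remain lower semicontinuous there.
\item You must include a whole sequence $\eps_k\downarrow 0$ jointly, rather than working ``for fixed $\eps$''. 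Membership in $\mathcal{U}_p$ requires all the decompositions of the same process on the same probability space. Separate Skorokhod representations for each $\eps$ would only produce copies of $\tilde\theta$ that agree in law.
\end{itemize}
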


\begin{proof}
We first check tightness in $C_T \mathcal{B}_{r,p}$ and then upgrade it to $\mathcal{E}$.
First, note that by \autoref{defn:vanishing_scheme} $f^\nu\to f$ in $L^1_T(L^r_x\cap L^p_x)$, therefore $\int_0^\cdot f^\nu_s \dd s\to \int_0^\cdot f_s \dd s$ in $C_T (L^r_x\cap L^p_x)$, which continuously embeds in $C_T \mathcal{B}_{r,p}$; since this family converges to a limit, it is automatically tight.
Next we focus on $\tilde\theta^\nu_t:=\theta^\nu_t-\int_0^t f^\nu_s \dd s$; in this case, tightness easily follows from \cite[Lemma 2.2 (1)]{crippa2025zero}, in light of the uniform estimates given by \eqref{viscous_uniform_bound} and \autoref{time_compactness}.
Since tightness behaves well under summation, we conclude that $\{\theta^\nu\}_\nu=\{\tilde\theta^\nu+\int_0^\cdot f^\nu\}_\nu$ is tight in $C_T \mathcal{B}_{r,p}$ as well.

Now fix $\eps>0$, and consider any compact set $K_\eps$ in $C_T \mathcal{B}_{r,p}$ such that $\PP(\theta^\nu\notin K_\eps)\leq \eps/2$.
Set $K_{\eps,R}:=K\cap \{f: \| f\|_{L^2_T H^{\beta-\delta}_x} \leq R\}$; noting that the embedding $\mathcal{B}_{r,p}\hookrightarrow \tilde H_s^{-\delta}$ is compact by \eqref{eq:weighted_spaces_embeddings}, it follows from \autoref{lem:compactness_paths_weights}-ii) that $K_{\eps,R}$ is compact in $\mathcal{E}$.
By \autoref{cor:space_compactness} and Markov's inequality, we see that
\begin{align*}
    \PP(\theta^\nu\notin K_{\eps,R}) \leq \PP(\theta^\nu\notin K_{\eps}) + \PP(\|\theta^\nu\|_{L^2_T H^{\beta-\delta}_x} > R) \leq \frac{\eps}{2} + \frac{C}{R^2}
\end{align*}
for some constant $C>0$; since the estimate is uniform in $\nu \in (0,1)$, upon taking $R$ sufficiently large, this shows tightness of the laws of $\{\theta^\nu\}_{\nu \in (0,1)}$ in $\mathcal{E}$.

Next, observe that we can rewrite $W=\mathcal{C}^{1/2}\mathcal{W}$, where $\mathcal{W}$ is a cylindrical Brownian motion on the space of square-integrable divergence-free vector fields, and $\mathcal{C}^{1/2}$ is defined as in \cite{DrGaPa25}. $\mathcal{W}$ can be identified with a family of independent real Brownian motions $\{W^{k}\}_{k\in \N}$, namely as a process taking values in $C_T \R^{\N}$. Notice that the law of $\{W^{k}\}_{k\in \N}$ is independent of $n$.

We now apply Prokhorov's theorem together Skorokhod's representation theorem. The following arguments are standard and are detailed, for instance, in \cite[Chapter 2]{flandoli2023stochastic}.
Up to extracting a non-relabeled subsequence, there exist an auxiliary probability space $(\tilde{\Omega},\tilde{\mathcal{F}},\tilde{\mathbb{P}})$ and processes $(\tilde{\theta}^{\nu_n},\{\tilde{W}^{\nu_n,k}\}_{k\in \N})$, $(\tilde{\theta},\{\tilde{W}^{k}\}_{k\in \N})$ defined on it such that the processes $(\tilde{\theta}^{\nu_n},\{\tilde{W}^{\nu_n,k}\}_{k\in \N})$ and $({\theta}^{\nu_n},\{{W}^{k}\}_{k\in \N})$ have the same law on $\mathcal{E}\times C_T\R^{\N}$, and the convergences
\begin{align} \label{convergence_theta}
    \tilde{\theta}^{\nu_n}\rightarrow \tilde{\theta} \quad\, \textit{in } \mathcal{E},\qquad
    \{\tilde{W}^{\nu_n,k}\}_{k\in \N}\rightarrow \{\tilde{W}^{k}\}_{k\in \N} \quad \textit{in } C_T\R^{\N},
\end{align}
hold $\tilde{\mathbb{P}}$-almost surely. Denoting by $(\tilde{\mathcal{F}}^n_t)_{t\geq 0}$ the complete right-continuous filtration generated by $(\tilde{\theta}^{\nu_n},\tilde{W}^{\nu_n})$, the process $\tilde{\theta}^{\nu_n}$ is the unique probabilistically strong, analytically weak solution of \eqref{eq:nonlinear_viscous} on the filtered probability space $(\tilde{\Omega},\tilde{\mathcal{F}},(\tilde{\mathcal{F}}^n_t)_{t\geq 0},\tilde{\mathbb{P}})$ with initial datum $\theta^{\nu_n}_0$, forcing term $f^{\nu_n}$, and noise $\tilde{W}^{\nu_n}=\mathcal{C}^{1/2}\tilde{\mathcal{W}}^{\nu_n}$ given by \cite[Lemma 4.7]{bagnara2025regularization}. 
Therefore, \autoref{prop_cases} also holds on the auxiliary probability space $(\tilde{\Omega},\tilde{\mathcal{F}},\tilde{\mathbb{P}})$. In particular, by \autoref{cor:space_compactness}
\begin{align}\label{uniform_bound_Besov}
    \sup_{n\geq 1}\, \llbracket \tilde{\theta}^{\nu_n}\rrbracket_{\Tilde{L}^2_{\omega,T}\Tilde{B}^{\beta}_{2,\infty}}
    \lesssim 1.
\end{align}

Moreover, \eqref{convergence_theta} together with \eqref{viscous_uniform_bound} and \eqref{eq:weighted_spaces_embeddings} implies that
\begin{equation}\label{eq:tightness_proof}
    \tilde{\theta}^{\nu_n}\rightharpoonup \tilde{\theta} \quad\text{in } L^2(\tilde{\Omega}\times [0,T]\times \R^2), 
    \qquad
    \tilde{\theta}^{\nu_n}\to \tilde{\theta} \quad\text{in } L^2(\tilde{\Omega}\times[0,T]; L^2_{{\rm loc}}(\R^d)).
\end{equation}

By \eqref{uniform_bound_Besov} and lower semicontinuity of the $\Tilde{L}^2_{\omega,T}\Tilde{B}^{\beta}_{2,\infty}$ seminorm (cf. \cite[Remark 2.17]{DrGaPa25}), we deduce that $\tilde\theta\in \tilde L^2_{\omega,T} \tilde B^\beta_{2,\infty}$ with
\begin{align*}
    \llbracket \tilde{\theta}\rrbracket_{\Tilde{L}^2_{\omega,T}\Tilde{B}^{\beta}_{2,\infty}}
    \leq \liminf_{n\to\infty} \llbracket \tilde{\theta}^{\nu_n}\rrbracket_{\Tilde{L}^2_{\omega,T}\Tilde{B}^{\beta}_{2,\infty}}
    \lesssim 1.
\end{align*}
Following verbatim the proof of \cite[Lemma 3.5]{flandoli2021scaling}, one can show that, for every $q\in [r,p]$, the process $\tilde{\theta}$ has $\tilde{\mathbb{P}}$-a.s. weakly continuous trajectories in $L^q_x$, and satisfies $\tilde{\mathbb{P}}$-almost surely
\begin{align}\label{bound_inviscid}
    \| \tilde{\theta} \|_{L^\infty_{T} \left(L^r_x\cap L^p_x\right)}
    & \leq \norm{\theta_0}_{L^r_x\cap L^p_x}+\| f \|_{L^1_T \left(L^r_x\cap L^p_x\right)} < \infty,\\
     \label{energy_inviscid} \| \tilde{\theta}_t \|_{L^2_x}^2
    & \leq  \norm{\theta_0}_{L^2_x}^2+\int_0^t \langle f_s,\tilde{\theta}_s\rangle \mathd s ,   \qquad \mbox{for every }t\in [0,T].
\end{align}
Moreover, for any $\eps>0$, a splitting of the viscous approximations like \eqref{eq_viscous_splitting} holds in the auxiliary probability space, with estimates uniform in $\nu\in (0,1)$. By lower semicontinuity of $L^\infty_{\omega,T} L^q_x$-norms, it follows that $\tilde\theta$ satisfies a similar decomposition, with each component satisfying the same estimates; therefore $\tilde{\theta}\in \mathcal{U}_p$.

Denoting by $(\tilde{\mathcal{F}}_t)_{t\geq 0}$ the complete right-continuous filtration generated by $\tilde{\theta}$ and $\tilde{W}$, we are left to show that the tuple $(\tilde{\Omega},\tilde{\mathcal{F}},(\tilde{\mathcal{F}}_t)_{t\geq 0},\tilde{\mathbb{P}},\tilde{\theta},\tilde{W})$ is a weak solution to \eqref{eq:nonlinear_transport} in the sense of \autoref{martingale_sol}. 

The convergence of the linear terms easily follows by \eqref{viscous_uniform_bound}, \eqref{convergence_theta}, \eqref{bound_inviscid}, dominated convergence and \cite[Lemma 4.3]{bagnara2025no} for deterministic and stochastic integrals respectively.
So we only need to focus on the convergence of the nonlinear terms.
In the Euler case $\mathscr{R}=\mathscr{R}_{BS}$, since $r\in (1,2)$, by \eqref{eq:properties_BS_integrability}, \eqref{eq:tightness_proof}, and weak-strong convergence we deduce that
\begin{equation*}
    \tilde{u}^{\nu_n}\rightharpoonup \tilde{u} \quad\text{in } L^2(\tilde{\Omega}\times [0,T]; L^q(\R^2)), \qquad
    \tilde{u}^{\nu_n} \theta^{\nu_n} \rightharpoonup \tilde u \tilde{\theta} \quad\text{in } L^2(\tilde{\Omega}\times[0,T]; L^1_{{\rm loc}}(\R^d)),
\end{equation*}
for $q\in (2,\infty)$ given by $1/q=1/r-1/2$. Then, using integration by parts, it follows that for any $\varphi\in C^\infty_c(\R^2)$ and any $t\in [0,T]$
\begin{align*}
    \int_0^t \langle u^{\nu_n}_s\cdot \nabla \theta^{\nu_n}_s, \varphi\rangle \mathd s 
    \to 
    \int_0^t \langle u_s\cdot \nabla \theta_s, \varphi\rangle \mathd s.
\end{align*}
The case of Calderón--Zygmund operators is similar, up to taking $q=2$ and applying \eqref{eq:properties_CZ} instead of \eqref{eq:properties_BS_integrability}.   
\end{proof}

We can now complete the:
\begin{proof}[Proof of \autoref{thm:regularization}]
    The first part of the statement follows from the application of \autoref{lem:gron_increments}, whose assumptions were verified in each case in \autoref{prop_cases}. The second part is covered by \autoref{prop:convergence_law}.
\end{proof}

\begin{proof}[Proof of \autoref{cor:existence}]
    The first part of the statement is a consequence of \autoref{thm:regularization}; in particular, the law of the weak solution $\theta$ is the limit as $n\to\infty$ of the laws of $\theta^{\nu_n}$, for a suitable sequence $\nu_n\downarrow 0$, in the topology of $\mathcal{E}$.
    In particular, since we have the freedom to choose the vanishing viscosity scheme as we like, similarly to \autoref{rmk:bound_viscoud_data} we can construct it so that
    \eqref{bound_viscoud_data} holds, as well as
    \begin{equation}\label{eq:yet_another_unif_bound}
        \sup_{\nu\in (0,1)} \| f^\nu\|_{L^1_T L^{p_\ast}_x}\leq \| f\|_{L^1_T L^{p_\ast}_x}.
    \end{equation}
    By lower semicontinuity of the relevant norms, for any choice of $p_\ast\in [p,\infty]$ and $t\in [0,T)$, the map
    \begin{align*}
        F:\mathcal{E}\to [0,+\infty],\quad F(x):=\sup_{s\in [t,T]} \|x_s\|_{L^{p_\ast}_x}^p 
    \end{align*}
    is lower semicontinuous. Therefore, under the assumptions of \autoref{thm:integrability}, estimate \eqref{eq:corollary_anomalous_integrability} follows from the uniform-in-$\nu$ estimate \eqref{eq:intro_anomalous_integrability}, estimates \eqref{bound_viscoud_data}-\eqref{eq:yet_another_unif_bound} and the Portmanteau theorem.
\end{proof}

\begin{rmk}
    As clear from the proofs, the conclusions of \autoref{thm:regularization} and \autoref{cor:existence} hold for any choice of noise $W$ satisfying \autoref{ass:noise}.
\end{rmk}

\begin{rmk}\label{rem:right_continuity_weak_sol_t=0}
The weak solutions $\tilde \theta$ constructed in \autoref{prop:convergence_law} always satisfy the property that
\begin{align*}
    \tilde \PP\left(\lim_{t\to 0^+} \| \tilde\theta_t-\theta_0\|_{L^2_x}^2=0\right)=1.
\end{align*}
Indeed, for $\tilde\PP$-a.e. $\tilde\omega$, by combining the weak continuity of $t\mapsto \tilde\theta_t$, the lower semicontinuity of $L^2_{x}$-norm and the pathwise estimate \eqref{energy_inviscid}, one has
\begin{align*}
    \|\theta_0\|_{L^2_x}^2
    \leq \liminf_{t\to 0^+}\|\tilde \theta_{t}\|_{L^2_x}^2
    \leq \limsup_{t\to 0^+}\|\tilde \theta_{t}\|_{L^2_x}^2
    \leq \limsup_{t\to 0^+}\left( \norm{\theta_0}_{L^2_x}^2+\int_0^t \langle f_s,\tilde{\theta}_s\rangle \mathd s\right)
    = \|\theta_0\|_{L^2_x}^2
\end{align*}
which again by properties of weak convergence in $L^2_x$ implies the claim.
\end{rmk}

\subsection{Pathwise uniqueness and strong existence}\label{subsec:pathwise_uniqueness}

The goal of this section is to complete the proof of \autoref{thm:intro_wellposedness}.
We start by proving some novel uniqueness results for \eqref{eq:nonlinear_transport} under \autoref{ass:strong_regularization}, see \autoref{thm_pathwise_uniq_order_0} and \autoref{thm_pathwise_uniq_euler}.
To this end, we need the following condition; for consistency with \autoref{Appendix:approx_kernels}, we formulate it on $\R^d$, even though in this section we only need it for $d=2$.

\begin{ass}\label{ass:noise2}
    The noise covariance $C$ satisfies \eqref{eq:properties.C}. Moreover, there exist $\alpha\in (0,1)$ and constants $c_1'>0$, $c_2'\geq 0$ such that
    \begin{equation}\label{eq:ass_noise2}
        C(x)=c_1' C_\alpha(x) + R(x) \quad \text{and} \quad \int_{\R^d} (1+|\xi|^2) |\hat R(\xi) |\mathd \xi \leq c_2',
    \end{equation}
    where $C_\alpha$ is given by \eqref{eq:isotropic.covariance}.
\end{ass}

\begin{rmk}
    \autoref{ass:noise2} should be regarded as a ``Fourier analogue'' of \autoref{ass:noise}.
    In fact, it is a stronger requirement, as condition \eqref{eq:ass_noise2} implies \eqref{eq:local.expansion.noise}.
    Indeed, by \eqref{eq:ass_noise2} and properties of Fourier transform, it holds $R\in C^2_x$ with $\| R\|_{C^2_x}\lesssim c_2'$, therefore \autoref{ex:ass_noise} applies.
\end{rmk}

\begin{prop}\label{thm_pathwise_uniq_order_0}
Let $W$ satisfy \autoref{ass:noise2} and let $(\alpha,p,r,\mathscr{R})$ satisfy \autoref{ass:strong_regularization} with $\mathscr{R}=\mathscr{R}_{CZ}$.
Let $\theta_0\in L^2_x\cap L^p_x$ and $f\in L^1_T (L^2_x\cap L^{p}_x)$ be fixed.
Then, in the strictly subcritical case $\alpha<1/2-1/p$, pathwise uniqueness in $L^\infty_{\omega,T}(L^2_x\cap L^p_x)$ holds for weak solutions of \eqref{eq:nonlinear_transport}.
Similarly, in the critical case $\alpha=1/2-1/p$, pathwise uniqueness in $L^\infty_{\omega,T}(L^{2}_x\cap L^p_x)\cap \mathcal{U}_p$ holds for weak solutions of \eqref{eq:nonlinear_transport}, with $\mathcal{U}_p$ being defined by \eqref{eq:defn_Up}.
\end{prop}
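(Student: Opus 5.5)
We would prove pathwise uniqueness by a duality/negative-norm energy argument in the spirit of \cite{CoMa23,BaGa25}, exploiting the anomalous dissipation of a negative Sobolev norm induced by the Kraichnan noise. Let $\theta^1,\theta^2$ be two weak solutions on the same stochastic basis with the same $W$, $\theta_0$ and $f$, and set $\zeta:=\theta^1-\theta^2$, $v:=\mathscr{R}\zeta$. Then $\zeta$ solves
\begin{align*}
\mathd \zeta + u^1\cdot\nabla \zeta\,\mathd t + v\cdot \nabla \theta^2\,\mathd t + \mathd W\cdot\nabla \zeta = \tfrac12 C(0):D^2\zeta\,\mathd t,\qquad \zeta_0=0.
\end{align*}
We would control $\EE\|\zeta_t\|^2_{\dot H^{-s}_x}$ for a suitable $s>0$ (the natural choice being $s=\frac12$, dictated by the order-zero operator $\mathscr{R}_{CZ}$, or more precisely $s$ chosen in terms of $\alpha$). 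Mollifying ($\zeta^\delta=\chi^\delta\ast\zeta$), applying It\^o's formula to $\langle \zeta^\delta, (-\Delta)^{-s}\zeta^\delta\rangle$, and using \autoref{ass:noise2}, the Itô-Stratonovich correction and quadratic variation combine into a term with Fourier multiplier involving $\hat C(0)-\hat C$ convolutions; for the leading part $c_1'C_\alpha$ this yields (as in \cite{CoMa23,GaGrMa24}) a coercive contribution $-\kappa\|\zeta\|^2_{\dot H^{-s+1-\alpha}_x}$ up to lower order errors bounded by $\|\zeta\|^2_{\dot H^{-s}_x}$, while the smooth remainder $R$ contributes only $c_2'\|\zeta\|^2_{H^{-s}}$ by the bound $\int(1+|\xi|^2)|\hat R|\lesssim c_2'$. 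Removing the mollification needs commutator estimates, which we would take from \autoref{Appendix:approx_kernels}.

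The nonlinear terms are then estimated against the dissipation. The term $\langle u^1\cdot\nabla\zeta,(-\Delta)^{-s}\zeta\rangle$ is a commutator $[(-\Delta)^{-s/2},u^1\cdot\nabla]$-type term, bounded by $\|u^1\|_{L^p_x}\|\zeta\|_{\dot H^{-s+a}}^2$-type quantities with $a=\frac12+\frac1p$ via fractional Leibniz/Kato--Ponce and Sobolev embedding; similarly $\langle v\cdot\nabla\theta^2,(-\Delta)^{-s}\zeta\rangle=-\langle v\theta^2,\nabla(-\Delta)^{-s}\zeta\rangle$ is bounded using H\"older with $\theta^2\in L^p_x$, $\|v\|_{L^q}\lesssim\|\zeta\|_{L^q}$ from \eqref{eq:properties_CZ}, and Sobolev embeddings in negative spaces. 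The numerology closes exactly when the gained regularity $1-\alpha$ exceeds $\frac12+\frac1p$, i.e. $\alpha<\frac12-\frac1p$: interpolating between $\dot H^{-s}$ and $\dot H^{-s+1-\alpha}$ and Young's inequality absorbs the nonlinear contributions into the dissipation, leaving $\frac{\dd}{\dd t}\EE\|\zeta\|^2_{\dot H^{-s}}\lesssim \EE[(1+\|\theta^1\|^{k}_{L^p}+\|\theta^2\|^{k}_{L^p})\|\zeta\|^2_{\dot H^{-s}}]$, with the $L^\infty_{\omega,T}L^p_x$ bounds making the prefactor deterministic; Gr\"onwall gives $\zeta\equiv0$. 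Well-definedness of $\|\zeta\|_{\dot H^{-s}}$ uses $\zeta\in L^2_x$ and $s<1$ in $d=2$ (since $L^2\cap L^{r}$ with... we would use $s<1$ so $L^{2/(1+s)}\subset\dot H^{-s}$, requiring $L^r$ with $r\le 2$, which we have).

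In the critical case $\alpha=\frac12-\frac1p$ the Young absorption fails by a constant, so we use the class $\mathcal{U}_p$: split $\theta^i=\theta^{i,<}+\theta^{i,>}$ as in \eqref{eq:defn_Up}; the $\theta^{i,<}$ parts have $L^p$ norm $\le\eps$ and contribute at most $C\eps\|\zeta\|^2_{\dot H^{-s+1-\alpha}}$, absorbed for $\eps$ small, while the $\theta^{i,>}$ parts lie in $L^q$, $q>p$, hence are strictly subcritical and handled as above. The main obstacle we expect is the rigorous derivation of the coercive It\^o term for the mollified negative norm uniformly in $\delta$ together with the commutator estimate for $u^1\cdot\nabla\zeta$ with $u^1$ only in $L^p$ (not Lipschitz); we would first try a Fourier-side computation mirroring \cite{BaGa25} with the trilinear term written as $\int \hat u(\xi-\eta)\cdot(\ldots)$ and symmetrized to gain the factor $|\xi|^{-2s}-|\eta|^{-2s}$.
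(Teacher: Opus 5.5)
Your architecture matches the paper's proof. You run an It\^o/energy identity for a negative Sobolev norm of the difference, and the Kraichnan part of $C$ gives coercivity while the smooth remainder $R$ is lower order. The nonlinearity is handled by H\"older--Sobolev bounds, then interpolation plus Young, then Gr\"onwall, with the $\mathcal{U}_p$ splitting in the critical case. The gap is the choice of functional, which is the ``main obstacle'' you leave open.

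\textbf{Finiteness of $\EE\|\zeta_t\|^2_{\dot H^{-s}_x}$.} The uniqueness class is $L^\infty_{\omega,T}(L^2_x\cap L^p_x)$, so there is no $L^r_x$ with $r<2$ available. Your embedding $L^{2/(1+s)}_x\subset\dot H^{-s}_x$ needs $r\leq 2/(1+s)<2$, and $L^2_x\not\subset\dot H^{-s}_x$ for every $s>0$ because of low frequencies. So both the Gr\"onwall functional and the lower-order error $\|\zeta\|^2_{\dot H^{-s}_x}$ may be infinite.

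\textbf{Positive-order dissipation when $s<1-\alpha$.} For any $s<1-\alpha$, e.g.\ $s=\tfrac12$, the coercive term is a positive-order norm $\|\zeta\|^2_{\dot H^{1-\alpha-s}_x}$, and your nonlinear bounds also involve positive-order norms. None of these is known to be finite for weak solutions in this class, which carry no Besov regularity. After a frequency cut-off $\varphi_n$, the truncated fluxes are only bounded by $\langle\xi\rangle^{2(1-\alpha-s)}$, while $\EE|\hat\rho_s(\xi)|^2$ is only in $L^1_\xi$. So the limit $n\to\infty$ cannot be taken by dominated convergence. You would need an extra $n$-uniform coercivity bound for the truncated fluxes, or an a posteriori regularity argument, and neither is supplied.

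\textbf{How the paper avoids this.} It uses the \emph{inhomogeneous} norm $\|\rho_t\|^2_{H^{\alpha-1}_x}$, i.e.\ $s=1-\alpha$ with weight $\langle\xi\rangle^{2(\alpha-1)}$. This norm is finite for $L^2_x$-valued $\rho$. By \eqref{eq:pointwise_bound_approx}, the truncated fluxes $F^n_{1-\alpha}$ are bounded uniformly in $n$ and $\xi$, so dominated convergence applies. By \eqref{eq:good_bound}, the limit flux produces the dissipation $-K_1\EE\|\rho_s\|^2_{L^2_x}$ plus $K_2\EE\|\rho_s\|^2_{H^{\alpha-1}_x}$, all finite a priori. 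The ingredient you are missing is the inhomogeneous coercivity estimate of \autoref{lem:flux_functions}. It does not follow directly from the homogeneous bound of \cite{GaGrMa24}; the paper obtains it through the comparison estimate \eqref{eq:estimate_CrLuPa} from \cite{crippa2025zero}.

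\textbf{No commutator is needed.} Once the dissipated quantity is $\|\rho\|^2_{L^2_x}$, no commutator structure is required. With $u^1$ only in $L^p_x$, a Kato--Ponce commutator bound would gain nothing anyway, so that part of your plan should be dropped. Write the nonlinearity as $\nabla\cdot(u^1\rho+(\mathscr{R}_{CZ}\rho)\theta^2)$ and move the derivative onto the test function. H\"older gives $\|u^1\rho\|_{L^{2p/(p+2)}_x}\leq\|u^1\|_{L^p_x}\|\rho\|_{L^2_x}$, and the embedding $L^{2p/(p+2)}_x\hookrightarrow H^{-2/p}_x$ gives a bound by $\|\rho\|_{H^{-\gamma}_x}\|\rho\|_{L^2_x}$ with $\gamma=1-2\alpha-2/p>0$. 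Since $\alpha-1<-\gamma<0$, interpolating between $H^{\alpha-1}_x$ and $L^2_x$ and applying Young absorbs this into $K_1\|\rho\|^2_{L^2_x}$. With this functional, your critical-case $\mathcal{U}_p$ splitting goes through exactly as in the paper.
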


\begin{proof}
The argument is inspired by the previous works \cite{CoMa23,GaGrMa24,crippa2025zero}, based on looking at the evolution of appropriate negative Sobolev norms.
Let us first present the argument in the subcritical regime.

Let $\theta^1,\theta^2 \in L^\infty_{\omega,T}(L^r_x\cap L^p_x)$ be two weak solutions on the same tuple $\left(\Omega,\mathcal{F},\mathcal{F}_t,\mathbb{P},W\right)$, and denote $\rho_t:={\theta}^1_t-{\theta}^2_t$ their difference and let $\hat{\rho}_t$ be its Fourier transform.  
Arguing analogously to \cite[Section A.1]{crippa2025zero}, calling $a_t(\xi):=\mathbb{E}\left[|\hat{\rho}_t(\xi)|^2\right]$ and $e_{\xi}:=(2\pi)^{-1} e^{-i\xi\cdot x}$, for any $n\geq 1$ it holds that
\begin{align}\label{Ito_uniqueness}
    \int_{\R^2}  a_t(\xi)\langle \xi\rangle^{2(\alpha-1)}\varphi_n(\xi) \mathd\xi
    &=
    \int_0^t \int_{\R^2}a_s(\xi) F^n_{1-\alpha}(\xi) \mathd\xi \mathd s\notag\notag\\ 
    & -2\int_0^t\int_{\R^2}{\mathbb{E}}\left[\mathfrak{Re} \left(\overline{\hat{\rho}_s(\xi)} \langle u^1_s \cdot \nabla \theta^1_s-u^2_s \cdot \nabla \theta^2_s , e_\xi \rangle \right)\right] \langle \xi\rangle^{2(\alpha-1)}\varphi_n(\xi) \mathd\xi \mathd s,
\end{align}
where $\varphi_n$ are suitable frequency cut-off functions and the ``flux'' functions $F^n_{1-\alpha}$ are defined in \autoref{Appendix:approx_kernels}. 
By dominated convergence, we have 
\begin{align}\label{conve_1_uniq}
     \lim_{n\to\infty} \int_{\R^d} a_t(\xi)\langle \xi\rangle^{2(\alpha-1)}\varphi_n(\xi) \mathd \xi = \mathbb{E}\left[\|\rho_t\|_{H^{\alpha-1}_x}^2\right].
\end{align}
By assumption $\rho\in L^\infty_{\omega,T} L^2_x$, thus $a_s\in L^\infty_{T}L^1_{\xi}$; in view of estimate \eqref{eq:pointwise_bound_approx} from \autoref{lem:flux_functions} (with $s=1-\alpha$), $F^n_{1-\alpha}$ are uniformly bounded. Therefore by dominated convergence and \eqref{eq:good_bound}, we find
\begin{align}\label{conve_2_uniq}
     \lim_{n\to\infty} \int_0^t \int_{\R^2}a_s(\xi) F^n_{1-\alpha}(\xi) \mathd\xi \mathd s
     &= \int_0^t \int_{\R^2}a_s(\xi) F_{1-\alpha}(\xi) \mathd\xi \mathd s\notag\\ 
     &\leq -K_1\int_0^t \mathbb{E}\left[\|\rho_s\|_{L^2_x}^2 \right] \mathd s+K_2\int_0^t\mathbb{E}\left[ \|\rho_s\|_{H^{\alpha-1}_x}^2\right] \mathd s
\end{align}
for some $K_1,K_2>0.$ We are left to estimate the nonlinear term. In the case $\alpha<1/2-1/p$, calling $\gamma:=1-2\alpha-2/p> 0$, by H\"older's inequality and the Sobolev embedding 
\begin{align*}
L^{\frac{2p}{p+2}}_x\hookrightarrow H^{-\frac{2}{p}}_x=H^{2\alpha-1+\gamma}_x,
\end{align*}
it holds 
\begin{align}\label{conve_3_uniq}
\bigg|\int_0^t\int_{\R^2}\mathbb{E}& \left[\mathfrak{Re} \left(\overline{\hat{\rho}_s(\xi)} \langle u^1_s \cdot \nabla \theta^1_s-u^2_s \cdot \nabla \theta^2_s , e_\xi \rangle \right)\right] \langle \xi\rangle^{2(\alpha-1)}\varphi_n(\xi) \mathd\xi \mathd s\bigg|\notag\\
& \leq \int_0^t\int_{\R^2}\mathbb{E}\left[|\hat{\rho}_s(\xi)| |\langle u^1_s \rho_s+(\mathscr{R}_{CZ}\rho_s) \theta^2_s, e_\xi \rangle|\right] |\xi|\langle \xi\rangle^{2(\alpha-1)}\varphi_n(\xi) \mathd\xi \mathd s\notag\\
& \leq \int_0^t \mathbb{E} \int_{\R^2} |\hat{\rho}_s(\xi)|  \big(|\widehat{u^1_s \rho_s}(\xi)|+|\widehat{(\mathscr{R}_{CZ}\rho_s) \theta^2_s}(\xi)| \big)\langle \xi\rangle^{2\alpha-1} \, \mathd\xi \mathd s \notag\\
& \lesssim \int_0^t \mathbb{E}\left[\|\rho_s\|_{H^{-\gamma}_x}^2\right]^{1/2} \mathbb{E}\left[\|u^1_s \rho_s\|_{H^{2\alpha-1+\gamma}_x}^2+\|\mathscr{R}_{CZ}\rho_s \theta^2_s\|_{H^{2\alpha-1+\gamma}_x}^2\right]^{1/2} \mathd s\notag\\
& \lesssim 
\int_0^t \mathbb{E}\left[\|\rho_s\|_{H^{-\gamma}_x}^2\right]^{1/2} 
\mathbb{E}\left[\|u^1_s \rho_s\|_{L^{\frac{2p}{p+2}}_x}^2+\|\mathscr{R}_{CZ}\rho_s \theta^2_s\|_{L^{\frac{2p}{p+2}}_x}^2\right]^{1/2} \mathd s\notag\\
& \lesssim 
\big( \| \theta^1\|_{L^\infty_{\omega,T} L^p_x} + \| \theta^2\|_{L^\infty_{\omega,T} L^p_x}\big)\int_0^t \mathbb{E}\left[\|\rho_s\|_{H^{-\gamma}_x}^2\right]^{1/2}\mathbb{E}\left[\|\rho_s\|_{L^2_x}^2\right]^{1/2} \mathd s,
\end{align}
uniformly in $n$. Letting $n\rightarrow +\infty$ in \eqref{Ito_uniqueness}, combining \eqref{conve_1_uniq}, \eqref{conve_2_uniq}, \eqref{conve_3_uniq} and Young's inequality, we obtain
\begin{align*}
\mathbb{E}\left[\|\rho_t\|_{H^{\alpha-1}_x}^2\right]+\frac{K_1}{2}\int_0^t \mathbb{E}\left[\|\rho_s\|_{L^2_x}^2\right] \mathd s \lesssim \int_0^t \mathbb{E}\left[\|\rho_s\|_{H^{\alpha-1}_x}^2\right] \mathd s.
\end{align*}
The latter implies the claim by Gr\"onwall's inequality.

In the critical case $\alpha=1/2-1/p$, it holds $\gamma=0$, so \eqref{conve_3_uniq} together with Young's inequality does not allow to conclude anymore.
In this case, we follow the argument from \cite{bagnara2025regularization}. 

For $\eps>0$ to be fixed later, consider a decomposition of $\theta^1,\theta^2$ such that
\begin{align*}
    \|\theta^{1,<}\|_{L^{\infty}_{\omega,T}L^p_x}\leq \eps,\quad \|\theta^{2,<}\|_{L^{\infty}_{\omega,T}L^p_x}\leq \eps,\quad  \|\theta^{1,>}\|_{L^{\infty}_{\omega,T}(L^p_x\cap L^{q}_x)}\leq M_{1,\eps},\quad \|\theta^{2,>}\|_{L^{\infty}_{\omega,T}(L^p_x\cap L^{q}_x)}\leq M_{2,\eps}
\end{align*}
for some $q\in (p,+\infty).$ Then arguing as above, it holds
\begin{align}\label{conve_3_uniq_2}
\int_0^t\int_{\R^2}\mathbb{E} & \left[\mathfrak{Re} \left(\overline{\hat{\rho}_s(\xi)} \langle u^1_s \cdot \nabla \theta^1_s-u^2_s \cdot \nabla \theta^2_s , e_\xi \rangle \right)\right] \langle \xi\rangle^{2(\alpha-1)}\varphi_n(\xi) \mathd\xi \mathd s\notag\\
& \leq C_p \int_0^t \mathbb{E}\left[\|\rho_s\|_{L^{2}_x}^2\right]^{1/2} \mathbb{E}\left[\|\mathscr{R}_{CZ}\left(\theta^{1,<}_s+\theta^{1,>}_s\right) \rho_s\|_{L^{\frac{2p}{p+2}}_x}^2
+
\|\mathscr{R}_{CZ}\rho_s \left(\theta^{2,<}_s+\theta^{2,>}_s\right)\|_{L^{\frac{2p}{p+2}}_x}^2\right]^{1/2} \mathd s\notag\\
& \leq \eps C_p\int_0^t \mathbb{E}\left[\|\rho_s\|_{L^{2}_x}^2\right]\mathd s+C_p(M_{1,\varepsilon}+M_{2,\varepsilon})\int_0^t\mathbb{E}\left[\|\rho_s\|_{L^{2}_x}^2\right]^{1/2}\mathbb{E}\left[\|\rho_s\|_{H^{-\gamma_{\eps}}_x}^2\right]^{1/2}\mathd s,
\end{align}
for some $\gamma_{\eps}\in (0,1-\alpha)$ and some constant $C_p>0$ possibly changing line by line, but independent of the choice of $\eps.$ Thus, we can first fix $\eps$ small enough such that $\eps C_p\leq K_1/4$, then \eqref{conve_3_uniq_2} allows to conclude by Young's inequality as in the case $\alpha<1/2-1/p$.
\end{proof}

\begin{rmk}\label{rmk:pathwise_uniqueness}
    In the strictly subcritical regime $\alpha<1/2 - 1/p$, the previous \autoref{thm_pathwise_uniq_order_0} provides pathwise uniqueness in the class $\theta\in L^\infty_{\omega,T}(L^r_x \cap L^p_x)$, without any additional $\tilde L^2_{\omega,T} \tilde B^\beta_{2,\infty}$-regularity of solutions required. In particular, this class is strictly larger than the one to which solutions obtained by vanishing viscosity scheme (\cref{thm:regularization}) belong. 
    A similar remark applies in the critical regime $\alpha = 1/2-1/p$, up to including the space $\mathcal{U}_p$.
\end{rmk}

\begin{rmk}
As shown in \cite[Theorem 1.1]{castro2025unstable}, non-uniqueness of solutions in $L^\infty_{T}(L^r_x \cap L^p_x)$ holds for the deterministic SQG equation with forcing.
The presence of rough transport noise genuinely restores wellposedness for this SPDE.
Similar considerations apply for IPM, where in the absence of noise uniqueness fails for $L^\infty_{T,x}$-solutions, as shown by convex integration techniques \cite{cordoba2011lack}.
\end{rmk}

Our next result is a technical variation of \cite[Theorem 1.1]{BaGa25} that shows pathwise uniqueness in the Euler case $\mathscr{R} = \mathscr{R}_{BS}$. With respect to the aforementioned result, we replace the assumption $\theta_0 \in L^p_x \cap \dot{H}^{-1}_x$ therein with $\theta_0 \in L^r_x \cap L^p_x$, which is more suited for our purposes. Moreover, as in \autoref{thm_pathwise_uniq_order_0}, the result holds under the more general \autoref{ass:noise2} on the noise $W$. 

\begin{prop}\label{thm_pathwise_uniq_euler}
Let $W$ satisfy \autoref{ass:noise2} and let $(\alpha,p,r,\mathscr{R})$ satisfy \autoref{ass:strong_regularization} with $\mathscr{R}=\mathscr{R}_{BS}$.
Then for any $\theta_0\in L^r_x\cap L^p_x$ and deterministic forcing $f\in L^1_T (L^r_x\cap L^{p}_x)$, pathwise uniqueness in $L^\infty_{\omega,T}(L^r_x\cap L^p_x)$ holds for weak solutions of \eqref{eq:nonlinear_transport}.
\end{prop}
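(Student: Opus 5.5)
We follow the scheme of \autoref{thm_pathwise_uniq_order_0}, working with the same negative Sobolev norm $H^{\alpha-1}_x$ of the difference. Let $\theta^1,\theta^2\in L^\infty_{\omega,T}(L^r_x\cap L^p_x)$ be two weak solutions on the same stochastic basis, and set $\rho:=\theta^1-\theta^2$. Since $r<2\le p$, interpolation gives $\rho\in L^\infty_{\omega,T}L^2_x$, so $a_t(\xi):=\EE[|\hat\rho_t(\xi)|^2]\in L^\infty_T L^1_\xi$. The It\^o identity \eqref{Ito_uniqueness}, with frequency cutoffs $\varphi_n$, is then justified as in \cite[Section A.1]{crippa2025zero}. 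Letting $n\to\infty$ and using \autoref{lem:flux_functions}, the noise contribution yields the coercive term $-K_1\int_0^t\EE\|\rho_s\|_{L^2_x}^2\,\dd s$, up to a harmless $K_2\int_0^t\EE\|\rho_s\|_{H^{\alpha-1}_x}^2\,\dd s$. The whole task therefore reduces to bounding the nonlinear term. Note that we never need $\rho\in\dot H^{-1}_x$: the weight $\langle\xi\rangle^{2(\alpha-1)}$ is inhomogeneous, so the $L^r_x$ assumption only serves to give $u^i=\mathscr R_{BS}\theta^i$ a meaning via \eqref{eq:properties_BS_integrability}. This is exactly where the hypothesis $\theta_0\in \dot H^{-1}_x$ of \cite{BaGa25} is dropped.

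For the nonlinear term we write $u^1\cdot\nabla\theta^1-u^2\cdot\nabla\theta^2=\nabla\cdot(u^1\rho)+\nabla\cdot\big((\mathscr R_{BS}\rho)\theta^2\big)$. Pairing with $\hat\rho\langle\xi\rangle^{2\alpha-2}$ and absorbing one derivative, we bound the term by
\[
\int_0^t \EE\big[\|\rho_s\|_{H^{-\gamma}_x}\big(\|u^1_s\rho_s\|_{H^{2\alpha-1+\gamma}_x}+\|(\mathscr R_{BS}\rho_s)\theta^2_s\|_{H^{2\alpha-1+\gamma}_x}\big)\big]\,\dd s,
\]
with $\gamma\ge 0$ to be chosen.

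\textbf{The term $u^1\rho$.} Here $\nabla u^1\in L^p_x$ and $u^1\in L^{q_2}_x\cap L^\infty_x$ when $p>2$. We handle it as in the abstract case a) of \autoref{thm:anomalous_linear}: we use $u^1\in \dot W^{1,p}_x\hookrightarrow C^{1-2/p}_x$ and estimate the product in $H^{2\alpha-1+\gamma}_x$ by paraproduct/commutator bounds. Subcriticality $\alpha<1-1/p$ provides the room needed to take $\gamma>0$.

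\textbf{The term $(\mathscr R_{BS}\rho)\theta^2$.} This is the delicate one. We use \eqref{eq:BS_Sobolev_spaces} locally in the form $\mathscr R_{BS}\rho\in \dot H^1_x$ with $\|\nabla\mathscr R_{BS}\rho\|_{L^2_x}\lesssim\|\rho\|_{L^2_x}$. The low frequencies of $\mathscr R_{BS}\rho$ must be controlled without $\dot H^{-1}_x$. We plan to split $\rho=P_{\le 1}\rho+P_{>1}\rho$. The high part satisfies $\|\mathscr R_{BS}P_{>1}\rho\|_{H^1_x}\lesssim\|\rho\|_{L^2_x}$. For the low part we use $\|\mathscr R_{BS}P_{\le1}\rho\|_{L^{q_2}_x}\lesssim\|\rho\|_{L^r_x}$, which is bounded but only gives a \emph{non-small} contribution. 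To close, we need this contribution to be controlled by $\|\rho\|_{H^{\alpha-1}_x}$. Since $\nabla\mathscr R_{BS}P_{\le1}\rho$ is a smooth zero-order multiplier of $P_{\le1}\rho$, this should follow by estimating $(\mathscr R_{BS}P_{\le 1}\rho)\theta^2$ in a negative space through its gradient, and exploiting that at low frequency $\|P_{\le1}\rho\|_{L^2_x}\sim\|P_{\le1}\rho\|_{H^{\alpha-1}_x}$. Sobolev/H\"older with $\theta^2\in L^p_x$ then gives a bound $\lesssim\|\rho\|_{L^2_x}\|\rho\|_{H^{-\gamma}_x}+\|\rho\|_{L^2_x}\|\rho\|_{H^{\alpha-1}_x}$. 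Interpolating $H^{-\gamma}_x$ between $L^2_x$ and $H^{\alpha-1}_x$ and applying Young's inequality, we absorb into $K_1\int\EE\|\rho\|_{L^2_x}^2$ and conclude by Gr\"onwall.

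\textbf{Main obstacle.} We expect two difficulties: this low-frequency treatment of $\mathscr R_{BS}\rho$ (avoiding $\dot H^{-1}_x$, whose finiteness was the role of the assumption in \cite{BaGa25}), and the critical case $\alpha=1-1/p$, where $\gamma=0$. For the critical case, as in \autoref{thm_pathwise_uniq_order_0}, we split $\theta^i=\theta^{i,<}+\theta^{i,>}$ with $\|\theta^{i,<}\|_{L^\infty_{\omega,T}L^p_x}\le\eps$. No extra class $\mathcal U_p$ is required here, since for Euler the velocity gains a full derivative. Indeed, the Lagrangian structure of the transport gives uniform equi-integrability: transport by a divergence-free field with incompressible noise preserves distribution functions pathwise (as in \eqref{viscous_bound_variant}), which yields $\theta^i\in\mathcal U_p$ automatically. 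We would justify this first, possibly via renormalization of weak solutions in the Euler case (as in \cite{BaGa25}). If that fails, we would instead handle the critical case by the strict gap from $p\ge 2$ and footnote~\ref{foot:3}-type enlargement of $p$.
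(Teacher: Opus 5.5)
There is a genuine gap, and it sits exactly at your claim that $\rho\in\dot H^{-1}_x$ is never needed.

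The inhomogeneous $H^{\alpha-1}_x$ method of \autoref{thm_pathwise_uniq_order_0} works for $\mathscr R_{CZ}$ because that operator is bounded on $L^2_x$ and $\alpha<1/2$ there. For $\mathscr R_{BS}$, your term $\nabla\cdot((\mathscr R_{BS}\rho)\theta^2)$ defeats it in two ways.

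\emph{Low frequencies.} The undifferentiated velocity difference $\mathscr R_{BS}\rho$ enters, with symbol of size $|\xi|^{-1}$. Both quantities your identity controls, $\|\rho\|_{L^2_x}$ and $\|\rho\|_{H^{\alpha-1}_x}$, are comparable to $\|P_{\le1}\rho\|_{L^2_x}$ there. H\"older against $\theta^2\in L^p_x$ with $p>2$ forces $\mathscr R_{BS}\rho$ into some $L^c_x$ with $c<\infty$. A dyadic block at frequency $M\ll1$ can then have $\|\mathscr R_{BS}P_M\rho\|_{L^c_x}$ of size $M^{-2/c}\|P_M\rho\|_{L^2_x}$, which is unbounded as $M\to0$. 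Estimating the product ``in a negative space through its gradient'' discards exactly these frequencies. The bound $\|\rho\|_{L^r_x}\lesssim1$ does not rescue you, because it is not part of the energy. Interpolating it in produces terms like $\|\rho\|_{L^2_x}^{1+\lambda}$ with $\lambda<1$; after Young these leave an additive constant, so Gr\"onwall cannot give $\rho\equiv0$.

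\emph{High frequencies.} For $\alpha\in(1/2,1-1/p]$, which is nonempty once $p>2$, you need $\|(\mathscr R_{BS}\rho)\theta^2\|_{H^{2\alpha-1+\gamma}_x}$ with a positive index. That is Sobolev regularity of a product containing $\theta^2$, which lies only in $L^p_x$. This is not a technicality. Let $\Lambda$ be the multiplier with symbol $\langle\xi\rangle$, take $\rho=a\phi_1+b\phi_N$ with unit $L^2_x$ bumps at frequencies $1$ and $N$, and let $\theta^2$ be an $L^p_x$-normalized function at frequency $N$ aligned with $(\mathscr R_{BS}\phi_1)\cdot\nabla\Lambda^{2\alpha-2}\phi_N$. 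The term then has size about $abN^{2\alpha-1}$, while you only dispose of $a^2+b^2$ and $a^2+N^{2\alpha-2}b^2$. Choosing $b=1$ and $a=\kappa N^{2\alpha-1}$ with $\kappa$ small rules out any bound of the form $\varepsilon\|\rho\|_{L^2_x}^2+C\|\rho\|_{H^{\alpha-1}_x}^2$ as $N\to\infty$. Your commutator treatment of $u^1\rho$ is plausible; it is the other term that fails.

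The missing idea is the Euler structure in the \emph{homogeneous} norm $\|\rho\|_{\dot H^{-1}_x}=\|u\|_{L^2_x}$, where $u=\mathscr R_{BS}\rho=-\nabla^\perp(-\Delta)^{-1}\rho$. In that pairing your bad term vanishes identically: $\langle u\cdot\nabla\theta^2,(-\Delta)^{-1}\rho\rangle=-\langle\theta^2,u\cdot\nabla(-\Delta)^{-1}\rho\rangle=0$. What survives can be written in velocity form as $\langle u,(u\cdot\nabla)u^2\rangle$; this is the paper's cancellation $\langle u,(u^1\cdot\nabla)u\rangle=0$. That term needs only $\nabla u^2\in L^p_x$ and is absorbed by the coercivity $\|u\|_{\dot H^{1-\alpha}_x}$ from \autoref{cor:flux_-1}, which is exactly what yields $\alpha\le 1-1/p$.

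So the $\dot H^{-1}_x$ hypothesis of \cite{BaGa25} cannot simply be dropped. It must be replaced by a proof that the \emph{difference} $\rho$, which starts from $0$, lies in $L^\infty_T L^2_\omega\dot H^{-1}_x$, even though $\theta^1,\theta^2$ need not. The paper does this with the truncated Green function $G^\delta$, whose multiplier is $m_\delta(\xi)=\int_\delta^{1/\delta}e^{-s|\xi|^2}\mathd s$. It\^o's formula for $\langle G^\delta*\rho,\rho\rangle$, the bound $|\xi|^2 m_\delta(\xi)\le 1$, Gagliardo--Nirenberg, and $u^i\in L^q_x$ from \eqref{eq:properties_BS_integrability} together give $\EE\langle G^\delta*\rho_t,\rho_t\rangle\lesssim\int_0^t(1+\EE\langle G^\delta*\rho_s,\rho_s\rangle)\mathd s$ uniformly in $\delta$. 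Monotone convergence then concludes.

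Your critical-case argument is also unjustified. Weak solutions in $L^\infty_{\omega,T}(L^r_x\cap L^p_x)$ are not known to be renormalized or Lagrangian, and \eqref{viscous_bound_variant} concerns only the viscous approximations. Enlarging $p$ as in footnote~\ref{foot:3} presupposes more integrability than $L^p_x$. The paper instead uses the existence of a solution in $\mathcal U_p$ (\autoref{prop:convergence_law}) together with the Yamada--Watanabe-type argument of \cite[Lemma 4.2]{bagnara2025regularization} to assume $\theta^2\in\mathcal U_p$. This suffices because only $\nabla u^2$ enters the surviving term.
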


\begin{proof}
Let $\theta^1,\theta^2$ be two solutions belonging to $L^\infty_{\omega,T}(L^r_x\cap L^p_x)$ and defined on the same probability space. Thanks to \autoref{prop:convergence_law}, we have weak existence of another weak solution in $L^\infty_{\omega,T}(L^r_x\cap L^p_x)\cap \mathcal{U}_p$; therefore, using the Yamada--Watanabe argument from \cite[Lemma 4.2]{bagnara2025regularization}, without loss of generality we may assume $\theta^2\in L^\infty_{\omega,T}(L^r_x\cap L^p_x)\cap \mathcal{U}_p$ whenever needed in the proof of pathwise uniqueness.

Set $u^i := \mathscr{R}_{BS} \theta^i$, $\rho:=\theta^1-\theta^2$, $u:=u^1-u^2$; since the deterministic forcing $f$ cancels out in the equation for $\rho$, we may ignore it.
The argument is similar to that of \cite[Theorem 1.1]{BaGa25}, based on a Gr\"onwall-type argument for the evolution of $\mathbb{E}[\| \rho_t \|^2_{\dot{H}^{-1}_x}]$, exploiting the specific cancellations from the Euler nonlinearity.
The key point, compared to \cite{BaGa25}, is that even though $\theta^i$ do not need to belong to $L^2_{\omega,T}\dot{H}^{-1}_x$, their difference $\rho$ still does. Let us preliminarily show this fact.

For this purpose, fix an auxiliary parameter $\ell\in(r,2)$ and set
\begin{align*}
q:=\frac{2\ell}{2-\ell}>\frac{2r}{2-r} \geq 2,
\qquad
\gamma:=\frac{2}{q}\in(0,1).
\end{align*}
Recall that $\rho\in L^\infty_{\omega,t}L^2_x$ since $r<2\leq p$ by assumption.
Moreover, by \eqref{eq:properties_BS_integrability} and our choice of the parameter $q$ we have for each $i\in\{1,2\}$
\begin{equation}\label{bound_velocity}
\|u^i\|_{L^\infty_{\omega,t}L^q_x}
\lesssim
\|\theta^i\|_{L^\infty_{\omega,t}(L^r_x\cap L^p_x)}.
\end{equation}
As in
\cite[Section~2.3]{BaGa25}, for $\delta\in(0,1)$, define the following approximation of the Green function $G$ associated to $(-\Delta)^{-1}$:
\begin{align*}
G^\delta(x):=\int_\delta^{1/\delta}p_s(x)\,\dd s,
\qquad
m_\delta(\xi):=\int_\delta^{1/\delta}
e^{-s|\xi|^2}\,\dd s,
\end{align*}
where $p_s$ denotes the heat kernel on $\mathbb R^2$,
so that $m_\delta$ is the Fourier multiplier of convolution
with $G^\delta$. 
Set
\begin{align*}
e^\delta_t:=
\langle G^\delta*\rho_t,\rho_t\rangle,
\qquad
E^\delta_t:=\mathbb E[e^\delta_t].
\end{align*}
Since $0\leq |\xi|^2m_\delta(\xi)\leq1$, Plancherel's identity
gives, for every $z\in L^2_x$,
\begin{equation}\label{eq:estimate_reg}
\|\nabla G^\delta*z\|_{L^2_x}^2
\leq \langle G^\delta*z,z\rangle,
\qquad
\|D^2G^\delta*z\|_{L^2_x}
\leq \|z\|_{L^2_x}.
\end{equation}
Combining these estimates with the
Gagliardo--Nirenberg inequality, we obtain
\begin{align}\label{eq:gagliardo_niremberg}
\|\nabla G^\delta*\rho_s\|_{L^{2q/(q-2)}_x}
\lesssim_q
\|\nabla G^\delta*\rho_s\|_{L^2_x}^{1-\gamma}
\|D^2G^\delta*\rho_s\|_{L^2_x}^{\gamma}\leq
(e^\delta_s)^{(1-\gamma)/2}
\|\rho_s\|_{L^2_x}^{\gamma}.
\end{align}

Arguing as in \cite[Section~4]{BaGa25}, we apply Itô's formula to $e_\delta(t)$ and take expectations.
In this step we observe that $\rho$ has zero initial condition and forcing (as they were the same for $\theta^1$ and $\theta^2$), and
the contribution coming from the term with $C(0) : D^2 \rho$ is non positive;
overall, we get
\begin{align}\label{eq:regularized_ito_euler}
E^\delta_t
&\lesssim
\int_0^t
\mathbb E\bigl[
\langle\nabla G^\delta*\rho_s,J_s\rangle
\bigr]\,\dd s
 +
\sum_{k\in\mathbb N}\int_0^t
\mathbb E\bigl[
\langle
G^\delta*\operatorname{div}(\sigma_k\rho_s),
\operatorname{div}(\sigma_k\rho_s)
\rangle
\bigr]\,\dd s
\end{align}
where $J:=u^1\theta^1-u^2\theta^2$.
Let us control the right-hand side of \eqref{eq:regularized_ito_euler} to apply Gr\"onwall. First, by Hölder's inequality,  \eqref{eq:gagliardo_niremberg}, \eqref{bound_velocity}, and the uniform $L^2_x$ bounds on the
solutions, we have
\begin{align*}
|\langle\nabla G^\delta*\rho_s,J_s\rangle|
&\leq
\|\nabla G^\delta*\rho_s\|_{L^{2q/(q-2)}_x}
\sum_{i=1}^2
\|u^i_s\|_{L^q_x}\|\theta^i_s\|_{L^2_x}\\
&\lesssim
e_\delta(s)^{(1-\gamma)/2}
\|\rho_s\|_{L^2_x}^{\gamma}
\sum_{i=1}^2
\|u^i_s\|_{L^q_x}\|\theta^i_s\|_{L^2_x}
\lesssim 
1+e_\delta(s),
\end{align*}
where all implicit constants are independent of $\delta$.
Secondly, the bound $|\xi|^2m_\delta(\xi)\leq1$ gives
\begin{align*}
\sum_{k\in\mathbb N}
\langle
G^\delta*\operatorname{div}(\sigma_k\rho_s),
\operatorname{div}(\sigma_k\rho_s)
\rangle\leq
\sum_{k\in\mathbb N}\|\sigma_k\rho_s\|_{L^2_x}^2
=
\operatorname{Tr}C(0)\,\|\rho_s\|_{L^2_x}^2
\lesssim 1.
\end{align*}
Combining these estimates with
\eqref{eq:regularized_ito_euler}, we obtain
\begin{align*}
E^\delta_t
\lesssim \int_0^t (1+E^\delta_s)\,\dd s,
\end{align*}
and Grönwall's lemma yields $\sup_{\delta\in(0,1), t\in[0,T]}
E^\delta_t < \infty$.
Since $m_\delta(\xi)\uparrow |\xi|^{-2}$ as $\delta\rightarrow 0$
for every $k\neq0$, monotone convergence then implies the desired $\rho \in L^\infty_T L^2_\omega\dot H^{-1}_x$;
as a consequence $u\in L^\infty_T L^2_\omega  H^1_x$. 

With this additional information on $\rho$, we can now follow the lines of the proof of \cite[Theorem 1.1]{BaGa25} to get the equivalent of their Equation (4.4), and then conclude by Gr\"onwall's lemma; let us only briefly sketch one possible argument.
Notice that, due to the structure of the Euler nonlinearity, we have $\rho_t = \int_0^ t f_s \dd s + \nabla\cdot M_t$, where
\begin{align*}
    f_t:= -\nabla^\perp\cdot [\nabla\cdot (u\otimes u^1 + u^2\otimes u)],\qquad 
    M_t := \int_0^t \rho_s\, \dd W_s.
\end{align*}
Using that $u\in L^2_{\omega,T} H^1_x$ by the above and $u^1,u^2\in L^\infty_{\omega,T} L^q_x$ with $q=2r/(2-r)$ by \eqref{eq:properties_BS_integrability}, it's easy to see that $u\otimes u^1 + u^2\otimes u\in L^2_{\omega,T,x}$, thus $f\in L^2_{\omega,T} \dot H^{-2}_x$. Similarly, $\nabla\cdot M$ is a continuous $\dot H^{-1}_x$-valued martingale by \cite[Lemma 2.2]{DrGaPa25}.
With these facts in mind, as well as $\rho\in L^\infty_{\omega,t} L^2_x$, one can rigorously pass to the limit as $\delta\to 0^+$ in the It\^o formula for $E^\delta$ and take expectation to find
\begin{align*}
    \EE[\| \rho_t\|_{\dot H^{-1}_x}^2] = -2\int_0^t \EE[ \langle u_s, (u_s\cdot\nabla) u^2_s) \rangle] + \mathbb{E}\left[ \int_0^t \langle \mathrm{Tr} [QD^2G] \ast \rho_s ,\rho_s \rangle \, \dd s \right]
\end{align*}
where we exploited the cancellation $\langle u_s, (u^1_s\cdot\nabla) u_s\rangle = 0$.
The rest of the proof proceeds identically to \cite{BaGa25}, up to replacing the application of Corollary 2.5 therein with \autoref{cor:flux_-1}.
\end{proof}

We are finally ready to complete the

\begin{proof}[Proof of \autoref{thm:intro_wellposedness}]
    By \cref{thm_pathwise_uniq_euler} and \cref{thm_pathwise_uniq_order_0}, pathwise uniqueness for \eqref{eq:nonlinear_transport} holds in the class $\mathcal{E}$ for $\mathscr{R}=\mathscr{R}_{BS}$ or strictly subcritical regimes when $\mathscr{R}=\mathscr{R}_{CZ}$, and in $\mathcal{E}\cap \mathcal{U}_p$ in critical regimes for $\mathscr{R}=\mathscr{R}_{CZ}$.
    On the other hand, by \autoref{prop:convergence_law}, any limit points of the laws of $\{\theta^{\nu}\}_{\nu \in (0,1)}$ must be the law of a solution to \eqref{eq:nonlinear_transport} with values in $\mathcal{E}\cap \mathcal{U}_p$.
    Combining these facts with the classical Gyöngy--Krylov criterion \cite[Lemma 1.1]{gyongy1996existence} we can deduce that the whole family $\{\theta^\nu\}_{\nu \in (0,1)}$ converges, without the need to extract a subsequence, and that $\theta^\nu\to \theta$ in $\mathcal{E}$ in probability, without the need to pass to another probability space via Skorokhod's theorem.
    It follows that $\theta$ is adapted to the (augmented) filtration generated by $W$, since it is a limit in probability of adapted processes $\{\theta^\nu\}_{\nu \in (0,1)}$.
    Finally, Markovianity of $\theta$ follows by standard arguments, since we have established global strong existence and pathwise uniqueness for an SPDE with autonomous coefficients, whose driving noise $W$ has independent stationary increments.
\end{proof}

\section{Enstrophy dissipation and sharpness of regularization for $2$D Euler}\label{sec:enstrophy_dissipation}

The first part of this section is devoted to the proof of property \eqref{eq:an_diss_euler} from \autoref{thm_anomalous_dissipation}.
Then in \autoref{subsec:no_anomalous} we provide a general criterion to establish conservation of energy for regular enough solutions to \eqref{eq:nonlinear_transport}; this implies sharpness of anomalous regularization by a contradiction argument, completing the proof of \autoref{thm_anomalous_dissipation}.

\subsection{Girsanov transform and anomalous dissipation}\label{subsec:girsanov}

Throughout this specific section, we will assume the noise $W$ to be \emph{exactly} the Kraichnan model, namely with covariance function $C_\alpha$ as defined in \eqref{eq:isotropic.covariance}.
In this case, the Cameron--Martin space associated to $\dd W$ is given by $\mathcal{H}=L^2_T \mathcal{H}_0$, where $\mathcal{H}_0:=\mathcal{C}_\alpha^{1/2}(L^2_x)$; the latter can be explicitly identified, see for instance in \cite[Lemma 3.1]{bagnara2024anomalous}:
\begin{equation}\label{eq:identification_cameron}
    \mathcal{H}_0=\{f\in H^{1+\alpha}_x:\, {\rm div} f = 0\}, \quad \| f\|_{\mathcal{H}_0}\sim \| f\|_{H^{1+\alpha}_x}.
\end{equation}

We restrict our attention to the stochastic Euler equations, $\mathscr{R}=\mathscr{R}_{BS}$, and assume that condition \eqref{eq:parameters_anomalous_dissipation} holds.
In this case, for any initial condition $\theta_0$, strong existence and pathwise uniqueness of solutions are provided by \cite[Theorem 1.1]{BaGa25}; going through similar estimates as \cite[Proposition 4.1, equation (4.4)]{GalLuo25}, it's easy to see that  $\theta\in L^{2}_\omega C_T \dot H^{-1}_x$. 
By \autoref{thm:regularization}, for any $\delta>0$, we also have $\theta\in L^{2}_{\omega,T}H^{1-\alpha-\delta}_x$; by the properties of the Biot--Savart kernel \eqref{eq:BS_Sobolev_spaces} it then follows that
\begin{align}\label{regularity_u_girsanov}
    u=\mathscr{R}_{BS}\theta\in L^{2}_{\omega,T}H^{2-\alpha-\delta}_x\qquad \text{and }\qquad \operatorname{div}u=0.
\end{align}
Let now $\rho$ be the unique probabilistically strong solution to the linear problem
\begin{equation}\label{eq:linear_kraichnan}
    \begin{cases}
\mathd \rho + \circ\, \mathrm{d} {W} \cdot \nabla \rho =
  0, 
  \\
  \rho |_{t = 0} = \theta_0 \in L^r_x \cap L^p_x\cap \dot H^{-1}_x
  \end{cases}
\end{equation}
given e.g. by \cite[Proposition 1.1]{crippa2025zero}.
Combining this result with \cite[equation (4.4)]{GalLuo25}, one can see that $ \rho\in L^{2}_{\omega} C_T\dot H^{-1}_x\cap L^{2}_{\omega,T}H^{1-\alpha-\delta}_x$, and therefore one again
\begin{align}\label{regularity_girsanov_2}
    \hat{u}:=\mathscr{R}_{BS}\rho\in L^{2}_{\omega,T}H^{2-\alpha-\delta}_x\qquad \text{and }\qquad \operatorname{div}\hat{u}=0.
\end{align}
Overall, since $\alpha\in (0,1/2)$ and $\delta>0$ can be taken arbitrarily small, by \eqref{eq:identification_cameron}, \eqref{regularity_u_girsanov} and \eqref{regularity_girsanov_2} we deduce that
\begin{equation}\label{eq:relative_entropy_requirement}
    u,\hat{u} \in L^2_\omega \mathcal{H}=L^2_{\omega,T}\mathcal{H}_0.
\end{equation}
In what follows, we want to work on a Polish path space. 
To this end, let $\mathcal{B}_{2}$ denote the closed ball of radius $\Bar{M}$ in $L^{2}_x$ endowed with the weak topology;\footnote{We recall that $\Bar{M}$ is the square root of the right-hand side in \eqref{viscous_uniform_bound}.}
consider $C([0,T];\mathcal{B}_{2})=C_T \mathcal{B}_{2}$, seen as a measurable space equipped with its Borel $\sigma$-algebra.
By contruction, $\theta,\rho$ belong $\PP$-a.s. to $C_T \mathcal{B}_{2}$.

The difference between the SPDEs \eqref{eq:linear_kraichnan} and \eqref{eq:nonlinear_transport} is that the first is transported by $\dd W_t$, while the second one by $\dd W_t + u_t \dd t$, where the shift $u=\mathscr{R}_{BS}\theta$ belongs to $\mathcal{H}$ $\PP$-a.s. due to \eqref{eq:relative_entropy_requirement}.
As a consequence, we can apply stopping time arguments and Girsanov transform to deduce relative entropy bounds for the laws of $\theta$ and $\rho$; see the contributions \cite{LiSh01,Lehec2013,Ferrario2012} for several instances of this argument.
We recall that, given two probability measures $\mu_1,\mu_2$ on a measurable space $(A,\mathcal{A})$, the relative entropy of $\mu_1$ with respect to $\mu_2$ is defined by
    \begin{align*}
        H(\mu_1|\mu_2)=\begin{cases}
        \int_A \log(\frac{d\mu_1}{d\mu_2})d\mu_1 \quad &\mbox{if }\mu_1\ll \mu_2,\\
            +\infty\quad &\mbox{otherwise.} 
        \end{cases}
    \end{align*}
In light of \eqref{eq:relative_entropy_requirement}, \cite[Propositions 4.4 and 4.6]{GalLuo25} apply and yield the following result.

\begin{prop}\label{prop:girsanov}
    Let $\mathscr{R}=\mathscr{R}_{BS}$ and assume that \eqref{eq:parameters_anomalous_dissipation} holds; let the covariance of $W$ be given by $C_\alpha$ as in \eqref{eq:isotropic.covariance}.
    Let $\mu^{\theta}:=\mathcal{L}(\theta)$ and $\mu^{\rho}:=\mathcal{L}(\rho)$ be the laws of $\theta$ and $\rho$, solving respectively \eqref{eq:nonlinear_transport} and \eqref{eq:linear_kraichnan}, on $C([0,T];\mathcal{B}_{2})$.
    Then $\mu^{\theta}$ and $\mu^{\rho}$ are equivalent measures. Moreover, their relative entropies satisfy
    \begin{equation}\label{eq:relative_entropy_estimates}\begin{split}
        H(\mu^{\theta}|\mu^{\rho})&\leq \frac{1}{2}\mathbb{E}\left[\int_0^T\|u_s\|_{\mathcal{H}_0}^2 \mathd s\right]<+\infty,\\
        H(\mu^{\rho}|\mu^{\theta})&\leq \frac{1}{2}\mathbb{E}\left[\int_0^T\|\hat{u}_s\|_{\mathcal{H}_0}^2 \mathd s\right]<+\infty.
    \end{split}\end{equation}
\end{prop}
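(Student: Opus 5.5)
The plan is to realize both $\theta$ and $\rho$ as images of one measurable solution map applied to a shifted noise, and then to invoke a Girsanov-type relative entropy bound.

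\textbf{Step 1: the solution map.} Write the nonlinear equation \eqref{eq:nonlinear_transport} (with $f\equiv 0$) as a linear Kraichnan equation driven by the shifted noise $\tilde W_t:=W_t+\int_0^t u_s\,\dd s$. Since $u$ is divergence-free and adapted, this is legitimate, and the Stratonovich-to-It\^o correction is unchanged because the shift has finite variation. By pathwise uniqueness and strong existence for \eqref{eq:linear_kraichnan} (cf. \cite[Proposition 1.1]{crippa2025zero}), Yamada--Watanabe provides a measurable map $\Phi$ from noise paths to $C_T\mathcal{B}_2$ with $\rho=\Phi(W)$. Applied under a changed measure, the same map should give $\theta=\Phi(\tilde W)$.

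\textbf{Step 2: Girsanov for $H(\mu^\theta|\mu^\rho)$.} By \eqref{eq:relative_entropy_requirement}, $u\in L^2_\omega\mathcal{H}$. Localize with the stopping times $\tau_N:=\inf\{t:\int_0^t\|u_s\|^2_{\mathcal{H}_0}\dd s\geq N\}$. The Novikov condition holds for $u\mathbf 1_{[0,\tau_N]}$, so under $\dd\mathbb Q_N:=\mathcal E(-\int u\mathbf 1_{s\le\tau_N}\cdot \dd W)\,\dd\PP$ the process $\tilde W^{N}:=W+\int_0^{\cdot\wedge\tau_N}u\,\dd s$ is a Wiener process with covariance $C_\alpha$. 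Here the Cameron--Martin pairing is the one on $\mathcal H_0$, made concrete through the expansion $W=\sum_k\sigma_kW^k$.

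Next apply the variational formula for relative entropy, $H(\mathcal L_{\PP}(X)|\mathcal L_{\mathbb Q}(X))\le H(\PP|\mathbb Q)$, together with data processing through the map $\Phi$. This yields $H(\mathcal L(\theta^{N})|\mu^\rho)\le \frac12\EE\int_0^{T\wedge\tau_N}\|u_s\|^2_{\mathcal H_0}\dd s$, where $\theta^N$ coincides with $\theta$ up to $\tau_N$. Since $\tau_N\uparrow T$ a.s., let $N\to\infty$ and use lower semicontinuity of $H$ under weak convergence on the Polish space $C_T\mathcal B_2$ to get the first inequality in \eqref{eq:relative_entropy_estimates}.

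\textbf{Step 3: the reverse bound $H(\mu^\rho|\mu^\theta)$.} Here shift the noise driving $\rho$ by $-\hat u$, with $\hat u=\mathscr R_{BS}\rho\in L^2_\omega\mathcal H$ by \eqref{regularity_girsanov_2}. Under the new measure, $W-\int\hat u$ is a Wiener process and $\rho$ solves $\dd\rho+\hat u\cdot\nabla\rho\,\dd t+\circ\,\dd\tilde W\cdot\nabla\rho=0$, i.e.\ the stochastic Euler equation with noise $\tilde W$. Pathwise uniqueness for Euler in $L^\infty_{\omega,T}(L^r_x\cap L^p_x)$ (\autoref{thm_pathwise_uniq_euler}, or \cite{BaGa25}) and Yamada--Watanabe then identify its law with $\mu^\theta$. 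This requires $\rho$ to be in the uniqueness class, which holds by $L^q$ propagation for the linear equation. The same stopping and lower semicontinuity argument as in Step 2 gives the second bound.

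\textbf{Step 4: equivalence.} Both relative entropies are finite, so $\mu^\theta\ll\mu^\rho$ and $\mu^\rho\ll\mu^\theta$.

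\textbf{Main obstacle.} The delicate step is the localization in Step 3. Stopping changes the equation after $\tau_N$, so one needs uniqueness in law for the stopped and then continued system, meaning linear Kraichnan after $\tau_N$ glued to Euler before it. I would handle this as in \cite[Propositions 4.4, 4.6]{GalLuo25}, which is exactly the cited source: concatenate the solution maps at $\tau_N$, using the strong Markov and uniqueness properties of both equations.
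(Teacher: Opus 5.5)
Your proposal is correct and follows essentially the same route as the paper: the paper only checks $u,\hat u\in L^2_\omega\mathcal{H}$ (your starting point \eqref{eq:relative_entropy_requirement}) and then invokes \cite[Propositions 4.4 and 4.6]{GalLuo25}, whose content is the localized Girsanov, data-processing and lower-semicontinuity argument you sketch. One precision: the concatenation issue you flag for Step 3 also arises in Step 2, since under $\mathbb{Q}_N$ the process $\theta$ solves the linear equation only up to $\tau_N$; there $\theta^N$ should be taken as $\Phi(\tilde W^N)$, or equivalently as $\theta$ continued after $\tau_N$ by the linear Kraichnan flow.
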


As a consequence, we are ready to present the

\begin{proof}[Proof of \eqref{eq:an_diss_euler} from \autoref{thm_anomalous_dissipation}]
Let $a$ denote generic elements of $C_T \mathcal{B}_2$.
Fix $t\in[0,T]$ and let $\{s_n\}_{n\in\N}\subset [0,T]$ be such that $s_n\rightarrow t$ as $n\rightarrow+\infty$. Define the random variables
\begin{align*}
    X_n(a):=\|a(s_n)-a(t)\|_{L^2_x}^2.
\end{align*}
By \cite[Theorem 1.1]{DrGaPa25}, $X_n\rightarrow 0$ in measure with respect to $\mu^{\rho}$.
Since $\mu^\rho$ and $\mu^\theta$ are equivalent by \autoref{prop:girsanov}, it follows that $X_n\rightarrow 0$ in measure with respect to $\mu^{\theta}$ as well; namely,
\begin{align}\label{convergence_prob_theta}
    \|\theta_{s_n}-\theta_t\|_{L^2_x}^2\rightarrow 0\quad \mbox{in probability.}
\end{align}
On the other hand,
\begin{align*}
    \sup_{n\geq 1}\|\theta_{s_n}-\theta_t\|_{L^2_x}
    \leq 2 \sup_{t\in [0,T]} \|\theta_t\|_{L^2_x}
    \leq 2 \| \theta_0\|_{L^2_x}\quad \mathbb{P}\mbox{-a.s.}
\end{align*}
Therefore, \eqref{convergence_prob_theta} and dominated convergence imply the first claim in \eqref{eq:an_diss_euler}.

We now prove the second claim. 
Fix any $s<t$; by \cite[Proposition 2.6]{DrGaPa25}, $\rho$ is a Markov process, which combined with Remark 2.7 therein implies that
\begin{align*}
    \PP(\| \rho_t\|_{L^2_x} \leq \| \rho_s\|_{L^2_x})=1.
\end{align*}
On the other hand, by \cite[Theorem 1.1]{DrGaPa25}, $\EE[\rho_t\|_{L^2_x}]<\EE[\rho_s\|_{L^2_x}]$, so that necessarily $\PP(\| \rho_t\|_{L^2_x} < \| \rho_s\|_{L^2_x})>0$.
Since $\mu^\theta$ and $\mu^\rho$ are equivalent, this necessarily implies that
\begin{align*}
    \PP(\| \theta_t\|_{L^2_x} \leq \| \theta_s\|_{L^2_x})=1, \quad \PP(\| \theta_t\|_{L^2_x} < \| \theta_s\|_{L^2_x})>0
\end{align*}
and so we conclude that $\EE[\| \theta_t\|_{L^2_x}]<\EE[\| \theta_s\|_{L^2_x}]$.
\end{proof} 

\begin{rmk}
Consider the following family of viscous approximations of \eqref{eq:nonlinear_transport} with varying noise intensity (cf. \cite[p. 859]{le2002integration} and \cite[eq, (2.11)]{DrGaPa25} for some motivations)
\begin{align*}
    \begin{cases}
\mathd \bar\theta^\nu + \bar u^\nu \cdot \nabla \bar\theta^\nu \mathd t + \sqrt{1-\nu}\circ\, \mathrm{d} W \cdot \nabla \bar\theta^\nu =
   \frac{\nu}{2}\Delta \bar\theta^\nu \mathd t, \\
  \bar u^\nu := \mathscr{R}_{BS} \bar\theta^\nu,\\
  \bar\theta^\nu |_{t = 0} = \theta^\nu_0.
\end{cases}
\end{align*}
By readapting the Girsanov-type arguments presented above and exploiting the strong convergence in $L^2(\Omega\times \R^2)$ for the corresponding linear problems (see \cite[Lemma 2.8]{DrGaPa25}), one can easily verify that the following continuous-in-time anomalous dissipation property holds: for every $0\leq s<t\leq T$, 
\begin{equation}\label{eq:anomalous.girsanov}
    \lim_{\nu\rightarrow 0} \nu\int_s^t\mathbb{E}\left[\|\nabla\bar\theta^{\nu}_r\|_{L^2_x}^2\right]\mathd r>0.
\end{equation}
One can moreover redapt the arguments from \cite[Proposition 2.3]{DrGaPa25} to obtain equivalent characterizations of \eqref{eq:anomalous.girsanov}, e.g. at the level of the dissipation measure, and deduce a variant of \eqref{eq:anomalous.girsanov} with $\limsup_{\nu\to 0}$ inside the expectation rather than outside.
\end{rmk}

\subsection{Anomalous dissipation implies irregularity}\label{subsec:no_anomalous}
We are left to show the optimality of the anomalous regularization obtained in \autoref{thm:regularization}. This is a direct consequence of the strict dissipation of enstrophy \eqref{eq:an_diss_euler} via \autoref{prop:dissipation.implies.irregularity} below.
The argument is classical: if solutions enjoyed higher regularity, then by commutator estimates mean enstrophy should be preserved, yielding a contradiction.

Even though \autoref{thm_anomalous_dissipation} only applies to the Euler equations and $\alpha<1/2$, we give the next statement in higher generality.
For $\gamma\in (0,1)$ and $p\in [1,+\infty]$, recall the deterministic function spaces $E^{\gamma,p}_{[s,t]}$ defined in \autoref{subsec:besov}.
The next conditional statement only requires regularity of the covariance associated to the noise $W$, which is much weaker than \autoref{ass:noise} (since \eqref{eq:local.expansion.noise} also encodes some form of non-degeneracy and local isotropy).

\begin{lem}\label{prop:dissipation.implies.irregularity}
Let $\alpha\in (0,1)$ and let $W$ be a noise whose covariance $C$ satisfies \eqref{eq:properties.C} and belongs to $C^{2\alpha}_x$. Let $\theta$ be a weak solution to the SPDE \eqref{eq:nonlinear_transport} with $f\equiv 0$ and $\theta\in L^\infty_{\omega,T} (L^r_x\cap L^p_x)$.
Suppose that $\mathscr{R},\alpha,r,p,\beta$ satisfy \autoref{ass:exponent.beta} with $\beta=1-\alpha$, namely that either of the following holds: 
\begin{itemize}
    \item $\mathscr{R} = \mathscr{R}_{BS}$, $r\in (1,2)$, $p\in [2,\infty)$ and $(\alpha,p)$ are such that $\beta=1-\alpha$ in \eqref{eq:definition.regularity};
    \item $\mathscr{R}=\mathscr{R}_{CZ}$, $r\in (1,2]$, $p\in [2,\infty)$ and $(\alpha,p)$ are such that $\beta=1-\alpha$ in \eqref{eq:definition.regularity_CZ}.
\end{itemize}
Let $0 \leq t_0 < t_1 \leq T$ and further assume that 
\begin{equation}\label{eq:condition_no_anomalous1}
    \PP\left( \theta\in E^{1-\alpha,2}_{[t_0,t_1]}\right)=1.
\end{equation}
Then 
\begin{equation}\label{eq:conclusion_no_anomalous1}
    \PP\left( \|\theta_{t}\|_{L^2_x}=\|\theta_{t_0}\|_{L^2_x} \text{ for every } t\in [t_0,t_1]\right)=1.
\end{equation}
\end{lem}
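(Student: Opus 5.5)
We follow the Constantin--E--Titi strategy, adapted to the stochastic setting as in \cite[Section 3]{DrGaPa25}, working pathwise. Fix a symmetric mollifier $\chi^\eps$ and set $\theta^\eps:=\chi^\eps\ast\theta$. Testing the weak formulation \eqref{weak_formulation} with translates of $\chi^\eps$ gives an Itô equation for $\theta^\eps$. Applying Itô's formula to $\|\theta^\eps_t\|_{L^2_x}^2$ on $[t_0,t]$, the martingale term $\sum_k\int\langle\theta^\eps,(\sigma_k\cdot\nabla\theta)^\eps\rangle\,\dd W^k$ and the Itô correction combine. After using $\nabla\cdot\sigma_k=0$ as in the proof of \autoref{lem:gron_increments}, we obtain the pathwise identity
\begin{align*}
\|\theta^\eps_t\|_{L^2_x}^2-\|\theta^\eps_{t_0}\|_{L^2_x}^2
= -2\int_{t_0}^t\langle (u\cdot\nabla\theta)^\eps,\theta^\eps\rangle\,\dd s + \int_{t_0}^t R^\eps_{\rm noise}(s)\,\dd s + N^\eps_t,
\end{align*}
where $N^\eps$ is a local martingale built from commutators $[\sigma_k\cdot\nabla,\chi^\eps\ast]\theta$. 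The noise remainder $R^\eps_{\rm noise}$ is a double commutator, expressible via $Q=C(0)-C$ as $\int\!\!\int Q(x-y):\big(D^2\chi^\eps\ast\chi^\eps\big)(x-y)\,|\theta(x)-\theta(y)|^2\,\dd x\,\dd y$ (up to constants), exactly as in the computation of $I_1^{\nu,\eps}$. We then show that the three terms vanish as $\eps\to0$, almost surely and uniformly in $t\in[t_0,t_1]$.

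For the noise term, $C\in C^{2\alpha}_x$ is even, so $|Q(z)|\lesssim|z|^{2\alpha}$. Rescaling gives
\begin{align*}
\int_{t_0}^{t_1}|R^\eps_{\rm noise}|\,\dd s\lesssim\int_{\{|z|\le2\}}\frac{\|\delta_{\eps z}\theta\|_{L^2_{[t_0,t_1]}L^2_x}^2}{\eps^{2(1-\alpha)}}\,\dd z\lesssim\sup_{|h|\le2\eps}\frac{\|\delta_h\theta\|_{L^2_{[t_0,t_1]}L^2_x}^2}{|h|^{2(1-\alpha)}}.
\end{align*}
By \eqref{eq:characterization_sharp_endpoint} and \eqref{eq:condition_no_anomalous1}, this tends to $0$ almost surely. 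The same bound controls the quadratic variation of $N^\eps$, with integrand of the form $\sum_k|\langle\theta^\eps,[\sigma_k\cdot\nabla,\chi^\eps]\theta\rangle|^2$. The commutator is $O(\eps^{\alpha-1}\|\delta\theta\|)$ since the $\sigma_k$ are $C^\alpha$ in an $\ell^2$ sense, which is the covariance regularity. Hence $\langle N^\eps\rangle_{t_1}\to0$ in probability, and BDG together with a localization by stopping times gives $\sup_t|N^\eps_t|\to0$ in probability along a subsequence a.s. The terms $\|\theta^\eps_t\|_{L^2_x}^2$ converge to $\|\theta_t\|_{L^2_x}^2$ for every $t$, because $\theta_t\in L^2_x$ pointwise in time (weak continuity plus the $L^\infty_T L^2_x$ bound).

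For the nonlinear term we use \autoref{lem:trilinear} in the form $\langle(u\cdot\nabla\theta)^\eps,\theta^\eps\rangle=\langle u\cdot\nabla\theta^\eps,\theta^\eps\rangle-\langle[u\cdot\nabla,\chi^\eps\ast]\theta,\theta^\eps\rangle$. The first piece vanishes since $\nabla\cdot u=0$, and the commutator equals $\eps^{-1}$ times a trilinear expression in $\delta_{\eps z}u$ and $\delta_{\eps z}\theta$, as in \eqref{eq:commutator}. This is the \textbf{main obstacle}: we must show $\eps^{-1}\int_{t_0}^{t_1}|\mathcal{T}_{u,\theta}(s,\eps)|\,\dd s\to0$ using only the $E^{1-\alpha,2}$ little-$o$ regularity, not any expectation bound. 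We would redo case by case the pathwise estimates of \autoref{sec:computation}, keeping $\omega$ fixed. In Case 2 ($p\ge3$), $u\in\dot W^{1,3}_x$ and $\|\delta\theta\|_{L^3_x}\le2\|\theta\|_{L^3_x}$ only give $O(1)$, so instead we interpolate $\|\delta_{\eps z}\theta\|_{L^3}$ between $L^2$, where it is $o(\eps^{1-\alpha})$, and $L^p$, where it is bounded, to obtain a positive gain; this should need to be combined with the splitting \eqref{eq_viscous_splitting} of $\theta$ into small $L^p$ plus bounded parts. In Cases 1, 4 and 6 we mimic the Hölder–Sobolev–interpolation chain, replacing \autoref{lem:final_besov} by its deterministic analogue: $\|\delta_{\eps z}\theta\|_{L^2_TH^{\delta(1-\alpha)}}\lesssim o(\eps^{(1-\delta)(1-\alpha)})$, obtained by interpolating the $o$-Besov bound for increments.

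The exponent bookkeeping then gives a total power $\eps^{\ge1}$ times $o(1)$, which is exactly where $\beta=1-\alpha$ in \autoref{ass:exponent.beta} is used. In the critical endpoints, a small/large splitting of $u$ absorbs the borderline factor. Letting $\eps\to0$ in the energy identity yields $\|\theta_t\|_{L^2_x}=\|\theta_{t_0}\|_{L^2_x}$ a.s. for each $t$, and weak continuity plus density of rational times upgrades this to all $t\in[t_0,t_1]$ simultaneously, which is \eqref{eq:conclusion_no_anomalous1}.
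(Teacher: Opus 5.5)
Your proposal follows the paper's proof essentially step by step. You use a mollified energy balance: $\|\theta^\eps_t\|_{L^2_x}^2$ in place of the paper's $\langle\theta^\eps_t,\theta_t\rangle$, which only replaces $\chi$ by $\chi\ast\chi$. You rewrite the It\^o correction through $Q$ and bound it using $|Q(z)|\lesssim|z|^{2\alpha}$. You control the quadratic variation of the martingale by $\sup_{0<|h|\lesssim\eps}|h|^{-2(1-\alpha)}\|\delta_h\theta\|^2_{L^2([t_0,t_1]\times\R^2)}$. Finally, you treat the nonlinear term by pathwise versions of the estimates of \autoref{sec:computation}, with the deterministic analogue of \autoref{lem:final_besov}. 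Your commutator/Cauchy--Schwarz bound for the quadratic variation is an equally good substitute for the paper's \autoref{lem:trilinear} plus Minkowski; it uses $\nabla\cdot\sigma_k=0$ to move increments of $\theta$ into the commutator, and $\sum_k|\delta_h\sigma_k|^2=2\,\mathrm{Tr}\,Q(h)$. Deriving $|Q(z)|\lesssim|z|^{2\alpha}$ from $C\in C^{2\alpha}_x$ and evenness is the justification that matches the hypotheses of the statement. Your choice of Cases 1, 4 and 6 is also right: together with Case 2, these are exactly the regimes with $\beta=1-\alpha$.

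Two points need correcting.

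\textbf{Case 2 ($\mathscr{R}=\mathscr{R}_{BS}$, $p\geq 3$).} The direct estimate does not leave an $O(1)$ obstruction. After $\eps^{-1}\|\delta_{\eps z}u_s\|_{L^3_x}\le|z|\,\|\nabla u_s\|_{L^3_x}$, what remains besides the finite factor $\|\nabla u\|_{L^3([t_0,t_1]\times\R^2)}$ is $\sup_{|z|\le 1}\|\delta_{\eps z}\theta\|^2_{L^3([t_0,t_1]\times\R^2)}$. This tends to $0$ for every $\omega$ by continuity of translations in $L^3$, which is how the paper closes this case. Your interpolation between $L^2_x$ and $L^p_x$ also works for $p>3$, but it gives no gain at $p=3$. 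The fallback you propose there is not legitimate: the splitting \eqref{eq_viscous_splitting} comes from linearity and uniqueness for the viscous approximations $\theta^\nu$. Here $\theta$ is an arbitrary weak solution in $L^\infty_{\omega,T}(L^r_x\cap L^p_x)$, not assumed to lie in $\mathcal{U}_p$.

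For the same reason, drop the small/large splitting of $u$ at critical endpoints. It is also unnecessary: every bound there carries the factor $\sup_{0<|h|\le\eps}|h|^{-2(1-\alpha)}\|\delta_h\theta\|^2_{L^2([t_0,t_1]\times\R^2)}$. By \eqref{eq:condition_no_anomalous1} and \eqref{eq:characterization_sharp_endpoint} this factor tends to $0$ almost surely, so a total power $\eps^{1}$ already suffices.

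\textbf{Upgrade to all times.} Extending the conclusion to all $t\in[t_0,t_1]$ should use the uniform-in-$t$ convergence of the right-hand side along a subsequence, which you already have. Density of rational times plus weak continuity only gives $\|\theta_t\|_{L^2_x}\le\|\theta_{t_0}\|_{L^2_x}$, by lower semicontinuity.
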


\begin{proof}
     Let $\chi\in C^\infty_c(\R^2)$ be a symmetric probability density supported in the annulus $B_1\setminus B_{1/2}$, and let $\{\chi^\eps\}_{\eps\in (0,1)}$ denote the associated family of standard mollifiers.

    Arguing as in \cite[Proposition 3.10]{DrGaPa25}, we can apply It\^o's formula to the process
    $\langle\theta^\eps_t,\theta_t\rangle$ and use that $Q$ is divergence-free to obtain that $\PP$-a.s.
    \begin{equation}\label{eq:approx_energy_balance}
        \langle\theta^\eps_{t},\theta_{t}\rangle - \langle\theta^\eps_{t_0},\theta_{t_0}\rangle
        = 2 \int_{t_0}^t \langle u_{s}\cdot\nabla \theta^\eps_{s}, \theta_{s}\rangle \dd {s}
        + 2 M^\eps_{t} + \int_{t_0}^t \mathcal{A}^\eps_{s} \dd {s},\quad\forall t\in [t_0,t_1];
    \end{equation}
    in the above, the martingale $M^\eps$ and the process $\mathcal{A}^\eps$ are given by
    \begin{equation}\label{eq:no_anomalous_terms}
        M^\eps_{t} =\sum_{k\in\N} \int_{t_0}^{t} \langle \sigma_k\cdot\nabla\theta^\eps_{s}, \theta_{s}\rangle \dd W^k_{s}, \quad
        \mathcal{A}^\eps_{s} = -\frac{1}{2\eps^2} \int_{\R^2} Q(\eps z):D^2 \chi(z) \|\delta_{\eps z}\theta_{s}\|_{L^2_x}^2 \dd z,
    \end{equation}
    where we used the series representation \eqref{eq:noise_series_expansion}.
    Set $I^\eps_s:= \langle u_{s}\cdot\nabla \theta^\eps_{s}, \theta_{s}\rangle$; in view of \eqref{eq:approx_energy_balance} and properties of mollifiers, in order to deduce \eqref{eq:conclusion_no_anomalous1} it suffices to show that
    \begin{equation}\label{eq:no_anomalous_goal}
        \lim_{\eps\to 0} \int_{t_0}^{t_1} |I^\eps_s| \mathd s=0, \quad
        \lim_{\eps\to 0} \int_{t_0}^{t_1} |\mathcal{A}^\eps_{s}| \dd {s}=0,\quad
        \lim_{\eps\to 0} \sup_{t\in [t_0,t_1]} |M_t^\eps|=0,
    \end{equation}
    where all limits above must be understood in probability.

    For notational convenience, throughout this proof let us adopt the notation
    \begin{align*}
        \trinorm{\theta}_\eps:= \sup_{0<|z|\leq \eps} \frac{1}{|z|^{1-\alpha}} \|\delta_z \theta\|_{L^2([t_0,t_1]\times \R^d)}
    \end{align*}
    so that assumption \eqref{eq:condition_no_anomalous1} amounts to the $\PP$-a.s. convergence $\trinorm{\theta}_\eps\to 0$ as $\eps\to 0^+$ (see \eqref{eq:characterization_sharp_endpoint}).

    Concerning $\mathcal{A}^\eps$, by \eqref{eq:no_anomalous_terms} and \eqref{eq:local.expansion.noise} we have
    \begin{align*}
        \limsup_{\eps\to 0^+} \int_{t_0}^{t_1} |\mathcal{A}^{\eps}_{s}| \mathd{s}
        \lesssim \limsup_{\eps\to 0^+} \frac{1}{\eps^{2-2\alpha}} \int_{t_0}^{t_1} \int_{B_1}\|\delta_{\eps z}\theta_s\|_{L^2_x}^2 \mathd z \mathd s
        \lesssim \trinorm{\theta}_\eps^2
        \rightarrow 0.
    \end{align*}

    Since $M^\eps$ are continuous martingales with $M^\eps_{t_0}=0$, denoting by $[M^\eps]$ their quadratic variations, in order to check \eqref{eq:no_anomalous_goal} for this term it suffices to show that $[M^\eps]_{t_1}\to 0$ in probability as $\eps\to 0^+$.

    A direct computation yields
    \begin{align*}
        [M^\eps]_{t_1}
        & =\int_{t_0}^{t_1} \sum_{k\in\N} |\langle \sigma_k\cdot\nabla \theta^\eps_{s}, \theta_{s}\rangle|^2 \dd {s}\\
        & = \int_{t_0}^{t_1} \frac{1}{16\eps^2} \sum_{k\in\N} \left|\int_{\R^2\times \R^2} \nabla\chi(z)\cdot \delta_{\eps z} \sigma_k(x) |\delta_{\eps z}\theta_{s}(x)|^2 \dd x\dd z\right|^2 \dd {s}
        =: \frac{1}{16\eps^2}\int_{t_0}^{t_1} J^\eps_{s} \dd {s},
    \end{align*}
    where we applied \autoref{lem:trilinear}. 
    By Minkowski's integral inequality,
    \begin{align*}
        \sum_{k\in\N} \| f_k\|_{L^1(\dd x,\dd z)}^2
        & =\Big\| \| f_k\|_{L^1(\dd x,\dd z)} \Big\|_{\ell^2(k)}^2\\
        & \leq \Big\| \|f_k(x,z)\|_{\ell^2(k)} \Big\|^2_{L^1(\dd x,\dd z)}
        = \Bigg[ \int_{\R^2\times \R^2} \bigg( \sum_{k\in\N} |f_k(x,z)|^2 \bigg)^{1/2} \dd x \dd z \Bigg]^2.
    \end{align*}
    Applying this to $J^\eps_{s}$ yields
    \begin{align*}
        J^\eps_{s}
        & \leq \Bigg[ \int_{\R^2\times \R^2} |\nabla\chi(z)| |\delta_{\eps z}\theta_{s}(x)|^2  \bigg( \sum_{k\in\N} |\delta_{\eps z}\sigma_k(x)|^2 \bigg)^{1/2} \dd x \dd z \Bigg]^2\\
        & = 2 \Bigg[ \int_{\R^2\times \R^2} |\nabla\chi(z)| |\delta_{\eps z}\theta_{s}(x)|^2  \big( {\rm Tr} Q(\eps z)\big)^{1/2} \dd x \dd z \Bigg]^2\\
        & = 2 \Bigg[ \int_{\R^2} |\nabla\chi(z)| \|\delta_{\eps z}\theta_{s}\|_{L^2_x}^2  \big( {\rm Tr} Q(\eps z)\big)^{1/2} \dd z \Bigg]^2\\
        & \lesssim \int_{\R^2} |\nabla\chi(z)| \|\delta_{\eps z}\theta_{s}\|_{L^2_x}^4  \, {\rm Tr} Q(\eps z) \dd z
    \end{align*}
    wehre in the second passage we used the identity \eqref{eq:covariance_series_expansion} and the definition of $Q$, while in the last step we applied Jensen's inequality, which is justified since $\nabla\chi\in L^1_x$.
    Using the assumption \eqref{eq:local.expansion.noise}, we infer that for all $\eps>0$ sufficiently small
    \begin{align*}
        [M^\eps]_{t_1}
        \lesssim \frac{1}{\eps^2} \int_{t_0}^{t_1} \int_{B_1} \|\delta_{\eps z}\theta_{s}\|_{L^2_x}^4  \, {\rm Tr} Q(\eps z) \dd z
        \lesssim \frac{\eps^{2(1-\alpha)} \eps^{2\alpha} }{\eps^2}\| \theta\|_{L^\infty_T L^2_x}^2 \trinorm{\theta}_\eps^2
        \lesssim \trinorm{\theta}_\eps^2\to 0
    \end{align*}
    which concludes the verification of \eqref{eq:no_anomalous_goal} for this term as well.

    It remains to treat $I^\eps$. Note that this is the usual DiPerna--Lions commutator term, which by \autoref{lem:trilinear} satisfies
    \begin{align*}
        \int_{t_0}^{t_1} |I^\eps_s| \mathd s
        & \lesssim \frac{1}{\eps} \int_{t_0}^{t_1} \int_{\R^2\times \R^2}  |\nabla\chi(z)| |\delta_{\eps z} u_s(x)| |\delta_{\eps z}\theta_s(x)|^2 \mathd x \mathd z \mathd s\\
        & \lesssim \frac{1}{\eps} \int_{t_0}^{t_1} \int_{B_1} \int_{\R^2} |\delta_{\eps z} u_s(x)| |\delta_{\eps z}\theta_s(x)|^2 \mathd x \mathd z \mathd s;
    \end{align*}
    the last term above has almost the same structure as $\mathcal{T}_{u^\nu,\theta^\nu}(s,\eps)$ previously estimated in \autoref{sec:computation}.
    In particular, obtaining uniform estimates for this term is very similar to verifying \eqref{eq:key_bound_regularization} with $\beta=1-\alpha$ and/or verifying \eqref{eq:anomalous_integrability_goal}. A couple observations are in order:
    \begin{itemize}
        \item[i)] Compared to the setting of the previous sections, we do not need to enforce any decomposition for either $u$ or $\theta$ in order to handle critical parameters: as soon as we obtain an estimate which contains a positive power of $\trinorm{\theta}_\eps$, this term $\PP$-a.s. converges to $0$ by assumption.
        \item[ii)] Applying \autoref{lem:final_besov} to deterministic functions, for any $\gamma\in (0,1)$, we similarly obtain the estimate
        \begin{align*}
            \int_{t_0}^{t_1} \| \delta_{\eps z} \theta_s\|_{H^{\gamma(1-\alpha)}_x}^2 \mathd s \lesssim \eps^{2(1-\gamma)(1-\alpha)} \trinorm{\theta}_\eps^2, \quad\forall\eps\in (0,1).
        \end{align*}
        \item[iii)] Many estimates performed in \autoref{sec:computation} are purely analytic, based on Sobolev embeddings and H\"older inequalities, and only use expectation in the end to make relevant norms like $\seminorm{\theta}_{\beta,\varrho_0}$ appear.
    \end{itemize}
    With these facts in mind, the verification that $\int_{t_0}^{t_1} |I^\eps_s| \mathd s\to 0$ in probability now follows in the associated ranges of parameters by going through the same computations as in Cases 1, 4 and 5 from \autoref{sec:computation}.
    The only partial exception is the case $\mathscr{R}=\mathscr{R}_{BS}$ and $p\geq 3$; here we can use the more direct estimate
    \begin{align*}
        \int_{t_0}^{t_2} |I^\eps_s| \mathd s
        & \leq \frac{1}{\eps} \int_{B_1} \| \delta_{\eps z} u\|_{L^3_{[t_0,t_1]} L^3_x} \| \delta_{\eps z} \theta\|_{L^3_{[t_0,t_1]} L^3_x}^2 \mathd z\\
        & \lesssim \| \nabla u\|_{L^3_{[t_0,t_1]} L^3_x} \int_{B_1} \| \delta_{\eps z} \theta\|_{L^3_{[t_0,t_1]} L^3_x}^2 \mathd z
        \lesssim \sup_{|z|\leq 1} \| \delta_{\eps z} \theta\|_{L^3_{[t_0,t_1]} L^3_x}^2
    \end{align*}
    where the last quantity goes to $0$ $\PP$-a.s. as $\eps\to 0^+$ due to continuity of translations in $L^3_T L^3_x$.    
\end{proof}

\begin{rmk}
    Going through similar computations, one can obtain the following variant of \autoref{prop:dissipation.implies.irregularity}: if condition \eqref{eq:condition_no_anomalous1} is replaced by
    \begin{align*}
        \lim_{|h|\to 0} \frac{1}{|h|^{2(1-\alpha)}} \int_{t_0}^{t_1} \EE[\| \delta_h \theta_s\|_{L^2_x}^2] \dd s=0,
    \end{align*}
    then instead of \eqref{eq:conclusion_no_anomalous1} one can conclude that $\EE[\|\theta_{t}\|_{L^2}^2]=\EE[\|\theta_{t_0}\|_{L^2}^2]$ for every $t\in [t_0,t_1]$.
\end{rmk}

\begin{proof}[Proof of \autoref{thm_anomalous_dissipation}]
    The proof of property \eqref{eq:an_diss_euler} was presented in \autoref{subsec:girsanov}.
    Concerning the last claim, if $\theta$ belonged to $L^2_\omega L^2_{[t_0,t_1]} B^{1-\alpha}_{2,q}$, then in particular by the facts recalled in \autoref{subsec:besov} one would have
    \begin{align*}
        \PP\big(\theta \in E^{1-\alpha,2}_{[t_0,t_1]}\big)
        \geq \PP\big(\theta \in L^2_{[t_0,t_1]} B^{1-\alpha}_{2,q}\big)=1
    \end{align*}
    which combined with \autoref{prop:dissipation.implies.irregularity} would contradict \eqref{eq:an_diss_euler}.
\end{proof}

\appendix
\section{Technical lemmas}\label{Appendix:approx_kernels}

\subsection{Flux functions}

As noticed in \cite{GaGrMa24}, one way to deduce anomalous regularization estimates is to keep track of the evolution of negative Sobolev norms $\dot H^{-s}_x$ under the linear SPDE, which requires to study the derive coercive bounds on the associated flux functions $\dot{F}_s$. Once these are available, they can be used to infer pathwise uniqueness for nonlinear equations. Here we combine the estimates from \cite{GaGrMa24} with the argument from \cite{crippa2025zero} to extend similar bounds to the fluxes $F_s$ associated to inhomogeneous Sobolev norms $H^{-s}_x$. This result is crucially applied in \autoref{thm_pathwise_uniq_order_0} in the special case $d=2$, $s=1-\alpha$; for the sake of broader applications, we state it in higher generality.

Let $\varphi\in C^\infty_c(\R_+;\R_+)$ be a decreasing function such that $\varphi\equiv 1$ on $[0,1]$, $\varphi\equiv 0$ on $[2,+\infty)$ and set $\varphi_n(r):=\varphi(r/n)$.

\begin{lem}\label{lem:flux_functions}
    Let $\alpha\in (0,1)$ and let $C$ satisfy \autoref{ass:noise2} with constants $c_1'$, $c_2'$. Let $s,\alpha\in (0,1)$ such that $s+\alpha\leq 1$ and consider
\begin{align*}
    & \dot F^n_s(\xi):= (2\pi)^{-d/2} \int_{\R^d}  \xi \cdot \hat{C}(\xi-\eta)\xi\, \Big( \frac{\varphi_n(|\eta|)}{|\eta|^{2s}}-\frac{\varphi_n(|\xi|)}{|\xi|^{2s}}\Big) \mathd \eta,\\
    & F^n_s(\xi):= (2\pi)^{-d/2} \int_{\R^d} \xi \cdot \hat{C}(\xi-\eta)\xi\, \Big( \frac{\varphi_n(|\eta|)}{\langle\eta\rangle^{2s}}-\frac{\varphi_n(|\xi|)}{\langle\xi\rangle^{2s}}\Big) \mathd \eta.
    \end{align*}
    Define similarly $F_s(\xi)$, $\dot F_s(\xi)$ by replacing $\varphi_n$ with the constant $1$ in the previous lines.
    Then there exists a constant $K=K(d,s,\alpha,c_1',c_2')$ such that
    \begin{equation}\label{eq:pointwise_bound_approx}
        |F_s(\xi)| + |\dot F_s(\xi)| + \sup_n |\dot F^n_s(\xi)| +  \sup_n | F^n_s(\xi)|\leq K \langle \xi\rangle^{2(1-\alpha-s)} \quad \forall \xi\in\R^d.
    \end{equation}
    Moreover, there exist constants $K_1,K_2>0$, depending on the same parameters as $K$, such that
    \begin{equation}\label{eq:good_bound}
        F_s(\xi) \leq -K_1 \langle \xi\rangle^{2(1-\alpha-s)} + K_2 \langle \xi\rangle^{-2s}\quad \forall \xi\in\R^d.
    \end{equation}
\end{lem}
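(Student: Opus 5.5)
The plan is to split $\hat C=c_1'\hat C_\alpha+\hat R$ according to \autoref{ass:noise2}, treat the two pieces separately, and then reduce the inhomogeneous fluxes to the homogeneous ones of \cite{GaGrMa24}, following the strategy of \cite{crippa2025zero}.

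\emph{Step 1: the remainder $\hat R$.} Here the bound is crude. Since $|\varphi_n(|\eta|)\langle\eta\rangle^{-2s}|\le 1$ and $\xi\cdot\hat R(\xi-\eta)\xi\le |\xi|^2|\hat R(\xi-\eta)|$, we get
\[
|F^{n}_{s,R}(\xi)|\lesssim |\xi|^2\int|\hat R|\,\mathd\eta\lesssim c_2'|\xi|^2 .
\]
This is not of order $\langle\xi\rangle^{2(1-\alpha-s)}$, so I will not bound the two terms separately. Instead I use the difference structure. I Taylor expand $\eta\mapsto \varphi_n(|\eta|)\langle\eta\rangle^{-2s}$ around $\xi$, using $\eta=\xi-\zeta$, and integrate against $\hat R(\zeta)$. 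The first-order term vanishes because $\hat R$ is even, and $\xi\cdot\hat R(\zeta)\xi = (\xi-\zeta)\cdot \hat R(\zeta)(\xi-\zeta)$ by \eqref{eq:cancellation_fourier_covariance}. The second-order term is controlled by $\int|\zeta|^2|\hat R|\le c_2'$ times $\sup|D^2(\varphi_n\langle\cdot\rangle^{-2s})|$, uniformly in $n$.

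I expect this to give $|F_{s,R}^n(\xi)|\lesssim c_2'\langle\xi\rangle^{-2s}$. A cleaner route is to write $|\xi|^2$ against the second difference of $\langle\cdot\rangle^{-2s}$, which is $O(\langle\xi\rangle^{-2s-2})$ in the region $|\zeta|\le|\xi|/2$. In the complementary region I would use crude bounds together with $\int|\zeta|^2|\hat R|<\infty$. This produces the $K_2\langle\xi\rangle^{-2s}$ error term in \eqref{eq:good_bound}, and it is also compatible with \eqref{eq:pointwise_bound_approx}. The same argument applies to $\dot F$, where $|\eta|^{-2s}$ is locally integrable because $2s<d$.

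\emph{Step 2: the Kraichnan part, pointwise bounds.} For $\hat C_\alpha=\langle\cdot\rangle^{-d-2\alpha}P^\perp$, I follow \cite{GaGrMa24}. By rescaling $\eta=|\xi|\tilde\eta$, the homogeneous version with symbol $|\cdot|^{-d-2\alpha}$ is exactly $(d,s,\alpha)$-homogeneous of degree $2-2\alpha-2s$. It equals $-\kappa|\xi|^{2(1-\alpha-s)}$ with $\kappa>0$, and the sign follows from the Mellin transform computation there, which requires $s+\alpha\le1$ (this is where that hypothesis enters). The Japanese bracket differs from $|\cdot|^{-d-2\alpha}$ by a kernel that is integrable at the origin with finite second moment, up to the singularity near zero. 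So $\langle\cdot\rangle^{-d-2\alpha}-|\cdot|^{-d-2\alpha}$ is handled by the Step 1 argument, except near $\zeta=0$, where the homogeneous kernel already dominates.

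To pass from $|\eta|^{-2s}$ to $\langle\eta\rangle^{-2s}$, I follow \cite{crippa2025zero}. For $|\xi|\ge1$, the difference $|\eta|^{-2s}-\langle\eta\rangle^{-2s}$ is smooth away from the origin and is $O(|\eta|^{-2s-2})$, so its flux contribution is of lower order, $O(\langle\xi\rangle^{-2s}+\langle\xi\rangle^{2-2\alpha-2s-2})$. For $|\xi|\le1$ everything is bounded, which is absorbed into $K_2$. The cutoffs $\varphi_n$ are handled uniformly because $\varphi$ is decreasing: $\varphi_n(|\eta|)/\langle\eta\rangle^{2s}$ is a radially decreasing function whose derivatives are bounded by those of $\langle\eta\rangle^{-2s}$ uniformly in $n$. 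This gives \eqref{eq:pointwise_bound_approx}.

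\emph{Step 3: the sign bound \eqref{eq:good_bound}.} This combines Step 2's leading term $-c_1'\kappa|\xi|^{2(1-\alpha-s)}$, which equals $-K_1\langle\xi\rangle^{2(1-\alpha-s)}$ up to $O(1)$ for $|\xi|\ge1$, with all the errors of order $\langle\xi\rangle^{-2s}$. For $|\xi|\le1$, I bound $F_s$ crudely and absorb it into $K_2$.

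The main obstacle is Step 2. Controlling the comparison between the inhomogeneous and homogeneous weights, and between the two kernels, near the singular set $\eta=0$ and $\eta=\xi$ is delicate, in particular proving that these errors are genuinely $O(\langle\xi\rangle^{-2s})$ and not merely $O(1)$. There I would split the $\eta$-integral into $|\eta|<|\xi|/2$, $|\eta-\xi|<|\xi|/2$, and the remaining region, and estimate each piece directly.
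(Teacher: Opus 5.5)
Your proposal is correct and is essentially the paper's argument. You split $\hat C=c_1'\hat C_\alpha+\hat R$, bound the $\hat R$-contribution by $c_2'\langle\xi\rangle^{-2s}$ over the regions $|\eta-\xi|\le|\xi|/2$, $|\eta|\le|\xi|/2$ and their complement, and obtain the Kraichnan part from the sign computation of \cite{GaGrMa24} plus a comparison of the weights $|\eta|^{-2s}$ and $\langle\eta\rangle^{-2s}$; the only difference is that you re-derive by hand (exact homogeneous kernel, kernel comparison, direct weight comparison) what the paper imports as black boxes from \cite[Proposition 4.2]{GaGrMa24} and \cite[Lemma A.2]{crippa2025zero}.

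One point needs to be made explicit: in the region $|\eta|\le|\xi|/2$ crude bounds do not suffice, since for $\dot F^n_s$ nothing controls $\hat R(\xi-\eta)$ pointwise against the singular weight $|\eta|^{-2s}$. There you must use the identity $\xi\cdot\hat R(\xi-\eta)\xi=\eta\cdot\hat R(\xi-\eta)\eta$ that you already wrote down, together with $|\eta|^{2-2s}\le|\xi|^{2-2s}\lesssim|\xi|^{-2s}|\xi-\eta|^2$, exactly as the paper does.
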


\begin{proof}
    By \autoref{ass:noise2}, one has $\hat C=c_1' \hat C_\alpha + \hat R$ with $\hat C_\alpha$ given by \eqref{eq:isotropic.covariance}. We divide the proof in two steps.

    \textit{Step 1.} First assume $c_2'=0$, namely $R\equiv 0$. By homogeneity we can further assume $c_1'=0$ and $C=C_\alpha$; $\dot F_s^n$ is then given by
    \begin{equation*}
        \dot F^n_s(\xi):= (2\pi)^{-d/2} \int_{\R^d} \frac{1}{\langle \xi-\eta\rangle^{d+2\alpha}} |P^\perp_{\xi-\eta}\xi|^2 \Big( \frac{\varphi_n(|\eta|)}{|\eta|^{2s}}-\frac{\varphi_n(|\xi|)}{|\xi|^{2s}}\Big) \mathd \eta,
    \end{equation*}
    similarly for $F_s^n$.

    For $|\xi|\leq 1$, estimate \eqref{eq:pointwise_bound_approx} follows from the bound $|P^\perp_{\xi-\eta}\xi|^2\leq |\xi|^2 \leq 1$, that fact that $|\eta|^{-2s}, \langle \eta\rangle^{-2s}\in L^1_x+L^\infty_x$ and Young's convolution inequalities, see the proof of \cite[Proposition 3.2]{GaGrMa24}.

    Assume now $|\xi|\geq 1$. Following \cite[Proposition 3.2]{GaGrMa24}, for given $\xi$, one can define
    \begin{equation}\label{eq:domains}\begin{split}
         & D_1:=\{\eta\in \R^d: |\eta-\xi|\wedge |\eta|>|\xi|/2\},\\
         & D_2:=\{\eta\in \R^d: |\eta-\xi|> |\xi|/2, \ |\eta|\leq |\xi|/2\},\\
         & D_3:=\{\eta\in \R^d: |\eta-\xi|\leq |\xi|/2\}.
    \end{split}\end{equation}
    Accordingly we can write $\dot F^n_s=\sum_{i=1}^3 \dot F^n_{s,i}$, where $\dot F^n_{s,i}$ is defined as $\dot F^n_s$ but with the integral taken over $D_i$.   
    The term $\dot F^n_3$ can be estimated identically as in \cite{GaGrMa24} and is thus omitted (see also Step 2 below).
    Noting that $|\eta-\xi|\sim |\eta|$ on $D_1$, we can estimate $\dot F^n_{s,1}$ by
    \begin{align*}
        |\dot F^n_{s,1}(\xi)|
        \lesssim |\xi|^2 \int_{|\eta|>|\xi|/2}  \frac{1}{|\eta|^{d+2\alpha+2s}} \mathd \eta + |\xi|^{2-2s} \int_{|\eta|>|\xi|/2}  \frac{1}{|\eta|^{d+2\alpha}} \mathd \eta
        \lesssim |\xi|^{2-2\alpha-2s}.
    \end{align*}
    On $D_2$ it holds $|\xi|\sim |\eta-\xi|$, therefore
    \begin{align*}
        |\dot F^n_{s,2}(\xi)|
        & \leq \int_{D_2} \frac{1}{\langle \xi-\eta\rangle^{d+2\alpha}} \Big( \frac{
        |\xi|^2}{|\eta|^{2s}}+|\xi|^{2-2s}\Big) \mathd \eta\\
        & \lesssim |\xi|^{2-2\alpha-d} \int_{|\eta|\leq |\xi|/2 } \frac{
        1}{|\eta|^{2s}} \mathd \eta + |\xi|^{2-2s} \int_{|\xi-\eta|\geq |\xi|/2 } \frac{1}{\langle \xi-\eta\rangle^{d+2\alpha}} \mathd \eta
        \lesssim |\xi|^{2-2\alpha-2s}.
    \end{align*}
    Combining the above estimates gives the $n$-uniform bound for $|\dot F^n_s(\xi)|$.

    Concerning $|F^n_s(\xi)|$, whenever $|\xi|\geq R=R(\alpha,s)\gg 1$, by \cite[Lemma A.2]{crippa2025zero} it holds
    \begin{equation}\label{eq:estimate_CrLuPa}
        |\dot F_s(\xi)-F_s(\xi)| + \sup_{n} |\dot F^n_s(\xi)-F^n_s(\xi)| \leq C' \langle \xi\rangle^{-2s (\frac{d+2\alpha}{d+2})},
    \end{equation}
    which combined with the bound for $\sup_n |\dot F^n_s(\xi)|$ allows to conclude. Instead for $|\xi|\leq R$, we have the trivial estimate
    \begin{equation*}
        |F^n_s(\xi)| \lesssim |\xi|^2 \int_{\R^d} \frac{1}{\langle \xi-\eta\rangle^{d+2\alpha}} \mathd \eta \lesssim |\xi|^2 \leq R^{2(\alpha+s)} |\xi|^{2(1-\alpha-s)}.
    \end{equation*}
    Having established uniform-in-$n$ bounds for $\dot F^n_s$, $F^n_s$, the pointwise estimates for $\dot F_s$, $F_s$ follow by taking $n\to\infty$, concluding the proof of \eqref{eq:pointwise_bound_approx}.

    It remains to show \eqref{eq:good_bound}.
    By \cite[Proposition 4.2]{GaGrMa24}, there exist constants $\tilde K_1$, $\tilde K_2$ such that
    \begin{equation}\label{eq:estimate_GaGrMa}
        \dot F_s(\xi) \leq - \tilde K_1 |\xi|^{2(1-\alpha-s)}+\tilde K_2 |\xi|^{-2s} \quad \forall \xi\in \R^d\setminus\{0\}.
    \end{equation}
    Together with \eqref{eq:estimate_CrLuPa} and Young's inequality, for $|\xi|\geq R$ this implies the existence of constants $C''$, $\tilde K_3>0$ such that
    \begin{align*}
        F_s(\xi)
        &\leq - \tilde K_1 |\xi|^{2(1-\alpha-s)} + \tilde K_2 |\xi|^{-2s} +  C' \langle \xi\rangle^{-2s (\frac{d+2\alpha}{d+2})}\\
        & \leq -\frac{\tilde K_1}{2} |\xi|^{2(1-\alpha-s)} + (\tilde K_2+C'') |\xi|^{-2s}
        \leq -\frac{\tilde K_1}{4} \langle \xi\rangle^{2(1-\alpha-s)} + \tilde K_3 \langle \xi\rangle^{-2s}
    \end{align*}
    where we used that $|\xi|\geq R\gg 1$ and $\frac{d+2\alpha}{d+2}<1$.
    On the other hand, for $|\xi|\leq R$, by virtue of \eqref{eq:pointwise_bound_approx} it holds
    \begin{align*}
        |F_s(\xi)|\leq C\langle \xi\rangle^{2(1-\alpha-s)} \pm \frac{\tilde K_1}{4} \langle \xi\rangle^{2(1-\alpha-s)} \leq - \frac{\tilde K_1}{4} \langle \xi\rangle^{2(1-\alpha-s)} + \Big(C+\frac{\tilde K_1} {4}\Big) \langle R\rangle^{2(1-\alpha)} \langle \xi\rangle ^{-2s}.
    \end{align*}
    Up to relabelling constants, this yields \eqref{eq:good_bound}.
    
    \textit{Step 2.} Consider now $c_2'\neq 0$. To conclude, it suffices to show that the contributions associated to $\hat R$ can be absorbed in the r.h.s. of \eqref{eq:pointwise_bound_approx}. Namely, letting $H_s$, $\dot{H}_s$, $\dot H^n_s$, $H^n_s$ denote the analogues of $F_s$, $\dot F_s$, $\dot F^n_s$, $F^n_s$ with $\hat C$ replaced by $\hat R$, we want to show that
\begin{equation}\label{eq:pointwise_bound_step2}
        |H_s(\xi)| + |\dot H_s(\xi)| + \sup_n |\dot H^n_s(\xi)| +  \sup_n | H^n_s(\xi)|\leq c_2' \langle \xi\rangle^{-2s} \quad \forall \xi\in\R^d.
    \end{equation}
    The proof of \eqref{eq:pointwise_bound_step2} holds for any $s\in (0,1]$, independently of $\alpha$.
    We show \eqref{eq:pointwise_bound_step2} only for $\dot H^n_s$ and $|\xi|\geq 1$; the other cases are similar and left to the reader.
    Let $D_i$ be defined as in \eqref{eq:domains} and consider $\dot H^n_{s,i}$ as before. On $D_1$, since $|\eta-\xi|\sim |\eta| > |\xi|/2$, we have
    \begin{align*}
        |\dot H^n_{s,1}(\xi)|
        \lesssim \int_{\R^d} |\xi|^2 |\hat R(\xi-\eta)| \left(\frac{1}{|\eta|^{2s}}+\frac{1}{|\xi|^{2s}}\right) \dd\eta
        \lesssim \frac{1}{|\xi|^{2s}} \int_{\R^d} |\xi-\eta|^2 |\hat R(\xi-\eta)| \dd \eta
        \lesssim c_2' \langle \xi\rangle^{-2s}.
    \end{align*}
    On $D_2$, thanks to $|\eta-\xi|\sim |\xi|\geq 2|\eta|$ and the cancellation property \eqref{eq:cancellation_fourier_covariance}, it holds
    \begin{align*}
        |\dot H^n_{s,2}(\xi)|
        & \lesssim \int_{\R^d} |\eta|^{2-2s} |\hat R(\xi-\eta)| \dd \eta
        \lesssim \int_{\R^d} |\xi|^{2-2s} |\hat R(\xi-\eta)| \dd \eta\\
        & \lesssim \langle \xi\rangle^{-2s} \int_{\R^d} |\xi-\eta|^2 |\hat R(\xi-\eta)| \dd \eta
        \leq c_2' \langle \xi\rangle^{-2s} 
    \end{align*}
    where we used that $s\leq 1$.
    Finally, on $D_3$ we need to go through the same argument as \cite[Proposition 3.2, Step 2]{GaGrMa24}. Letting $\psi_n$ be defined as therein, using the fact that
    \begin{align*}
        (\eta-\xi)\mapsto [\xi\cdot \hat R(\xi-\eta) \xi]\,\nabla\psi_n(\xi)\cdot(\eta-\xi)
    \end{align*}
    is an odd function (for fixed $\xi$) due to the evenness of $\hat R$, one ends up finding once again the estimate
    \begin{align*}
        |\dot H^n_{s,3}|
        \lesssim \int_{\R^d} |\xi\cdot \hat R(\xi-\eta)\xi| \, |\xi-\eta|^2 \, \frac{1}{|\xi|^{2+2s}} \dd \eta
        \lesssim c_2'\langle \xi\rangle^{-2s}.
    \end{align*}
    Overall this concludes the verification of \eqref{eq:pointwise_bound_step2} and the proof.
\end{proof}

As mentioned above, estimate \eqref{eq:pointwise_bound_step2} holds for any $s\in (0,1]$. Combined with \cite[equation (2.6) and Corollary 2.5]{BaGa25}, this readily yields the following variant of \autoref{lem:flux_functions}.

\begin{cor}\label{cor:flux_-1}
    Let $d=2$, $\alpha\in (0,1)$ and let $C$ satisfy \autoref{ass:noise2} with constants $c_1'$, $c_2'$. Set $Q(z)=C(0)-C(z)$ and let $\dot F_1$ be defined as in \autoref{lem:flux_functions} for $s=1$.
    Let $G$ denote the Green function associated to $(-\Delta)^{-1}$.
    Then there exist constants $K_1,K_2>0$, depending on the parameters $\alpha,c_1',c_2'$ such that
    \begin{equation}\label{eq:good_bound_-1}
        \mathrm{Tr} [\widehat{Q\,D^2 G}](\xi)  = \dot F_1(\xi) \leq -K_1 | \xi|^{-2\alpha} + K_2 |\xi|^{-2}, \quad \forall \xi\in\R^d.
    \end{equation}
    Moreover one has $|F_1(\xi)|\lesssim 1$ uniformly in $\xi\in\R^d$.
\end{cor}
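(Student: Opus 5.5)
The plan is to split $\hat C = c_1'\hat C_\alpha + \hat R$ and treat the two contributions to $\dot F_1$ separately. The identity $\mathrm{Tr}[\widehat{Q\,D^2G}](\xi)=\dot F_1(\xi)$ is a direct Fourier computation, which I would quote from \cite[equation (2.6)]{BaGa25}. Write $Q(z)=C(0)-C(z)$, so that $\widehat{Q}=\hat C(0\text{-mode})\delta$-type term minus $\hat C$, and note $\widehat{D^2G}(\eta)=-\eta\otimes\eta/|\eta|^2$ up to normalization. Then $\mathrm{Tr}[\widehat{Q D^2G}](\xi)$ is $(2\pi)^{-1}\int \hat C(\xi-\eta):\big(\eta\otimes\eta/|\eta|^{2}-\xi\otimes\xi/|\xi|^{2}\big)\,\mathrm d\eta$.

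Two facts convert this into the form $\xi\cdot\hat C(\xi-\eta)\xi\,(|\eta|^{-2}-|\xi|^{-2})$ defining $\dot F_1$. First, the cancellation \eqref{eq:cancellation_fourier_covariance} gives $(\xi-\eta)\cdot\hat C(\xi-\eta)=0$. Second, the quadratic form can therefore be evaluated on $\xi$ instead of $\eta$. This is exactly the computation behind \cite[(2.6)]{BaGa25}, and it uses only the divergence-free structure, so it holds for general $C$ satisfying \eqref{eq:properties.C}.

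For the Kraichnan part $c_1'C_\alpha$, \cite[Corollary 2.5]{BaGa25} directly provides
\begin{equation*}
c_1'\dot F_1^{\alpha}(\xi)\le -\tilde K_1|\xi|^{-2\alpha}+\tilde K_2|\xi|^{-2},
\end{equation*}
where $\dot F_1^{\alpha}$ denotes the flux built with $C_\alpha$. Alternatively, this is \eqref{eq:estimate_GaGrMa} at the endpoint $s=1$, which lies outside the range $s+\alpha\le1$ of \autoref{lem:flux_functions}; this is why that corollary is invoked rather than the lemma. For the remainder $\hat R$, I will use \eqref{eq:pointwise_bound_step2} with $s=1$. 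That estimate is valid for all $s\in(0,1]$ independently of $\alpha$, so $|\dot H_1(\xi)|\le c_2'\langle\xi\rangle^{-2}\le c_2'|\xi|^{-2}$, which is absorbed into the $K_2|\xi|^{-2}$ term. Summing the two contributions gives \eqref{eq:good_bound_-1}.

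\textbf{Main obstacle.} The delicate step is checking that the Step 2 argument of \autoref{lem:flux_functions} really covers $s=1$ on all three regions, including $|\xi|<1$, which was left to the reader there. On $D_2$ this requires $|\eta|^{2-2s}$ to remain bounded, which is fine at $s=1$. For small $|\xi|$, I would bound $\xi\cdot\hat R(\xi-\eta)\xi\,(|\eta|^{-2}-|\xi|^{-2})$ directly. On the part of the integral where $|\eta|\le 2|\xi|$, I would use $|\xi\cdot\hat R(\xi-\eta)\xi|=|\eta\cdot\hat R(\xi-\eta)\eta|\le|\eta|^2\|\hat R\|_\infty$; here $\|\hat R\|_\infty$ must be justified, e.g.\ from $\hat C\in L^\infty_\xi$ in \eqref{eq:properties.C} together with the boundedness of $\hat C_\alpha$. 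This removes the $|\eta|^{-2}$ singularity, and the remaining terms are controlled by $|\xi|^2\cdot|\xi|^{-2}\int|\hat R|\lesssim c_2'$. This yields an $O(|\xi|^{-2})$ bound in the small-$\xi$ regime as well.

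Finally, for $|F_1|\lesssim1$, I will use $\langle\eta\rangle^{-2}$ in place of $|\eta|^{-2}$. The Kraichnan part is bounded by \eqref{eq:pointwise_bound_approx}-type estimates at the endpoint $s=1-\alpha$ pushed to $s=1$, giving growth $\langle\xi\rangle^{-2\alpha}\le1$. This is obtained by rerunning the $D_1,D_2,D_3$ splitting, where every bound used only $s\le1$. The $R$-part is bounded by $\langle\xi\rangle^{-2}$ by \eqref{eq:pointwise_bound_step2}.
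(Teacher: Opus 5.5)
Your proposal is correct and follows the paper's own route: the identity from \cite[equation (2.6)]{BaGa25}, \cite[Corollary 2.5]{BaGa25} for the $c_1'C_\alpha$ part, and \eqref{eq:pointwise_bound_step2} at $s=1$ (with $\langle\xi\rangle^{-2}\le|\xi|^{-2}$) for the $\hat R$ part. Two minor fixes. First, on $\{|\eta|\le 2|\xi|\}$ bound $|\eta\cdot\hat R(\xi-\eta)\eta|\,|\eta|^{-2}\le|\hat R(\xi-\eta)|$ and integrate, rather than using $\|\hat R\|_{L^\infty_\xi}$, which is not controlled by $c_2'$. Second, the same cancellation gives directly $|F_1(\xi)|\le 2(2\pi)^{-1}\|\hat C\|_{L^1_\xi}$; this is safer than rerunning the $D_2$ estimate at $s=1$, $d=2$, where $\int_{|\eta|\le|\xi|/2}\langle\eta\rangle^{-2}\,\mathrm{d}\eta\sim\log|\xi|$, so your claim that every bound used only $s\le 1$ is not accurate, although boundedness of $F_1$ still survives.
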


\subsection{Besov-type spaces}\label{app:besov}
Given $f\in L^2_{\omega, T,x}$, $\beta \in (0,1)$ and $\varrho_0>0$, we defined in \eqref{eq:relevant_seminorms} in \autoref{sec:preliminaries} the quantities $g_\beta (\varrho,f)$, $\seminorm{\theta}_{\beta,\varrho_0}^2$ in order to estimate regularity in Besov-type spaces $\tilde L^2_{\omega,T} \tilde B^\beta_{2,\infty}$.
In particular, by \cite[Lemma 2.18]{DrGaPa25}, it holds $\seminorm{f}_{\beta,+\infty}\sim\llbracket f\rrbracket_{\tilde L^2_{\omega,T} \tilde B^\beta_{2,\infty}}$.

We study here more in details useful estimates associated to such quantities; see \autoref{lem:final_besov} for one of the most important examples. For the sake of generality, we allow here any dimension $d\geq 2$.

It turns out convenient, for any $T_0 \in [0,T]$, we consider their analogues conditionally on $\mathcal{F}_{T_0}$:
\begin{equation}\label{eq:conditional_seminorms_appendix}\begin{split}
    \mathbb{E}_{T_0} &:= \mathbb{E} [\, \cdot \mid \mathcal{F}_{T_0}],
    \\
    G_{\beta,T_0} (\varrho,f) 
    &:= 
    \mathbb{E}_{T_0} \left[ \int_{[T_0, T] \times \mathbb{R}^d}
   \int_{\mathbb{S}^{d-1}} \frac{| \delta_{\varrho z} f_s (y) |^2}{\varrho^{2\beta}} \sigma (\dd z) \mathd y \mathd s \right],
   \\
   \seminorm{\theta }_{\beta,\varrho_0 \mid \mathcal{F}_{T_0}}^2 &:=
   \sup_{\varrho \in (0,\varrho_0)}G_{\beta,T_0}(\varrho,f).
\end{split}\end{equation}
When $T_0=0$, if $\mathcal{F}_{T_0}$ is (the completion of) the trivial $\sigma$-algebra, then we recover $G_{\beta,0}(\varrho,f)=g_\beta(\varrho,f)$ and $\seminorm{f }_{\beta,\varrho_0 \mid \mathcal{F}_{0}} = \seminorm{f }_{\beta,\varrho_0}$.

Notice that, for any $f\in L^2_{\omega,T,x}$,  $\seminorm{f }_{\beta,\varrho_0 \mid \mathcal{F}_{T_0}}$ is a well-defined $\mathcal{F}_{T_0}$-measurable random variable, despite the supremum being taken over uncountably many $\varrho \in (0,\varrho_0)$. Indeed, since $f\in L^2_{\omega,T,x}$, by strong continuity of translations in $L^2_x$ and dominated convergence the map 
\begin{align*}
 \varrho \mapsto \int_{[T_0, T] \times \mathbb{R}^d}
   \int_{\mathbb{S}^{d-1}} \frac{| \delta_{\varrho z} f_s (y) |^2}{\varrho^{2\beta}} \sigma (\dd z) \mathd y \mathd s   
\end{align*}
is $\PP$-a.s. continuous at every $\varrho \in (0,+\infty)$; moreover, for every subinterval of the form $[1/n,\varrho_0-1/n] \subset (0,\varrho_0)$ with $n \in \N$, the integrand above is a Bochner integrable random variable taking values in the separable Banach space $C([1/n,\varrho_0-1/n])$. As such, the conditional expectation $G_{\beta,T_0} (\cdot,f)$ is a well-defined $C([1/n,\varrho_0-1/n])$-valued $\mathcal{F}_{T_0}$-measurable random variable, and its supremum on $\varrho\in (0,\varrho_0)$ coincides $\PP$-a.s. with the one taken over countably many values of $\varrho$.

We adopt the short-hand notation $\|f \|_{L^2_{[T_0,T],x}}^2 :=\| f\|_{L^2([T_0,T];L^2_x)}^2$.

\begin{lem} \label{lem:auxiliary.besov.1}
    For any $\beta \in (0,1)$, $\varrho_0\in (0,\infty)$, $T_0 \in [0,T]$ and for every $h \in \R^d$ with $|h| \in (0,\varrho_0)$ it holds $\PP$-a.s. 
    \begin{equation}\label{eq:restricted_increments}
        \mathbb{E}_{T_0} \left[ \| \delta_h f\|_{L^2_{[T_0,T],x}}^2 \right] \lesssim_d |h|^{2\beta} \seminorm{f  }_{\beta,\varrho_0 \mid \mathcal{F}_{T_0}}^2,
    \end{equation}
    where the hidden constant does not depend on $h,\varrho_0,T_0,T$.
\end{lem}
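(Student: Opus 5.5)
The plan is to pass from spherical averages of increments, which $\seminorm{f}_{\beta,\varrho_0\mid\mathcal{F}_{T_0}}$ controls, to a single fixed increment $\delta_h f$. We do this with a purely deterministic, pathwise geometric decomposition of $h$ into two increments of admissible length, and then take conditional expectations.

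\textbf{Step 1 (decomposition).} Fix $h$ with $|h|=\varrho\in(0,\varrho_0)$. For any $z\in\mathbb{S}^{d-1}$ write $h=\varrho z+(h-\varrho z)$. The triangle inequality and translation invariance of the $L^2_x$ norm give, for a.e.\ $s$,
\begin{align*}
\|\delta_h f_s\|_{L^2_x}^2\leq 2\|\delta_{\varrho z} f_s\|_{L^2_x}^2+2\|\delta_{h-\varrho z} f_s\|_{L^2_x}^2 .
\end{align*}
The vector $h-\varrho z$ is not of length $\varrho$ in general, so a single average over $z\in\mathbb{S}^{d-1}$ does not close. Instead, we restrict $z$ to the spherical cap $S_h:=\{z\in\mathbb{S}^{d-1}:\ |\hat h - z|\leq 1\}$, where $\hat h=h/\varrho$; this cap has surface measure $\gtrsim_d 1$. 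For $z\in S_h$ we get $|h-\varrho z|=\varrho|\hat h-z|\leq\varrho<\varrho_0$.

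\textbf{Step 2 (reparametrize the second increment).} We rewrite $h-\varrho z=r w$ with $r=|h-\varrho z|\in[0,\varrho]$ and $w\in\mathbb{S}^{d-1}$. Averaging the inequality over $z\in S_h$ with respect to $\sigma$, the first term is bounded by $\int_{\mathbb{S}^{d-1}}\|\delta_{\varrho z}f_s\|^2\sigma(\dd z)$. For the second term we change variables $z\mapsto (r,w)$; here the main obstacle is controlling the Jacobian and the fact that $r$ ranges over $(0,\varrho]$.

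The cleanest route is to avoid the change of variables: we instead average over a full ball of translations. Choose $y\in B_{\varrho}(0)\cap B_{\varrho}(h)$, a lens of volume $\sim_d\varrho^d$. Then
\begin{align*}
\|\delta_h f\|^2\leq 2\|\delta_y f\|^2+2\|\delta_{h-y}f\|^2 .
\end{align*}
Averaging over $y$ in the lens and enlarging to full balls gives
\begin{align*}
\|\delta_h f\|^2\lesssim_d \varrho^{-d}\int_{B_\varrho}\|\delta_y f\|^2\dd y .
\end{align*}
In polar coordinates this equals $\varrho^{-d}\int_0^\varrho r^{d-1}\int_{\mathbb{S}^{d-1}}\|\delta_{rw}f\|^2\sigma(\dd w)\dd r$.

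\textbf{Step 3 (conditional expectation).} We integrate over $[T_0,T]$ and apply $\mathbb{E}_{T_0}$. Conditional Fubini is justified since everything is nonnegative and $f\in L^2_{\omega,T,x}$. The inner quantity becomes $r^{2\beta}G_{\beta,T_0}(r,f)\leq r^{2\beta}\seminorm{f}^2_{\beta,\varrho_0\mid\mathcal{F}_{T_0}}$ for $r<\varrho_0$, $\PP$-a.s. simultaneously in $r$, by the countable-supremum measurability remark above. This yields
\begin{align*}
\mathbb{E}_{T_0}\big[\|\delta_h f\|^2_{L^2_{[T_0,T],x}}\big]\lesssim_d \varrho^{-d}\int_0^\varrho r^{d-1+2\beta}\dd r\;\seminorm{f}^2_{\beta,\varrho_0\mid\mathcal{F}_{T_0}}\lesssim_d |h|^{2\beta}\seminorm{f}^2_{\beta,\varrho_0\mid\mathcal{F}_{T_0}} .
\end{align*}
The constant depends only on $d$, since $\beta\in(0,1)$ only improves the integral.

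The only delicate point is measurability and the a.s.\ uniformity in $r$ of the bound $G_{\beta,T_0}(r,f)\leq\seminorm{f}^2$ inside the $\dd r$ integral. This follows from continuity in $r$ of $G_{\beta,T_0}(\cdot,f)$, established just before the statement.
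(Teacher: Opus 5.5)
Your argument is correct, and it takes a more economical route than the paper's.

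The paper proceeds in three steps:
\begin{enumerate}
\item It bounds the ball-mollification error $\|f-f\ast\psi_{B_\varrho}\|$ by $\varrho^{2\beta}\seminorm{f}^2_{\beta,\varrho_0\mid\mathcal{F}_{T_0}}$. This is exactly your polar-coordinates and conditional Tonelli computation.
\item It writes $\delta_h f$ as two mollification errors plus a difference of ball averages. Comparing $f(\cdot+h+z)$ with $f(\cdot+\tilde z)$ for $z,\tilde z\in B_{|h|}$ involves increments of length up to $3|h|$. This step therefore only covers $|h|<\varrho_0/3$.
\item For $|h|\in[\varrho_0/3,\varrho_0)$ it splits $h=3\cdot(h/3)$ separately.
\end{enumerate}

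Your version uses a single intermediate point $y$ in the lens $B_{|h|}(0)\cap B_{|h|}(h)$. The lens contains $B_{|h|/2}(h/2)$, so its volume is $\gtrsim_d |h|^d$, and $h-y\in B_{|h|}(0)$ for every $y$ in it. Hence every increment you use has length at most $|h|<\varrho_0$. You get $\|\delta_h f\|^2\lesssim_d |h|^{-d}\int_{B_{|h|}}\|\delta_y f\|^2\,\dd y$ for every admissible $h$ at once, with no case distinction and no factor-$3$ enlargement.

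The remaining step, passing to $\mathbb{E}_{T_0}$ via Tonelli and the continuous-in-$\varrho$ version of $G_{\beta,T_0}(\cdot,f)$, is the same as the paper's Step 1. The paper's detour through $f\ast\psi_{B_\varrho}$ gains nothing for this lemma.

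When you write it up, delete the spherical-cap attempt in your Step 1. It plays no role in the final argument.
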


\begin{proof}
    We divide the proof in several passages.
    
    \emph{Step 1.} Let $\psi_{B_{\varrho}}:=|B_{\varrho}|^{-1} \bm{1}_{B_{\varrho}}$ denote the characteristic function of the closed unit ball of radius ${\varrho}>0$, renormalized to be a probability density. We claim that, for any ${\varrho} < {\varrho}_0$, one has the a.s. bound
    \begin{equation} \label{eq:localized_ball_estim}
      \mathbb{E}_{T_0}  \left[ \| f - f\ast\psi_{B_{\varrho}}\|^2_{L^2_{[T_0,T],x}} \right] 
        \leq 
        \mathbb{E}_{T_0} \left[
        \left(
        \fint_{B_{\varrho}} \| \delta_h f\|_{L^2_{[T_0,T],x}} \mathd h
        \right)^2 \right]
        \lesssim 
        \varrho^{2\beta} \seminorm{f }_{\beta,{\varrho}_0 \mid \mathcal{F}_{T_0}}^2.
    \end{equation}
    The first inequality is an immediate consequence of definitions and Minkowski's integral inequality.
    Concerning the second one, by Jensen's inequality and Cavalieri's principle, it holds $\PP$-a.s.
    \begin{align*}
        \mathbb{E}_{T_0} \left[ \left( \fint_{B_{\varrho}} \| \delta_h f\|_{L^2_{[T_0,T],x}} \mathd h \right)^2  \right]
        & \lesssim 
        \mathbb{E}_{T_0} \left[ 
        \frac{1}{{\varrho}^d} \int_{|h|\leq {\varrho}} \| \delta_h f\|_{L^2_{[T_0,T],x}}^2 \mathd h  \right]
        \\
        & \lesssim 
        \frac{1}{{\varrho}^d} \int_0^{\varrho} r^{d-1+2\beta}
        \mathbb{E}_{T_0} \left[ r^{-2\beta}
        \int_{\mathbb{S}^{d-1}}  \| \delta_{r z} f\|_{L^2_{[T_0,T],x}}^2  \sigma(\dd z) \right]
        \dd r
        \\
        & \lesssim
        \varrho^{2\beta}  \seminorm{f  }_{\beta,{\varrho}_0 \mid \mathcal{F}_{T_0}}^2 .
    \end{align*}
    
    \emph{Step 2.} 
    In what follows we will frequently denote translation operators by $\tau_h$, namely $\tau_h f:= f(h+\cdot)$.
    Here we prove the bound \eqref{eq:restricted_increments} for $h\in \R^d$ with $|h|={\varrho} < {\varrho}_0/3$.
    Rewrite
    \begin{align*}
        \delta_h f
        =
        \left( \tau_h f - \tau_h f\ast \psi_{B_{\varrho}}\right) 
        + 
        \left( \tau_h f\ast \psi_{B_{\varrho}} - f\ast \psi_{B_{\varrho}} \right) 
        + 
        \left( f\ast \psi_{B_{\varrho}} - f \right). 
    \end{align*}
    Observe that the translation $\tau_h$ and the convolution with $\psi_{B_{\varrho}}$ commute, so no ambiguity arises in the quantity $\tau_h f\ast \psi_{B_{\varrho}}$ above.
    By triangular inequality and applying \eqref{eq:localized_ball_estim} twice we get
    \begin{align*}
        \mathbb{E}_{T_0} \left[ \| \delta_h f\|_{L^2_{[T_0,T],x}}^2  \right]
        & \lesssim
        \mathbb{E}_{T_0} \left[ \| \tau_h f - \tau_h f\ast \psi_{B_{\varrho}}\|_{L^2_{[T_0,T],x}}^2 \right]
        \\
        &\quad+ 
        \mathbb{E}_{T_0} \left[ \| \tau_h f\ast \psi_{B_{\varrho}} - f\ast \psi_{B_{\varrho}}\|_{L^2_{[T_0,T],x}}^2 \right] 
        \\
        &\quad+ 
        \mathbb{E}_{T_0} \left[ \| f\ast \psi_{B_{\varrho}} - f \|_{L^2_{[T_0,T],x}}^2 \right] 
        \\
        & \lesssim |h|^{2\beta} \seminorm{f }_{\beta,{\varrho}_0 \mid \mathcal{F}_{T_0}}^2 
        + 
        \mathbb{E}_{T_0} \left[ \| \tau_h f\ast \psi_{B_{\varrho}} - f\ast \psi_{B_{\varrho}}\|_{L^2_{[T_0,T],x}}^2  \right].
    \end{align*}
    The first term in the right-hand side above is the same appearing in \eqref{eq:restricted_increments}, thus we only need to bound the second term.
    On the other hand, by Minkowski's integral inequality, we have $\PP$-almost surely
    \begin{align*}
      \mathbb{E}_{T_0} &\left[ \| \tau_h f\ast \psi_{B_{\varrho}} - f\ast \psi_{B_{\varrho}}\|_{L^2_{[T_0,T],x}}^2   \right]
        \\
        &\leq 
         \mathbb{E}_{T_0} \left[ \left(
         \fint_{B_{\varrho}} \fint_{B_{\varrho}} \| f(h+z+\cdot)-f(\tilde z+\cdot)\|_{L^2_{[T_0,T],x}} \mathd z \mathd \tilde z \right)^2   \right]
         \\
         & \lesssim 
         \mathbb{E}_{T_0} \left[ \left(
         \fint_{B_{\varrho}} \fint_{B_{3{\varrho}}} \| f(y+\tilde z +\cdot)-f(\tilde z+\cdot)\|_{L^2_{[T_0,T],x}} \mathd y \mathd \tilde z
         \right)^2  \right]
         \\
         & = 
         \mathbb{E}_{T_0} \left[ \left(
         \fint_{B_{3{\varrho}}} \| \delta_y f\|_{L^2_{[T_0,T],x}} \mathd y
         \right)^2  \right]
         \lesssim 
        \varrho^{2\beta} \seminorm{f }_{\beta,{\varrho}_0 \mid \mathcal{F}_{T_0}}^2
        =
        |h|^{2\beta} \seminorm{f }_{\beta,{\varrho}_0 \mid \mathcal{F}_{T_0}}^2,
    \end{align*}
    where we used that $y:= z+h-\tilde z$ satisfies $|y|\leq 3{\varrho} < {\varrho}_0$ in the intermediate passage, while in the last line we invoked \eqref{eq:localized_ball_estim}.

    \emph{Step 3.} For $|h|={\varrho}\in [{\varrho}_0/3,{\varrho}_0)$, we can just write
    \begin{align*}
        \mathbb{E}_{T_0} \left[ \| \delta_h f\|_{L^2_{[T_0,T],x}}^2  \right]
        &= 
        \mathbb{E}_{T_0} \left[
        \|\tau_{2h/3} (\delta_{h/3} f) + \tau_{h/3} (\delta_{h/3} f)  + \delta_{h/3}f\|_{L^2_{[T_0,T],x}}^2  \right]
        \\
        &\lesssim 
        \mathbb{E}_{T_0} \left[ \| \delta_{h/3} f\|_{L^2_{[T_0,T],x}}^2  \right]
        \lesssim 
        |h|^{2\beta} \seminorm{f }_{\beta,{\varrho}_0 \mid \mathcal{F}_{T_0}}^2,
    \end{align*}
    where the last estimate follows from the previous step, concluding the verification of \eqref{eq:restricted_increments}.    
\end{proof}

The next statement must be understood as an endpoint interpolation estimate: even though $\seminorm{f }_{\beta,+\infty}$ is slightly worse than $\| f\|_{L^2_T B^\beta_{2,\infty}}$, it provides similar estimates on intermediate norms $\| f\|_{L^2_T H^{\gamma \beta}_x}$.

\begin{lem}\label{lem:final_besov}
    Fix $\beta,\varrho_0,T_0$ as in \cref{lem:auxiliary.besov.1}.
    For any $\gamma\in [0,1)$ and any $h\in \R^d$ with $|h| < {\varrho}_0 \leq 1$, one has the $\PP$-a.s. bounds
    \begin{align} 
        & \int_{T_0}^T \EE_{T_0} \left[\| \delta_h f_t\|_{H^{\beta\gamma}_x}^2  \right] \mathd t \lesssim
        |h|^{2(1-\gamma)\beta} \seminorm{ f }_{\beta,{\varrho}_0 \mid \mathcal{F}_{T_0}}^2,\label{eq:final.besov1}\\
        & \int_{T_0}^T \EE_{T_0} \left[\| f_t\|_{\dot H^{\beta\gamma}_x}^2  \right] \mathd t 
        \lesssim \EE_{T_0}\left[\| f\|_{L^2_{[T_0,T],x}}^{2}\right]^{1-\gamma} \seminorm{ f    }_{\beta,+\infty \mid \mathcal{F}_{T_0}}^{2\gamma}. \label{eq:final.besov2}
    \end{align}
    where the hidden constants do not depend on $h,{\varrho}_0,T_0,T$.
\end{lem}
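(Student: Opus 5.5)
The plan is to work on the Fourier side via the Gagliardo-type characterization of $\dot H^{s}_x$ norms by increments, and to interpolate between the trivial $L^2$ bound for increments and the Besov-type control from \autoref{lem:auxiliary.besov.1}. Fix $s:=\beta\gamma\in[0,\beta)$. For \eqref{eq:final.besov2} I would write, by Plancherel,
\[
\| g\|_{\dot H^{s}_x}^2 \sim_{d,s} \int_{\R^d} \frac{\|\delta_y g\|_{L^2_x}^2}{|y|^{d+2s}}\,\mathd y ,
\]
valid for $s\in(0,1)$, and the case $\gamma=0$ being trivial. Apply this with $g=f_t$, integrate in $t\in[T_0,T]$, take $\EE_{T_0}$, and use Tonelli to swap the $y$-integral with time and conditional expectation. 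I then split the $y$-integral at a threshold $R>0$. For $|y|\ge R$, bound $\|\delta_y f\|_{L^2_{[T_0,T],x}}^2\le 4\|f\|^2_{L^2_{[T_0,T],x}}$, which gives the contribution $R^{-2s}\EE_{T_0}[\|f\|^2]$. For $|y|<R$, apply \autoref{lem:auxiliary.besov.1} with $\varrho_0=+\infty$ (the constant there does not depend on $\varrho_0$), which gives $|y|^{2\beta}\seminorm{f}^2_{\beta,+\infty\mid\mathcal{F}_{T_0}}$. Integrability near the origin holds since $2\beta-2s>0$, so this contribution is $R^{2(\beta-s)}\seminorm{f}^2$. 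Optimizing in $R$ yields $A^{1-\gamma}B^{\gamma}$ with $A=\EE_{T_0}\|f\|^2$ and $B=\seminorm{f}^2$, which is \eqref{eq:final.besov2}. The case $\seminorm{f}=\infty$ is vacuous, and if $A=0$ the bound is trivial.

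For \eqref{eq:final.besov1}, the inhomogeneous norm splits as $\|\delta_h f_t\|^2_{H^{s}}\sim\|\delta_h f_t\|^2_{L^2}+\|\delta_h f_t\|^2_{\dot H^{s}}$. The $L^2$ part is handled directly by \autoref{lem:auxiliary.besov.1}, giving $|h|^{2\beta}\seminorm{f}^2_{\beta,\varrho_0\mid}\le |h|^{2(1-\gamma)\beta}\seminorm{f}^2$ because $|h|<1$. For the homogeneous part I would again use the increment formula, now for $g=\delta_h f_t$, split at $|y|=|h|$, and note $\delta_y\delta_h f=\delta_h\delta_y f$.

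For $|y|\ge |h|$, I bound $\|\delta_y\delta_h f\|\le 2\|\delta_h f\|$, whose conditional mean square is $\lesssim|h|^{2\beta}\seminorm{f}^2_{\beta,\varrho_0\mid}$. Integrating $|y|^{-d-2s}$ over $|y|\ge|h|$ then gives $|h|^{2\beta-2s}$, as desired. For $|y|<|h|<\varrho_0$, I bound $\|\delta_h\delta_y f\|\le 2\|\delta_y f\|$ and use \autoref{lem:auxiliary.besov.1} at scale $|y|<\varrho_0$, which gives $|y|^{2\beta}$. Integrating $|y|^{2\beta-d-2s}$ over $|y|<|h|$ again yields $|h|^{2(\beta-s)}$. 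All constants depend only on $d,s$ and on the constant in \autoref{lem:auxiliary.besov.1}, hence not on $h,\varrho_0,T_0,T$.

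The main technical point is justifying the interchange of the conditional expectation with the $y$- and $t$-integrals and the use of \autoref{lem:auxiliary.besov.1} simultaneously for all $y$. I plan to do this via conditional Tonelli for nonnegative integrands, choosing a jointly measurable version $(y,\omega)\mapsto\EE_{T_0}[\|\delta_y f\|^2]$. The continuity in $y$ noted before \autoref{lem:auxiliary.besov.1} lets the almost-sure bound hold for all $y$ outside one null set, via a countable dense family. The rest is routine.
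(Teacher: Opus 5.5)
Your proof is correct, and it takes a different route from the paper's.

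\textbf{Your route.} You work in physical space with the exact Plancherel identity
\[
\int_{\R^d}|y|^{-d-2s}\|\delta_y g\|_{L^2_x}^2\,\mathd y=c_{d,s}\|g\|_{\dot H^s_x}^2 ,
\]
which is in fact how the paper itself defines fractional Sobolev seminorms.

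\textbf{The paper's route.} The paper decomposes into Littlewood--Paley blocks and imports from [DrGaPa25, Lemma 2.18] the comparison
\[
\sup_j 2^{2\beta j}\EE_{T_0}\bigl[\|\Delta_j g\|^2\bigr]\lesssim\sup_{\eps>0}\eps^{-2\beta}\fint_{\mathbb{S}^{d-1}}\EE_{T_0}\bigl[\|\delta_{\eps z}g\|^2\bigr]\sigma(\mathd z),
\]
applied to $g=\delta_h f$. It then splits the frequency sum at $2^{-N}\sim|h|$, and for \eqref{eq:final.besov2} it optimizes over $N\in\Z$ with homogeneous blocks.

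\textbf{How they compare.} The two arguments are structurally parallel. Your cut of the increment scale at $|y|=|h|$ (resp.\ $|y|=R$) uses the same commutation $\delta_y\delta_h=\delta_h\delta_y$ and the same two-case bound as the paper's split $\eps\le|h|$ versus $\eps>|h|$. It is the physical-space counterpart of the paper's frequency cut at $N$.

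Your version is more self-contained. It needs only \autoref{lem:auxiliary.besov.1} and Plancherel, avoids the Littlewood--Paley comparison lemma (and the Bernstein-type remark in the homogeneous case), and gives \eqref{eq:final.besov2} by a one-line optimization in $R$. The paper's version adapts more naturally to general Besov indices $B^s_{2,q}$, but that flexibility is not needed here.

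\textbf{What your route costs.} You must exchange $\EE_{T_0}$ with an uncountable $y$-integral and apply \autoref{lem:auxiliary.besov.1} for all $y$ off a single null set; this is also what lets you choose $R=R(\omega)$ afterwards. Your plan handles this correctly: take a $y$-continuous, hence jointly measurable, version of $\EE_{T_0}[\|\delta_y f\|^2_{L^2_{[T_0,T],x}}]$, obtained from the same $C(K)$-valued Bochner argument the paper uses before \autoref{lem:auxiliary.besov.1}, and combine it with a countable dense set.

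\textbf{Two small details.}
\begin{itemize}
\item \autoref{lem:auxiliary.besov.1} is stated only for finite $\varrho_0$. For $|y|<R$, apply it with any finite $\varrho_0>R$ and then use the monotonicity $\seminorm{f}_{\beta,\varrho_0\mid\mathcal{F}_{T_0}}\le\seminorm{f}_{\beta,+\infty\mid\mathcal{F}_{T_0}}$.
\item The degenerate case $\seminorm{f}_{\beta,+\infty\mid\mathcal{F}_{T_0}}=0$ follows by letting $R\to\infty$.
\end{itemize}
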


\begin{proof}
    Denote by $\{\Delta_j\}_{j \geq -1}$ the inhomogeneous Littlewood--Paley blocks in the space variable;
    arguing as in \cite[Lemma 2.18]{DrGaPa25} (up to taking conditional expectation instead of plain expectation in the proof thereof) it holds $\PP$-a.s.
    \begin{equation} \label{eq:LP_bound.aux}
        \sup_{j> -1} 2^{2\beta j} \mathbb{E}_{T_0} \left[ \| \Delta_j \delta_h f\|^2_{L^2_{[T_0,T],x}}
          \right]
        \lesssim  
        \sup_{\varepsilon>0} \frac{1}{\varepsilon^{2\beta}} \fint_{\mathbb{S}^{d-1}} 
        \mathbb{E}_{T_0} \left[ \|\delta_{\varepsilon z} (\delta_h f)\|^2_{L^2_{[T_0,T],x}}   \right]\sigma(\dd z)
        .
    \end{equation}
As in the case of $\sup_{\varrho \in (0,\varrho_0)}G_{\beta,T_0}(\varrho,f)$, the right-hand side of \eqref{eq:LP_bound.aux} is a well-defined $\mathcal{F}_{T_0}$-measurable random variable.

We focus on \eqref{eq:final.besov1}. Let us distinguish two cases: if $\varepsilon \leq  |h|$, since $\delta_{\varepsilon z}$ and $\delta_h$ commute, by \cref{lem:auxiliary.besov.1} we find
\begin{align*}
    \mathbb{E}_{T_0} \left[
    \|\delta_{\varepsilon z} (\delta_h f)\|^2_{L^2_{[T_0,T],x}}
     \right] 
    &=
    \mathbb{E}_{T_0} \left[
    \|\delta_h (\delta_{\varepsilon z} f)\|^2_{L^2_{[T_0,T],x}}
      \right]
    \\
    &\lesssim
    \mathbb{E}_{T_0} \left[
    \| \delta_{\varepsilon z} f \|^2_{L^2_{[T_0,T],x}}
      \right]
    \lesssim
    \varepsilon^{2\beta}  \seminorm{f  }_{\beta,\varrho_0 \mid \mathcal{F}_{T_0}}^2;
\end{align*}
Instead, if $\varepsilon > |h|$, one can use the trivial bound
\begin{align*}
   \mathbb{E}_{T_0} \left[
    \|\delta_{\varepsilon z} (\delta_h f)\|^2_{L^2_{[T_0,T],x}}
      \right]
    \lesssim
    \mathbb{E}_{T_0} \left[
    \| \delta_h f\|^2_{L^2_{[T_0,T],x}}
      \right]
    \lesssim
    \varepsilon^{2\beta}  \seminorm{f  }_{\beta,\varrho_0 \mid \mathcal{F}_{T_0}}^2.
\end{align*}
In either case, from \eqref{eq:LP_bound.aux} and the above estimates we deduce
\begin{align}\label{eq:LP_bound}
    \mathbb{E}_{T_0} \left[ \| \Delta_j \delta_h f\|^2_{L^2_{[T_0,T],x}}
         \right]
        \lesssim 
        2^{-2\beta j} \seminorm{f  }_{\beta,\varrho_0 \mid \mathcal{F}_{T_0}}^2.
\end{align}
By the Besov identifications $H^{\beta \gamma}_x= B^{\beta\gamma}_{2,2}$ and $L^2_x=B^{0}_{2,2}$, we have for any $N \in \N$
\begin{align*}
    \int_0^T \mathbb{E}_{T_0} \left[ \| \delta_h f_t\|_{H^{\beta\gamma}_x}^2   \right] \mathd t
        &\sim 
        \sum_{j\geq -1} 2^{2j\beta\gamma} \mathbb{E}_{T_0} \left[  \| \Delta_j \delta_h f_t\|_{L^2_{[T_0,T],x}}^2   \right]
        \\
        & \lesssim 
        2^{2N\beta\gamma} \sum_{-1\leq j\leq N} \mathbb{E}_{T_0} \left[  \| \Delta_j \delta_h f\|_{L^2_{[T_0,T],x}}^2   \right]
        + 
        \sum_{j> N} 2^{-2 j\beta (1-\gamma)}
        \seminorm{f  }_{\beta,\varrho_0 \mid \mathcal{F}_{T_0}}^2
        \\
        & \lesssim 
        2^{2N\beta\gamma} \mathbb{E}_{T_0} \left[  \| \delta_h f\|_{L^2_{[T_0,T],x}}^2  \right]
        + 
        2^{-2 N \beta  (1-\gamma) } \seminorm{f  }_{\beta,\varrho_0 \mid \mathcal{F}_{T_0}}^2
        \\
        & \lesssim 
        \left( 2^{2N\beta\gamma} |h|^{2\beta} + 2^{-2N\beta (1-\gamma)} \right) \seminorm{f  }_{\beta,\varrho_0 \mid \mathcal{F}_{T_0}}^2,
    \end{align*}
    where in the above passages we used \eqref{eq:LP_bound} and \cref{lem:auxiliary.besov.1}. Choosing $N$ such that $2^{-N}\sim |h|$ then yields the desired \eqref{eq:final.besov1}.

    The proof of \eqref{eq:final.besov2} is similar, using the homogeneous Littlewood--Paley blocks $\{\dot\Delta_j\}_{j \in\Z}$, the identification $\dot H^{\beta\gamma}_x=\dot B^{\beta\gamma}_{2,2}$ and similar optimization arguments with $N\in\Z$; the only difference is the use of the Bernstein estimate $\| \dot\Delta_j f\|_{L^2_{[T_0,T],x}}^2 \lesssim 2^{2j} \| f\|_{L^2_{[T_0,T],x}}^2$ valid for all $j\in\Z$.
\end{proof}

\begin{cor}\label{lem:besov.higher.time}
Under the same assumptions of \cref{lem:final_besov}, for every $t_\ast \in [0,T-T_0]$ it holds
    \begin{align*}   
        & \int_{T_0}^{T_0+t_\ast} \EE_{T_0} \left[ \| f_t\|_{H^{\beta\gamma}_x}^2   \right] \mathd t\\
        & \ \ \lesssim (1+\varrho_0^{-2\beta\gamma})\EE_{T_0}\left[\| f\|_{L^2_{[T_0,T],x}}^{2}\right] + \EE_{T_0}\left[\| f\|_{L^2_{[T_0,T],x}}^{2}\right]^{1-\gamma} \seminorm{ f  \mathbf{1}_{[T_0, T_0+t_\ast]}}_{\beta,+\infty \mid \mathcal{F}_{T_0}}^{2\gamma}\\
        & \ \ \lesssim t_\ast (1+\varrho_0^{-2\beta\gamma}) \left\| \EE_{T_0} \left[ \| f \|_{L^2_x}^2    \right] \right\|_{L^\infty_{[T_0,T_0+t_\ast]}}
        + t_\ast^{1-\gamma} \left\| \EE_{T_0} \left[ \| f \|_{L^2_x}^2    \right] \right\|_{L^\infty_{[T_0,T_0+t_\ast]}}^{1-\gamma} \seminorm{ f  \mathbf{1}_{[T_0, T_0+t_\ast]}}_{\beta,+\infty \mid \mathcal{F}_{T_0}}^{2\gamma}.
    \end{align*} 
    where the hidden constant does not depend on ${\varrho}_0,T_0,t_\ast,T$.
\end{cor}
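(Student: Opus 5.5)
The first inequality will follow by splitting $f$ into a low-frequency part and a high-frequency part, handled respectively by the trivial $L^2$ bound and by the homogeneous interpolation estimate \eqref{eq:final.besov2} of \autoref{lem:final_besov}. The second inequality will then follow by elementary time-integration bounds.

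Fix $t_\ast$ and set $g:=f\mathbf{1}_{[T_0,T_0+t_\ast]}$. Then
\begin{align*}
\int_{T_0}^{T_0+t_\ast}\EE_{T_0}\bigl[\|f_t\|_{H^{\beta\gamma}_x}^2\bigr]\mathd t=\int_{T_0}^{T}\EE_{T_0}\bigl[\|g_t\|_{H^{\beta\gamma}_x}^2\bigr]\mathd t.
\end{align*}
We use the equivalence $\|g\|_{H^{\beta\gamma}_x}^2\sim\|g\|_{L^2_x}^2+\|g\|_{\dot H^{\beta\gamma}_x}^2$.

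For the $L^2_x$ part we use the trivial bound by $\EE_{T_0}[\|f\|^2_{L^2_{[T_0,T],x}}]$. For the homogeneous part we apply \eqref{eq:final.besov2} directly to $g$, which gives
\begin{align*}
\EE_{T_0}\bigl[\|g\|^2_{L^2_{[T_0,T],x}}\bigr]^{1-\gamma}\seminorm{g}^{2\gamma}_{\beta,+\infty\mid\mathcal F_{T_0}}.
\end{align*}
Since $\|g\|_{L^2_{[T_0,T],x}}\le\|f\|_{L^2_{[T_0,T],x}}$, this is the second term on the right-hand side. At this stage the bound holds even without the $\varrho_0^{-2\beta\gamma}$ term.

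That term appears if one prefers, as the paper's later applications do, to control $\seminorm{g}_{\beta,+\infty}$ by $\seminorm{\cdot}_{\beta,\varrho_0}$. The conditional analogue of the triangle inequality recalled in \autoref{subsec:besov} gives
\begin{align*}
\seminorm{g}_{\beta,+\infty\mid\mathcal F_{T_0}}^2\lesssim\seminorm{g}_{\beta,\varrho_0\mid\mathcal F_{T_0}}^2+\varrho_0^{-2\beta}\EE_{T_0}\bigl[\|g\|^2_{L^2}\bigr].
\end{align*}
Applying Young's inequality to the product $A^{1-\gamma}(\varrho_0^{-2\beta}A)^{\gamma}$ then produces the extra term $\varrho_0^{-2\beta\gamma}\EE_{T_0}[\|f\|^2]$. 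In any case the stated first inequality holds, possibly with room to spare.

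For the second inequality, use $\EE_{T_0}\bigl[\|g\|^2_{L^2_{[T_0,T],x}}\bigr]=\int_{T_0}^{T_0+t_\ast}\EE_{T_0}\bigl[\|f_t\|^2_{L^2_x}\bigr]\mathd t\le t_\ast\bigl\|\EE_{T_0}[\|f\|_{L^2_x}^2]\bigr\|_{L^\infty_{[T_0,T_0+t_\ast]}}$. Conditional expectation commutes with the time integral by Fubini for conditional expectations. Substituting this into both terms yields the $t_\ast$ and $t_\ast^{1-\gamma}$ factors.

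The main obstacle is a bookkeeping mismatch. The second line of the statement has $\EE_{T_0}\bigl[\|f\|^2_{L^2_{[T_0,T],x}}\bigr]$ over the full interval, whereas the third line needs the restricted $L^2$ norm over $[T_0,T_0+t_\ast]$. The plan resolves this by carrying the whole argument with $g$ instead of $f$, including in the low-frequency and $L^2$ terms, so that every $L^2$ quantity is already restricted to $[T_0,T_0+t_\ast]$. The first line is then obtained from this sharper restricted bound by monotonicity.

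A secondary point is that \eqref{eq:final.besov2} is stated for $f$ with $\seminorm{\cdot}_{\beta,+\infty\mid\mathcal F_{T_0}}$ on $[T_0,T]$. Applying it to $g$ is legitimate because $g\in L^2_{\omega,T,x}$, and the proof of \eqref{eq:final.besov2} via homogeneous Littlewood--Paley blocks uses no time structure.
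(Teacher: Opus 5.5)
Your proposal is correct and follows essentially the same route as the paper: working with $g=f\mathbf{1}_{[T_0,T_0+t_\ast]}$ is exactly the paper's reduction ``without loss of generality $t_\ast=T-T_0$''. From there both arguments combine $\|g_t\|_{H^{\beta\gamma}_x}^2\lesssim\|g_t\|_{L^2_x}^2+\|g_t\|_{\dot H^{\beta\gamma}_x}^2$ with \eqref{eq:final.besov2}, use $\seminorm{g}_{\beta,+\infty\mid\mathcal{F}_{T_0}}^2\lesssim\seminorm{g}_{\beta,\varrho_0\mid\mathcal{F}_{T_0}}^2+\varrho_0^{-2\beta}\EE_{T_0}[\|g\|_{L^2_{[T_0,T],x}}^2]$ to account for the $\varrho_0^{-2\beta\gamma}$ term, and conclude with $\EE_{T_0}[\|g\|_{L^2_{[T_0,T],x}}^2]\le t_\ast\|\EE_{T_0}[\|f\|_{L^2_x}^2]\|_{L^\infty_{[T_0,T_0+t_\ast]}}$ (only cosmetic slips: that $\varrho_0$-step uses subadditivity of $x\mapsto x^\gamma$ rather than Young's inequality, and your opening ``low/high-frequency splitting'' is really the $L^2$/homogeneous-norm splitting you carry out).
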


\begin{proof}
    Without loss of generality we may assume $t_\ast=T-T_0$. The first estimate follows from combining \eqref{eq:final.besov2} with the bounds
    \begin{align*}
        \| f_t\|_{H^{\beta\gamma}_x}^2 \lesssim \| f_t\|_{L^2_x}^2 + \| f_t\|_{\dot H^{\beta\gamma}_x}^2,\quad
        \seminorm{ f    }_{\beta,+\infty \mid \mathcal{F}_{T_0}}^2 \lesssim \varrho_0^{-2\beta} \EE_{T_0}\left[ \| f\|_{L^2_{[T_0,T],x}}^2\right]+\seminorm{ f    }_{\beta,\varrho_0 \mid \mathcal{F}_{T_0}}^2
    \end{align*}
    where the second one follows from the definition of $\seminorm{ f    }_{\beta,+\infty \mid \mathcal{F}_{T_0}}$ and triangular inequality.
    The second estimate then follows from the first one combined with
    \begin{equation*}
        \EE_{T_0}\left[\| f\|_{L^2_{[T_0,T],x}}^{2}\right] \leq t_\ast \left\| \EE_{T_0} \left[ \| f \|_{L^2_x}^2    \right] \right\|_{L^\infty_{[T_0,T]}}. \qedhere
    \end{equation*}
\end{proof}

\bibliography{biblio}{}
\bibliographystyle{alpha}

\end{document}